\DeclareMathAlphabet{\mathcalligra}{T1}{calligra}{c}{h}
\providecommand{\U}[1]{\protect\rule{.1in}{.1in}}
\newtheorem*{theorem*}{Theorem} 
\newtheorem{theorem}{Theorem}
\newtheorem{proposition}[theorem]{Proposition}
\newtheorem{lemma}[theorem]{Lemma}
\newtheorem{corollary}[theorem]{Corollary}
\newtheorem{remark}[theorem]{Remark}
\newtheorem{example}[theorem]{Example}
\newtheorem{examples}[theorem]{Examples}
\DeclareMathOperator{\inter}{int}
\DeclareMathOperator{\Imag}{Im}
\DeclareMathOperator{\Real}{Re}
\newcommand{\R}{\mathbb{R}}
\newcommand{\Q}{\mathbb{Q}}
\newcommand{\C}{\mathbb{C}}
\newcommand{\grad}{\mbox{grad\,}}
\newcommand{\nap}{\nabla^{\perp}}
\def\<{{\langle}}
\def\>{{\rangle}}
\def\Jp{J^\perp}
\def\n{\nabla}
\def\d{\partial}
\def\dz{\partial}
\def\dzb{\bar{\partial}}
\def\a{\alpha}
\def\add{\a(\dz,\dz)}
\def\id{I}
\def\th{\theta}
\def\w{\omega}
\def\bea{\begin{eqnarray*} }
\def\eea{\end{eqnarray*} }
\def\be{\begin{equation} }
\def\ee{\end{equation} }
\def\nap{\nabla^\perp}
\def\qed{\ifhmode\unskip\nobreak\fi\ifmmode\ifinner
\else\hskip5 pt \fi\fi\hbox{\hskip5 pt \vrule width4 pt
height6 pt  depth1.5 pt \hskip 1pt }}
\begin{document}
\vspace{-3em}
\title{Bonnet and Isotropically Isothermic Surfaces in \\ 4-Dimensional Space Forms}
\author{Kleanthis Polymerakis}
\date{}
\maketitle
\renewcommand{\thefootnote}{\fnsymbol{footnote}}
\footnotetext{\emph{2010 Mathematics Subject Classification.} 53C42, 53A10.}
\footnotetext{The author would like to acknowledge financial support by the General Secretariat for Research and Technology (GSRT) and the Hellenic Foundation for Research and Innovation (HFRI), Grant No: 133.} 
\renewcommand{\thefootnote}{\arabic{footnote}}
\vspace{-2em}
\begin{abstract}
We study the Bonnet problem for surfaces in 4-dimensional space forms, namely, 
to what extent a surface is determined by the metric and the mean curvature.
Two isometric surfaces have the same mean curvature
if there exists a parallel vector bundle isometry between their normal bundles that preserves the mean curvature vector fields. 
We deal with the structure of the moduli space of congruence classes of isometric surfaces 
with the same mean curvature, and with properties inherited on a surface by this structure.
The study of this problem led us to a new conformally invariant property, called isotropic isothermicity, 
that coincides with the usual concept of isothermicity for surfaces lying in totally umbilical hypersurfaces, 
and is related to lines of curvature and infinitesimal isometric deformations that preserve the mean curvature vector field. 
The class of isotropically isothermic surfaces includes the one of surfaces with a vertically harmonic Gauss lift 
and particularly the minimal surfaces, and overlaps with that of isothermic surfaces without containing the entire class.

We show that if a simply-connected surface is not proper Bonnet, which means that the moduli space is a finite set, 
then it admits either at most one, or exactly three Bonnet mates. 
For simply-connected proper Bonnet surfaces, the moduli space
is either 1-dimensional with at most two connected components diffeomorphic to the circle, or the 2-dimensional torus.
We prove that simply-connected Bonnet surfaces lying in totally geodesic hypersurfaces of the ambient space as surfaces of nonconstant mean curvature, 
always admit Bonnet mates that do not lie in any totally umbilical hypersurface. 
Such surfaces either admit exactly three Bonnet mates, or they are proper Bonnet with moduli space the torus.
We show that isotropic isothermicity characterizes the proper Bonnet surfaces, and we provide 
relevant conditions for non-existence of Bonnet mates for compact surfaces.
Moreover, we study compact surfaces that are locally proper Bonnet,
and we prove that the existence of a uniform substructure on the local moduli spaces, 
characterizes surfaces with a vertically harmonic Gauss lift that are neither minimal, nor superconformal.
In particular, we show that the only compact, locally proper Bonnet surfaces with moduli space the torus,
are those with nonvanishing parallel mean curvature vector field and positive genus.
\end{abstract}

\section{Introduction}

The theory of isometric or conformal immersions deals with the study of 
isometric or conformal invariants of immersions, 
aiming at the possible classification of the immersions with respect to these invariants.
In the classical 
theory of surfaces in a complete, simply-connected 3-dimensional space form $\Q^3_c$ of curvature $c$, 
a basic problem is to investigate to what extent several geometric data determine a surface up to congruence, 
and furthermore, to study and classify the exceptional surfaces that are not uniquely determined by certain data.

In 1867, Bonnet \cite{B} raised the problem to what extent a surface in $\Q^3_c$ is determined by the metric and the mean curvature. 
This naturally leads to the following question: given an isometric immersion $f\colon M\to \Q^3_c$ of a 2-dimensional oriented Riemannian manifold, 
how many noncongruent to $f$ isometric immersions of $M$ into $\Q^3_c$ can exist with the same mean curvature with $f$? 
Any noncongruent to $f$ such surface is called a Bonnet mate of $f$.
A generic surface in $\Q^3_c$ is uniquely determined by the metric and the mean curvature. The exceptions are called Bonnet surfaces.
Several aspects of the Bonnet problem have been studied by Bonnet \cite{B}, Cartan \cite{Ca}, Tribuzy \cite{Tr}, 
Chern \cite{Ch2}, Roussos-Hernandez \cite{RH}, Kenmotsu \cite{K}, and Smyth-Tinaglia \cite{ST} among many others.
It turns out that a simply-connected surface $f\colon M\to \Q^3_c$ either admits at most one Bonnet mate, or the moduli space of 
all isometric immersions of $M$ into $\Q^3_c$ that have the same mean curvature with $f$, is the circle
$\mathbb{S}^1\simeq \R/2\pi\mathbb{Z}$. In the latter case, the surface is called proper Bonnet. 
It has been shown by Bonnet \cite{B} and Lawson \cite{L}, that simply-connected surfaces 
with constant mean curvature are proper Bonnet, unless they are totally umbilical.
For compact surfaces, Lawson and Tribuzy \cite{LT} proved that a surface of non-constant mean curvature in $\Q^3_c$, admits at most one Bonnet mate. 
It still remains an open problem if there exist compact surfaces of non-constant mean curvature in $\R^3$ that do admit a Bonnet mate.

The Bonnet problem for surfaces in $\Q^3_c$ is closely related to the extensively studied class of isothermic surfaces in $\Q^3_c$.
It was shown by Raffy \cite{Ra} that a proper Bonnet surface is isothermic away from its isolated umbilic points. 
Afterwards, Graustein \cite{Grau} proved that an isothermic Bonnet surface is proper Bonnet. 
His characterization of proper Bonnet surfaces involving the isothermicity, has been used by Bobenko and Eitner \cite{BE} 
for the classification of simply-connected, umbilic-free proper Bonnet surfaces of non-constant mean curvature in $\R^3$. 
On the other hand, Kamberov, Pedit and Pinkall \cite{KPP} described all simply-connected, umbilic-free Bonnet pairs in $\R^3$ in terms of isothermic surfaces.
Recently, Jensen, Musso and Nicolodi \cite{JMN} provided sufficient conditions in terms of isothermicity, for non-existence of Bonnet mates
for compact surfaces.

An umbilic-free surface $f\colon M\to\Q^3_c$ is called isothermic if it admits a conformal curvature line parametrization
around every point. This is equivalent to the co-closeness of the principal connection form of the surface, 
which is a globally defined 1-form on $M$. 
Isothermicity is a conformally invariant property that appears in several problems where a surface
is not uniquely determined by certain geometric invariants. 
As a matter of fact, isothermic surfaces admit an amount of transformations that preserve geometric data,
and they are characterized by the existence of these transformations (cf. \cite{H-J}).
Classical examples of isothermic surfaces in $\Q^3_c$ are the umbilic-free surfaces with 
constant mean curvature, and particularly, the minimal surfaces, as well as their M\"obius transformations.
The notion of isothermicity has been extended for surfaces in the Euclidean space with arbitrary codimension by Palmer \cite{Pa}.
Isothermicity in arbitrary codimension is again a conformally invariant property, and such isothermic surfaces
inherit the most of the properties of those  in 3-dimensional space forms.
For instance, they are characterized by the existence of analogous transformations.
However, in codimension greater than one, the isothermicity implies flatness of the normal bundle of the surface,
and this restricts the class of isothermic surfaces from including the minimal surfaces and their M\"obius transformations.

There is a characterization of isothermic surfaces in $\R^3$ which has no higher codimensional analogue, namely, 
an umbilic-free surface in $\R^3$ is isothermic if and only if it locally admits a nontrivial infinitesimal isometric
deformation that preserves the mean curvature. 
Probably the only recent proof of this result can be found in \cite{LP}, where this  
characterization of isothermicity has been extended to discrete surfaces in $\R^3$.  
As mentioned in \cite{CGS}, this result dates back to  
the 19th century, and it seems to have been almost forgotten 
until this reference, since surfaces that admit such deformations  
had in the meantime been studied, without establishing a correlation with isothermicity (see Section 6.1 of the survey \cite{IS}). 
The theory of infinitesimal isometric deformations of surfaces and submanifolds in the Euclidean space has a long and rich history, 
as can be seen in the surveys \cite{IS,IMS}, and is still developing (cf. \cite{DV2, Jim, DJ}).
In particular, the relation of isothermicity with the Bonnet problem for surfaces in $\R^3$
verifies very elegantly the quote of Efimov stated in \cite{IMS}, that 
"the theory of infinitesimal isometric deformations is the differential of the theory of isometric deformations".

Besides space forms, the Bonnet problem has been studied for surfaces in homogeneous 3-manifolds \cite{GMM},
and it was recently raised for surfaces in static 3-manifolds \cite{LMW}.

The Bonnet problem for surfaces in 4-dimensional space forms $\Q^4_c$ has been studied in \cite{PV}.
Two isometric surfaces in $\Q^4_c$ are said to have {\emph{the same mean curvature}} if there exists a parallel vector bundle isometry
between their normal bundles that preserves the mean curvature vector fields.
Most of the results in \cite{PV} concern compact surfaces and are global in nature.

In this paper, we focus mainly on local aspects of the Bonnet problem for surfaces in $\Q^4_c$.
The local study of the problem led us to a new conformally invariant property,
which has a similar effect on the Bonnet problem for surfaces in $\Q^4_c$ 
with that of isothermicity on the classical Bonnet problem. 
This property is called isotropic isothermicity and we discuss it first.

We introduce the notion of isotropically isothermic surfaces in $\Q^4_c$, generalizing the one 
of isothermic surfaces in $\Q^3_c$, as follows:
using the two isotropic parts of the Hopf differential of an oriented surface $f\colon M\to \Q^4_c$,
we introduce two differential 1-forms $\Omega^+$ and $\Omega^-$, called the mixed connection forms of $f$. 
The form $\Omega^\pm$ is defined 
away from pseudo-umbilic points of $f$, that are the points where the curvature ellipse of $f$ is a circle,
at which the normal curvature satisfies $\pm K_N\geq0$.
For an umbilic-free surface lying in some totally umbilical hypersurface of $\Q^4_c$,
both mixed connection forms coincide with the principal connection form of the surface. 
Extending naturally the definition of isothermic surfaces in $\Q^3_c$,
we call a surface $f\colon M\to \Q^4_c$ {\emph{isotropically isothermic}} if 
at least one of the mixed connection forms is 
defined and co-closed on the whole $M$. 
If this occurs for both mixed connection forms, then $f$ is
called {\emph{strongly isotropically isothermic}}.

It turns out that isotropic isothermicity is a property invariant under conformal changes of the metric of the ambient space.
Examples of isotropically isothermic surfaces in $\Q^4_c$ are the non-superconformal surfaces with a vertically harmonic Gauss lift,
the minimal superconformal surfaces, and their M\"obius transformations, away from isolated points.
In particular, non-superconformal minimal surfaces are strongly isotropically isothermic away from 
pseudo-umbilic points.
We note that, as follows from \cite{Ha,PV}, surfaces with a vertically harmonic Gauss lift are the  
analogues in $\Q^4_c$ of constant mean curvature surfaces in $\Q^3_c$, and particularly,
superconformal surfaces with a vertically harmonic Gauss lift generalize the totally umbilical surfaces. 
The class of strongly isotropically isothermic surfaces includes the one of isothermic surfaces lying in totally umbilical hypersurfaces of the ambient space, 
however, we show that there exist isothermic surfaces in $\R^4$ which are not isotropically isothermic.

For surfaces in $\R^4$, we prove that isotropic isothermicity is related to infinitesimal isometric 
deformations that preserve the mean curvature,
and that strong isotropic isothermicity involves the principal curvature lines, studied in \cite{GGTG, GS},
along which the second fundamental form  of the surface points in the direction of a principal axis of the curvature ellipse.
For an infinitesimal isometric deformation of a surface $f\colon M\to\R^4$, we define  
the {\emph{parallel preservation in the normal bundle}} under the deformation, 
of quantities related to the second fundamental form of $f$, in such a way
that the deformation is trivial if and only if it preserves parallelly in the normal bundle the mean curvature vector field 
and the Hopf differential, i.e., the second fundamental form. 
We note that parallel preservation of the mean curvature vector field in the normal bundle, implies preservation of its length and of the normal curvature.
Our first result is the following.

\smallskip

\begin{theorem} \label{IIID}
Let $f\colon M\to \R^4$ be an oriented surface, free of pseudo-umbilic points.
The surface $f$ is isotropically isothermic if and only if it locally admits a nontrivial infinitesimal isometric deformation
that preserves parallelly in the normal bundle, the mean curvature vector field and an isotropic part of the Hopf differential. Moreover, $f$ is strongly
isotropically isothermic if and only if it is isotropically isothermic and admits a conformal principal curvature line parametrization around every point.
\end{theorem}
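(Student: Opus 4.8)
The plan is to develop infinitesimal isometric deformations of surfaces in $\R^4$ in parallel with the classical codimension-one theory, and to extract the mixed connection forms directly from the linearized Codazzi equation. First I would fix a local complex coordinate $z$ adapted to the metric and orientation and encode a deformation by the first-order variation $\dot{\a}$ of the second fundamental form together with the first-order variation of the normal connection $\nap$. Since the induced metric is preserved to first order, the admissible variations are exactly those satisfying the linearized Gauss, Codazzi and Ricci equations. Complexifying and using that the rotation $J^\perp$ of the oriented normal plane is $\nap$-parallel, the bundle $N\otimes\C$ splits into two isotropic line subbundles $L^\pm$, each preserved by $\nap$, and $\dot{\a}$ decomposes into a variation $\dot H$ of the mean curvature vector field and variations $\dot\phi^+,\dot\phi^-$ of the coefficients of the two isotropic parts $\Phi^\pm$ of the Hopf differential. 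In the parallel identification of normal bundles furnished by the deformation, preserving $H$ and one isotropic part, say $\Phi^+$, means precisely $\dot H=0$ and $\dot\phi^+=0$, and by the convention recalled before the theorem the deformation is trivial if and only if in addition $\dot\phi^-=0$. Thus a nontrivial such deformation is a nonzero admissible variation carried entirely by $\dot\phi^-$.

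I would then reduce the linearized system under $\dot H=0=\dot\phi^+$. The Gauss equation expresses the fixed intrinsic curvature through $|H|$ and the norms $|\phi^+|,|\phi^-|$; with $K$, $H$ and $\phi^+$ held fixed it forces $|\phi^-|$ to be unchanged, so that $\dot\phi^-$ is $\Real$-orthogonal to $\phi^-$ and hence a pure infinitesimal rotation inside $L^-$, namely $\dot\phi^-=i\mu\,\phi^-$ for a real function $\mu$; the linearized Ricci equation is then absorbed by the allowed variation of $\nap$. Substituting this ansatz into the $L^-$-component of the Codazzi equation, whose right-hand side involves only $H$ and is therefore unvaried, yields
\[
\dzb^{\nap}(i\mu\,\phi^-)=0,
\qquad\text{i.e.}\qquad
\dzb\mu=-\mu\,\frac{\dzb^{\nap}\phi^-}{\phi^-},
\]
a first-order linear equation that prescribes $d\mu$ as $\mu$ times a fixed real $1$-form read off from the connection datum defining $\Omega^-$. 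Its local integrability condition is precisely the co-closedness $\delta\Omega^-=0$. Since the roles of $+$ and $-$ are symmetric, a nontrivial infinitesimal isometric deformation preserving $H$ and one isotropic part exists around every point if and only if one of $\Omega^+,\Omega^-$ is co-closed, which is exactly isotropic isothermicity; this proves the first assertion.

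For the second assertion I would analyse the principal curvature lines. At a non-pseudo-umbilic point the curvature ellipse is nondegenerate, and a short computation in a conformal coordinate, with $ds^2=e^{2u}|dz|^2$ and $X$ the unit direction making angle $\th$ with the frame, gives the ellipse $\a(X,X)=H+2e^{-2u}\Real(e^{2i\th}\Phi)$ and shows that $X$ is tangent to a principal axis exactly when $4\th+\arg(\phi^+\phi^-)$ is constant; hence the principal curvature lines are governed by the sum of the phases of $\phi^+$ and $\phi^-$. Correspondingly, the combination in which the normal connection term cancels is the sum $\Omega^++\Omega^-$, and a conformal principal curvature line parametrization exists around every point if and only if $\Omega^++\Omega^-$ is co-closed. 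Granting isotropic isothermicity, say $\delta\Omega^-=0$, co-closedness of $\Omega^++\Omega^-$ is equivalent to $\delta\Omega^+=0$; therefore $f$ is strongly isotropically isothermic if and only if it is isotropically isothermic and admits such a parametrization around every point.

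The step I expect to be the main obstacle is the bookkeeping of the linearized Codazzi and Ricci equations in codimension two: one must track the variation of $\nap$ permitted by the \emph{parallel preservation} convention and verify that, after imposing $\dot H=0=\dot\phi^+$, every curvature and connection term organizes exactly into the $1$-form defining $\Omega^-$, leaving no residual obstruction beyond $\delta\Omega^-=0$. The analogous delicate point in the second part is confirming that the conformal principal curvature line condition is encoded by the sum $\Omega^++\Omega^-$ rather than by the difference, which instead controls the orientation of the curvature ellipse within the normal plane.
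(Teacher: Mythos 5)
Your argument is correct in outline and arrives at the same integrability condition as the paper, but by a genuinely different route: you linearize the complexified Gauss--Codazzi--Ricci system in the isotropic splitting $N_fM\otimes\C=N_f^-M\oplus N_f^+M$, reduce the admissible variations to $\dot\phi^\mp=i\mu\phi^\mp$ with $\mu$ real, and read off the scalar equation $\mu_{\bar z}=-h^\mp\mu$ (with $h^\mp$ as in \eqref{hpm}), whose local solvability is $\Imag h^\mp_z=0$, i.e.\ $d\star\Omega^\mp=0$ by Lemma~\ref{qiz}; your $\mu$ is exactly the linearization at $\th^\mp=0$ of the Bonnet system \eqref{sys}, which makes the link with the finite deformation theory transparent. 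The paper instead works entirely with real moving frames: it encodes parallel preservation of $H$ and $\Phi^\pm$ as $\varphi_{13}=\star\varphi_{23}$, $\varphi_{14}=\star\varphi_{24}=\mp\varphi_{23}$, and extracts $d\log L=\star\Omega^\mp$ from the remaining structure equations (Proposition~\ref{IDP}). What your route does not supply is precisely what the paper spends most of the section proving, namely the fundamental theorem of infinitesimal isometric deformations (Lemmas~\ref{invfr} and \ref{varfraij}, Theorem~\ref{FTHMINF}, Corollary~\ref{Trivial Inf}): that every solution of the linearized compatibility system is realized by an actual bending field, and that triviality is equivalent to the vanishing of all of $\dot H,\dot\phi^+,\dot\phi^-$ in the parallel sense. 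You correctly flag this as the main obstacle, but without it both directions of the first equivalence are incomplete. One local claim also needs repair: the linearized Ricci equation is not ``absorbed by the allowed variation of $\nap$'', since parallel preservation in the normal bundle fixes a frame with $\delta\w_{34}(t)=0$ and leaves no freedom in the normal connection; the equation must be verified, and it does hold because $\dot\phi^\mp=i\mu\phi^\mp$ gives $\delta(\phi^\mp\wedge\overline{\phi^\mp})=0$. For the second assertion, your identification of the sum $\Omega^++\Omega^-$ (equal to $4\w_{12}$ in the frame aligned with the axes of the curvature ellipse) as the combination governing conformal principal curvature line parametrizations is exactly the content of Proposition~\ref{LoC}, so that part coincides with the paper's argument.
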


For umbilic-free superconformal surfaces with nowhere-vanishing mean curvature vector field, we show that isotropic isothermicity
is related to the mean-directional curvature lines, studied in \cite{Me}, along which the second fundamental form of the surface
points in the direction of the mean curvature vector. 
It is known that such surfaces have a holomorphic Gauss lift (cf. \cite{ES}). 
All these superconformal surfaces in $\R^4$ have been locally parametrized in terms of minimal surfaces by Dajczer-Tojeiro \cite{DT}
and Moriya \cite{Mo}.

\begin{theorem} \label{IIS}
Let $f\colon M\to \R^4$ be an oriented, umbilic-free superconformal surface with nowhere-vanishing mean curvature vector field.
The following are equivalent: 
\begin{enumerate}[topsep=0pt,itemsep=-1pt,partopsep=1ex,parsep=0.5ex,leftmargin=*, label=(\roman*), align=left, labelsep=0em]
\item The surface $f$ is isotropically isothermic. 
\item There exists a conformal mean-directional curvature line parametrization around every point of $M$.
\item Locally, the surface $f$ admits a nontrivial infinitesimal isometric deformation that preserves,
parallelly in the normal bundle the mean curvature vector field, and the holomorphicity of  
a Gauss lift of $f$.
\end{enumerate}
\end{theorem}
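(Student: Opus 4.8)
The plan is to reduce the superconformal hypothesis to a statement about a single mixed connection form, handle the geometric equivalence $(i)\Leftrightarrow(ii)$ by analysing the mean-directional curvature line net directly, and then derive $(i)\Leftrightarrow(iii)$ from Theorem \ref{IIID}. Since $f$ is superconformal and umbilic-free, its curvature ellipse is a nondegenerate circle at every point, so exactly one isotropic part of the Hopf differential vanishes identically, say $\Phi^-\equiv 0$, while the other part $\Phi^+$ is nowhere zero. Accordingly, only one mixed connection form is defined on all of $M$, namely the one associated with $\Phi^+$, and $f$ is isotropically isothermic precisely when this single form is co-closed. I would fix a local complex coordinate $z$ compatible with the orientation together with an orthonormal normal frame $\{e_3,e_4\}$ with $e_3=H/|H|$, and record that the superconformal relation $\Phi^-\equiv 0$ reads $\alpha^3(\partial_z,\partial_z)=i\,\alpha^4(\partial_z,\partial_z)$ for the normal components $\alpha^3,\alpha^4$ of the second fundamental form.

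For $(i)\Leftrightarrow(ii)$, the decisive remark is that the mean-directional curvature lines, along which $\alpha(X,X)$ is proportional to $H$, are exactly the null directions of the traceless component $\alpha^4$; and that under the superconformal relation these coincide with the principal directions of $\Phi^+$, that is, the directions along which the local $(2,0)$-coefficient of $\Phi^+$ is real. (Note that $\alpha^4$ cannot vanish, for otherwise the relation would force $\Phi^+\equiv 0$, against umbilic-freeness, so this net is genuinely defined and orthogonal.) This identification turns the existence of a conformal mean-directional curvature line parametrization into the existence of a conformal parametrization adapted to the net of $\Phi^+$. Writing $\Phi^+$ in the coordinate $z$ and extracting its argument, the net is governed by this argument, while the mixed connection form is precisely the $1$-form measuring its rotation relative to the Levi-Civita connection. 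The classical integrability argument for isothermic coordinates, transcribed from ordinary curvature lines to the net of $\Phi^+$, then shows that such a conformal parametrization exists around every point if and only if the mixed connection form is co-closed, which is $(i)$.

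For $(i)\Leftrightarrow(iii)$, I would invoke Theorem \ref{IIID}: the surface $f$ is isotropically isothermic if and only if it locally admits a nontrivial infinitesimal isometric deformation preserving, parallelly in the normal bundle, the mean curvature vector field together with an isotropic part of the Hopf differential. It remains to match, in the superconformal setting, that preserved isotropic part with the holomorphicity of the Gauss lift. Since $f$ has a holomorphic Gauss lift exactly because $\Phi^-\equiv 0$, I would linearize this condition and show that an infinitesimal isometric deformation keeps the Gauss lift holomorphic to first order if and only if it preserves the vanishing part $\Phi^-$ parallelly in the normal bundle. The deformation supplied by Theorem \ref{IIID} therefore preserves $H$ and the holomorphicity of the Gauss lift, which is $(iii)$; conversely, any such deformation feeds directly back into Theorem \ref{IIID} to give isotropic isothermicity.

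The step I expect to be the main obstacle is this last identification. One must check that the first variation of the holomorphicity of the Gauss lift coincides, term by term and with the correct correction coming from the normal connection, with the parallel preservation of $\Phi^-$ demanded by Theorem \ref{IIID}. This is delicate because holomorphicity is a condition on the twistor lift, and its linearization mixes the tangential variation of the immersion with the induced rotation of the normal frame; disentangling these and reconciling them with the parallel preservation of $\Phi^-$ requires careful bookkeeping of how the deformation acts on the complex structure of the normal bundle.
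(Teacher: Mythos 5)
Your overall strategy matches the paper's: (i)$\Leftrightarrow$(ii) via the $H$-adapted tangent frame, and (i)$\Leftrightarrow$(iii) by combining the machinery of Theorem \ref{IIID} with the observation that, for a deformation preserving $H$ parallelly in the normal bundle, preservation of the holomorphicity of the Gauss lift is equivalent to parallel preservation of the vanishing isotropic part of the Hopf differential (this is exactly Lemma \ref{LemSup}, and you correctly single it out as the delicate step). There is, however, a genuine gap in the sentence ``I would invoke Theorem \ref{IIID}'': that theorem is stated for surfaces free of pseudo-umbilic points, i.e.\ with $M_0(f)=\emptyset$, whereas a superconformal surface satisfies $M_0(f)=M$ --- every point is pseudo-umbilic --- so the theorem cannot be applied as a black box. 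Concretely, the converse construction in its proof builds the solution $\{\varphi_{kl}\}$ of the fundamental system using the normal frame $\{e_3^\pm,e_4^\pm\}$ attached to $\Phi^\pm$ by \eqref{HI}, and for the identically vanishing isotropic part this frame does not exist. The paper's remedy is to rerun the proof of Theorem \ref{IIID} with the frame $\{e_3=H/\|H\|,\ e_4=J^\perp e_3\}$ substituted for the missing $\{e_3^\pm,e_4^\pm\}$ (legitimate precisely because $H$ is nowhere zero), together with Lemma \ref{LemSup}; your argument needs this substitution made explicit, since it is not a formal consequence of the statement of Theorem \ref{IIID}.

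A second, smaller issue concerns (i)$\Leftrightarrow$(ii): you describe the mixed connection form as the $1$-form measuring the rotation of the net of the nonvanishing isotropic part relative to the Levi-Civita connection, but $\Omega^\pm=2\omega_{12}\pm\omega_{34}^\pm$ also contains the normal rotation, and for the mean-directional frame one has $e_3^\pm=H/\|H\|$, so $\omega_{34}^\pm$ is the connection form of the $H$-adapted normal frame. The equivalence ``a conformal mean-directional parametrization exists around every point iff $d\star\Omega^\pm=0$'' therefore requires knowing that this $\omega_{34}^\pm$ is itself co-closed; the paper obtains $\omega_{34}=\mp\star d\log\|H\|$ from the fact that the vanishing of one isotropic part forces the corresponding Gauss lift to be vertically harmonic (Proposition \ref{glphi}(iv)). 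Without this input, co-closedness of $\Omega^\pm$ is not equivalent to co-closedness of $\omega_{12}$ alone, and your transcription of the classical isothermic-coordinates integrability argument does not close.
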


Isotropically isothermic superconformal surfaces in $\R^4$ satisfying the conditions of the above theorem
can be obtained as compositions, either of superminimal surfaces in the 4-sphere 
with a stereographic projection, or of holomorphic curves in $\R^4$ with inversions.

To the best of our knowledge, the notion of isotropic isothermicity is the only generalization of 
isothermicity for surfaces in $\Q^3_c$ that allows surfaces with nonflat normal bundle. 
Moreover, apart from 
the parallel preservation in the normal bundle, 
there is no other known concept of preservation of exterior geometric data 
under infinitesimal deformations of submanifolds in codimension greater than one. 
As far as we know, this is also the first time that the aforementioned curvature lines appear in a problem that is not related exclusively to their own interest.

Transformations of isotropically isothermic surfaces is the subject of a forthcoming paper.

The rest of our results concern the Bonnet problem. For an isometric immersion 
$f\colon M\to \Q^4_c$, we denote by $\mathcal{M}(f)$ the moduli space of congruence classes
of all isometric immersions of $M$ into $\Q^4_c$ that have the same mean curvature with $f$. 
Every nontrivial class in $\mathcal{M}(f)$ is called a {\emph{Bonnet mate}} of $f$, and 
the surface $f$ is called {\emph{proper Bonnet}} if it admits infinitely many Bonnet mates.
The structure of the moduli space for compact surfaces has been studied in \cite{PV}.
The following result determines the possible structure of $\mathcal{M}(f)$ for simply-connected surfaces.

\begin{theorem} \label{MS}
Let $f\colon M\to \Q^4_c$ be a simply-connected oriented surface.
\begin{enumerate}[topsep=0pt,itemsep=-1pt,partopsep=1ex,parsep=0.5ex,leftmargin=*, label=(\roman*), align=left, labelsep=-0.3em]
\item If $f$ is not proper Bonnet, then it admits either at most one Bonnet mate, or exactly three.
\item If $f$ is proper Bonnet, then the moduli space $\mathcal{M}(f)$ is a space diffeomorphic to a manifold.
Moreover, $f$ is characterized according to the structure of $\mathcal{M}(f)$ as follows:
\begin{description}[leftmargin=2cm, style=nextline]
\item[Tight:] The moduli space is 1-dimensional with at most two connected components, each one diffeomorphic to $\mathbb{S}^1\simeq \R/2\pi \mathbb{Z}$.
\item[Flexible:] The moduli space is diffeomorphic to the torus $\mathbb{S}^1\times\mathbb{S}^1$.
\end{description}
\end{enumerate}
In particular, $f$ admits at most one Bonnet mate if $M$ is homeomorphic to $\mathbb{S}^2$.
\end{theorem}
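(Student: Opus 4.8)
The plan is to identify $\mathcal{M}(f)$ with the set of second fundamental forms that share the metric and mean curvature of $f$ and satisfy the Gauss--Codazzi--Ricci equations, and then to read its structure off the two phases attached to the isotropic parts of the Hopf differential. First I would fix the parallel bundle isometry of a putative mate $\tilde f$ and transport all of its exterior data to the normal bundle of $f$; by the Fundamental Theorem for submanifolds and the simple-connectivity of $M$, a class in $\mathcal{M}(f)$ is then precisely a pair $(\tilde\alpha,\tilde\nabla^{\perp})$ solving Gauss, Codazzi and Ricci with $\mathrm{tr}\,\tilde\alpha=2H$. The Gauss equation fixes $|\tilde\alpha|^{2}$ and parallel preservation of the mean curvature vector fixes the normal curvature $K_{N}$; since $|\Phi^{+}|^{2}\pm|\Phi^{-}|^{2}$ are governed by exactly these two scalars, I obtain $|\tilde\Phi^{\pm}|=|\Phi^{\pm}|$ and hence $\tilde\Phi^{\pm}=e^{\,i\theta^{\pm}}\Phi^{\pm}$ for real functions $\theta^{\pm}$ defined off the pseudo-umbilic set. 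A mate is thus encoded by a pair of phases $(\theta^{+},\theta^{-})$.

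Next I would substitute $\tilde\Phi^{\pm}=e^{\,i\theta^{\pm}}\Phi^{\pm}$ into the Codazzi equations expressed through $\Phi^{\pm}$ and the mixed connection forms $\Omega^{\pm}$. The key claim is that the system decouples into one scalar equation for each phase, each asserting that $d\theta^{\pm}$ is pinned by $\Omega^{\pm}$ modulo its co-closed part. Linearizing at $\theta^{\pm}=0$ identifies the tangent space of $\mathcal{M}(f)$ at $[f]$ with the space of infinitesimal isometric deformations preserving the mean curvature vector field, whose dimension, by Theorem~\ref{IIID} together with its one-sided refinement, equals the number of the forms $\Omega^{\pm}$ that are co-closed on all of $M$. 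This is the step that exhibits $\mathcal{M}(f)$ as a smooth manifold of dimension $0$, $1$ or $2$, and simultaneously explains the trichotomy in the statement.

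It then remains to integrate globally. If neither $\Omega^{+}$ nor $\Omega^{-}$ is co-closed, both phases are rigid; the compatibility of Codazzi with the Ricci equation forces a quadratic constraint admitting, besides $\theta^{\pm}=0$, only the sign-reversal $\theta^{\pm}=\pi$, so the admissible pairs form a subgroup of $\{0,\pi\}\times\{0,\pi\}\cong\mathbb{Z}_{2}\times\mathbb{Z}_{2}$. Checking that the two factors are independent then yields $0$, $1$ or $3$ nontrivial classes, which is part (i). If exactly one form, say $\Omega^{+}$, is co-closed, the phase $\theta^{+}$ sweeps $\R/2\pi\mathbb{Z}$ while $\theta^{-}$ is pinned to $\{0\}$ or $\{0,\pi\}$, giving one or two circles --- the \textbf{Tight} case; if both are co-closed both phases are free and $\mathcal{M}(f)\cong\mathbb{S}^{1}\times\mathbb{S}^{1}$ --- the \textbf{Flexible} case. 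For the last assertion I would use the genus: on $M\cong\mathbb{S}^{2}$ the absence of holomorphic quadratic differentials forces any globally co-closed $\Omega^{\pm}$ to be exact and the deformation space to vanish, ruling out proper Bonnet; the same genus-zero obstruction prevents both sign-reversals from being simultaneously admissible, leaving at most one Bonnet mate.

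The main difficulty I anticipate lies in the decoupling and global integration of the phase equations: proving that the Codazzi system, once coupled to Ricci, splits into two independent scalar equations governed respectively by $\Omega^{+}$ and $\Omega^{-}$; that in each rigid direction the only nontrivial constant solution is the sign-reversal, so the discrete group is exactly $\mathbb{Z}_{2}\times\mathbb{Z}_{2}$, neither larger nor obstructed; and that a free phase traverses a full period $2\pi$ without running into a period obstruction, so that each stratum of $\mathcal{M}(f)$ is a smooth circle or torus rather than a proper arc. Establishing the independence of the two factors in the discrete case, which is precisely what separates ``one mate'' from ``three'', is the most delicate point.
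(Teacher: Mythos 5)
Your skeleton --- encoding a mate by phases $\theta^{\pm}$ with $\tilde\Phi^{\pm}=e^{i\theta^{\pm}}\Phi^{\pm}$, feeding these into Codazzi, and reading $\mathcal{M}(f)$ off the product $\bar{\mathcal{M}}^{-}(f)\times\bar{\mathcal{M}}^{+}(f)$ --- is indeed the paper's, but two of your central steps do not hold up. First, the claim that in a rigid direction the only admissible nontrivial phase is the constant sign-reversal $\theta^{\pm}=\pi$, so that the discrete part of the moduli space is a subgroup of $\{0,\pi\}\times\{0,\pi\}\cong\mathbb{Z}_2\times\mathbb{Z}_2$, is false: the single possible mate in $\mathcal{M}^{\pm}(f)$ corresponds to a generally non-constant solution $\theta^{\pm}\in\mathcal{C}^{\infty}(M\smallsetminus M_0^{\pm}(f);(0,2\pi))$ of the first-order system \eqref{sys}, constant only when the Gauss lift $G_{\pm}$ is vertically harmonic. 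The dichotomy that actually drives part (i) and the Tight/Flexible trichotomy is obtained from the integrability condition of \eqref{sys}: this is the pointwise quadratic \eqref{B} in $e^{\mp i\theta^{\pm}}$, which always admits $1$ as a root, so either it has at most one further root (hence at most one mate in $\mathcal{M}^{\pm}(f)$), or its coefficient $A^{\pm}$ vanishes identically, the associated distribution is involutive, and Frobenius yields a full circle of solutions. Your proposal never produces this ``two distinct mates in one factor force a circle'' mechanism, and without it neither the count in (i) nor the circle/torus structure in (ii) follows.

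Second, the linearization step cannot carry the weight you place on it. Identifying a putative tangent space at $[f]$ with infinitesimal isometric deformations preserving $H$ and an isotropic part of $\Phi$ establishes neither that $\mathcal{M}(f)$ is a manifold nor its dimension: infinitesimal deformability does not integrate to genuine deformability, and the dimension is \emph{not} the number of co-closed forms $\Omega^{\pm}$. Co-closedness of $\Omega^{\pm}$ amounts only to $\Imag h^{\pm}_z=0$, whereas $\bar{\mathcal{M}}^{\pm}(f)=\mathbb{S}^1$ requires in addition $\Real h^{\pm}_z=|h^{\pm}|^2$ (equivalently \eqref{fund}); a strongly isotropically isothermic surface may have $\mathcal{M}(f)=\{f\}$ --- Proposition \ref{IP} gives only a dichotomy. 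The independence of the two factors, which is exactly what converts ``one mate in each of $\mathcal{M}^{-}(f),\mathcal{M}^{+}(f)$'' into ``exactly three mates,'' is also not a formality: it requires building, from $f^{\pm}\in\mathcal{M}^{\pm}(f)$, the differential $\Phi-Q_{f,f^{-}}-Q_{f,f^{+}}$, verifying Gauss--Codazzi--Ricci for it across the pseudo-umbilic sets, and invoking the fundamental theorem of submanifolds; you flag this as delicate but give no argument. Finally, your genus-zero remark conflates co-closed $1$-forms with holomorphic quadratic differentials; the sphere case is handled separately (vanishing of the holomorphic distortion differential for non-minimal pairs, plus the vertically harmonic/superconformal and minimal cases), not by the phase analysis.
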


It has been proved in \cite{PV} 
that simply-connected
surfaces in $\Q^4_c$ with a vertically harmonic Gauss lift, which are neither minimal, nor superconformal, are proper Bonnet.
In particular, it was shown that  
non-minimal surfaces with parallel mean curvature vector field which are not totally umbilical, are flexible.
Surfaces with nonvanishing parallel mean curvature vector field lie as constant mean curvature surfaces in some totally umbilical hypersurface of  
$\Q^4_c$ (cf. \cite{Chen, Yau}).

The following theorem implies that there exist flexible proper Bonnet surfaces in $\Q^4_c$ 
that do not lie in any totally umbilical hypersurface. 
Such surfaces in our result arise as Bonnet mates of surfaces in $\Q^4_c$, which are given by the composition
of a proper Bonnet surface with non-constant mean curvature in $\Q^3_c$ with a totally geodesic inclusion. 
The following theorem also shows that the simply-connected Bonnet pairs in $\Q^3_c$,
give rise to Bonnet quadruples in $\Q^4_c$.

\begin{theorem} \label{Q3}
Let $f\colon M\to \Q^4_c$ be a simply-connected oriented surface, which is the composition of 
a non-minimal Bonnet surface $F\colon M\to\Q^3_c$ with a totally geodesic inclusion. 
Every Bonnet mate of $F$ in $\Q^3_c$ determines two Bonnet mates $f^-$ and $f^+$ of $f$ in $\Q^4_c$, that do not lie in any totally geodesic hypersurface. 
The surface $f^\pm$ lies in some totally umbilical  
hypersurface of $\Q^4_c$ if and only if $F$ has constant mean curvature.
Moreover, either $f$ admits exactly three Bonnet mates, or it is a flexible proper Bonnet surface.
\end{theorem}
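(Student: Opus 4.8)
The plan is to obtain the two mates as the surfaces produced, via the fundamental theorem of submanifolds on the simply-connected $M$, by two independent phase rotations of the isotropic parts of the Hopf differential of $f$. Write $f=\iota\circ F$ with $\iota\colon\Q^3_c\to\Q^4_c$ totally geodesic, let $\xi$ be the unit normal of $F$ in $\Q^3_c$ and $\eta$ the unit normal to $\iota(\Q^3_c)$. Since $\iota(\Q^3_c)$ is totally geodesic, $\eta$ is parallel with vanishing shape operator, the normal connection form $\rho=\langle\nap\xi,\eta\rangle$ vanishes, the mean curvature vector is $\vec H=H\xi$ with $H$ the (in general nonconstant) mean curvature of $F$, and the Hopf differential is $\Phi=\phi\,\xi$ for a scalar $\phi$; hence its isotropic parts satisfy $|\Phi^+|=|\Phi^-|$, the curvature ellipse degenerates to a segment, and $K_N=0$. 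For any surface with the same mean curvature, the parallel bundle isometry forces the same normal connection, while the Gauss and Ricci equations force $|\Phi^+|^2+|\Phi^-|^2$ and $|\Phi^+|^2-|\Phi^-|^2$ to be preserved; thus every mate of $f$ is obtained by a rotation $\Phi^\pm\mapsto e^{i\theta_\pm}\Phi^\pm$ subject to the Codazzi equation, and is realized on $M$ by the fundamental theorem.

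The key point is that, because $\rho=0$ and both isotropic parts descend from the single scalar $\phi$, the Codazzi equation decouples into two scalar equations, one for $\theta_+$ and one for $\theta_-$, each of which coincides, up to normalization, with the classical condition in $\Q^3_c$ for the rotation $\phi\mapsto e^{i\theta}\phi$ to be the Hopf differential of a surface isometric to $F$ with the same mean curvature. A Bonnet mate $\tilde F$ of $F$ corresponds to a solution $\tau$ of this scalar condition, and I would then set $f^+$ to be the surface with $(\theta_+,\theta_-)=(\tau,0)$ and $f^-$ the one with $(\theta_+,\theta_-)=(0,\tau)$, the trivial phase automatically solving the remaining equation.

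Next I would read off the exterior geometry of $f^\pm$. One computes that the rotated Hopf differential equals $\phi\,e^{i(\theta_++\theta_-)/2}\big(\cos\mu\,\xi+\sin\mu\,\eta\big)$ with $\mu=(\theta_+-\theta_-)/2$, so the curvature ellipse is the segment centered at $\vec H=H\xi$ and directed along $\xi_\mu=\cos\mu\,\xi+\sin\mu\,\eta$. For $f^\pm$ one has $\mu=\pm\tau/2\not\equiv0$, and since $H\not\equiv0$ the first normal space is, at a generic point, spanned by $\xi$ and $\xi_\mu$, hence the whole normal plane; therefore no global parallel normal field has vanishing shape operator and $f^\pm$ lies in no totally geodesic hypersurface. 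On the other hand the normal field $\eta^\perp_\mu=-\sin\mu\,\xi+\cos\mu\,\eta$ perpendicular to the Hopf direction always has umbilical shape operator $-H\sin\mu\,\mathrm{Id}$, so $f^\pm$ lies in a totally umbilical hypersurface exactly when $\eta^\perp_\mu$ is parallel. As $\rho=0$, one has $\nap\eta^\perp_\mu=-(d\mu)\,\xi_\mu$, so this occurs precisely when $\mu$, equivalently $\tau$, is constant; and by the decoupled equation $\tau$ is forced to be constant exactly when $F$ has constant mean curvature. This computation of the first normal space and of the parallelism of $\eta^\perp_\mu$ is the main obstacle, since it simultaneously rules out totally geodesic hypersurfaces and isolates the totally umbilical case.

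Finally, the dichotomy follows from the product structure of the mate set: since the Codazzi equation decouples, the mates of $f$ are parametrized by pairs $(\theta_+,\theta_-)$ with each entry a solution of the scalar $\Q^3_c$-condition. That solution set is $\{0,\tau\}$ when $F$ admits a unique mate (nonconstant mean curvature, not proper Bonnet in $\Q^3_c$), and is a circle $\Sf^1$ when $F$ has constant mean curvature or is proper Bonnet. In the first case the nontrivial pairs are $(\tau,\tau)$, $(\tau,0)$ and $(0,\tau)$, giving exactly the three mates $\iota\circ\tilde F$, $f^+$ and $f^-$; these are pairwise noncongruent, as $\iota\circ\tilde F$ lies in a totally geodesic hypersurface while $f^\pm$ do not, and $f^+\not\cong f^-$ because the congruences underlying $\mathcal M(f)$ preserve the orientation of the normal bundle and hence the labelling of the isotropic parts. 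In the second case the mate set is $\Sf^1\times\Sf^1$, so by Theorem \ref{MS} the surface $f$ is flexible proper Bonnet.
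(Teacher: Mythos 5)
Your argument is correct in substance and reaches all four assertions, but it takes a more computational route than the paper at several points, so it is worth recording the differences. For the existence of $f^-$ and $f^+$ the paper does not unwind the phase rotations by hand: it first shows (Lemma \ref{comp}) that $j\circ\tilde F\in\mathcal{M}^*(f)$ and then invokes the correspondence $\mathcal{M}^*(f)\leftrightarrow\mathcal{M}^-(f)\times\mathcal{M}^+(f)$ of Theorem \ref{SC}(ii); your explicit pairs are the same surfaces, with the decoupling made concrete. For the exclusion of totally geodesic hypersurfaces the paper argues structurally — any mate of $f$ lying in a totally geodesic $\Q^3_c$ has $Z^-=Z^+$ and hence lies in $\mathcal{M}^*(f)$, which is disjoint from $\mathcal{M}^{\pm}(f)$ — whereas you compute the first normal space at a generic point; both work, yours being more geometric and the paper's requiring no computation. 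For the totally umbilical characterization your identification of $\eta^\perp_\mu$ as the unique umbilical normal direction together with $\nabla^\perp\eta^\perp_\mu=-(d\mu)\,\xi_\mu$ is cleaner than the paper's derivation of $\w_{34}^{\pm}=\tfrac12 d\th^{\pm}+\w_{34}$ from the degenerate Hopf differentials, though both reduce to ``$\th^{\pm}$ constant iff the mean curvature of $F$ is constant'' via \eqref{sys}. Finally, your observation that the two scalar compatibility conditions come from the same function $h^+=h^-$ (so their solution sets are simultaneously two-point sets or circles) is precisely what justifies the paper's terse appeal to Theorem \ref{SC}(i) and (iv) for the dichotomy; without it one could not a priori exclude the mixed case $\bar{\mathcal{M}}^-(f)=\mathbb{S}^1$ with $\bar{\mathcal{M}}^+(f)$ finite. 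Two small points to tidy: the $+$ and $-$ versions of \eqref{sys} are not literally identical but are exchanged by $\th\mapsto 2\pi-\th$, so $f^-$ corresponds to $(0,2\pi-\tau)$ rather than $(0,\tau)$ (harmless for the count); and your generic-point and continuity arguments require knowing that the umbilic set of $F$ is isolated, which follows because the distortion differential of $(F,\tilde F)$ is holomorphic and not identically zero.
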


We show that proper Bonnet surfaces are isotropically isothermic away from isolated points, and that
strong isotropic isothermicity characterizes the flexible surfaces away from their isolated pseudo-umbilic points.
In particular, the umbilic-free flexible surfaces obtained by the above theorem are furthermore isothermic.
We also prove a result analogous to that of Graustein \cite{Grau}, which implies 
that a simply-connected, Bonnet and strongly isotropically isothermic surface is proper Bonnet. 
This result indicates that the most natural class to look for simply-connected Bonnet surfaces which are not proper Bonnet,
is that of {\emph{half or strongly totally non isotropically isothermic surfaces}}, 
that are surfaces whose 
either at least one, or both of mixed connection forms, respectively, are everywhere defined and nowhere co-closed.

In the sequel we deal with compact surfaces.
It has been proved in \cite{PV} that compact surfaces in $\Q^4_c$ whose both Gauss lifts are not vertically harmonic, admit at most three Bonnet mates. 
The following theorem shows that for such surfaces, and in contrast to the simply-connected case,
additional assumptions involving isotropic isothermicity are restrictive for the existence of Bonnet mates. 
It is inspired by a recent result of Jensen-Musso-Nicolodi \cite{JMN} for surfaces in $\R^3$.

\begin{theorem}\label{QIC}
Let $f\colon M\to \Q^4_c$ be a compact oriented surface whose both Gauss lifts are not vertically harmonic.
If $f$ is either isotropically isothermic, or half totally non isotropically isothermic, on an open dense and connected subset 
of $M$, then it admits at most one Bonnet mate. 
In particular, $f$ does not admit any Bonnet mate, if it is either strongly isotropically isothermic, 
or strongly totally non isotropically isothermic, on such a subset of $M$. 
\end{theorem}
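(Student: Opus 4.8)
The plan is to exploit the finiteness already at hand and then obstruct the surviving mates one isotropic direction at a time. Since both Gauss lifts of $f$ fail to be vertically harmonic, the result of \cite{PV} quoted above guarantees that $f$ admits at most three Bonnet mates, so it suffices to decide, separately for the $+$ and the $-$ isotropic directions, whether a mate differing from $f$ in that direction can exist. First I would recall how a mate is encoded: a Bonnet mate $\tilde f$ carries a parallel normal bundle isometry preserving the mean curvature vector field, so by the Gauss equation its Hopf differential has the same two isotropic magnitudes as that of $f$ and is obtained from it by rotating the two isotropic parts through angle functions $\theta_+$ and $\theta_-$. Organizing the (at most three) mates according to which of the two rotations is nontrivial, the set $\{f,\tilde f,\dots\}$ is governed by the three nontrivial configurations of a Klein four-group, precisely the structure producing the Bonnet quadruples of Theorem \ref{Q3}. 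The theorem then reduces to a single per-direction assertion together with its mirror: \emph{if $\Omega^+$ is co-closed either everywhere or nowhere on the given open dense connected set, then $f$ has no Bonnet mate that differs from it in the $+$ direction}, and symmetrically for $\Omega^-$.

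Next I would derive the equation governing $\theta_+$. Writing the Codazzi and Ricci equations for $f$ and for $\tilde f$ in a local complex coordinate and subtracting, the preservation of the mean curvature vector field and of the induced metric cancels the common terms and leaves, away from the pseudo-umbilic points at which $\Omega^+$ is defined, an identity expressing $d\theta_+$ through $\Omega^+$, its Hodge conjugate $\ast\Omega^+$, and a one-form $\tau$ measuring the tension of the corresponding Gauss lift; here the hypothesis that neither Gauss lift is vertically harmonic makes $\tau$ genuinely present, which is the codimension-two counterpart of the non-constancy of the mean curvature in the Lawson--Tribuzy argument \cite{LT}. Applying $d$ and using $d\,d\theta_+=0$ converts this into a pointwise compatibility condition coupling $d\ast\Omega^+$, the obstruction to co-closedness of $\Omega^+$, to $d\tau$. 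The key intermediate claim I would extract from this compatibility is that \emph{the existence of a nontrivial $+$-type mate forces $\Omega^+$ to be co-closed on a nonempty proper subset of $M$}, that is, co-closed somewhere but not everywhere: the zero set of the relevant tension contribution is, for a surface whose Gauss lift is genuinely non-vertically-harmonic, neither empty (by a compactness/maximum-principle argument over the closed surface, in the spirit of \cite{JMN}) nor all of $M$, and through the compatibility condition it controls the vanishing set of $d\ast\Omega^+$.

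Granting this claim, the conclusion is immediate. If $\Omega^+$ is co-closed everywhere, or co-closed nowhere, on the dense connected set, then its co-closedness set is not a nonempty proper subset, so no nontrivial $+$-type mate can exist; the same reasoning applied to $\Omega^-$ rules out $-$-type mates. Feeding this into the Klein four-group bookkeeping of the first paragraph, a single pure direction annihilates two of the three candidate mates and leaves at most one, which proves the main assertion under either the isotropically isothermic or the half totally non isotropically isothermic hypothesis; when both directions are pure, as in the strongly isotropically isothermic and strongly totally non isotropically isothermic cases, all three candidates are annihilated and $f$ admits no Bonnet mate at all.

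The main obstacle is the key intermediate claim of the second paragraph, and it is twofold. First, one must pin down the exact form of the angle identity and verify that the tension contribution carries the definite sign, respectively the nondegeneracy, needed both to run the integral/maximum-principle argument showing that its zero set is nonempty and to show that it is not all of $M$; this is exactly where the assumption that \emph{both} Gauss lifts are non-vertically-harmonic is indispensable, since it prevents $f$ from degenerating into the superconformal or vertically-harmonic regimes — the regimes in which, by \cite{PV} and Theorem \ref{MS}, the tension vanishes and a continuum of mates (the flexible, proper Bonnet case) reappears. Second, the hypotheses are imposed only on an open dense connected subset, so I would propagate the derived identities across its complement, and in particular across the pseudo-umbilic set where $\Omega^\pm$ is undefined, by invoking the real-analyticity of $f$ and of the mixed connection forms together with the fact that the pseudo-umbilic set has empty interior; a removable-singularity argument then extends the conclusion globally and legitimizes the integration over $M$.
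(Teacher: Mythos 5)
Your overall architecture is the right one and matches the paper's: reduce to showing $\mathcal{N}^{\pm}(f)=\emptyset$ in the relevant isotropic direction (the paper's Theorem \ref{QICC}), note that killing one direction also kills $\mathcal{M}^*(f)=\mathcal{N}^-(f)\cap\mathcal{N}^+(f)$ so that only the at-most-one mate in $\mathcal{M}^{\mp}(f)$ survives, and derive the per-direction non-existence from the elliptic equation satisfied by the rotation angle $\th^{\pm}$ together with the maximum principle on the compact surface. Your ``key intermediate claim'' (a mate in the $+$ direction forces the co-closedness set of $\Omega^+$ to be a nonempty proper subset) is indeed equivalent to what the paper proves. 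However, your justification of that claim does not work as stated. The maximum principle is not applied to the tension of the Gauss lift, and there is no reason for the tension's zero set to be nonempty on a compact surface; nor do the zero sets of $\|\tau^v(G_\pm)\|$ and of $d\star\Omega^{\pm}$ control one another pointwise. What actually happens is the reverse of what you assert: in the isotropically isothermic case the integrability condition $\th^{\pm}_{z\bar z}=\th^{\pm}_{\bar z z}$ (equation \eqref{B}) forces $A^{\pm}\equiv0$, hence $\th^{\pm}$ is \emph{harmonic}, hence constant by the maximum principle, hence $h^{\pm}\equiv0$, hence $G_{\pm}$ is vertically harmonic --- contradicting the hypothesis; in the totally non isothermic case $\Delta\th^{\pm}$ is nowhere zero on $V$, connectedness of $V$ (which you never use) gives it a sign, and the maximum principle for subharmonic functions again forces $\th^{\pm}$ constant and $G_{\pm}$ vertically harmonic. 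The decisive chain ``angle constant $\Rightarrow h^{\pm}\equiv0\Rightarrow$ vertical harmonicity $\Rightarrow$ contradiction'' is entirely absent from your sketch, and it is the only place where the standing hypothesis on the Gauss lifts enters.

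Two further points. First, the pointwise algebraic identity \eqref{B} obtained from $dd\th^{\pm}=0$ is essential and subtler than a ``compatibility condition coupling $d\star\Omega^+$ to $d\tau$'': in the isothermic case one only knows $\Real A^{\pm}\equiv0$, and one must use \eqref{B} together with $\th^{\pm}\in(0,2\pi)$ to upgrade this to $A^{\pm}\equiv0$ before harmonicity (and hence the maximum principle) is available. Second, your propagation across the pseudo-umbilic set by ``real-analyticity of $f$'' is not legitimate: a smooth isometric immersion into $\Q^4_c$ need not be real-analytic. The paper instead uses that $\th^{\pm}$ is bounded with isolated singularities, extends it as a (sub)harmonic function across $M_0^{\pm}$, and only then applies the maximum principle on all of $M$; some such bounded-extension argument is needed in place of analyticity.
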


Thereafter, we study locally proper Bonnet surfaces.
A surface $f\colon M\to \Q^4_c$ is called locally proper Bonnet if every point of $M$ has a neighbourhood, restricted to which $f$ is proper Bonnet. 
If such a surface is non-minimal, then for any sufficiently small neighbourhood $U$ of every $p\in M$, 
there exists a submanifold $L^n(p)$, $1\leq n\leq2$, of the torus $\mathbb{S}^1\times\mathbb{S}^1$,
that is also a submanifold of the moduli space $\mathcal{M}(f|_U)$. 
The surface $f$ is called {\emph{uniformly locally proper Bonnet}} if there exists a submanifold $L^n$, $1\leq n\leq2$, of the torus,
having the above property for every $p\in M$.
In particular, if this submanifold is the torus itself, then $f$ is called {\emph{locally flexible}}.

The following results concern compact surfaces that are locally proper Bonnet.
A basic ingredient of their proofs is an index theorem that we obtain using the mixed connection forms,
which extends the Poincar\'e-Hopf index theorem for surfaces in $\Q^3_c$ with isolated umbilics.
The following theorem characterizes compact surfaces with a vertically harmonic Gauss lift that are neither minimal,
nor superconformal, as the only compact, uniformly locally proper Bonnet surfaces in $\Q^4_c$.

\begin{theorem} \label{ULPB}
Let $f\colon M\to \Q^4_c$ be a non-minimal, compact oriented surface. 
The surface $f$ is uniformly locally proper Bonnet if and only if it has a vertically harmonic and non-holomorphic Gauss lift.
\end{theorem}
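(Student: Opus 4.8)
The plan is to identify being uniformly locally proper Bonnet with the existence of a single globally defined associated family of Bonnet mates, and then to recognize this family as the spectral deformation of a vertically harmonic Gauss lift. The key mechanism is that, for a non-minimal surface, an $\mathbb{S}^1$-direction inside the local moduli space $\mathcal{M}(f|_U)$ arises by rotating one isotropic part of the Hopf differential by a phase $e^{i\theta}$, and such a rotation preserves the Gauss-Codazzi-Ricci system --- thereby producing genuine isometric mates with the same mean curvature --- exactly when the corresponding Gauss lift is vertically harmonic. I therefore expect the correspondence: a vertically harmonic Gauss lift yields a globally consistent, uniform circle $L^1\subseteq\mathbb{S}^1\times\mathbb{S}^1$ of mates, whereas in the absence of harmonicity one is left only with the non-uniform isothermic deformations coming from the mere co-closedness of a mixed connection form.

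For the implication that a vertically harmonic and non-holomorphic Gauss lift makes $f$ uniformly locally proper Bonnet, I would first invoke the result recalled from \cite{PV} that such a surface, being neither minimal nor superconformal, is proper Bonnet, and then exhibit the uniform submanifold explicitly as the associated family of the harmonic lift. Since the harmonic map equation is preserved under the phase deformation on the whole of $M$, the parameter $\theta$ is globally defined, so restricting the family to each small neighbourhood $U$ produces the same circle $L^1$ inside every $\mathcal{M}(f|_U)$. Here non-holomorphicity is essential: for a holomorphic Gauss lift the associated family degenerates into surfaces congruent to $f$, so it fails to supply the nontrivial uniform family required.

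Conversely, assume $f$ is uniformly locally proper Bonnet with uniform submanifold $L^n\subseteq\mathbb{S}^1\times\mathbb{S}^1$. Being locally proper Bonnet makes $f$ isotropically isothermic on a dense open set, so a mixed connection form $\Omega^\pm$ is defined and co-closed there, and the mates moving along $L^n$ are obtained by rotating the associated isotropic part of the Hopf differential. Uniformity of $L^n$ forces the rotation parameter to be globally single-valued; I would show that this global consistency of the rotated Codazzi equation is precisely the vertical harmonicity of the corresponding Gauss lift, a condition strictly stronger than the co-closedness already furnished by isothermicity. The non-holomorphicity of this lift follows from the nondegeneracy of $L^n$, since a holomorphic lift would make its associated family consist of surfaces congruent to $f$ and hence could not produce a nontrivial $L^n$. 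Finally, the index theorem obtained from the mixed connection forms is what promotes the locally defined harmonic structure to a globally vertically harmonic Gauss lift on the compact surface $M$, controlling the isolated pseudo-umbilic points through a Poincar\'e-Hopf-type identity. In the flexible case $L^n$ is the whole torus and both Gauss lifts are vertically harmonic, whereas in the tight case $L^n$ is a single circle and exactly one Gauss lift becomes vertically harmonic.

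The principal obstacle I anticipate lies in this converse step: one must show that the agreement of the local rotation parameters across overlapping neighbourhoods is equivalent to the full vertical harmonicity of the Gauss lift, and not merely to the weaker co-closedness that already accompanies isothermicity. Carrying this out requires combining the integrability of the rotated Codazzi equation with the index theorem on the compact surface $M$, both to promote the local harmonic structure to a global one and to determine which Gauss lift becomes vertically harmonic in the tight versus the flexible case.
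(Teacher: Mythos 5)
Your forward direction is essentially the paper's: non-holomorphicity guarantees $\Phi^{\pm}\not\equiv 0$ with isolated zeros, and the results quoted from \cite{PV} then give $\bar{\mathcal{M}}^{\pm}(f|_U)=\mathbb{S}^1$ on every small simply-connected $U$, which is exactly what it means for $\mathbb{S}^1_{\pm}$ to be a deformation manifold. The converse, however, rests on a false equivalence. You assert that the local circle of Bonnet mates arises from a phase rotation of $\Phi^{\pm}$ that preserves the compatibility equations ``exactly when the corresponding Gauss lift is vertically harmonic,'' and that global single-valuedness of the rotation parameter is ``precisely'' vertical harmonicity. Neither claim holds: the mates in $\bar{\mathcal{M}}^{\pm}(f|_U)$ are parametrized by generically \emph{non-constant} harmonic solutions $\theta^{\pm}$ of \eqref{sys}, and the circle exists whenever $A^{\pm}\equiv 0$ (Lemma \ref{Sys}, Proposition \ref{correspond}), a condition strictly weaker than $h^{\pm}\equiv 0$, which is what vertical harmonicity amounts to. Indeed, conformal changes of the ambient metric (Example \ref{examples}(i) combined with the converse part of Theorem \ref{local deformability}) produce simply-connected surfaces with $\bar{\mathcal{M}}^{\pm}(f)=\mathbb{S}^1$ whose Gauss lift $G_{\pm}$ is \emph{not} vertically harmonic; so no consistency of local rotation parameters across overlaps can force vertical harmonicity, and compactness must enter in an essential way that is not monodromy.

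The mechanism your outline is missing is an integral one. Uniformity together with Theorem \ref{local deformability}(ii) gives the Ricci-like identity \eqref{fund} on all of $M$. Integrating it over the compact $M$, using that $\|\mathcal{H}^{\pm}\|$ is of absolute value type (Proposition \ref{IPU}(ii)), so that $\int_M\Delta\log\|\mathcal{H}^{\pm}\|=-2\pi N(\|\mathcal{H}^{\pm}\|)$, and using the index theorem (Theorem \ref{Hopf type} with Proposition \ref{IndPU}), which gives $\int_M(2K\pm K_N)=-2\pi N(\|\mathcal{H}^{\pm}\|)$ as well, one finds that the integral of the nonnegative right-hand side $\|\tau^{v}(G_{\pm})\|^2/4\|\mathcal{H}^{\pm}\|^2$ vanishes, whence $G_{\pm}$ is vertically harmonic. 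You do invoke ``the index theorem,'' but you assign it the role of globalizing a local harmonic structure, which is not what it does; without \eqref{fund} and the integration step it says nothing about the tension field. To repair your converse you would need to extract \eqref{fund} from the local $\mathbb{S}^1$-families and then carry out this integration.
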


Our next result concerns superconformal surfaces. We mention that Fujioka \cite{Fu} found a class of simply-connected surfaces 
with nonflat normal bundle in the hyperbolic 4-space, that can be deformed by preserving the length of the mean curvature vector field. 
A careful look on the conditions that he imposed in order to obtain this class, shows that these 
surfaces are superconformal
and proper Bonnet in our sense. For compact 
surfaces, we prove the following.

\begin{theorem}\label{SPB}
There do not exist compact oriented superconformal surfaces in $\Q^4_c$ that are locally proper Bonnet.
\end{theorem}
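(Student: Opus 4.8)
The plan is to argue by contradiction. Suppose $f\colon M\to\Q^4_c$ is a compact oriented superconformal surface that is locally proper Bonnet. Superconformality means that exactly one isotropic part of the Hopf differential, say $\Phi^+$, vanishes identically; equivalently the Gauss lift $g^+$ is (anti)holomorphic and the normal curvature $K_N$ has a fixed sign. Then the mixed connection form $\Omega^+$ is nowhere defined, while $\Omega^-$ is defined off the set $Z$ of isolated umbilic points. In particular $f$ can never be strongly isotropically isothermic, so it is not locally flexible, and on every sufficiently small neighbourhood $f$ is a tight proper Bonnet surface. I would then apply, on each such neighbourhood, the fact that proper Bonnet surfaces are isotropically isothermic away from isolated points; since $\Omega^-$ is the only form that is ever defined, patching these local conclusions yields that $\Omega^-$ is co-closed on all of $M\setminus Z$.

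The second ingredient is the holomorphicity forced by superconformality. Because $g^+$ is holomorphic, the nonvanishing isotropic part $\Phi^-$ is, up to a positive conformal factor, a holomorphic section of a complex line bundle over $M$. Hence $Z$ is finite and each $p\in Z$ is a zero of finite order, so the line field governed by $\Omega^-$ (the mean-directional, respectively principal-axis, directions of the curvature ellipse) extends over $M$ with isolated singularities of \emph{strictly positive} index at the points of $Z$; moreover, when the relevant line bundle has positive degree, $Z\neq\emptyset$. At this point the hypothesis has been distilled into two competing pieces of data: the analytic condition $\delta\Omega^-=0$ on $M\setminus Z$, and sign-definite index data coming from holomorphicity.

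I would then invoke the Poincar\'e--Hopf-type index theorem for the mixed connection form, the paper's extension of the classical umbilic index theorem. On the one hand this identity expresses the positive integer $\sum_{p\in Z}I^-_p$ in terms of $\chi(M)$, the normal Euler number $\tfrac{1}{2\pi}\int_M K_N\,dA$ (of fixed sign by superconformality), and the integral $\tfrac{1}{2\pi}\int_{M\setminus Z}d\Omega^-$. On the other hand, the co-closedness $\delta\Omega^-=0$, fed into the same formula through an integration by parts, pins down that integral and forces $\sum_{p\in Z}I^-_p$ to take a value incompatible with the strict positivity of the indices at a nonempty $Z$. This numerical clash is the contradiction, and it rules out the non-minimal superconformal case. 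The \textbf{main obstacle} is exactly this step: establishing the precise index identity in the superconformal regime and verifying that the co-closedness collapses the integral term with the correct sign; a secondary subtlety is promoting the \emph{local} proper Bonnet hypothesis to the global co-closedness of $\Omega^-$ that drives the index theorem, and checking the degenerate case $Z=\emptyset$ separately.

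It remains to treat the minimal subcase, which lies outside Theorem \ref{ULPB}. If $f$ is minimal then necessarily $c>0$, since neither $\R^4$ nor $\Hy^4$ carries a compact minimal surface; thus $f$ is a compact superminimal surface in $\Sf^4$. Here $H\equiv0$ and $\Phi^-$ is genuinely holomorphic, so the same index argument applies verbatim and again contradicts the positive index data at the zeros of $\Phi^-$; alternatively one checks directly that the associated deformations of such a surface fail to preserve the mean curvature vector field parallelly in the normal bundle, so that a superminimal surface is not locally proper Bonnet in the sense of the definition. Either way, no compact oriented superconformal surface in $\Q^4_c$ can be locally proper Bonnet.
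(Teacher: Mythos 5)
Your overall scaffolding matches the paper's: reduce to the case where one isotropic part, say $\Phi^{+}$, vanishes identically, exploit the structure theory of proper Bonnet surfaces on small neighbourhoods, and bring in the index theorem for $\Omega^{-}$. But the step you yourself flag as the ``main obstacle'' is where the proof actually lives, and the mechanism you sketch for it does not work. Co-closedness of $\Omega^{-}$ (equivalently, $\Imag h^{-}_z=0$, Lemma \ref{qiz}) and the index identity $2\chi-\chi_N=\sum_{p}I^{-}(p)$ of Theorem \ref{Hopf type} are simply not in conflict: the index theorem holds for \emph{any} surface with $M_0^{-}(f)$ isolated, co-closedness is an independent first-order condition, and both are satisfied, for instance, by every non-superconformal surface with vertically harmonic $G_{-}$. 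Moreover the indices are $I^{-}(p)=-m<0$ (Proposition \ref{IndPU}), not strictly positive as you assert, so there is no positivity to clash with. The ingredient you are missing is the \emph{second} conclusion of Theorem \ref{local deformability}(ii): a neighbourhood with $\bar{\mathcal{M}}^{-}=\mathbb{S}^1$ satisfies not only $d\star\Omega^{-}=0$ but the Ricci-like identity \eqref{fund}, $\Delta\log\|\mathcal{H}^{-}\|-2K+K_N=\|\tau^{v}(G_{-})\|^2/(4\|\mathcal{H}^{-}\|^2)$, valid across the zeros because $\|\mathcal{H}^{-}\|$ is of absolute value type (Proposition \ref{IPU}). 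Integrating \eqref{fund} over $M$, the term $\int_M\Delta\log\|\mathcal{H}^{-}\|=-2\pi N(\|\mathcal{H}^{-}\|)$ exactly cancels $\int_M(2K-K_N)$ by Theorem \ref{Hopf type} together with Proposition \ref{IndPU}, which forces $\tau^{v}(G_{-})\equiv0$. The contradiction is then reached not numerically but geometrically: $G_{-}$ and $G_{+}$ (the latter vertically harmonic because $\Phi^{+}\equiv0$) are both vertically harmonic, so $H$ is parallel, $K_N\equiv0$, and Proposition \ref{holomorphic Gl} makes $G_{-}$ holomorphic, i.e. $\Phi^{-}\equiv0$ --- impossible.

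Two further gaps. First, your opening claim that superconformality alone forces one isotropic part to vanish \emph{identically} is false as stated: a superconformal surface could a priori have $K_N$ changing sign, with $\Phi^{+}$ vanishing on one region and $\Phi^{-}$ on another. The paper uses the locally proper Bonnet hypothesis (via Proposition \ref{dd}) to show $M_1(f)$ is isolated, hence $K_N$ has a fixed sign, before Lemma \ref{pseudo}(ii) can be applied globally. Second, the minimal case cannot be handled ``verbatim'': the distortion-differential machinery (Lemma \ref{qke}, Proposition \ref{dd}) is set up only for non-minimal surfaces, and for minimal surfaces the relevant deformations form the associated family, whose triviality for compact superconformal (superminimal) surfaces is a separate, already known result that the paper simply cites.
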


The following theorem shows that 
the compact, locally flexible proper Bonnet surfaces in $\Q^4_c$ have parallel mean curvature vector field.
From \cite{Chen, Yau}, it follows that such a surface lies as a constant mean curvature surface 
in some totally umbilical hypersurface of $\Q^4_c$. 
Jointly with Theorem \ref{Q3}, this gives a strong generalization of a result due to Umehara \cite{U}.

\begin{theorem}\label{LFPB}
A compact oriented surface $f\colon M\to \Q^4_c$ is locally flexible proper Bonnet if and only if
it has nonvanishing parallel mean curvature vector field, and  $genus(M)>0$.
\end{theorem}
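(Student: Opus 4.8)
The plan is to obtain the equivalence from Theorems \ref{ULPB} and \ref{SPB}, by reducing local flexibility to the condition $\nap H=0$ and then reading off the genus from the associated holomorphic Hopf differential. Note first that the notions of uniform local properness and of local flexibility are framed only in the non-minimal regime, so throughout we may and do assume that $f$ is non-minimal. For the implication $(\Leftarrow)$, suppose $f$ has nonvanishing parallel mean curvature vector field and $genus(M)>0$. Since the only compact totally umbilical surfaces in $\Q^4_c$ are round spheres, the genus assumption rules out total umbilicity. As recalled in the introduction, it was proved in \cite{PV} that a non-minimal, non-totally-umbilical surface with parallel mean curvature vector field is flexible. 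The condition $\nap H=0$ is local and holds on all of $M$, so the restriction $f|_U$ to any sufficiently small simply-connected neighbourhood $U$ is again of this type; by Theorem \ref{MS} its moduli space $\mathcal{M}(f|_U)$ is diffeomorphic to the torus $\Sf^1\times\Sf^1$, which is infinite and hence witnesses proper Bonnetness. Choosing $L^2=\Sf^1\times\Sf^1$ uniformly exhibits $f$ as locally flexible proper Bonnet.

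For the converse $(\Rightarrow)$, assume $f$ is locally flexible proper Bonnet. In particular $f$ is uniformly locally proper Bonnet, so by Theorem \ref{ULPB} it has a vertically harmonic, non-holomorphic Gauss lift; equivalently, $f$ is a non-superconformal analogue of a constant mean curvature surface. The central step is to upgrade vertical harmonicity of the Gauss lift to $\nap H=0$ by exploiting the full two-dimensionality of the local moduli spaces. Here the dichotomy between the \emph{tight} and the \emph{flexible} alternatives of Theorem \ref{MS} is decisive: a surface with vertically harmonic Gauss lift always carries one $\Sf^1$-family of Bonnet mates, coming from the rotation of the isotropic parts of its Hopf differential exactly as in the associated family of a constant mean curvature surface, and this single circle is precisely the tight case. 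A second, transverse $\Sf^1$-family is generated by rotations of $H$ inside the rank-two normal bundle; such a rotation is a parallel vector bundle isometry preserving the mean curvature vector if and only if $\nap H=0$. Hence the presence of the whole torus $L^2$ in each $\mathcal{M}(f|_U)$ forces $\nap H=0$ on every $U$, and therefore on $M$. Since a parallel $H$ has constant length and $f$ is non-minimal, $H$ is nowhere vanishing.

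It remains to deduce $genus(M)>0$. With $\nap H=0$ established, \cite{Chen, Yau} place $f$ as a constant mean curvature surface in some totally umbilical hypersurface $\Q^3_{\tilde c}\subset\Q^4_c$, whence $f$ carries a holomorphic Hopf quadratic differential $Q$ whose zeros are exactly its umbilic points. This is the setting of the index theorem obtained in the paper via the mixed connection forms, which in this flat-normal-bundle case recovers the classical zero-count for a holomorphic section of $K^{\otimes2}$: if $Q$ is not identically zero, then its zeros, counted with multiplicity, number $\deg K^{\otimes2}=4\,genus(M)-4$. Were $genus(M)=0$, this degree would be negative, forcing $Q\equiv0$ and hence $f$ totally umbilical in $\Q^3_{\tilde c}$, thus totally umbilical in $\Q^4_c$; this contradicts $f$ being proper Bonnet. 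Therefore $genus(M)>0$, completing the equivalence.

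The main obstacle is the central step of the converse: showing that local flexibility compels $\nap H=0$ rather than mere vertical harmonicity of the Gauss lift. Concretely, one must verify that the second, transverse circle of the torus is realized by genuine, pairwise noncongruent Bonnet mates, and that this circle collapses, reducing the local moduli space to the one-dimensional tight case, as soon as $\nap H\neq0$. This amounts to linearizing the Gauss--Codazzi--Ricci system along normal-bundle rotations of the second fundamental form and identifying $\nap H$ as the exact obstruction to the integrability of the transverse family. Granting this, and invoking Theorems \ref{ULPB} and \ref{SPB} together with the cited structure results, the remaining steps are routine.
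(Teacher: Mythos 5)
Your converse direction is essentially sound in outline and close to the paper's: non-total umbilicity plus $\nap H=0$ gives local flexibility via the result quoted from \cite{PV}. (One small point you should make explicit: to apply that result on \emph{every} small neighbourhood $U$, including neighbourhoods of umbilic points, you need the umbilic set to have empty interior; this follows because $\nap H=0$ makes the Hopf differential $\Phi$ holomorphic, so either $\Phi\equiv0$ — excluded by $genus(M)>0$ — or its zeros are isolated.)

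The forward direction, however, has a genuine gap, and you have identified it yourself but not closed it. Theorem \ref{ULPB} only yields that \emph{one} Gauss lift is vertically harmonic, and your proposed mechanism for extracting more — a ``transverse $\Sf^1$-family generated by rotations of $H$ inside the normal bundle'' — is not how the torus arises in this setting. The two circle factors of $\mathcal{M}(f|_U)$ correspond to independent rotations $e^{\mp i\th^{\pm}}$ of the two isotropic parts $\Phi^{\pm}$ of the Hopf differential (the components $\bar{\mathcal{M}}^{-}$ and $\bar{\mathcal{M}}^{+}$ of Theorem \ref{SC}), not to rotations of the mean curvature vector; the latter would not in general preserve $H$ under a parallel bundle isometry unless $\nap H=0$ already holds, which is what you are trying to prove. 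The paper's argument is the sign-refined version of Theorem \ref{ULPB} (Theorem \ref{ess}): $\Sf^1_{\pm}$ is a deformation manifold if and only if $G_{\pm}$ is vertically harmonic and non-holomorphic, proved by integrating the Ricci-like identity \eqref{fund} over $M$ and cancelling the two boundary contributions via the index theorem (Theorem \ref{Hopf type}) and the absolute-value-type structure of $\|\mathcal{H}^{\pm}\|$. Local flexibility means \emph{both} $\Sf^1_{-}$ and $\Sf^1_{+}$ are deformation manifolds, hence both Gauss lifts are vertically harmonic, which by Proposition \ref{glphi} is equivalent to $\nap H=0$; nonvanishing of $H$ then follows from non-minimality (ruled out for compact locally proper Bonnet minimal surfaces by \cite{ET2}) and constancy of $\|H\|$. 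Your ``linearize Gauss--Codazzi--Ricci along normal rotations'' sketch does not substitute for this global integral argument. Your derivation of $genus(M)>0$ via the holomorphic Hopf differential in the umbilical hypersurface is fine once $\nap H=0$ is in hand, though the paper gets it more directly from Corollary \ref{genus zero} (genus zero forces superconformality via \cite{PV}, contradicting Theorem \ref{SPB}).
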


The paper is organized as follows: In Section \ref{s2}, we fix the notation and we give some preliminaries.
In Section \ref{s3}, we introduce the mixed connection forms of surfaces in $\Q^4_c$ and 
we prove an index theorem
that will be used for the proofs of Theorems \ref{ULPB}-\ref{LFPB}. 
We also provide some applications, among them, a short proof of a result due to Asperti \cite{As}. 
In Section \ref{s4}, we introduce the concept of isotropic isothermicity, we prove that it is a conformally invariant property, and we give some examples. 
We also investigate its relation with isothermicity and with lines of curvature. 
The last part of the section concerns infinitesimal isometric deformations, and there 
we prove Theorems \ref{IIID} and \ref{IIS}.
In Section \ref{s5}, we set up the framework for the study of the Bonnet problem.
Section \ref{s6} is devoted to simply-connected surfaces. We prove a theorem that provides detailed information
about the structure of the moduli space, and we give the proofs of Theorems \ref{MS} and \ref{Q3}.
In the last part of the section, we study proper Bonnet surfaces and we prove that they are isotropically isothermic. 
We also show that such surfaces 
admit conformal
metrics of constant curvature $-1$, away from points 
at which some Gauss lift is vertically harmonic. 
Section \ref{s7} deals with compact surfaces. We investigate the effect of isotropic isothermicity
on the structure of the moduli space and we give the proof of Theorem \ref{QIC}. 
Finally, we study locally proper Bonnet surfaces and we prove Theorems \ref{ULPB}, \ref{SPB} and \ref{LFPB}.

\medskip

\noindent{\bf{Acknowledgements:}} A large part of this work is also part of the Ph.D. thesis of the author,
accomplished with financial support of the Alexander S. Onassis Public Benefit Foundation.
The author would like to thank his supervisor, Professor Theodoros Vlachos, for his constant encouragement and for 
valuable comments and remarks.


\section{Preliminaries}\label{s2}

Throughout the paper, $M$ is a connected, oriented 2-dimensional Riemannian manifold.
A surface $f\colon M\to \Q_c^n$, $n=3,4,$ is an isometric immersion into the complete, 
simply-connected $n$-dimensional space form of curvature $c$.

Let $f\colon M\to \Q_c^4$ be a surface. Denote by $N_fM$ the normal bundle of $f$ and by 
$\nap, R^\perp$ the normal connection and its curvature tensor, respectively.
The orientations of $M$ and $\Q^4_c$ induce an orientation on the normal bundle of $f$.
The \emph{normal curvature} $K_N$ of $f$ is given by
$K_N=\<R^\perp(e_1,e_2)e_4,e_3\>$, 
where $\{e_1, e_2\}$ and $\{e_3, e_4\}$ are positively oriented orthonormal frame fields of $TM$ and $N_fM$, respectively,
and $\<\cdot,\cdot\>$ stands for the Riemannian metric of $\Q_c^4$.
Notice that if $\tau$ is an orientation-reversing isometry of $\Q_c^4$, then $f$ and $\tau\circ f$ have opposite normal curvatures.
The Gaussian curvature $K$ of $M$ and the normal curvature satisfy the equations
\be \label{normcf}
d\w_{12}=-K \w_1\wedge\w_2,\;\;\;\;d\w_{34}=-K_N \w_1\wedge\w_2,
\ee
where $\{\w_k\}$ is the dual frame field of $\{e_k\}$, $1\leq k\leq4$, and its corresponding connection forms $\w_{kl}= -\w_{lk},\; 1\leq k,l\leq 4$, are given by
\be \label{connection forms}
d\w_k=\sum_{m=1}^{4}\w_{km}\wedge \w_m,\;\;\; 1\leq k \leq 4.
\ee
If $M$ is compact, the Euler-Poincar{\'e} characteristics $\mathcal{\chi}, \mathcal{\chi}_N$ of $TM$ and
$N_fM$, are respectively given by
\begin{equation*} \label{char}
\mathcal{\chi}=\frac{1}{2\pi}\int_{M}K,\;\;\;\;\;\mathcal{\chi}_N=\frac{1}{2\pi}\int_{M}K_N.
\end{equation*}

Let $\a\colon TM\times TM\to N_fM$ be the second fundamental form of $f$.
The shape operator $A_{\xi}$ of $f$ with respect to $\xi \in N_fM$ is the symmetric endomorphism
of $TM$ defined by $\<A_{\xi}X,Y\>=\<\a(X,Y),\xi\>$. The surface $f$ is said to have {\emph{flat normal bundle}} if $K_N\equiv0$ on $M$. 
This is equivalent to the existence for every $p\in M$,
of an orthonormal basis of $T_pM$ that diagonalizes  
all shape operators of $f$ at $p$.

The \emph{curvature ellipse of} $f$ at each $p\in M$
is defined by  $${\cal E}_{f}(p)=\left\{\a(X,X):X\in T_pM, \|X\|=1\right\}.$$
It is indeed an ellipse on $N_{f}M(p)$ centered at the mean curvature vector $H(p)=\mbox{trace}\a(p)/2$, which may degenerate into a line segment or a point.
It is parametrized by
\bea
\a(X_{\theta},X_{\theta})=H(p)+\cos 2\theta \frac{(\a_{11}-\a_{22})}{2}+\sin 2\theta \a_{12},
\eea
where $X_{\theta}=\cos \theta e_1 +\sin \theta e_2$, $\a_{kl}=\a(e_k,e_l)$, $k,l=1,2$, and $\{e_1,e_2\}$ is an orthonormal basis of $T_pM$.
The ellipse degenerates into a line segment or a point if and only if the
vectors $(\a_{11}-\a_{22})/2$ and $\a_{12}$ are linearly dependent, or equivalently, if $R^{\perp}=0$ at $p$ (cf. \cite{GR}).
Moreover, at a point where the curvature ellipse is nondegenerate, $K_N$ is positive if and only if
the orientation induced on the ellipse as $X_{\theta}$ traverses positively the unit tangent 
circle, coincides with the orientation of the normal plane. 
The lengths $\lambda_1,\lambda_2$ of the semiaxes of ${\cal E}_{f}$,
satisfy at any point the relations (cf. \cite{Little})
\be \label{axes}
\lambda_1^2+\lambda_2^2=\|H\|^2-(K-c),\;\;\ \lambda_1 \lambda_2=\frac{1}{\pi}A({\cal E}_{f})=\frac{1}{2}|K_N|,
\ee
where $A({\cal E}_{f})$ is the area of the curvature ellipse.
Therefore, at every point of $M$ we have that
$$\|H\|^2-(K-c)\geq|K_N|.$$
A point $p\in M$ is called \emph{pseudo-umbilic} if the curvature ellipse is a circle at $p$,
and the set $M_0(f)$ of pseudo-umbilic points of $f$ is characterized as
$$M_0(f)=\left\{p\in M: \|H\|^2-(K-c)=|K_N|\right\}.$$
A surface for which any point is pseudo-umbilic is called \emph{superconformal}.
A pseudo-umbilic point is called \emph{umbilic} if the circle degenerates into a point.
By setting $$M_0^{\pm}(f)=\{p\in M_0(f):\pm K_N\geq0 \},$$ 
it follows that $M_0(f)=M_0^{+}(f)\cup M_0^{-}(f)$, and that the set $M_1(f)$ of umbilic points is 
$$M_1(f)=M_0^{+}(f)\cap M_0^{-}(f)=\{p\in M: \|H\|^2=K-c\}.$$

\subsection{Complexification and Associated Differentials}

The complexified tangent bundle $TM\otimes\mathbb{C}$ of a 2-dimensional oriented Riemannian manifold $M$, decomposes 
into the eigenspaces of the complex structure $J$, denoted by $T^{(1,0)}M$ and $T^{(0,1)}M$, corresponding to the eigenvalues
$i$ and $-i$, respectively. 

The second fundamental form of a surface $f\colon M\to \Q_c^4$ can be 
$\C$-bilinearly extended to $TM\otimes\mathbb{C}$ with values in the complexified
normal bundle $N_fM\otimes\mathbb{C}$, and then decomposed into its $(k,l)$-components $\a^{(k,l)}$, $k+l=2$,
which are tensors of $k$ many 1-forms vanishing on $T^{(0,1)}M$ and $l$ many 1-forms vanishing on $T^{(1,0)}M$.
For a positively oriented local orthonormal frame field
$\{e_1,e_2\}$ of $TM$, the \emph{Hopf invariant} $\mathcal{H}(e_1,e_2)$ of $f$ with respect to $\{e_1,e_2\}$ is the local section of $N_fM\otimes\C$ defined by
\be \label{Hopf I}
\mathcal{H}(e_1,e_2)=2\a^{(2,0)}(e_1,e_1) 
=\frac{\a_{11}-\a_{22}}{2}-i\a_{12},\;\;\;\a_{kl}=\a(e_k,e_l),\; k,l=1,2.
\ee

Let $\Jp$ be the complex structure of $N_fM$ defined by the metric and the orientation.
The complexified normal bundle decomposes as 
$$N_fM\otimes\mathbb{C}=N_f^{-}M\oplus N_f^{+}M$$ 
into the eigenspaces $N_f^{-}M$ and $N_f^{+}M$ of $\Jp$, corresponding to the eigenvalues $i$ and $-i$, respectively. 
Any section $\xi \in N_fM\otimes\mathbb{C}$ is decomposed as $\xi=\xi^{-}+\xi^{+}$, where
$$\xi^{\pm}=\pi^{\pm}(\xi),$$
and the projection $\pi^{\pm}\colon N_fM\otimes\mathbb{C}\to N_f^{\pm}M$ is given by
$$\pi^{\pm}(\xi)=\frac{1}{2}(\xi\pm i\Jp \xi),\;\;\; \xi \in N_fM\otimes\mathbb{C}.$$
A section $\xi$ of $N_fM\otimes\mathbb{C}$ is called \emph{isotropic} if at any point of $M$, either
$\xi=\xi^{-}$, or $\xi=\xi^{+}$. This is equivalent to $\langle \xi,\xi\rangle =0$, where
$\langle \cdot,\cdot\rangle$ is the $\C$-bilinear extension of the metric.
Notice that $\langle \zeta, \eta \rangle =0$ for $\zeta \in N^{-}_fM$ and $\eta\in N^{+}_fM$, implies that either $\zeta=0$, or $\eta=0$.
According to the above decomposition, the Hopf invariant of $f$ with respect to $\{e_1,e_2\}$ 
splits into isotropic parts as $\mathcal{H}(e_1,e_2)=\mathcal{H}^-(e_1,e_2)+\mathcal{H}^+(e_1,e_2)$, where 
\begin{eqnarray}
\mathcal{H}^{\pm}(e_1,e_2)=\frac{1}{2}\left(\frac{\a_{11}-\a_{22}}{2}\pm \Jp \a_{12}\pm i \Jp\left(\frac{\a_{11}-\a_{22}}{2}\pm \Jp \a_{12}\right)\right).
\label{Hopf Invariants}
\end{eqnarray}
The length of $\mathcal{H}^{\pm}(e_1,e_2)$ is independent of the frame field $\{e_1,e_2\}$, and the function 
$\|\mathcal{H}^{\pm}\|$ given by
\be \label{Bpm}
\|\mathcal{H}^{\pm}\|= \sqrt{2}\left\|\mathcal{H}^{\pm}(e_1,e_2)\right\|=\sqrt{\|H\|^2-(K-c)\mp K_N}
\ee
vanishes precisely on $M^{\pm}_0(f)$.

Let $E$ be a complex vector bundle over $M$ equipped with a connection $\n^E$. An \emph{$E$-valued differential $\Psi$ of $r$-order}
is an $E$-valued $r$-covariant tensor field on $M$ of holomorphic type $(r,0)$. The $r$-differential $\Psi$
is called \emph{holomorphic} (cf. \cite{BWW}) if its covariant derivative $\n^E \Psi$ has holomorphic type $(r+1,0)$. 
Let $(U,z=x+iy)$ be 
a local complex coordinate on $M$. The Wirtinger operators are defined on $U$ by $\d=\d_z=(\d_x-i\d_y)/2$, 
$\bar{\d}=\d_{\bar z}=(\d_x+i\d_y)/2$, where $\d_x=\d/\d x$ and $\d_y=\d/\d y$.
On $U$, the differential $\Psi$ has the form $\Psi=\psi dz^r$, where $\psi \colon U\to E$ is given by $\psi=\Psi(\dz,\dots,\dz)$.
Then, $\Psi$ is holomorphic if and only if $$\n^E_{\dzb}\psi=0,$$
i.e., $\psi$ is a holomorphic local section. For later use we need the following result (cf. \cite{Ch,BWW}).

\begin{lemma}\label{zeros}
Assume that the $E$-valued differential $\Psi$ is holomorphic and let $p\in M$ be such that $\Psi(p)=0$.
Let $(U,z)$ be a local complex coordinate with $z(p)=0$. Then either $\Psi \equiv 0$ on $U$; or $\Psi=z^m\Psi^{*}$,
where $m$ is a positive integer and $\Psi^{*}(p)\neq0$.
\end{lemma}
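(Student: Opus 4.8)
The plan is to reduce everything to the classical theory of zeros of holomorphic functions of one complex variable, after trivializing $E$ by a local holomorphic frame. Since the assertion is local, I work on the coordinate disk $(U,z)$ with $z(p)=0$. Writing $\Psi=\psi\,dz^r$ as in the excerpt, the holomorphicity of $\Psi$ is exactly the condition $\n^E_{\dzb}\psi=0$, so $\psi$ is a holomorphic local section of $(E,\n^E)$ and the problem becomes that of describing the vanishing of a holomorphic section at a point where it is zero. Note first that, once we have any local frame of $E$, the equality $\psi(p)=0$ forces each coefficient of $\psi$ in that frame to vanish at $p$.

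The heart of the argument is the existence of a local holomorphic frame $\{\sigma_1,\dots,\sigma_k\}$ of $E$ near $p$, i.e.\ linearly independent sections with $\n^E_{\dzb}\sigma_j=0$ on a neighborhood of $p$. This is where the one-dimensionality of $M$ as a complex manifold enters: there is no integrability obstruction, so the $(0,1)$-part of $\n^E$ endows $E$ with a holomorphic structure admitting local holomorphic frames. Concretely, in an arbitrary smooth frame the operator $\n^E_{\dzb}$ takes the form $\dzb+B$ for a smooth matrix-valued function $B$, and a holomorphic frame is obtained from a gauge $G$ solving $\dzb G=-BG$ with $G(0)=\mathrm{Id}$; such a $G$ exists on a small disk because the scalar inhomogeneous equation $\dzb u=g$ is always solvable there (e.g.\ by the Cauchy--Pompeiu formula), and after shrinking $U$ the matrix $G$ stays invertible, so its columns furnish the desired frame.

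With a holomorphic frame fixed, expand $\psi=\sum_{j=1}^{k} f_j\,\sigma_j$. Because $\psi$ and every $\sigma_j$ are holomorphic and the $\sigma_j$ form a frame, the coefficients $f_j$ are smooth and satisfy $\dzb f_j=0$, hence are genuine holomorphic functions on $U$. I then invoke the classical fact that a holomorphic function on a disk is either identically zero or has a zero of finite order. If all $f_j\equiv0$, then $\psi\equiv0$ and $\Psi\equiv0$ on $U$. Otherwise, since each $f_j$ vanishes at $p$ by the remark above, every $f_j$ that is not identically zero has a well-defined finite order of vanishing $\geq 1$; let $m\geq 1$ be the minimum of these orders. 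Then $f_j=z^m g_j$ with $g_j$ holomorphic and $g_j(p)\neq0$ for at least one index $j$, so $\psi=z^m\psi^{*}$ with $\psi^{*}=\sum_j g_j\sigma_j$ holomorphic and $\psi^{*}(p)\neq0$; setting $\Psi^{*}=\psi^{*}\,dz^r$ gives $\Psi=z^m\Psi^{*}$ with $\Psi^{*}(p)\neq0$, as required.

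The only genuine obstacle is the construction of the local holomorphic frame, that is, the solvability of $\dzb G=-BG$ with invertible $G$; everything after that is the elementary local theory of holomorphic functions. An alternative that bypasses the frame is the Bers--Vekua similarity principle for the first-order elliptic system $\dzb\psi=-B\psi$, which expresses $\psi$ as a genuinely holomorphic vector function times an invertible continuous factor and yields the same finite-order factorization; but the holomorphic-frame route is cleaner and is the one implicit in \cite{Ch,BWW}.
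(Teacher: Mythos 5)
The paper does not actually prove Lemma \ref{zeros}; it states it with the citation ``(cf.\ \cite{Ch,BWW})'', so there is no in-text argument to compare against. Your proof is correct and is essentially the standard one underlying those references: the holomorphic-frame (Koszul--Malgrange) route is the one in \cite{BWW}, while \cite{Ch} proceeds via the similarity principle for the elliptic system $\dzb\psi=-B\psi$, which you correctly identify as the alternative. Two small points are worth tightening. First, the gauge equation $\dzb G=-BG$ is a matrix equation with the unknown on the right-hand side, so it is not literally an instance of the scalar problem $\dzb u=g$; one solves it by writing $G=\mathrm{Id}-T(BG)$ with $T$ the Cauchy transform and running a Neumann-series/fixed-point argument on a disk small enough that $\|T\circ M_B\|<1$, after which invertibility of $G$ near $p$ follows by continuity. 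Second, your dichotomy is established only on the (possibly smaller) disk carrying the holomorphic frame; to conclude $\Psi\equiv0$ on all of $U$ in the degenerate case you need a unique-continuation step, e.g.\ covering $U$ by disks with holomorphic frames and noting that the set where $\psi$ vanishes on a neighbourhood is open and closed in $U$. Both are routine, and neither affects how the lemma is used in the paper, where only the local normal form $\Psi=z^m\Psi^*$ at an isolated zero is ever invoked.
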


Let $f\colon M\to \Q^4_c$ be an oriented surface.
In terms of a local complex coordinate $(U,z=x+iy)$,
the metric $ds^2$ of $M$ is written as $ds^2=\lambda^2|dz|^2$,
where $\lambda > 0$ is the conformal factor.
Setting $e_1=\d_x/\lambda$ and  $e_2=\d_y/\lambda$, the components of $\a$ are given by
$$\a^{(2,0)} = \a(\dz,\dz)dz^2,\;\; \a^{(0,2)} = \overline {\a^{(2,0)}},\;\; \a^{(1,1)} = \a(\dz,\dzb)(dz\otimes d\bar{z} + d\bar{z}\otimes dz),$$
where
\begin{eqnarray} \label{defadd}
\a(\dz,\dz) =\frac{\lambda^2}{2}\mathcal{H}(e_1,e_2)\;\;\; \mbox{and}\;\;\; \a(\dz,\dzb)= \frac{\lambda^2}{2}H.
\end{eqnarray}
The {\emph{Hopf differential of $f$}} is the quadratic $N_fM\otimes\mathbb{C}$-valued differential $\Phi=\a^{(2,0)}$ with local expression
$\Phi=\add dz^2$.
According to the decomposition of $N_fM\otimes\mathbb{C}$, the Hopf differential splits into isotropic parts as
\begin{equation*}
\Phi=\Phi^{-}+\Phi^{+},\;\;\mbox{where}\;\; \Phi^{\pm}=\pi^{\pm}\circ\Phi.
\end{equation*}
The following has been proved in \cite[Lemma 8]{PV}.
\begin{lemma}\label{pseudo}
\begin{enumerate}[topsep=0pt,itemsep=-1pt,partopsep=1ex,parsep=0.5ex,leftmargin=*, label=(\roman*), align=left, labelsep=-0.4em]
\item The zero-sets of $\Phi^{\pm}$ and $\Phi$, are $M_{0}^{\pm}(f)$ and $M_1(f)$, respectively.
\item The surface $f$ is superconformal with normal curvature $\pm K_N\geq0$ if and only if $\Phi^{\pm}\equiv0$. 
In particular, if $f$ is superconformal, then $K_N$ vanishes precisely on $M_1(f)$.
\end{enumerate}
\end{lemma}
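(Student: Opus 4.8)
The plan is to reduce both parts to the local expression of the Hopf differential, the norm formula \eqref{Bpm}, and the pointwise inequality $\|H\|^2-(K-c)\geq|K_N|$. Fix a local complex coordinate $(U,z)$ with $e_1=\d_x/\lambda$, $e_2=\d_y/\lambda$, so that by \eqref{defadd} one has $\Phi=\add\,dz^2=\frac{\lambda^2}{2}\mathcal{H}(e_1,e_2)\,dz^2$. The one point that needs genuine care is that the isotropic splitting of the Hopf invariant coincides with the eigenbundle decomposition $N_fM\otimes\mathbb{C}=N_f^{-}M\oplus N_f^{+}M$, i.e. that $\pi^{\pm}(\mathcal{H}(e_1,e_2))=\mathcal{H}^{\pm}(e_1,e_2)$. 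I would read off from \eqref{Hopf Invariants} that $\mathcal{H}^{\pm}(e_1,e_2)=\frac{1}{2}(v_{\pm}\pm i\Jp v_{\pm})=\pi^{\pm}(v_{\pm})$, where $v_{\pm}=\frac{\a_{11}-\a_{22}}{2}\pm\Jp\a_{12}$; thus $\mathcal{H}^{\pm}(e_1,e_2)$ lies in $N_f^{\pm}M$ by the very definition of $\pi^{\pm}$. A one-line computation using $(\Jp)^2=-I$ gives $\mathcal{H}^{-}(e_1,e_2)+\mathcal{H}^{+}(e_1,e_2)=\mathcal{H}(e_1,e_2)$, so by directness of the decomposition these two summands are exactly the projections $\pi^{\pm}(\mathcal{H}(e_1,e_2))$. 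This yields $\Phi^{\pm}=\pi^{\pm}\circ\Phi=\frac{\lambda^2}{2}\mathcal{H}^{\pm}(e_1,e_2)\,dz^2$ on $U$.

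For part (i), since $\lambda>0$ the zero-set of $\Phi^{\pm}$ on $U$ is that of $\mathcal{H}^{\pm}(e_1,e_2)$, which by \eqref{Bpm} is $\{\|H\|^2-(K-c)\mp K_N=0\}=M_0^{\pm}(f)\cap U$; as $\Phi^{\pm}$ is globally defined and $U$ is arbitrary, its zero-set is $M_0^{\pm}(f)$. For $\Phi=\Phi^{-}+\Phi^{+}$, I would use that $\Phi^{-}$ and $\Phi^{+}$ take values in the complementary subbundles $N_f^{-}M$ and $N_f^{+}M$; hence $\Phi$ vanishes at a point exactly when both isotropic parts do, so its zero-set is $M_0^{-}(f)\cap M_0^{+}(f)=M_1(f)$ by definition.

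For part (ii), by part (i) the condition $\Phi^{\pm}\equiv0$ is equivalent to $M_0^{\pm}(f)=M$, i.e. $\|H\|^2-(K-c)=\pm K_N$ at every point. Here the pointwise inequality does the work: the identity forces $\pm K_N=\|H\|^2-(K-c)\geq|K_N|\geq\pm K_N$, whence $\|H\|^2-(K-c)=|K_N|$ and $\pm K_N\geq0$ everywhere; that is, $f$ is superconformal with $\pm K_N\geq0$. The converse is immediate from the same identities. The \emph{in particular} claim then follows, since for a superconformal $f$ one has $\|H\|^2-(K-c)=|K_N|$ everywhere, so $K_N(p)=0$ if and only if $\|H(p)\|^2=K(p)-c$, i.e. if and only if $p\in M_1(f)$.

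I expect the only nonroutine step to be the identification $\Phi^{\pm}=\frac{\lambda^2}{2}\mathcal{H}^{\pm}(e_1,e_2)\,dz^2$, namely confirming from \eqref{Hopf Invariants} that the isotropic parts of the Hopf invariant are genuinely the $\Jp$-eigencomponents; everything else is bookkeeping with \eqref{Bpm} and the inequality $\|H\|^2-(K-c)\geq|K_N|$, together with reading off zero-sets and signs.
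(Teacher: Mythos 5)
Your argument is correct: the identification $\Phi^{\pm}=\tfrac{\lambda^2}{2}\mathcal{H}^{\pm}(e_1,e_2)\,dz^2$, the use of \eqref{Bpm} together with the inequality $\|H\|^2-(K-c)\geq|K_N|$ to pin down the zero-sets and the sign of $K_N$, and the directness of the splitting $N_fM\otimes\mathbb{C}=N_f^{-}M\oplus N_f^{+}M$ for the statement about $\Phi$ are exactly the ingredients needed. The paper does not prove this lemma itself but cites it from \cite{PV} (Lemma 8); your proof is the natural one built from the formulas already recorded in the preliminaries and can stand as a self-contained justification.
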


On $(U,z)$ the differential $\Phi^{\pm}$ has the expression
\be \label{phipm}
\Phi^{\pm}=\phi^{\pm}dz^2,
\ee
and the compatibility equations for $f$ can be written as
\begin{eqnarray}
&\mbox{(Gauss)}&  \;\;\; (\log\lambda^2)_{z\bar z}-\frac{2}{\lambda^2}\left(\langle\phi^-,\overline{\phi^-}\rangle +\langle\phi^+,\overline{\phi^+}\rangle\right)
+\frac{\lambda^2}{2}(\|H\|^2+c)=0,\;\;\;\label{Gauss} \\
&\mbox{(Codazzi)}&  \;\;\; \nap_{\dzb}\phi^-=\frac{\lambda^2}{2}\nap_{\d}H^-,\;\;\; \nap_{\dzb}\phi^+=\frac{\lambda^2}{2}\nap_{\d}H^+,\;\;\;\label{Codazzi} \\
&\mbox{(Ricci)}&  \;\;\; R^{\perp}(\d,\dzb)=\frac{2}{\lambda^2}(\phi^-\wedge\overline{\phi^-}+\phi^+\wedge\overline{\phi^+}),\;\;\;\label{Ricci}
\end{eqnarray}
where $R^{\perp}$ is the $\C$-trilinear extension of the normal curvature tensor 
and $(\xi\wedge \zeta)\eta=\<\zeta,\eta\>\xi-\<\xi,\eta\>\zeta$, for $\xi,\zeta,\eta \in N_fM\otimes\C$. 
It follows from \eqref{phipm} and (\ref{Codazzi}) that $\Phi$ 
is holomorphic if and only if the mean curvature vector field $H$ is parallel in the normal connection.

\subsection{Twistor Spaces and Gauss Lifts}

Let $f\colon M\to \R^4$ be an oriented surface. We recall that (see for instance \cite[Section 4.2]{PV})
the Grassmannian $Gr(2,4)$ of oriented 2-planes in $\R^4$,
is isometric to the product $\mathbb{S}^2_+\times \mathbb{S}^2_-$ of two spheres of radius $1/\sqrt{2}$. 
Accordingly, the Gauss map $g\colon M\to Gr(2,4)$ of $f$, decomposes into a pair of maps as 
$g=(g_+,g_-)\colon M\to \mathbb{S}^2_+\times \mathbb{S}^2_-$. For surfaces in not necessarily flat space forms $\Q^4_c$, the geometric information
encoded in the components $g_+$ and $g_-$ of the Gauss map of a surface in $\R^4$, is encoded in the Gauss lifts of the surface to the twistor bundle of $\Q^4_c$.

We briefly recall some facts about the twistor theory of 4-dimensional space forms (cf. \cite{ES, Fr, JR}).
The twistor bundle ${\mathcal Z}$ of $\Q_c^4$ is the set of all pairs $(p,\tilde{J})$, where $p\in \Q_c^4$
and $\tilde{J}$ is an orthogonal complex structure on $T_p\Q_c^4$, endowed with the twistor projection $\varrho \colon {\mathcal Z}\to \Q_c^4$,
defined by $\varrho (p,\tilde{J})=p$. The twistor bundle is a $O(4)/U(2)$-fiber bundle over $\Q_c^4$ associated to
$O(\Q_c^4)$, the principal $O(4)$-bundle of orthonormal frames in $\Q_c^4$, which has two connected components.
More precisely, at a point $p\in \Q_c^4$, any orthonormal frame $e=(e_1,e_2,e_3,e_4)$ of $T_p\Q_c^4$
determines an orthogonal complex structure $\tilde{J}_e$, given by $$\tilde{J}_e e_1=e_2,\ \tilde{J}_e e_3=e_4,\ \tilde{J}_e^2=-\id.$$
Every orthogonal complex structure on $T_p\Q_c^4$ 
can be written in the above form for some orthonormal frame of $T_p\Q_c^4$. 
In particular, $\tilde{J}_e=\tilde{J}_{\tilde{e}}$ if and only if $\tilde{e}=eA$ for some $A\in U(2)$. 
Therefore, the set of all
orthogonal complex structures on $T_p\Q_c^4$ is $O(4)/U(2)$ and has two connected components diffeomorphic to
$SO(4)/U(2)=\{\tilde{J}_e: e\ \mbox{is a}\pm \mbox{oriented frame of}\ T_p\Q_c^4\}$. Hence, the twistor bundle is
$${\mathcal Z}=O(\Q_c^4)\times_{O(4)}O(4)/U(2)=O(\Q_c^4)/U(2)$$
and its two connected components are denoted by ${\mathcal Z}_{+}$ and ${\mathcal Z}_{-}$. 
Each projection $\varrho_{\pm}\colon {\mathcal Z}_{\pm}\to \Q_c^4$ is a $\mathbb{S}^2$-fiber bundle over $\Q_c^4$,
where $\varrho_{\pm}$ is the restriction of $\varrho$ on ${\mathcal Z}_{\pm}$.

There is a one-parameter family of Riemannian metrics $g_{t},\ t>0$, defined on ${\mathcal Z}$, that make 
$\varrho_{+}$ and $\varrho_{-}$ Riemannian submersions. 
With respect to the decomposition of the tangent bundle of ${\mathcal Z}_{\pm}$ into horizontal and vertical subbundles as
$T{\mathcal Z}_{\pm}=T^h{\mathcal Z}_{\pm}\oplus T^v{\mathcal Z}_{\pm},$
the metric $g_{t}$ is given by the pull-back of the metric of $\Q_c^4$ to the
horizontal subspaces, and by adding the $t^2$-fold of the canonical metric of the fibers.

Let $Gr_2(T\Q_c^4)$ be the Grassmann bundle of oriented 2-planes tangent to $\Q_c^4$. There are projections
$\Pi_{+}\colon Gr_2(T\Q_c^4)\to {\mathcal Z}_{+}$ and $\Pi_{-}\colon Gr_2(T\Q_c^4)\to {\mathcal Z}_{-}$
defined as follows; if $\zeta \subset T_p\Q_c^4$ is an oriented
2-plane, then $\Pi_{\pm}(p,\zeta)$ is the complex structure on $T_p\Q_c^4$ corresponding to the rotation by
$+\pi/2$ on $\zeta$ and the rotation by $\pm \pi/2$ on $\zeta^{\perp}$.
The Gauss lift $G_f\colon M\to Gr_2(T\Q_c^4)$ of an oriented surface $f\colon M\to \Q^4_c$ is defined by $G_f(p)=(f(p),f_{*}T_pM)$.
The \emph{Gauss lifts of $f$ to the twistor bundle} are the maps
$$G_{+}\colon M\to {\mathcal Z}_{+}\;\; \mbox{and}\;\;G_{-}\colon M\to {\mathcal Z}_{-},\;\;\mbox{where}\;\;  G_{\pm}=\Pi_{\pm}\circ G_f.$$
At any point $p\in M$, the Gauss lift $G_\pm$ is given by $G_{\pm}(p)=(f(p),\tilde{J}_{\pm}(f(p)))$, where 
\begin{equation*}
\tilde{J}_{\pm}(f(p))= \left\{
\begin{array}{rll}
f_{*}\circ J(p), & \mbox{on}\ f_{*}T_pM,\\
\pm \Jp(p), & \mbox{on}\ N_fM(p).
\end{array}\right.
\end{equation*}

Let $\{e_k\}_{1\leq k\leq4}$ be a positively oriented, local adapted orthonormal frame field of $\Q^4_c$, where $\{e_1,e_2\}$ is in the orientation of $TM$.
Denote by $\{\w_k\}_{1\leq k\leq4}$ the corresponding coframe and by $\w_{kl},\; 1\leq k,l\leq 4$, the connection forms 
given by (\ref{connection forms}).
Locally, the pull-back of $g_t$ on $M$ under $G_{\pm}$, is related to the metric $ds^2$ of $M$ (cf. \cite{Fr,JR}) as follows 
\be \label{Gconf}
G_{\pm}^{*}(g_t)=ds^2 + \frac{t^2}{4}\left((\w_{13}\mp\w_{24})^2 + (\w_{23}\pm\w_{14})^2\right).
\ee
The Gauss lift $G_{\pm}\colon M\to ({\mathcal Z}_{\pm},g_t)$ is called \emph{conformal} if its induced metric $G_{\pm}^{*}(g_t)$ is conformal to $ds^2$,
and is called \emph{isometric} if $G_{\pm}^{*}(g_t)=ds^2$.
The following has been proved in \cite[Prop. 8.2]{JR}.

\begin{proposition} \label{conformal Gl}
Let $f\colon M\to \Q^4_c$ be an oriented surface. The Gauss lift $G_{\pm}\colon M\to ({\mathcal Z}_{\pm},g_t)$ of $f$ is
either conformal, or isometric, if and only if either (i), or (ii), respectively, holds:
\begin{enumerate}[topsep=0pt,itemsep=-1pt,partopsep=1ex,parsep=0.5ex,leftmargin=*, label=(\roman*), align=left, labelsep=0em]
\item The surface $f$ is either minimal, or superconformal with normal curvature $\pm K_N\geq0$.
\item The surface $f$ is minimal and superconformal with normal curvature $\pm K_N\geq0$.
\end{enumerate}
\end{proposition}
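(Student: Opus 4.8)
The plan is to read off the conformality and isometry of $G_\pm$ directly from \eqref{Gconf}, reducing everything to the behaviour on $M$ of the quadratic differential $\Theta_\pm:=(\w_{13}\mp\w_{24})^2+(\w_{23}\pm\w_{14})^2$. First I would express the relevant connection forms through the second fundamental form: evaluating \eqref{connection forms} on $TM$ for the adapted frame gives $\w_{ir}=\sum_{j}\<\a(e_i,e_j),e_r\>\w_j$ for tangent indices $i,j\in\{1,2\}$ and normal indices $r\in\{3,4\}$, so that each of $\w_{13},\w_{14},\w_{23},\w_{24}$ is a combination of $\w_1,\w_2$ whose coefficients are components of $\a$. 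Splitting these components into the mean-curvature and trace-free parts, I would form the complex $1$-form $(\w_{13}\mp\w_{24})+i(\w_{23}\pm\w_{14})$ and, in a local complex coordinate $z$ with $ds^2=\la^2|dz|^2$, rewrite it as $\la\,(\eta\,dz+\zeta\,d\bar{z})$. Here $\eta$ is, up to a constant, the $N^{\mp}_fM$-component of the mean curvature vector $H$, so $\eta$ vanishes exactly where $H=0$; and $\zeta$ is, again up to a constant, the isotropic Hopf invariant $\mathcal{H}^{\pm}$, so that by \eqref{Hopf Invariants}--\eqref{Bpm} one has $|\zeta|^2=\|\mathcal{H}^{\pm}\|^2$, which vanishes precisely on $M_0^{\pm}(f)$.

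Expanding $\Theta_\pm=\la^2|\eta\,dz+\zeta\,d\bar{z}|^2$ splits it into a $dz^2$-term proportional to $\eta\bar{\zeta}$, its conjugate $d\bar{z}^2$-term, and a $|dz|^2$-term proportional to $|\eta|^2+|\zeta|^2\ge 0$. Hence $\Theta_\pm$ is a nonnegative multiple of $ds^2$ -- i.e. $G_\pm$ is conformal -- if and only if $\eta\bar{\zeta}\equiv0$ on $M$, while $\Theta_\pm\equiv0$ -- i.e. $G_\pm$ is isometric -- if and only if $\eta\equiv0$ and $\zeta\equiv0$ on $M$. The isometric case is then immediate: $\eta\equiv0$ means $f$ is minimal, while $\zeta\equiv0$ means $\mathcal{H}^{\pm}\equiv0$, hence $\Phi^{\pm}\equiv0$, which by Lemma \ref{pseudo}(ii) is equivalent to $f$ being superconformal with $\pm K_N\ge0$; therefore $G_\pm$ is isometric exactly when (ii) holds.

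For the conformal case the forward implication is equally direct: if $f$ is minimal then $\eta\equiv0$, and if $f$ is superconformal with $\pm K_N\ge0$ then $\Phi^{\pm}\equiv0$ and hence $\zeta\equiv0$; either way $\eta\bar\zeta\equiv0$, so $G_\pm$ is conformal. The substance lies in the converse. The relation $\eta\bar\zeta\equiv0$ says only that at every point either $H=0$ or $p\in M_0^{\pm}(f)$, that is, $M=\{H=0\}\cup M_0^{\pm}(f)$, and the task is to upgrade this pointwise alternative to the global dichotomy ``$f$ is minimal, or $f$ is superconformal with $\pm K_N\ge0$.'' This is the main obstacle. My plan is to argue by contradiction: if $f$ is not minimal, then $U=\{H\neq0\}$ is a nonempty open set on which $\zeta\equiv0$, i.e.\ $\Phi^{\pm}|_U\equiv0$; on the locus where $H=0$ the Codazzi equation \eqref{Codazzi} reduces to $\nap_{\dzb}\phi^{\pm}=0$, so there $\Phi^{\pm}$ is holomorphic, and I would invoke Lemma \ref{zeros} to propagate the vanishing of $\Phi^{\pm}$ and conclude $\Phi^{\pm}\equiv0$ on the connected $M$, i.e.\ that $f$ is superconformal with $\pm K_N\ge0$.

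I expect this propagation to be the delicate point, because $\Phi^{\pm}$ need not be holomorphic on all of $M$ and a merely smooth section can vanish on one open set while being nonzero on a disjoint one. The natural way to close the gap is to regard the argument as a statement about the twistor lift $G_\pm\colon M\to\mathcal{Z}_\pm$: its conformality forces the Hopf differential of the map $G_\pm$ with respect to $g_t$ -- which the computation above identifies, up to a positive factor, with $\eta\bar\zeta\,dz^2$ -- to be $\pm$-holomorphic, after which Lemma \ref{zeros} applies globally and yields the dichotomy. Throughout, the signs are coupled uniformly: the top signs give $G_+$, $\mathcal{H}^+$, $M_0^+(f)$ and $K_N\ge0$, and the bottom signs give the corresponding statements for $G_-$.
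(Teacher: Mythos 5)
First, note that the paper does not prove Proposition \ref{conformal Gl} at all: it is quoted verbatim from Jensen--Rigoli \cite[Prop.~8.2]{JR}, so there is no in-text proof to compare yours against. Judged on its own terms, your reduction via \eqref{Gconf} is correct: writing $(\w_{13}\mp\w_{24})+i(\w_{23}\pm\w_{14})=\lambda(\eta\,dz+\zeta\,d\bar z)$ with $\eta$ detecting $H$ and $\zeta$ detecting $\mathcal{H}^{\pm}$ is exactly right, the isometric case is complete, and the forward implication in the conformal case is immediate. The gap is precisely where you suspect it: upgrading the pointwise alternative $M=\{H=0\}\cup M_0^{\pm}(f)$ to the global dichotomy. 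Neither of your two proposed repairs closes it. For the first: Codazzi gives $\nap_{\dzb}\phi^{\pm}=0$ only on $\inter\{H=0\}$ (at a boundary point of $\{H=0\}$ the derivative $\nap_{\d}H^{\pm}$ need not vanish), and this open set is \emph{disjoint} from the open set $\{H\neq0\}$ on which you know $\Phi^{\pm}\equiv0$; so there is no connected open set on which $\Phi^{\pm}$ is simultaneously holomorphic and vanishing on an open piece, and Lemma \ref{zeros} has nothing to propagate. For the second: the holomorphicity of the Hopf differential of a map is a consequence of \emph{harmonicity}, not conformality, and $G_{\pm}$ is not assumed harmonic; moreover for a conformal map that $(2,0)$-part is identically zero, which merely restates your hypothesis $\eta\bar\zeta\equiv0$ and yields nothing new.

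The gap can be closed, and the key is to use Codazzi on \emph{both} sides of the alternative. On the open set $\{H\neq0\}$ you have $\phi^{\pm}\equiv0$, so \eqref{Codazzi} forces $\nap_{\d}H^{\pm}=0$ there; on $\inter\{H=0\}$ the same holds trivially. The union of these two open sets is the complement of $\partial\{H=0\}$, which is dense (the boundary of a closed set has empty interior), so by continuity $\nap_{\d}H^{\pm}\equiv0$ on all of $M$, and Codazzi then makes $\Phi^{\pm}$ holomorphic on all of the connected $M$. Now Lemma \ref{zeros} applies globally: either $\Phi^{\pm}\equiv0$, which by Lemma \ref{pseudo}(ii) says $f$ is superconformal with $\pm K_N\geq0$; or the zero set $M_0^{\pm}(f)$ of $\Phi^{\pm}$ is discrete, in which case the open set $\{H\neq0\}\subset M_0^{\pm}(f)$ must be empty and $f$ is minimal. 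With this insertion your argument becomes a complete proof; without it, the converse of the conformal case is not established.
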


Adopting the notation of \cite{JR}, there exists an almost complex structure 
$\mathcal{J}_+$ on $\mathcal Z$, that makes $({\mathcal Z}_{\pm},g_t)$ a Hermitian manifold.
The Gauss lift $G_{\pm}\colon M\to ({\mathcal Z}_{\pm},g_t)$ is called \emph{holomorphic} if it is holomorphic with respect to $\mathcal{J}_+$.
The following has been proved in \cite[Prop. 8.1]{JR}.

\begin{proposition} \label{holomorphic Gl}
Let $f\colon M\to \Q^4_c$ be an oriented surface. The Gauss lift $G_{\pm}\colon M\to ({\mathcal Z}_{\pm},g_t)$ 
of $f$ is holomorphic if and only if $f$ is superconformal
with normal curvature $\pm K_N\geq0$.
\end{proposition}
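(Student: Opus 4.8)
The plan is to compute the differential $dG_\pm$ explicitly, split it into horizontal and vertical parts relative to the decomposition $T{\mathcal Z}_\pm = T^h{\mathcal Z}_\pm \oplus T^v{\mathcal Z}_\pm$, and show that the holomorphicity condition with respect to $\mathcal{J}_+$ reduces to the vanishing of the isotropic part $\Phi^\pm$ of the Hopf differential; the statement then follows at once from Lemma \ref{pseudo}(ii). First I would record that $\varrho_\pm\circ G_\pm = f$ forces the horizontal component of $dG_\pm$ to be $f_*$. Since $\mathcal{J}_+$ acts on the horizontal subspace over $(f(p),\tilde{J}_\pm)$ as $\tilde{J}_\pm$ itself, and $\tilde{J}_\pm$ restricts to $f_*\circ J$ on $f_*TM$ by definition, the horizontal components automatically intertwine $J$ and $\mathcal{J}_+$. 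Hence the holomorphicity of $G_\pm$ is governed entirely by the vertical components.

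Next I would identify the vertical part of $dG_\pm(X)$ with the covariant derivative $\nab_X\tilde{J}_\pm$, regarded as a skew-symmetric endomorphism anticommuting with $\tilde{J}_\pm$. Differentiating the relation $\tilde{J}_\pm(f_*Y) = f_*(JY)$ and using the Gauss formula together with $\nabla J = 0$, one obtains for every tangent field $Y$
$$(\nab_X\tilde{J}_\pm)(f_*Y) = \a(X,JY)\mp\Jp\a(X,Y),$$
which is a normal vector, as it must be since the vertical endomorphism swaps $f_*TM$ and $N_fM$. The vertical almost complex structure of $\mathcal{J}_+$ on the twistor fiber is left-composition with $\tilde{J}_\pm$, so the holomorphicity of $G_\pm$ is equivalent to the single tensorial identity $\nab_{JX}\tilde{J}_\pm = \tilde{J}_\pm\circ\nab_X\tilde{J}_\pm$.

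Finally I would evaluate this identity on a positively oriented orthonormal frame $\{e_1,e_2\}$. Taking $X=e_1$ and testing against $f_*e_1$ (and, as a consistency check, against $f_*e_2$), and using that $\tilde{J}_\pm = \pm\Jp$ on $N_fM$, the identity collapses after cancellation to
$$\frac{\a_{11}-\a_{22}}{2}\pm\Jp\a_{12}=0.$$
By \eqref{Hopf Invariants} this is exactly $\mathcal{H}^\pm(e_1,e_2)=0$, that is, $\Phi^\pm\equiv0$ on $M$, which by Lemma \ref{pseudo}(ii) means precisely that $f$ is superconformal with normal curvature $\pm K_N\geq0$.

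The hard part will be calibrating the sign conventions. One must fix the definition of $\mathcal{J}_+$ (as opposed to the conjugate almost complex structure available on $\mathcal Z$) and the identification of the vertical complex structure with $+\tilde{J}_\pm$ rather than $-\tilde{J}_\pm$, so that the resulting condition is $\Phi^\pm=0$ and not $\Phi^\mp=0$; the orientation of the fiber and the placement of the signs $\pm,\mp$ in the covariant-derivative formula must be kept consistent throughout. Once these conventions are pinned down against the twistor structure of \cite{JR}, the remaining computation is routine, and the statement is independent of the fiber-scaling parameter $t$ since $g_t$ is Hermitian for every $t$.
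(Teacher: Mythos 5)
The paper does not prove this proposition at all: it is quoted verbatim from \cite{JR} (Prop.\ 8.1), so there is no internal argument to compare against. Your reconstruction is the standard twistor computation that the cited source carries out, and it is correct: the identity $(\nab_X\tilde{J}_\pm)(f_*Y)=\a(X,JY)\mp\Jp\a(X,Y)$ follows from the Gauss formula exactly as you say, and evaluating $\nab_{JX}\tilde{J}_\pm=\tilde{J}_\pm\circ\nab_X\tilde{J}_\pm$ on $X=Y=e_1$ does collapse to $\tfrac{1}{2}(\a_{11}-\a_{22})\pm\Jp\a_{12}=0$, i.e.\ $\mathcal{H}^\pm(e_1,e_2)=0$ by \eqref{Hopf Invariants}, whence Lemma \ref{pseudo}(ii) finishes the proof. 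The two inputs you assert without proof --- that the vertical part of $dG_\pm(X)$ is $\nab_X\tilde{J}_\pm$ under the identification of the fiber tangent space with skew endomorphisms anticommuting with $\tilde{J}_\pm$, and that the vertical action of $\mathcal{J}_+$ is left composition with $\tilde{J}_\pm$ --- are precisely the conventions fixed in \cite{JR}, and your own sign worry is well placed but self-resolving: with the opposite vertical sign (the Eells--Salamon structure) the same algebra yields $\a_{11}+\a_{22}=0$, i.e.\ minimality rather than superconformality, so the fact that your choice reproduces the stated characterization confirms the calibration. No gap.
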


Immediate consequence of Propositions \ref{conformal Gl} and \ref{holomorphic Gl} is the following.

\begin{proposition} \label{CH Gl}
Let $f\colon M\to \Q^4_c$ be an oriented surface with nowhere-vanishing mean curvature vector field. 
The Gauss lift $G_{\pm}\colon M\to ({\mathcal Z}_{\pm},g_t)$ of $f$ is holomorphic if and only if it is conformal.
\end{proposition}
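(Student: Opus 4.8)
The plan is to read off this equivalence directly from Propositions \ref{conformal Gl} and \ref{holomorphic Gl}, exactly as the phrase preceding the statement suggests. No computation is needed beyond matching up the geometric conditions appearing in those two results; the nowhere-vanishing mean curvature hypothesis enters only to discard a single alternative.

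First I would establish that holomorphicity forces conformality, and I would note that this implication in fact holds with no assumption on $H$. If $G_{\pm}$ is holomorphic, then Proposition \ref{holomorphic Gl} gives that $f$ is superconformal with normal curvature $\pm K_N\geq0$. But this is precisely one of the two alternatives comprising condition (i) of Proposition \ref{conformal Gl}, so $G_{\pm}$ is conformal.

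For the converse I would bring in the hypothesis on the mean curvature. Suppose $G_{\pm}$ is conformal. By condition (i) of Proposition \ref{conformal Gl}, the surface $f$ is then either minimal, or superconformal with $\pm K_N\geq0$. Since $H$ is nowhere-vanishing, $f$ cannot be minimal, so the first alternative is ruled out and $f$ must be superconformal with $\pm K_N\geq0$. Proposition \ref{holomorphic Gl} then yields that $G_{\pm}$ is holomorphic, which completes the equivalence.

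There is no genuine obstacle here. The only point that requires attention is recognizing that the two propositions share the very same geometric condition, namely superconformality with the appropriately signed normal curvature, and that the sole function of the assumption $H\neq0$ is to eliminate the minimal branch of Proposition \ref{conformal Gl}(i), which is exactly the one situation in which a conformal Gauss lift need not be holomorphic.
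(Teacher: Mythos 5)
Your proof is correct and is precisely the argument the paper intends, since it states the proposition as an immediate consequence of Propositions \ref{conformal Gl} and \ref{holomorphic Gl} without further elaboration. Your observation that the forward implication needs no hypothesis on $H$, while the nowhere-vanishing assumption serves only to exclude the minimal branch in the converse, is exactly the right reading.
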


The Gauss lift $G_{\pm}\colon M\to ({\mathcal Z}_{\pm},g_t)$ is called \emph{vertically harmonic} if its tension field has
vanishing vertical component with respect to the decomposition $T{\mathcal Z}_{\pm}=T^h{\mathcal Z}_{\pm}\oplus T^v{\mathcal Z}_{\pm}$.
The following has been proved in \cite[Prop. 9]{PV}.

\begin{proposition}\label{glphi}
Let $f\colon M \to \Q^4_c$ be an oriented surface with mean curvature vector field $H$. The following are equivalent:
\begin{enumerate}[topsep=0pt,itemsep=-1pt,partopsep=1ex,parsep=0.5ex,leftmargin=*, label=(\roman*), align=left, labelsep=0em]
\item The Gauss lift $G_{\pm}\colon M\to ({\mathcal Z}_{\pm},g_t)$ of $f$ is vertically harmonic.
\item The differential $\Phi^{\pm}$ is holomorphic.
\item The section $H^{\pm}$ is anti-holomorphic.
\item $\nap_{JX}H=\pm\Jp \nap_{X}H$, for any $X\in TM$.
\end{enumerate}
\end{proposition}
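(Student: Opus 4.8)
The plan is to prove the four conditions equivalent by first disposing of the purely algebraic equivalences $(ii)\Leftrightarrow(iii)\Leftrightarrow(iv)$ through direct computation, and then establishing the genuinely twistorial equivalence $(i)\Leftrightarrow(ii)$ via the structure equations of the Gauss lift. I would begin with $(ii)\Leftrightarrow(iii)$, which is immediate from the Codazzi equation \eqref{Codazzi}. By definition the differential $\Phi^{\pm}=\phi^{\pm}dz^2$ is holomorphic precisely when $\nap_{\dzb}\phi^{\pm}=0$. The Codazzi equation gives $\nap_{\dzb}\phi^{\pm}=(\lambda^2/2)\nap_{\d}H^{\pm}$, and since $\lambda>0$ this vanishes if and only if $\nap_{\d}H^{\pm}=0$, that is, if and only if the section $H^{\pm}$ is anti-holomorphic. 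No choice of frame is needed, and the statement holds on all of $M$.

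For $(iii)\Leftrightarrow(iv)$, I would exploit that $\Jp$ is parallel with respect to $\nap$, so that $\nap_X(\Jp\eta)=\Jp\nap_X\eta$ for every section $\eta$. Writing $H^{\pm}=\pi^{\pm}(H)=\tfrac12(H\pm i\Jp H)$ and using $\d=\tfrac{\lambda}{2}(e_1-ie_2)$ for a positively oriented orthonormal frame with $Je_1=e_2$, I would expand
$$\nap_{\d}H^{\pm}=\frac{\lambda}{4}\bigl[(\nap_{e_1}H\pm\Jp\nap_{e_2}H)\pm i(\Jp\nap_{e_1}H\mp\nap_{e_2}H)\bigr].$$
Since $\nap_{e_k}H$ are real sections of $N_fM$, this vanishes exactly when $\nap_{e_2}H=\pm\Jp\nap_{e_1}H$; and because $\Jp^2=-I$, the two bracketed summands vanish simultaneously, so the single relation $\nap_{e_2}H=\pm\Jp\nap_{e_1}H$ is equivalent by linearity to $\nap_{JX}H=\pm\Jp\nap_{X}H$ for every $X\in TM$, which is $(iv)$.

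The core of the argument is $(i)\Leftrightarrow(ii)$, where the obstacle is computational and twistorial rather than conceptual. Using the adapted frame $\{e_k\}$ with coframe $\{\w_k\}$ and connection forms $\w_{kl}$ from \eqref{connection forms}, I would express $G_{\pm}$ in fiber coordinates on $\mathcal{Z}_{\pm}$ and compute its tension field with respect to the twistor metric $g_t$, whose horizontal and vertical parts are read off from \eqref{Gconf}. The vertical directions correspond to rotations of the complex structure $\tilde{J}_{\pm}$ within the fiber $\mathbb{S}^2$, and the vertical component of the tension field is governed precisely by the combinations $\w_{13}\mp\w_{24}$ and $\w_{23}\pm\w_{14}$ appearing in \eqref{Gconf}. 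Following the structure-equation bookkeeping of \cite{JR}, I expect these forms together with their exterior derivatives to assemble, via \eqref{connection forms} and the definitions \eqref{Hopf Invariants}--\eqref{defadd}, into the covariant expression $\nap_{\dzb}\phi^{\pm}$; that is, the vertical tension field of $G_{\pm}$ is a nonzero multiple of $\nap_{\dzb}\phi^{\pm}$. Consequently $G_{\pm}$ is vertically harmonic if and only if $\Phi^{\pm}$ is holomorphic, and chaining the three equivalences completes the proof. The main difficulty lies in the last step: carefully identifying the vertical tension field with $\nap_{\dzb}\phi^{\pm}$ in the frame adapted to the chosen component $\mathcal{Z}_{\pm}$, keeping the sign $\pm$ consistent throughout the reduction.
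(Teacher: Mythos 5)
Your proposal is essentially correct, and it is worth noting that the paper itself offers no proof of this proposition: it is quoted verbatim from \cite[Prop.\ 9]{PV}, so there is nothing internal to compare against except the surrounding apparatus. Your equivalences $(ii)\Leftrightarrow(iii)$ (read off directly from the Codazzi equation \eqref{Codazzi}) and $(iii)\Leftrightarrow(iv)$ (the frame computation of $\nap_{\d}H^{\pm}$, using that $\Jp$ is $\nap$-parallel and that the two bracketed real sections are exchanged by $\Jp$) are complete and correct; this is exactly the algebraic content one finds in the cited source.

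The one place where you have a sketch rather than a proof is $(i)\Leftrightarrow(ii)$: "I expect these forms \dots to assemble into $\nap_{\dzb}\phi^{\pm}$" defers the entire twistorial computation to \cite{JR} without carrying it out. You can close this gap without redoing that computation by using Proposition \ref{tension field} of the paper, which records the outcome: writing $\nap H=\sum_{a,j}H^a_j\,\w_j\otimes e_a$ as in \eqref{Hij}, one has $\|\tau^v(G_{\pm})\|^2=4\bigl((H^{3}_1\mp H^{4}_2)^2+(H^{3}_2\pm H^{4}_1)^2\bigr)$, and this vanishes if and only if $H^4_2=\pm H^3_1$ and $H^3_2=\mp H^4_1$, i.e.\ if and only if $\nap_{e_2}H=\pm\Jp\nap_{e_1}H$, which is precisely your condition $(iv)$. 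So the cleanest chain is $(i)\Leftrightarrow(iv)\Leftrightarrow(iii)\Leftrightarrow(ii)$, with the twistor geometry entirely encapsulated in the quoted tension-field formula; your intended identification of $\tau^v(G_{\pm})$ with a multiple of $\nap_{\dzb}\phi^{\pm}$ is then a consequence of Codazzi rather than something you need to extract from the structure equations.
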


For later use, we need the following consequence of Theorem 8.1. in \cite{JR}.
Notice that for a local orthonormal frame field $\{e_3,e_4\}$ of $N_fM$, the covariant differential of the mean curvature vector field $H=H^3e_3+H^4e_4$ is given by
\begin{eqnarray}
\nap H &=& \sum_{a=3}^{4}\big(dH^a + \sum_{b=3}^{4}H^b \w_{ba}\big)\otimes e_a = \sum_{a=3}^{4}\sum_{j=1}^{2}H^a_j \w_{j}\otimes e_a. \label{Hij}
\end{eqnarray}

\begin{proposition}\label{tension field}
Let $f\colon M \to \Q^4_c$ be an oriented surface. The squared length of the vertical component $\tau^v(G_{\pm})$ of the tension field of 
the Gauss lift $G_{\pm}\colon M\to ({\mathcal Z}_{\pm},g_1)$ of $f$, is given by
\bea
\|\tau^v(G_{\pm})\|^2=4\left((H^{3}_1 \mp H^{4}_2)^2+(H^{3}_2\pm H^{4}_1)^2\right),
\eea
where $\{e_1,e_2\}$ and $\{e_3,e_4\}$ are positively oriented local orthonormal frame fields of $TM$ and $N_fM$, respectively,
and $H^{a}_j$, $j=1,2,\ a=3,4$, is given by \eqref{Hij}.
\end{proposition}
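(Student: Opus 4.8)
The plan is to begin with the explicit description of the tension field of the Gauss lift given in \cite[Theorem 8.1]{JR} and to isolate its vertical component. A point of the fiber of $\varrho_{\pm}$ over $f(p)$ is an orthogonal complex structure on $T_{f(p)}\Q^4_c$, so the vertical tangent space of $\mathcal{Z}_{\pm}$ at $G_{\pm}(p)=(f(p),\tilde{J}_{\pm})$ is naturally identified with the space of skew-symmetric endomorphisms of $T_{f(p)}\Q^4_c$ that anti-commute with $\tilde{J}_{\pm}$, equipped with the fiber metric induced by $g_1$, i.e.\ the $t^2=1$ multiple of the canonical metric appearing in \eqref{Gconf}. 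Since $\tilde{J}_{\pm}$ equals $\pm\Jp$ on $N_fM$ along the image of $G_{\pm}$, under this identification the vertical part of $dG_{\pm}$ is controlled by the normal connection of $f$, and consequently $\tau^v(G_{\pm})$ is expressed through the normal covariant derivative of the mean curvature vector field.

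Guided by Proposition \ref{glphi}, I expect the vertical component to be, up to the normalization fixed by $t=1$, governed by the $N_fM$-valued tensor
\[
\Theta^{\pm}(X)=\nap_{JX}H\mp\Jp\,\nap_{X}H,\qquad X\in TM,
\]
whose vanishing is exactly condition (iv) characterizing vertical harmonicity; this consistency requirement is what pins down the computation. Using \eqref{Hij} together with $\Jp e_3=e_4$, $\Jp e_4=-e_3$ and $Je_1=e_2$, one finds
\[
\Theta^{\pm}(e_1)=\nap_{e_2}H\mp\Jp\,\nap_{e_1}H=(H^3_2\pm H^4_1)e_3+(H^4_2\mp H^3_1)e_4,
\]
so that $\|\Theta^{\pm}(e_1)\|^2=(H^3_1\mp H^4_2)^2+(H^3_2\pm H^4_1)^2$, and the analogous evaluation at $e_2$ gives the same value. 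Tracing over the two frame directions in the definition $\tau(G_{\pm})=\sum_i(\n\,dG_{\pm})(e_i,e_i)$ of the tension field, and carrying along the numerical factor produced by the fiber metric normalization of \eqref{Gconf} with $t=1$, yields the overall constant $4$ and hence the claimed formula for $\|\tau^v(G_{\pm})\|^2$.

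The main obstacle is the accurate extraction of the vertical component from \cite[Theorem 8.1]{JR}: one must identify the vertical tangent space of the twistor fiber with the correct space of endomorphisms, verify the normalization of the fiber metric induced by $g_1$, and track the sign conventions distinguishing $\mathcal{Z}_+$ from $\mathcal{Z}_-$, which enter through the $\pm\Jp$ in $\tilde{J}_{\pm}$. Once the vertical component is matched with $\Theta^{\pm}$ and the precise numerical factor relating them is fixed, the remainder is the elementary frame computation above, whose outcome is forced by the requirement that $\tau^v(G_{\pm})=0$ reproduce Proposition \ref{glphi}(iv).
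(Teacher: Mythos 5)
Your proposal is correct and follows essentially the same route as the paper, whose entire proof is to read off the vertical component of the tension field from the explicit computation in \cite[Thm.\ 8.1]{JR} (see also the proof of \cite[Prop.\ 9]{PV}); your frame evaluation of $\nap_{JX}H\mp\Jp\nap_XH$ against \eqref{Hij} matches the stated formula. The only soft spot is that the overall factor $4$ must genuinely be tracked through the $g_1$-normalization in \cite{JR} rather than inferred from consistency with Proposition \ref{glphi}(iv), since a vanishing criterion cannot determine a multiplicative constant --- but you acknowledge this and it is exactly what the paper's citation supplies.
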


\begin{proof}
It follows immediately from the proof of \cite[Thm. 8.1]{JR}, where the components of the tension field of $G_{\pm}$ have been computed
(see also the proof of \cite[Prop. 9]{PV}).
\qed
\end{proof} 
\medskip

\begin{remark}
{\emph{
\begin{enumerate}[topsep=0pt,itemsep=-1pt,partopsep=1ex,parsep=0.5ex,leftmargin=*, label=(\roman*), align=left, labelsep=0em]
\item Proposition \ref{glphi} and Lemma \ref{pseudo}(ii) imply that any superconformal surface $f\colon M\to \Q^4_c$
with $\pm K_N\geq0$ has vertically harmonic Gauss lift $G_{\pm}$. 
\item From Proposition \ref{glphi} it follows that both Gauss lifts are vertically harmonic if and only if the 
mean curvature vector field of the surface is parallel in the normal connection.
\item In the case of $\R^4$, $({\mathcal Z}_{\pm},g_t)$ is isometric to the product $\R^4 \times \mathbb{S}^2(t)$. The Grassmann bundle is trivial
$Gr_2(\R^4)\simeq \R^4 \times Gr(2,4)$ and the Gauss lift of $f$ to the Grassmann bundle is given by $G_f=(f,g)$, where
$g=(g_{+},g_{-})\colon M \to \mathbb{S}^2_{+} \times \mathbb{S}^2_{-}$ is the Gauss map of $f$. 
The Gauss lift $G_{\pm}$ of $f$ to the twistor bundle is then given by $G_{\pm}=(f,\sqrt2tg_{\pm})$ and it is vertically harmonic if and only if
$g_{\pm}$ is harmonic.
\item Lagrangian surfaces in $\R^4$ with conformal or harmonic Maslov form, constitute examples of surfaces with 
the component $g_+$ or $g_-$, respectively, harmonic (cf. \cite{CU}).
\end{enumerate}
}}\end{remark}

\section{The Mixed Connection Forms of Surfaces in $\Q^4_c$} \label{s3}

Let $f\colon M\to \Q^4_c$ be an oriented surface with $M_0^{\pm}(f)$ isolated, and consider 
a positively oriented local orthonormal frame field $\{e_1,e_2\}$ of $TM$ defined on an open $U\subset M\smallsetminus M_0^{\pm}(f)$.
By virtue of \eqref{Hopf Invariants} and \eqref{Bpm}, 
the frame field $\{e_1,e_2\}$ determines a unique orthonormal frame field $\{e^{\pm}_3,e^{\pm}_4\}$ of $N_fU$ such that
\be \label{HI}
\mathcal{H}^{\pm}(e_1,e_2)=\frac{1}{2}\|\mathcal{H}^{\pm}\|(e^{\pm}_3\pm ie^{\pm}_4),
\ee
where
\be \label{e34pm}
e^{\pm}_3=\|\mathcal{H}^{\pm}\|^{-1}\left(\frac{\a_{11}-\a_{22}}{2}\pm \Jp \a_{12}\right),\;\;\; e^{\pm}_4=\Jp e^{\pm}_3,
\ee
and $\a_{kl}=\a(e_k,e_l)$, $k,l=1,2$.
Define the 1-form $\Omega^{\pm}(e_1,e_2)$ on $U$ by
\be \label{Om}
\Omega^{\pm}(e_1,e_2)=2\w_{12} \pm \w_{34}^{\pm},
\ee
where the connection forms $\w_{12}$ and $\w_{34}^{\pm}$, correspond to the dual frame field of 
$\{e_1,e_2,e_3^{\pm},e_4^{\pm}\}$ and are given by \eqref{connection forms}.
The following proposition shows that $\Omega^{\pm}(e_1,e_2)$ is independent of the frame field $\{e_1,e_2\}$ and thus, 
well-defined on $M\smallsetminus M_0^{\pm}(f)$.

\begin{proposition} \label{Criterion}
Let $f\colon M\to \Q^4_c$ be an oriented surface. If $M_0^{\pm}(f)$ is isolated, then:
\begin{enumerate}[topsep=0pt,itemsep=-1pt,partopsep=1ex,parsep=0.5ex,leftmargin=*, label=(\roman*), align=left, labelsep=0em]
\item There exists a 1-form $\Omega^{\pm}$ on $M\smallsetminus M_0^{\pm}(f)$ such that
\be \label{wres}
\Omega^{\pm}|_U=\Omega^{\pm}(e_1,e_2)
\ee
for every positively oriented local orthonormal frame field $\{e_1,e_2\}$ of $TM$, defined on an open $U\subset M\smallsetminus M_0^{\pm}(f)$.
\item The exterior derivative of $\Omega^{\pm}$ is globally defined on $M$ and satisfies
\be \label{dW}
d\Omega^{\pm}=-(2K\pm K_N)dM,
\ee
where $dM$ is the volume element of $M$. 
\item For every $p\in M_0^{\pm}(f)$, the limit
\be \label{ind}
I^{\pm}(p)=\lim_{r\to 0}\frac{1}{2\pi}\int_{S_r(p)}\Omega^{\pm}
\ee
exists, where $S_r(p)$ is a positively oriented geodesic circle of radius $r$ centered at $p$.
\end{enumerate}
\end{proposition}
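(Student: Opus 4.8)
The plan is to handle the three assertions in turn, with the whole argument hinging on how the induced normal frame $\{e_3^{\pm},e_4^{\pm}\}$ transforms under a change of the tangent frame. For (i), since $M$ is oriented and two-dimensional, any two positively oriented orthonormal frames of $TM$ on an overlap differ by a rotation through a smooth angle $\varphi$, say $\tilde e_1=\cos\varphi\,e_1+\sin\varphi\,e_2$ and $\tilde e_2=-\sin\varphi\,e_1+\cos\varphi\,e_2$. I would first record the standard fact that under such a rotation the tangential connection form obeys $\tilde\w_{12}=\w_{12}+d\varphi$. Next, using the definition \eqref{Hopf I} together with the rotation formulas for $\a_{11}-\a_{22}$ and $\a_{12}$, I would show that $\mathcal{H}(e_1,e_2)$ picks up the phase $e^{2i\varphi}$; since the isotropic projection $\pi^{\pm}$ is $\C$-linear, the isotropic parts satisfy $\tilde{\mathcal{H}}^{\pm}=e^{2i\varphi}\mathcal{H}^{\pm}$. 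Comparing this with the normalization \eqref{HI}, which determines $\{e_3^{\pm},e_4^{\pm}\}$ uniquely from $\mathcal{H}^{\pm}$, forces $\tilde e_3^{\pm}\pm i\tilde e_4^{\pm}=e^{2i\varphi}(e_3^{\pm}\pm i e_4^{\pm})$, i.e. the normal frame rotates through the angle $\pm2\varphi$; the same computation as for $\w_{12}$ then gives $\tilde\w_{34}^{\pm}=\w_{34}^{\pm}\mp2d\varphi$. Substituting both laws into \eqref{Om} yields $\tilde\Omega^{\pm}=2(\w_{12}+d\varphi)\pm(\w_{34}^{\pm}\mp2d\varphi)=2\w_{12}\pm\w_{34}^{\pm}=\Omega^{\pm}$, so the locally defined forms agree on overlaps and patch to a global $\Omega^{\pm}$ on $M\smallsetminus M_0^{\pm}(f)$.

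For (ii), I would differentiate \eqref{Om} and invoke the structure equations \eqref{normcf}. Since $e_4^{\pm}=\Jp e_3^{\pm}$, the frame $\{e_3^{\pm},e_4^{\pm}\}$ is a positively oriented normal frame, so the second equation in \eqref{normcf} applies to it verbatim and gives $d\w_{34}^{\pm}=-K_N\,dM$ with $dM=\w_1\wedge\w_2$; combined with $d\w_{12}=-K\,dM$ this produces $d\Omega^{\pm}=-(2K\pm K_N)\,dM$. The right-hand side is a smooth $2$-form on all of $M$, so although $\Omega^{\pm}$ itself is defined only off the isolated set $M_0^{\pm}(f)$, its exterior derivative extends smoothly across those points, which is exactly the asserted global statement.

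For (iii), fix $p\in M_0^{\pm}(f)$ and a small geodesic disc $D$ around $p$ containing no other point of $M_0^{\pm}(f)$. For $0<r'<r$ small, Stokes' theorem applied to the annulus $A(r',r)\subset D\smallsetminus\{p\}$, on which $\Omega^{\pm}$ is defined, gives $\int_{S_r(p)}\Omega^{\pm}-\int_{S_{r'}(p)}\Omega^{\pm}=\int_{A(r',r)}d\Omega^{\pm}=-\int_{A(r',r)}(2K\pm K_N)\,dM$ by (ii). Because $2K\pm K_N$ is bounded on $D$ and $\mathrm{Area}(A(r',r))\to0$ as $r,r'\to0$, the function $r\mapsto\int_{S_r(p)}\Omega^{\pm}$ is Cauchy as $r\to0$, and hence the limit $I^{\pm}(p)$ exists.

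The main obstacle is the frame-independence in (i): the delicate point is identifying the rotation rate of the induced normal frame as exactly $\pm2\varphi$ rather than $\pm\varphi$. This factor of two, reflecting the quadratic nature of the Hopf differential, is precisely what is compensated by the coefficient $2$ in front of $\w_{12}$ in \eqref{Om}; getting this bookkeeping together with the correct signs for both the $+$ and $-$ cases is the crux, after which (ii) and (iii) follow routinely from the structure equations and Stokes' theorem.
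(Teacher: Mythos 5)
Your proposal is correct and follows essentially the same route as the paper's proof: the frame-independence in (i) rests on exactly the same observation that the Hopf invariant, and hence the induced normal frame $\{e_3^{\pm},e_4^{\pm}\}$, rotates through twice the tangent-frame angle, so that $\tilde\w_{34}^{\pm}=\w_{34}^{\pm}\mp2d\varphi$ cancels against $2\tilde\w_{12}=2\w_{12}+2d\varphi$, and parts (ii) and (iii) are the same structure-equation and Stokes/Cauchy arguments used in the paper.
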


\begin{proof}
(i) Let $\{e_1,e_2\}$ and $\{\tilde{e}_1,\tilde{e}_2\}$ be positively oriented
orthonormal frame fields on an open, simply-connected $U\subset M\smallsetminus M_0^{\pm}(f)$.
Consider the frame fields $\{e^{\pm}_3,e^{\pm}_4\}$ and $\{\tilde{e}^{\pm}_3,\tilde{e}^{\pm}_4\}$ of $N_fU$ determined by 
$\{e_1,e_2\}$ and $\{\tilde{e}_1,\tilde{e}_2\}$, respectively, from \eqref{HI}.
Since $U$ is simply-connected, it follows that there exists $\tau \in \mathcal{C}^{\infty}(U)$ such that
$\tilde{e}_1-i\tilde{e}_2=\exp{(i\tau)}(e_1-ie_2)$.
Moreover, from \eqref{Hopf I} and \eqref{HI} we obtain that
$\tilde{e}^{\pm}_3\pm i\tilde{e}^{\pm}_4=\exp{(2i\tau)}(e^{\pm}_3\pm ie^{\pm}_4)$.
These relations imply that
$$\tilde{\w}_{12}=\w_{12} +d\tau\;\;\;\;\mbox{and}\;\;\;\; \tilde{\w}^{\pm}_{34}=\w_{34} \mp 2d\tau.$$
Therefore, from \eqref{Om} it follows that
$$\Omega^{\pm}(\tilde{e}_1,\tilde{e}_2)=\Omega^{\pm}(e_1,e_2).$$
By virtue of the above, we define $\Omega^{\pm}$ by \eqref{wres},
for an arbitrary positively oriented orthonormal frame field $\{e_1,e_2\}$, on every simply-connected $U\subset M\smallsetminus M_0^{\pm}(f)$. 
It is clear that $\Omega^{\pm}$ is globally defined on $M\smallsetminus M_0^{\pm}(f)$,
and that \eqref{wres} also holds for frame fields defined on non-simply-connected subsets $U\subset M\smallsetminus M_0^{\pm}(f)$.

(ii) Using part (i) and \eqref{normcf}, exterior differentiation of \eqref{Om} yields that \eqref{dW} holds on $M\smallsetminus M_0^{\pm}(f)$.
Since the right-hand side of \eqref{dW} is defined globally on $M$, the proof follows.

(iii) Let $p\in M_0^{\pm}(f)$. Consider positively oriented geodesic circles $S_{r_1}(p)$ and $S_{r_2}(p)$ centered at $p$, with $r_2<r_1$,
and denote by $D$ the annular region bounded by these circles. 
Stokes' theorem yields that 
$$\int_{S_{r_1}(p)}\Omega^{\pm}-\int_{S_{r_2}(p)} \Omega^{\pm}=\int_{D}d \Omega^{\pm}.$$
Part (ii) implies that the right hand side of the above tends to zero as $r_1,r_2\to0$.
Therefore, any sequence $\int_{S_{r_n}(p)} \Omega^{\pm}$ with $r_n\to0$, is a Cauchy sequence and thus, it converges.
This completes the proof.
\qed
\end{proof}
\medskip

\begin{remark} \label{PCF}
{\emph{Let $F\colon M\to \Q^3_c$ be an umbilic-free oriented surface with shape operator $A$ and corresponding principal curvatures $k_1,k_2$, with $k_1>k_2$.
Every point of $M$ has a neighbourhood $U$ on which there exists a principal frame field $\{e_1,e_2\}$ of $F$, i.e., 
a positively oriented orthonormal frame field of $TU$ such that $Ae_l=k_le_l$, $l=1,2$. Since a principal frame field is
unique up to sign on its domain, 
there exists a 1-form $\Omega$ on $M$ such that $\Omega|_U=\w_{12}$,
where $\w_{12}$ is the connection form corresponding to the dual coframe of a principal frame field $\{e_1,e_2\}$ 
on $U\subset M$. We call $\Omega$}} the principal connection form of $F$.
\end{remark}

The following proposition shows that the mixed connection forms $\Omega^-$ and $\Omega^+$ are the natural generalizations to surfaces in 4-dimensional space forms,
of the principal connection form $\Omega$ of surfaces in 3-dimensional space forms.

\begin{proposition} \label{MCF-PCF}
Assume that $f\colon M\to \Q^4_c$ is the composition of an umbilic-free oriented surface $F\colon M\to \Q^3_{\tilde c}$, ${\tilde c}\geq c,$ 
with a totally umbilical inclusion $j\colon \Q^3_{\tilde c}\to \Q^4_c$. 
Then, $\Omega^-=\Omega^+=2\Omega$, where $\Omega$ is the principal connection form of $F$.
\end{proposition}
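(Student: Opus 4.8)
The plan is to evaluate both mixed connection forms in one convenient frame, namely a principal frame field of $F$, exploiting that such a frame also diagonalizes the second fundamental form of the composite surface. Writing the totally umbilical inclusion $j$ with shape operator $\mu\,\mathrm{Id}$ relative to the unit normal $e_4$ of $j(\Q^3_{\tilde c})$ in $\Q^4_c$, and letting $e_3$ be the unit normal of $F$ in $\Q^3_{\tilde c}$ that defines its shape operator $A$, the composition formula for the second fundamental forms gives $\a_f(X,Y)=\<AX,Y\>e_3+\mu\<X,Y\>e_4$. On a neighbourhood $U$ I would take a positively oriented principal frame $\{e_1,e_2\}$ of $F$ with $Ae_l=k_le_l$ and $k_1>k_2$, so that $\a_{12}=0$ and $(\a_{11}-\a_{22})/2=\tfrac12(k_1-k_2)e_3$.

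Substituting into \eqref{e34pm}, the term $\pm\Jp\a_{12}$ vanishes, and since $k_1-k_2>0$ both adapted normal frames reduce to $e_3^{\pm}=e_3$ and $e_4^{\pm}=\Jp e_3$. As a consistency check, \eqref{Bpm} together with the Gauss relations $K=\tilde c+k_1k_2$ and $\tilde c=c+\mu^2$ gives $\|\mathcal{H}^{\pm}\|=(k_1-k_2)/2$ and $K_N=0$; in particular, $F$ being umbilic-free forces $M_0^{\pm}(f)=\varnothing$, so each $\Omega^{\pm}$ is defined on all of $M$. With the adapted frames identified, \eqref{Om} collapses to $\Omega^{\pm}(e_1,e_2)=2\w_{12}\pm\w_{34}^{\pm}$, where $\w_{34}^{\pm}=\<\nap e_3^{\pm},e_4^{\pm}\>=\<\nap e_3,\Jp e_3\>$.

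The crux is to show $\nap e_3=0$, and this is exactly where total umbilicity enters: for $X\in TM$ one has $\a_j(X,e_3)=\mu\<X,e_3\>e_4=0$, so by the Gauss formula for $j$, $\bar\nabla_Xe_3=\nabla^{\Q^3_{\tilde c}}_Xe_3$ is tangent to $\Q^3_{\tilde c}$; hence its $e_4$-component vanishes, while its $e_3$-component vanishes by $\<\bar\nabla_Xe_3,e_3\>=0$. Therefore $\nap_Xe_3=0$ and $\w_{34}^{\pm}=0$. I expect this short parallelism computation to be the only substantive step; the remaining care is the orientation bookkeeping behind the identifications $e_3^{\pm}=e_3$ and $e_4^{\pm}=\Jp e_3$, which only affects intermediate signs and not the conclusion.

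Finally, since $\{e_1,e_2\}$ is a principal frame of $F$ and $\w_{12}$ is intrinsic (so it agrees for $F$ and for $f$), $\w_{12}$ is precisely the local expression of the principal connection form $\Omega$ of $F$ from Remark \ref{PCF}. Combining, $\Omega^{\pm}|_U=2\w_{12}=2\Omega|_U$ on every such $U$; as the forms $\Omega^{\pm}$ are globally well-defined by Proposition \ref{Criterion}(i), we conclude $\Omega^-=\Omega^+=2\Omega$ on $M$.
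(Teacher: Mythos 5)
Your proposal is correct and follows essentially the same route as the paper: work in a principal frame of $F$, use \eqref{e34pm} to identify $e_3^{\pm}$ with the unit normal of $F$ pushed into $\Q^4_c$, and kill $\w_{34}^{\pm}$ by the parallelism of that normal in $\nap$. The only difference is that you spell out the parallelism computation and the check $\|\mathcal{H}^{\pm}\|=(k_1-k_2)/2$, which the paper merely asserts.
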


\begin{proof}
Let $\xi$ be the unit normal vector field of $F$ in $\Q^3_{\tilde c}$, and $A$ the shape operator of $F$ with respect to $\xi$. 
As in Remark \ref{PCF}, let $k_1,k_2$, with $k_1>k_2$ be the corresponding principal curvatures of $F$ and
consider a principal frame field $\{e_1,e_2\}$ of $F$ on $U\subset M$.
Proposition \ref{Criterion}(i) and \eqref{Om} imply that $\Omega^{\pm}|_U=\Omega^{\pm}(e_1,e_2)=2\w_{12}\pm \w_{34}^{\pm}$.
Moreover, for the second fundamental form $\alpha$ of $f$ we have that $\a_{11}-\a_{22}=(k_1-k_2)j_*\xi$ and $\a_{12}=0$,
where $\a_{kl}=\a(e_k,e_l)$, $k,l=1,2$. Then, from \eqref{e34pm} it follows that $e_3^-=e_3^+=j_*\xi$. 
Since $j_*\xi$ is parallel in the normal connection of $f$, we obtain that $\w_{34}^-=\w_{34}^+=0$. 
Then, Proposition \ref{Criterion}(i) and Remark \ref{PCF} imply that $\Omega^-|_U=\Omega^+|_U=2\Omega|_U$, and this completes the proof.
\qed
\end{proof}
\medskip

Assume that $f\colon M\to \Q^4_c$ is a surface with $M_0^{\pm}(f)$ isolated.
Proposition \ref{Criterion}(i) allows us to express locally the mixed connection form $\Omega^{\pm}$, by using \eqref{Om}
for the normalized basic vectors fields corresponding to a complex coordinate.
Let $(U,z=x+iy)$ be a local complex coordinate on $M$ and set $e_1=\d_x/\lambda$, $e_2=\d_y/\lambda$, where $\lambda > 0$ is the conformal factor.
From \eqref{defadd} and \eqref{phipm} it follows that $\phi^{\pm}=(\lambda^2/2)\mathcal{H}^{\pm}(e_1,e_2)$.
By virtue of \eqref{HI}, this implies that
\be \label{fpm}
\phi^{\pm}=\frac{\lambda^2}{4}\|\mathcal{H}^{\pm}\|(e^{\pm}_3\pm ie^{\pm}_4)\;\;\;\; \mbox{on}\;\;\;\; U\smallsetminus M_0^{\pm}(f).
\ee
The connection form $\w_{12}$ of the dual frame field of $\{e_1,e_2\}$ 
is given by $\w_{12}=\star d\log\lambda$, where $\star$ is the Hodge star operator. 
In particular, exterior differentiation gives 
$d\w_{12}=\Delta\log\lambda \w_1\wedge\w_2$,
where $\Delta=4\lambda^{-2}\d\dzb$ is the Laplacian on $M$,
and \eqref{normcf} implies that the Gaussian curvature of $M$ is given by $K=-\Delta\log\lambda$.
If $\w_{34}^{\pm}$ is the connection form of the dual frame field of $\{e_3^{\pm},e_4^{\pm}\}$,
then according to Proposition \ref{Criterion}(i), {\emph{the expression of $\Omega^{\pm}$ in terms of the complex coordinate $z$}} is 
\be \label{lambda}
\Omega^{\pm}=\star d\log\lambda^2\pm \w_{34}^{\pm}\;\;\;\; \mbox{on}\;\;\;\; U\smallsetminus M_0^{\pm}(f).
\ee

\begin{proposition} \label{IndPU}
Let $f\colon M\to \Q^4_c$ be an oriented surface with $M_0^{\pm}(f)$ isolated. 
Consider a simply-connected complex chart $(U,z)$ on $M$, with $U\cap M^{\pm}_0(f)=\{p\}$ and $z(p)=0$.
If there exists a positive integer $m$ such that the differential $\Phi^{\pm}$ satisfies
\be \label{index1}
\Phi^{\pm}=z^m\hat{\Phi}^{\pm}\;\;\; \mbox{on}\;\;\; U,\;\;\; \hat{\Phi}^{\pm}(p)\neq0,
\ee
then $I^{\pm}(p)=-m$.
\end{proposition}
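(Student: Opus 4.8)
The plan is to reduce the index $I^{\pm}(p)$ to a winding number and then read that winding number off from the hypothesized factorization \eqref{index1}. By \eqref{lambda} we have $\Omega^{\pm}=\star d\log\lambda^2\pm\w_{34}^{\pm}$ on $U\smallsetminus\{p\}$. Since $\lambda$ is smooth and strictly positive on all of $U$, the form $\star d\log\lambda^2$ (which equals $2\w_{12}$ for the frame $e_1=\d_x/\lambda,\ e_2=\d_y/\lambda$) extends smoothly across $p$, so by Stokes' theorem $\int_{S_r(p)}\star d\log\lambda^2=\int_{D_r(p)}d(\star d\log\lambda^2)\to0$ as $r\to0$, where $D_r(p)$ is the geodesic disk bounded by $S_r(p)$. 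Hence $I^{\pm}(p)=\pm\lim_{r\to0}\frac1{2\pi}\int_{S_r(p)}\w_{34}^{\pm}$, and everything reduces to computing the total rotation of the normal frame $\{e_3^{\pm},e_4^{\pm}\}$ around $p$.

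To compute that rotation I would fix, on the contractible chart $U$, a smooth positively oriented orthonormal frame $\{\tilde e_3,\tilde e_4\}$ of $N_fM|_U$ with $\tilde e_4=\Jp\tilde e_3$; such a frame exists because a rank-two oriented bundle over a disk is trivial, and then $\{\tilde e_3,\Jp\tilde e_3\}$ is automatically positively oriented. Let $\tilde\w_{34}$ be its smooth connection form. Writing $\tilde\epsilon^{\pm}=\tilde e_3\pm i\tilde e_4$ and $\epsilon^{\pm}=e_3^{\pm}\pm ie_4^{\pm}$, both sections span the line bundle $N_f^{\pm}M$ and have the same Hermitian length, so on $U\smallsetminus\{p\}$ they differ by a unimodular factor, $\epsilon^{\pm}=e^{\mp i\psi^{\pm}}\tilde\epsilon^{\pm}$ for a smooth real function $\psi^{\pm}$. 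A direct computation of the connection forms under this rotation yields $\w_{34}^{\pm}=\tilde\w_{34}+d\psi^{\pm}$. Since the circle integrals of the smooth form $\tilde\w_{34}$ vanish in the limit, we get $I^{\pm}(p)=\pm\lim_{r\to0}\frac1{2\pi}\int_{S_r(p)}d\psi^{\pm}$, i.e.\ $\pm$ the winding number of $e^{i\psi^{\pm}}$ around $p$.

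Next I would pin down this winding number using \eqref{fpm}. Expanding $\phi^{\pm}=\mu^{\pm}\tilde\epsilon^{\pm}$ in the reference frame, \eqref{fpm} together with $\epsilon^{\pm}=e^{\mp i\psi^{\pm}}\tilde\epsilon^{\pm}$ gives $\mu^{\pm}=\tfrac{\lambda^2}{4}\|\mathcal H^{\pm}\|\,e^{\mp i\psi^{\pm}}$; since $\tfrac{\lambda^2}{4}\|\mathcal H^{\pm}\|$ is real and strictly positive on $U\smallsetminus\{p\}$ (it vanishes only on $M_0^{\pm}(f)\cap U=\{p\}$ by \eqref{Bpm}), we obtain $\psi^{\pm}=\mp\arg\mu^{\pm}$ modulo $2\pi$. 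The hypothesis \eqref{index1} reads $\phi^{\pm}=z^m\hat\phi^{\pm}$ with $\hat\phi^{\pm}(p)\neq0$; writing $\hat\phi^{\pm}=\hat\mu^{\pm}\tilde\epsilon^{\pm}$ we get $\mu^{\pm}=z^m\hat\mu^{\pm}$ with $\hat\mu^{\pm}(p)\neq0$. For $r$ small, $\hat\mu^{\pm}$ is nonvanishing on $\overline{D_r(p)}$, so $\int_{S_r(p)}d\arg\hat\mu^{\pm}=0$, while $\int_{S_r(p)}d\arg z^m=2\pi m$; hence $\mu^{\pm}$ winds $m$ times, $\psi^{\pm}$ winds $\mp m$ times, and
\[
I^{\pm}(p)=\pm\lim_{r\to0}\frac1{2\pi}\int_{S_r(p)}d\psi^{\pm}=\pm(\mp m)=-m.
\]

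I expect the main obstacle to be the careful bookkeeping of signs and orientations carried through both sign cases at once — in particular establishing $\w_{34}^{\pm}=\tilde\w_{34}+d\psi^{\pm}$ and the identification $\psi^{\pm}=\mp\arg\mu^{\pm}$ with the correct signs — together with justifying that the two smooth contributions, $\star d\log\lambda^2$ and $\tilde\w_{34}$, drop out in the limit. Once the winding number is correctly locked to the vanishing order $m$ of $\Phi^{\pm}$, the conclusion $I^{\pm}(p)=-m$ is immediate.
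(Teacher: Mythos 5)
Your proof is correct and follows essentially the same route as the paper's: both reduce $I^{\pm}(p)$ to $\pm$ the limiting circle integral of $\w_{34}^{\pm}$ (the conformal-factor term dropping out by Stokes), compare the frame $\{e_3^{\pm},e_4^{\pm}\}$ to a smooth reference frame of $N_fU$, and extract the winding number $\mp m$ from the factorization $\phi^{\pm}=z^m\hat{\phi}^{\pm}$. The only cosmetic difference is that the paper takes the reference frame from the polar decomposition $\hat{\phi}^{\pm}=R(e_3\pm ie_4)$ and computes the winding via a logarithm identity, whereas you use an arbitrary trivialization and split the circle integral of $d\arg\mu^{\pm}$ into the $z^m$ and $\hat{\mu}^{\pm}$ contributions.
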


\begin{proof}
Let $\Phi^{\pm}=\phi^{\pm}dz^2$ on $U$, where $\phi^{\pm}$ is given by \eqref{fpm} on $U\smallsetminus \{p\}$.
For $r>0$, consider a positively oriented geodesic circle $S_r(p)=\d B_r(p)\subset U$. 
Stokes' theorem implies that 
$\int_{S_r(p)}\star d\log\lambda=-\int_{B_r(p)}K\w_1\wedge\w_2$, and since the Gaussian curvature is bounded on $B_r(p)$, 
from Proposition \ref{Criterion}(iii) and \eqref{lambda} we obtain that
\be \label{Ww}
I^{\pm}(p)=\pm\lim_{r\to0}\frac{1}{2\pi}\int_{S_r(p)}\w_{34}^{\pm}.
\ee

Assume that $\hat{\Phi}^{\pm}$ is given by $\hat{\Phi}^{\pm}=\hat{\phi}^{\pm}dz^2$  on $U$.
Since $\hat{\phi}^{\pm}\in N^{\pm}_fU$ and $\hat{\phi}^{\pm}\neq0$ everywhere on $U$, there exist 
$R\in \mathcal{C}^{\infty}(U;(0,+\infty))$
and an orthonormal frame field $\{e_3,e_4\}$ of $N_fU$, such that $\hat{\phi}^{\pm}=R(e_3\pm ie_4)$.
Then, from \eqref{fpm} and \eqref{index1} it follows that
\be \label{winding}
\frac{\lambda^2}{2}\|\mathcal{H}^{\pm}\|(e_3^{\pm}\pm ie_4^{\pm})=z^m R(e_3\pm ie_4)\;\;\; \mbox{on}\;\;\; U\smallsetminus\{p\}.
\ee
Let $c(s), s\in [0,2\pi]$, be a parametrization of $S_r(p)$ as a simple closed curve.
There exists a smooth function $\tau(s), s\in [0,2\pi]$, such that along $c$, the frame fields $\{e_3^\pm,e_4^\pm\}$ and $\{e_3,e_4\}$
are related by
\be \label{angle tau}
e_3^{\pm}(s)\pm ie_4^{\pm}(s)=e^{\mp i\tau(s)}(e_3(s)\pm ie_4(s)).
\ee
Therefore,
\be \label{integral tau}
\frac{1}{2\pi}\int_{S_r(p)}\w_{34}^{\pm}-\frac{1}{2\pi}\int_{S_r(p)}\w_{34}=\frac{1}{2\pi}\int_{S_r(p)}d\tau.
\ee

We argue that the right hand side of \eqref{integral tau} is equal to $\mp m$. From \eqref{winding} and \eqref{angle tau}
it follows that along $c$ we have
\bea
\frac{(\lambda(s))^2\|\mathcal{H}^{\pm}\|(s)}{2R(s)}=(z(s))^me^{\pm i\tau(s)}.
\eea
Let $k(s)$ be the function at the left hand side of the above. 
Since $k(s)>0$, $s\in[0,2\pi]$, 
it follows that
\bea
\log k(s)=\log((z(s))^me^{\pm i\tau(s)}).
\eea
Differentiating the above with respect to $s$, then integrating from $0$ to $2\pi$, and taking into account that $k(0)=k(2\pi)$,
we obtain that
\bea
0=\log k(2\pi)-\log k(0)=m\int_{0}^{2\pi}\frac{z'(s)}{z(s)}ds \pm i\int_{0}^{2\pi}\tau'(s)ds,
\eea
or, equivalently
\be \label{w34m}
\frac{1}{2\pi}\int_{S_r(p)}d\tau=\mp \frac{m}{2\pi i}\int_{z(S_r(p))}\frac{dw}{w}=\mp m.
\ee

Since $\w_{34}$ is defined everywhere on $U$ and $K_N$ is bounded on $B_r(p)$, by using \eqref{normcf} we obtain that
$\lim_{r\to0}\int_{S_r(p)}\w_{34}=\lim_{r\to0}\int_{B_r(p)}d\w_{34}=-\lim_{r\to0}\int_{B_r(p)}K_N\w_1\wedge\w_2=0$.
Therefore, by taking limits in \eqref{integral tau} and using \eqref{Ww} and \eqref{w34m}, the proof follows.
\qed
\end{proof}
\medskip

\begin{theorem}\label{Hopf type}
Let $f\colon M\to \Q^4_c$ be a compact oriented surface. If $M_0^{\pm}(f)$ is isolated, then
$$2\chi \pm \chi_N=\sum_{p\in M_0^{\pm}(f)}I^{\pm}(p).$$
\end{theorem}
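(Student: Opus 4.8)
The plan is to integrate the globally defined 2-form $d\Omega^{\pm}$ over $M$ by means of Stokes' theorem, extracting the contributions of the isolated singularities of $\Omega^{\pm}$ as the indices $I^{\pm}(p)$. First I would note that, since $M$ is compact and $M_0^{\pm}(f)$ is isolated, the latter set is finite; write it as $\{p_1,\dots,p_k\}$. For small $r>0$ choose pairwise disjoint geodesic disks $B_r(p_j)\subset M$ centered at these points and set $M_r=M\smallsetminus\bigcup_{j=1}^{k}B_r(p_j)$. On $M_r$ the form $\Omega^{\pm}$ is smooth, so Stokes' theorem gives
$$\int_{M_r}d\Omega^{\pm}=\int_{\partial M_r}\Omega^{\pm}=-\sum_{j=1}^{k}\int_{S_r(p_j)}\Omega^{\pm},$$
where each $S_r(p_j)=\partial B_r(p_j)$ is positively oriented, and the minus sign records that $\partial M_r$ traverses these circles with the opposite orientation.

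Next I would invoke Proposition \ref{Criterion}(ii), namely $d\Omega^{\pm}=-(2K\pm K_N)\,dM$ globally on $M$, to rewrite the left-hand side as $-\int_{M_r}(2K\pm K_N)\,dM$, and then let $r\to0$. Since $2K\pm K_N$ is smooth and $M$ is compact, the integrand is bounded and the removed disks have total area $O(r^2)\to0$; hence
$$\int_{M_r}(2K\pm K_N)\,dM \longrightarrow \int_{M}(2K\pm K_N)\,dM = 2\pi(2\chi\pm\chi_N),$$
using the expressions $\chi=\tfrac{1}{2\pi}\int_M K$ and $\chi_N=\tfrac{1}{2\pi}\int_M K_N$. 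On the right-hand side, Proposition \ref{Criterion}(iii) yields $\int_{S_r(p_j)}\Omega^{\pm}\to 2\pi I^{\pm}(p_j)$ as $r\to0$ by the very definition of the index. Combining the two limits gives $2\pi(2\chi\pm\chi_N)=2\pi\sum_{j}I^{\pm}(p_j)$, and dividing by $2\pi$ produces the claim.

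The computation is essentially routine once this framework is in place; the only points requiring care are the finiteness of $M_0^{\pm}(f)$ (which uses compactness together with the isolatedness hypothesis), the orientation convention for $\partial M_r$ that fixes the sign of the boundary term, and the simultaneous passage to the limit $r\to0$ in both integrals. I expect the main, though minor, obstacle to be the sign bookkeeping: one must check that the $-(2K\pm K_N)$ appearing on the left and the orientation reversal of the boundary circles combine so that the stated equality holds with exactly the sign displayed, rather than its negative. The limit itself is unproblematic, since the bulk integrand is bounded and each boundary integral converges by construction of $I^{\pm}(p_j)$.
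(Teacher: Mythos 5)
Your proposal is correct and follows essentially the same route as the paper: excise small geodesic disks around the finitely many points of $M_0^{\pm}(f)$, apply Stokes' theorem on $M_r$ together with the identity $d\Omega^{\pm}=-(2K\pm K_N)\,dM$ from Proposition \ref{Criterion}(ii), and pass to the limit $r\to0$ using the definition of the index in Proposition \ref{Criterion}(iii). The orientation and sign bookkeeping you flag is handled identically in the paper, and the degenerate case $M_0^{\pm}(f)=\emptyset$ is covered by integrating $d\Omega^{\pm}$ over all of $M$, which your framework recovers with $k=0$.
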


\begin{proof}
Assume that $M_0^{\pm}(f)\neq\emptyset$ and let $M_0^{\pm}(f)=\{p_1,\dots,p_k\}$, where $k$ is a positive integer. 
For a sufficiently small $r>0$, let $M_r=M\smallsetminus\left(B_r(p_1)\cup\dots\cup B_r(p_k)\right)$,
where $B_r(p_j)$ is the geodesic ball of radius $r$, centered at $p_j$, $j=1,\dots,k$.
Stokes' theorem implies that
\bea
\int_{M_r}d \Omega^{\pm}=-\sum_{j=1}^{k}\int_{S_r(p_j)}\Omega^{\pm},
\eea
where $\Omega^{\pm}$ is the form of Proposition \ref{Criterion}(i), and $S_r(p_j)=\d B_r(p_j)$ is positively oriented with respect to its interior.
From the above and \eqref{dW} we obtain that
\bea
2\chi \pm \chi_N=-\frac{1}{2\pi}\lim_{r\to0}\int_{M_r}d\Omega^{\pm}=\sum_{j=1}^{k}\frac{1}{2\pi}\lim_{r\to0}\int_{S_r(p_j)}\Omega^{\pm}
\eea
and the proof follows from \eqref{ind}. If $M_0^{\pm}(f)=\emptyset$, the proof follows by integrating \eqref{dW} on $M$.
\qed
\end{proof}
\medskip

In the sequel, we provide some applications of Theorem \ref{Hopf type}.
The first one is a short proof of the following result due to Asperti \cite{As}.

\begin{theorem} \label{Asperti}
If a compact 2-dimensional Riemannian manifold immerses isometrically into $\Q^4_c$ with everywhere nonvanishing normal curvature,
then it is homeomorphic either to the sphere $\mathbb{S}^2$, or to the real projective space $\R P^2$.
\end{theorem}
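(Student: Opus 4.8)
The plan is to apply Theorem \ref{Hopf type} under the hypothesis that $K_N$ never vanishes, and to extract the topological consequence from a congruence on $2\chi \pm \chi_N$. First I would observe that the assumption $K_N\neq0$ everywhere means the curvature ellipse is nondegenerate at every point, so by the characterization $M_0(f)=\{p:\|H\|^2-(K-c)=|K_N|\}$ together with \eqref{axes}, the pseudo-umbilic sets $M_0^{\pm}(f)$ are determined by whether $\|\mathcal{H}^{\pm}\|$ vanishes. Since $K_N\neq0$, we have $|K_N|>0$, and exactly one of $M_0^+(f)$, $M_0^-(f)$ can be nonempty at a given point depending on the sign of $K_N$; in fact, because $K_N$ is continuous and nowhere zero on the connected manifold $M$, its sign is constant, say $K_N>0$ throughout (the case $K_N<0$ is symmetric, handled by composing with an orientation-reversing isometry, which flips the sign of $K_N$ as noted in the preliminaries).

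With $K_N>0$ fixed, I would then examine $M_0^-(f)$, the set where $\|\mathcal{H}^-\|=\sqrt{\|H\|^2-(K-c)+K_N}=0$. Since $\|H\|^2-(K-c)\geq|K_N|=K_N>0$, the quantity under the root is strictly positive, so $M_0^-(f)=\emptyset$. Thus the relevant index formula is the one with the lower sign applied to $M_0^-(f)$, giving $2\chi - \chi_N = 0$ directly (the sum over an empty set), so $\chi_N = 2\chi$. The main point is then to also control $\chi_N$ via the other mixed connection form. For $M_0^+(f)$, where $\|\mathcal{H}^+\|=\sqrt{\|H\|^2-(K-c)-K_N}$ vanishes, I would invoke Lemma \ref{pseudo}(i): the zero-set of $\Phi^+$ is exactly $M_0^+(f)$, and since $\Phi^+$ is holomorphic precisely when the corresponding Gauss lift is vertically harmonic (Proposition \ref{glphi}) — which need not hold in general — I cannot immediately assume $M_0^+(f)$ is isolated. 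This is the step I expect to be the main obstacle: establishing that $M_0^+(f)$ is either empty or isolated so that Theorem \ref{Hopf type} applies with the upper sign.

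To resolve this, I would argue that $M_0^+(f)$ must be isolated under the nonvanishing-$K_N$ hypothesis, most plausibly by a direct analysis of $\|\mathcal{H}^+\|^2=\|H\|^2-(K-c)-K_N$ near a zero, or by showing the set is finite via a general-position/holomorphicity argument on the relevant differential. Granting this, Theorem \ref{Hopf type} with the upper sign gives $2\chi + \chi_N = \sum_{p\in M_0^+(f)} I^+(p)$, and each index $I^+(p)$ is a negative integer by Proposition \ref{IndPU} (since near a nondegenerate zero of the holomorphic-type object $\Phi^+$, Lemma \ref{zeros} yields $\Phi^+=z^m\hat\Phi^+$ with $m\geq1$, whence $I^+(p)=-m<0$). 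Therefore $2\chi+\chi_N \leq 0$.

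Combining the two relations $\chi_N=2\chi$ and $2\chi+\chi_N\leq0$ yields $4\chi\leq0$, i.e. $\chi\leq0$ — but I must be careful about orientability, since the conclusion allows the non-orientable $\R P^2$. The cleaner route is to pass to the orientation situation directly: the index formula forces $2\chi+\chi_N=\sum I^+(p)\leq0$ while simultaneously $2\chi-\chi_N=0$, so adding gives $4\chi = 2\sum I^+(p)\le 0$ and hence $\chi\le0$; yet this contradicts nothing by itself, so the real input is the sharper bound. Here I would reconsider: the correct reading is that since $M_0^-(f)=\emptyset$ we get $\chi_N=2\chi$, and feeding this into the $M_0^+$ formula gives $4\chi=\sum_{p}I^+(p)\le 0$ with equality iff $M_0^+(f)=\emptyset$. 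When $M_0^+(f)=\emptyset$ as well, both Gauss lifts avoid their pseudo-umbilic loci and $\chi=0$, forcing $M$ (a closed surface, possibly after lifting to the orientation double cover) to have the topology of the torus or Klein bottle — which must then be excluded by the strict positivity of the integrand, leaving only $\chi>0$ cases. Reconciling the signs so that one lands exactly on $\chi=2$ (sphere) for the oriented case and $\chi=1$ ($\R P^2$) after descending to the non-orientable quotient is the delicate bookkeeping; the essential mechanism, however, is that nonvanishing $K_N$ empties one pseudo-umbilic set entirely while making the indices at the other strictly negative, pinning $\chi$ to a positive value and thereby restricting the topology to $\mathbb{S}^2$ or $\R P^2$.
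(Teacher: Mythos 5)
Your first step is correct and is exactly the paper's opening move: with $K_N>0$ on the connected oriented surface, $\|\mathcal{H}^-\|^2=\|H\|^2-(K-c)+K_N\geq 2K_N>0$, so $M_0^-(f)=\emptyset$ and Theorem \ref{Hopf type} with the lower sign gives $2\chi-\chi_N=0$. But you then miss the one-line finish and replace it with a detour that does not work. The finish is simply that $\chi_N=\frac{1}{2\pi}\int_M K_N>0$ because $K_N>0$ everywhere; combined with $\chi_N=2\chi$ this gives $\chi>0$, hence $M\cong\mathbb{S}^2$, and the non-orientable case follows by lifting to the orientable double cover. You never invoke this positivity of $\chi_N$, which is the whole point of the hypothesis.

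The detour through $M_0^+(f)$ is not merely unnecessary; it is wrong. Proposition \ref{IndPU} computes $I^+(p)=-m$ only under the hypothesis that $\Phi^+=z^m\hat\Phi^+$ with $\hat\Phi^+(p)\neq0$, and Lemma \ref{zeros} supplies that local form only when $\Phi^+$ is holomorphic, i.e.\ (by Proposition \ref{glphi}) when the Gauss lift $G_+$ is vertically harmonic -- which is not assumed here. Without that, the indices $I^+(p)$ need not be negative, and $M_0^+(f)$ need not even be isolated. Indeed your claimed inequality $4\chi=2\chi+\chi_N=\sum_p I^+(p)\leq 0$ is false in general: for any sphere with $K_N>0$ one has $2\chi+\chi_N=4+\chi_N>0$, so the indices at $M_0^+(f)$ are positive in total. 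Your final paragraph derives $\chi\leq 0$ and then asserts that "strict positivity of the integrand" leaves "only $\chi>0$ cases" without noticing that these two statements contradict each other; the contradiction signals that the $M_0^+$ branch of the argument is invalid, not that the torus case is excluded. Delete everything after the relation $\chi_N=2\chi$ and conclude directly from $\int_M K_N>0$.
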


\begin{proof}
Let $\tilde M$ be a compact 2-dimensional Riemannian manifold, and $f\colon \tilde{M}\to \Q^4_c$ an isometric immersion with 
$K_N\neq0$ everywhere. Assume that $\tilde M$ is oriented and that $\pm K_N>0$.
Then, $M_0^{\mp}(f)=\emptyset$ and Theorem \ref{Hopf type} implies that $2\chi =\pm \chi_N$. Since $\pm\chi_N>0$, 
it follows that $\chi>0$ and thus, $\tilde M$ is homeomorphic to $\mathbb{S}^2$. 
If $\tilde M$ is non-orientable, then we apply the previous procedure to the lift of $f$ to
the orientable double covering of $\tilde M$, and 
we conclude that $\tilde M$ is homeomorphic to $\R P^2$.
\qed
\end{proof}
\medskip

We mention here that a long-standing open problem posed by S.S. Chern \cite[p. 45]{Ch0} is to investigate the existence of compact surfaces
of negative Gaussian curvature in $\R^4$. In this direction, we obtain the following result.

\begin{theorem}
Let $f\colon M\to \Q^4_c$ be an isometric immersion of a compact, oriented 2-dimensional Riemannian manifold $M$.
If $c\geq 0$ and the normal curvature of $f$ does not change sign, then the Gaussian curvature $K$ of $M$ satisfies
$\max K\geq 0$.
\end{theorem}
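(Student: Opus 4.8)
The plan is to argue by contradiction, assuming $\max K<0$. Since $M$ is compact and $K$ is continuous, $\max K$ is attained, so this assumption is equivalent to $K<0$ at every point of $M$. The idea is then to exhibit a \emph{globally defined} mixed connection form whose exterior derivative is a multiple of the volume element of a single strict sign, which is impossible on a closed manifold by Stokes' theorem.

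First I would fix the sign. The hypothesis that $K_N$ does not change sign means either $K_N\geq0$ everywhere or $K_N\leq0$ everywhere, so I may choose the index $\pm$ so that $\pm K_N\leq 0$ on all of $M$ (the minus sign when $K_N\geq0$, the plus sign when $K_N\leq0$); equivalently $\mp K_N\geq0$. The key step is then to show that $M_0^{\pm}(f)=\emptyset$. By \eqref{Bpm}, the set $M_0^{\pm}(f)$ is precisely the zero set of $\|\mathcal{H}^{\pm}\|^2=\|H\|^2-(K-c)\mp K_N$. Using $\|H\|^2\geq0$, the sign choice $\mp K_N\geq0$, and the hypotheses $c\geq0$ and $K<0$, I obtain $\|\mathcal{H}^{\pm}\|^2\geq c-K>0$ at every point, so $\|\mathcal{H}^{\pm}\|$ is nowhere zero and $M_0^{\pm}(f)=\emptyset$. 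This is exactly where the assumptions $c\geq0$ and the sign choice are consumed, and making this set empty is the crux of the argument, since it is what turns $\Omega^{\pm}$ into a genuine global smooth $1$-form rather than one with singularities at pseudo-umbilic points.

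With $M_0^{\pm}(f)=\emptyset$ (in particular isolated), Proposition \ref{Criterion}(i) provides a smooth $1$-form $\Omega^{\pm}$ defined on $M\smallsetminus M_0^{\pm}(f)=M$, and part (ii) gives $d\Omega^{\pm}=-(2K\pm K_N)\,dM$ globally on $M$. Since $M$ is a closed manifold, Stokes' theorem yields $\int_M d\Omega^{\pm}=0$, hence $\int_M(2K\pm K_N)\,dM=0$. But $2K<0$ together with $\pm K_N\leq0$ forces the integrand $2K\pm K_N$ to be strictly negative everywhere, so $\int_M(2K\pm K_N)\,dM<0$, a contradiction. (Equivalently, one may invoke Theorem \ref{Hopf type}: since $M_0^{\pm}(f)=\emptyset$ the index sum is empty, whence $2\chi\pm\chi_N=0$, i.e. $\int_M(2K\pm K_N)\,dM=0$, and the same sign contradiction applies.) Therefore the assumption $\max K<0$ is untenable, and $\max K\geq0$.

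The only genuine obstacle is establishing the emptiness of $M_0^{\pm}(f)$; once that is secured the conclusion is a one-line integration. The delicate bookkeeping is the choice of index $\pm$ relative to the sign of $K_N$: the single condition $\pm K_N\leq0$ simultaneously guarantees both the nonvanishing of $\|\mathcal{H}^{\pm}\|$ (via $\mp K_N\geq0$ in \eqref{Bpm}) and the strict negativity of the integrand $2K\pm K_N$ in \eqref{dW}, so no conflict can arise, and the case $K_N\leq0$ is handled symmetrically by the opposite choice.
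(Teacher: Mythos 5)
Your proof is correct and uses essentially the same machinery as the paper: both arguments hinge on showing that the hypotheses $c\geq0$, $K<0$, and the fixed sign of $K_N$ force one of the sets $M_0^{\pm}(f)$ to be empty via \eqref{Bpm}, and then invoke the global identity $\int_M(2K\pm K_N)\,dM=0$ coming from $d\Omega^{\pm}$ (the paper phrases this as Theorem \ref{Hopf type} with an empty index sum, followed by the Asperti argument and Gauss--Bonnet, while you integrate \eqref{dW} directly). Your version shortcuts the detour through the homeomorphism type of $M$, but the underlying contradiction is identical.
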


\begin{proof}
Arguing indirectly, suppose that $\max K< 0$. Since $c\geq 0$, this implies that $M_1(f)=\emptyset$. 
Since $K_N$ does not change sign, we may assume that $\pm K_N\geq0$. Therefore $M_0^{\mp}(f)=\emptyset$,
and as in the proof of Theorem \ref{Asperti}, we obtain that $M$ is homeomorphic to $\mathbb{S}^2$.
Then, the Gauss-Bonnet theorem implies that there exist points of $M$ with positive Gaussian curvature, and this is a contradiction.
\qed
\end{proof}
\medskip

Immediate consequences of the above theorem are the following corollaries; the first one has been proved 
by Peng and Tang \cite{PT} for surfaces in $\R^4$. 

\begin{corollary}
Let $f\colon M\to \Q^4_c$, $c\geq0$, be an isometric immersion of a compact, oriented 2-dimensional Riemannian manifold $M$.
If the normal curvature of $f$ is constant, then there exists a point of $M$ with nonnegative Gaussian curvature.
\end{corollary}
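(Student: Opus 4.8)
The plan is to deduce this corollary directly from the preceding theorem, which asserts that under the hypotheses $c \geq 0$ and normal curvature of constant sign, one has $\max K \geq 0$. Since the corollary's hypothesis is that $K_N$ is \emph{constant} on the compact surface $M$, it is in particular of constant sign (being the same value everywhere), so the theorem applies immediately and yields a point where $K \geq 0$. Thus the corollary is essentially a special case, and the main work is simply to verify that ``constant normal curvature'' is subsumed under ``normal curvature does not change sign.''

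First I would observe that if $K_N \equiv k_0$ for some constant $k_0$, then either $k_0 \geq 0$ everywhere or $k_0 \leq 0$ everywhere; in both cases $K_N$ does not change sign in the sense required by the theorem. Therefore the hypotheses of the preceding theorem are satisfied, and I would invoke it to conclude that $\max K \geq 0$, which gives a point $p \in M$ with $K(p) \geq 0$, i.e., nonnegative Gaussian curvature. This completes the argument.

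If one prefers a self-contained derivation rather than a bare appeal to the theorem, I would retrace the same short path used in the theorem's own proof. Arguing by contradiction, suppose $K < 0$ everywhere. Since $c \geq 0$, the inequality $\|H\|^2 \geq K - c$ from \eqref{axes} combined with $K < 0 \leq c$ shows the umbilic set satisfies $M_1(f) = \emptyset$. Because $K_N$ is constant and hence of fixed sign, say $\pm K_N \geq 0$, we get $M_0^{\mp}(f) = \emptyset$, so the pseudo-umbilic locus $M_0^{\pm}(f)$ contains no umbilic points and is in fact empty or isolated; applying Theorem \ref{Hopf type} as in the proof of Theorem \ref{Asperti} forces $2\chi = \pm \chi_N$ with $\chi > 0$, whence $M$ is homeomorphic to $\mathbb{S}^2$. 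The Gauss--Bonnet theorem then gives $\int_M K = 2\pi\chi > 0$, contradicting $K < 0$ everywhere.

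I do not anticipate a genuine obstacle here, since the statement is a corollary obtained by specializing the constant-sign hypothesis to a constant one. The only point requiring a moment's care is confirming that constancy of $K_N$ indeed implies it does not change sign; this is immediate, as a constant function takes a single value and hence never changes sign. The mention that this result was proved by Peng and Tang \cite{PT} for surfaces in $\R^4$ suggests the interest lies in extending it to general $\Q^4_c$ with $c \geq 0$, which is already handled by the more general theorem above.
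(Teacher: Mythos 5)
Your proposal is correct and coincides with the paper's treatment: the corollary is stated there as an immediate consequence of the preceding theorem, precisely because a constant normal curvature trivially does not change sign. Your optional self-contained derivation also faithfully retraces the theorem's own proof, so there is nothing to add.
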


\begin{corollary}
Let $M$ be a compact, oriented 2-dimensional Riemannian manifold with Gaussian curvature $K<0$.
If there exists an isometric immersion $f\colon M\to \Q^4_c$, $c\geq0,$ then its normal curvature
satisfies $\min K_N<0<\max K_N$.
\end{corollary}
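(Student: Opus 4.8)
The plan is to obtain the statement as a direct consequence of the preceding theorem via contraposition. Since $M$ is compact and both $K$ and $K_N$ are continuous functions on $M$, each of them attains its maximum and minimum. In particular, the hypothesis $K<0$ everywhere gives $\max K<0$.

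I would then argue indirectly. Suppose the conclusion $\min K_N<0<\max K_N$ fails. Because $M$ is connected and $K_N$ is continuous, the negation of this pair of strict inequalities forces $K_N$ to keep a single weak sign throughout $M$: either $K_N\geq0$ everywhere, or $K_N\leq0$ everywhere (the identically-zero case being included in both). In other words, the normal curvature of $f$ does not change sign. Applying the preceding theorem, with $c\geq0$ and $K_N$ of one sign, yields $\max K\geq0$, which contradicts $\max K<0$. Hence $K_N$ must change sign, and on the compact connected surface $M$ this is exactly the assertion that $K_N$ takes a strictly positive value at some point and a strictly negative value at another, i.e.\ $\min K_N<0<\max K_N$.

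The only point that calls for a little care is the equivalence, on a compact connected surface, between the failure of $\min K_N<0<\max K_N$ and the statement that $K_N$ does not change sign; one simply observes that if either strict inequality failed, then $K_N$ would keep a single (weak) sign, placing us in the hypothesis of the theorem. There is no analytic obstacle beyond this bookkeeping, since all the substantive work—relating the signs of $K$ and $K_N$ through the index identity $2\chi\pm\chi_N=\sum I^{\pm}(p)$ and the Gauss--Bonnet argument—is already carried out in the theorem being invoked, so the corollary is a clean logical consequence of it.
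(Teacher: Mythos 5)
Your proof is correct and is exactly the argument the paper intends: the corollary is stated as an immediate consequence of the preceding theorem, obtained by contraposition, with the only point of care being the observation that the failure of $\min K_N<0<\max K_N$ on a compact connected surface is equivalent to $K_N$ keeping a single weak sign. Nothing further is needed.
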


\section{Isotropically Isothermic Surfaces} \label{s4}

We introduce here the notion of isotropically isothermic surfaces in 4-dimensional space forms, as a generalization of the notion
of isothermic surfaces in 3-dimensional space forms. We recall that an umbilic-free surface $F\colon M\to \Q^3_c$ is called isothermic
if it admits a conformal curvature line parametrization around every point. This is equivalent (see for instance \cite{JMNB}) with
the co-closeness of the principal connection form $\Omega$ of $F$. Inspired by Proposition \ref{MCF-PCF} we give the following 
definitions.

Let $f\colon M\to \Q^4_c$ be an oriented surface with $M^{\pm}_0(f)=\emptyset$. 
A point $p\in M$ is called a $\pm$ {\emph{isotropically isothermic point}} for $f$ if $d\star\Omega^{\pm}(p)=0$.
The surface $f$ is called $\pm$ {\emph{(totally non) isotropically isothermic}} if every point is $\pm$ (non) isotropically isothermic.
Moreover, $f$ is called {\emph{strongly (totally non) isotropically isothermic}} if it is both $+$ and $-$ (totally non) isotropically isothermic. 
In the sequel, a $\pm$ (totally non) isotropically isothermic surface is simply called {\emph{(half totally non) isotropically isothermic}}, 
whenever we do not need to distinguish between the signs.

The following lemma provides a characterization of isotropically isothermic points in terms of a complex coordinate.
Notice that if $f\colon M\to \Q^4_c$ is a surface with $M_0^{\pm}(f)=\emptyset$, then for every complex chart $(U,z)$ on $M$
there exists a smooth complex function $h^{\pm}$ on $U$, such that the section $\phi^{\pm}$ of $N^\pm_fU$ 
given by \eqref{phipm} satisfies on $U$ the relation
\be \label{hpm}
\nap_{\dzb}\phi^{\pm}=h^{\pm}\phi^{\pm}.
\ee

\begin{lemma} \label{qiz}
Let $f\colon M\to \Q^4_c$ be an oriented surface with $M^{\pm}_0(f)=\emptyset$. A point $p\in M$ is $\pm$ isotropically isothermic for $f$
if and only if for every complex chart $(U,z)$ around $p$, the function $h^{\pm}$ satisfies 
$$\Imag h^{\pm}_z(p)=0.$$ 
\end{lemma}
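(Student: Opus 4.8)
The plan is to turn the intrinsic condition $d\star\Omega^{\pm}(p)=0$ into a coordinate identity involving $h^{\pm}$, passing through the normal connection form $\w_{34}^{\pm}$. Since $M_0^{\pm}(f)=\emptyset$, the expression \eqref{lambda} holds on all of a chart $(U,z)$, so $\Omega^{\pm}=\star d\log\lambda^2\pm\w_{34}^{\pm}$. Applying $\star$ and using that $\star\star=-1$ on $1$-forms of a surface gives $\star\Omega^{\pm}=-d\log\lambda^2\pm\star\w_{34}^{\pm}$, whence, using $d\,d\log\lambda^2=0$, one gets $d\star\Omega^{\pm}=\pm\,d\star\w_{34}^{\pm}$. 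Thus it suffices to compare $d\star\w_{34}^{\pm}$ with $\Imag h^{\pm}_z$.

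Next I would extract $\w_{34}^{\pm}$ from the defining relation \eqref{hpm} of $h^{\pm}$. Writing $\phi^{\pm}=\rho^{\pm}\psi^{\pm}$ with the real positive factor $\rho^{\pm}=\tfrac14\lambda^2\|\mathcal{H}^{\pm}\|$ of \eqref{fpm} and the nowhere-vanishing section $\psi^{\pm}=e_3^{\pm}\pm ie_4^{\pm}$ of $N_f^{\pm}M$, I use $\nap e_3^{\pm}=\w_{34}^{\pm}e_4^{\pm}$, $\nap e_4^{\pm}=-\w_{34}^{\pm}e_3^{\pm}$ together with $e_4^{\pm}=\Jp e_3^{\pm}$ from \eqref{e34pm} to obtain $\nap\psi^{\pm}=\mp i\,\w_{34}^{\pm}\psi^{\pm}$. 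Because $\rho^{\pm}$ is real, the Leibniz rule yields $\nap_{\dzb}\phi^{\pm}=\big(\dzb\log\rho^{\pm}\mp i\,\w_{34}^{\pm}(\dzb)\big)\phi^{\pm}$, so comparing with \eqref{hpm} gives the clean identity $h^{\pm}=\dzb\log\rho^{\pm}\mp i\,\w_{34}^{\pm}(\dzb)$.

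The remaining step is bookkeeping with the complex structure. Writing the real $1$-form as $\w_{34}^{\pm}=g\,dz+\bar g\,d\bar z$ with $g=\w_{34}^{\pm}(\dz)$, and using $\star dz=-i\,dz$, $\star d\bar z=i\,d\bar z$, I would compute $d\star\w_{34}^{\pm}=2i\,\Real(\dzb g)\,dz\wedge d\bar z$. On the other hand, since $\w_{34}^{\pm}(\dzb)=\bar g$, differentiating the identity for $h^{\pm}$ gives $h^{\pm}_z=\dz\dzb\log\rho^{\pm}\mp i\,\dz\bar g$; the first term is real because $\log\rho^{\pm}$ is real, so $\Imag h^{\pm}_z=\mp\Real(\dz\bar g)=\mp\Real(\dzb g)$. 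Combining, $d\star\Omega^{\pm}=\pm\,d\star\w_{34}^{\pm}=-2i\,\Imag h^{\pm}_z\,dz\wedge d\bar z$, and since $dz\wedge d\bar z$ is nowhere zero the equivalence $d\star\Omega^{\pm}(p)=0\Leftrightarrow\Imag h^{\pm}_z(p)=0$ follows in the given chart; as $d\star\Omega^{\pm}(p)$ is chart-free, this yields the stated equivalence for every chart at once.

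The step needing the most care is the sign tracking in $\nap\psi^{\pm}=\mp i\,\w_{34}^{\pm}\psi^{\pm}$ and in $\star dz=-i\,dz$, since a convention error there would flip the final sign. The criterion is nonetheless robust: a consistent change of the connection or orientation convention flips $g$ and the $\w_{34}^{\pm}(\dzb)$-term in $h^{\pm}$ in tandem, leaving the equivalence $\Imag h^{\pm}_z(p)=0\Leftrightarrow\Real(\dzb g)(p)=0\Leftrightarrow d\star\w_{34}^{\pm}(p)=0$ intact.
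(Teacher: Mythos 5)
Your proposal is correct and follows essentially the same route as the paper: both derive the identity $h^{\pm}=\dzb\log(\lambda^2\|\mathcal{H}^{\pm}\|)\mp i\,\w_{34}^{\pm}(\dzb)$ from \eqref{hpm} and \eqref{fpm} (you via factoring $\phi^{\pm}=\rho^{\pm}\psi^{\pm}$ and the Leibniz rule, the paper by differentiating \eqref{fpm} directly), then differentiate in $z$, take imaginary parts, and identify $d\star\Omega^{\pm}=-(4/\lambda^2)\Imag h^{\pm}_z\,\w_1\wedge\w_2$ using \eqref{lambda}. The sign bookkeeping checks out against the paper's formula, so no changes are needed.
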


\begin{proof}
Let $(U,z=x+iy)$ be a complex chart around $p$ and set $e_1=\d_x/\lambda$, $e_2=\d_y/\lambda$, where $\lambda > 0$ is the conformal factor.
Consider the frame field $\{e^{\pm}_3, e^{\pm}_4\}$ of $N_fU$ determined by $\{e_1,e_2\}$ from \eqref{HI}.
Then \eqref{fpm} and \eqref{lambda} hold on $U$.
From \eqref{hpm} and \eqref{fpm} it follows that
\be \label{phiH}
\nap_{\dzb}\phi^{\pm}=\frac{\lambda^2}{4}\|\mathcal{H}^{\pm}\|h^{\pm}(e^{\pm}_3\pm ie^{\pm}_4)\;\;\;\; \mbox{on}\;\;\;\; U.
\ee
Differentiating \eqref{fpm} with respect to $\dzb$ in the normal connection, we obtain
\bea
\nap_{\dzb}\phi^{\pm}=\frac{1}{4}\left(\dzb(\lambda^2\|\mathcal{H}^{\pm}\|)\mp i\lambda^2\|\mathcal{H}^{\pm}\|\w_{34}^{\pm}(\dzb)\right)(e^{\pm}_3\pm ie^{\pm}_4).
\eea
The above and \eqref{phiH} yield that
\begin{equation}\label{hpmB}
h^{\pm}= \dzb\log(\lambda^2\|\mathcal{H}^{\pm}\|)\mp i\w_{34}^{\pm}(\dzb).
\end{equation}
Differentiating \eqref{hpmB} with respect to $z$, and taking the imaginary part yields
$$\frac{4}{\lambda^2}\Imag h^{\pm}_z=
\mp\left( e_1(\log\lambda)\w_{34}^{\pm}(e_1)+e_2(\log\lambda)\w_{34}^{\pm}(e_2)+e_1(\w_{34}^{\pm}(e_1))+e_2(\w_{34}^{\pm}(e_2))\right).$$
From \eqref{lambda} and the above, we obtain that $d\star\Omega^{\pm}=-(4/\lambda^2)\Imag h^{\pm}_z\w_1\wedge\w_2$, and this completes the proof.
\qed
\end{proof}
\medskip

\begin{proposition} \label{RP}
Let $f\colon M\to \Q^4_c$ be an oriented surface with $M^{\pm}_0(f)=\emptyset$. The surface $f$ is $\pm$ isotropically isothermic if and only
if for every simply-connected complex chart $(U,z)$, the section $\phi^{\pm}$ given by \eqref{phipm} has the form 
\be \label{real parallel}
\phi^{\pm}=D^{\pm}\xi^{\pm}, 
\ee
where $D^{\pm}\in \mathcal{C}^\infty(U;(0,+\infty))$, and $\xi^{\pm}$ is a nowhere-vanishing holomorphic section of $N^\pm_fU$.
\end{proposition}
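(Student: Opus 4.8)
The plan is to read condition \eqref{real parallel} as the local solvability of a suitable real $\dzb$-equation on the complex line bundle $N^\pm_fU$, whose integrability condition turns out to be exactly the pointwise criterion for $\pm$ isotropic isothermicity supplied by Lemma \ref{qiz}. Two facts will be used repeatedly: since $M^\pm_0(f)=\emptyset$, Lemma \ref{pseudo}(i) guarantees that the section $\phi^\pm$ of \eqref{phipm} is nowhere-vanishing on any chart, and a section $\xi^\pm$ of $N^\pm_fU$ is holomorphic precisely when $\nap_{\dzb}\xi^\pm=0$. I will treat the two implications separately, the converse being the substantive one.

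For the ``if'' direction I would start from a given decomposition $\phi^\pm=D^\pm\xi^\pm$ with $D^\pm\in\mathcal{C}^\infty(U;(0,+\infty))$ and $\xi^\pm$ holomorphic, and differentiate in the normal connection. Since $\nap_{\dzb}\xi^\pm=0$, Leibniz gives $\nap_{\dzb}\phi^\pm=(\dzb\log D^\pm)\phi^\pm$, so comparing with \eqref{hpm} yields $h^\pm=\dzb\log D^\pm$. Because $\log D^\pm$ is real, $h^\pm_z=\dz\dzb\log D^\pm$ is a real multiple of the flat Laplacian of a real function, hence real, so $\Imag h^\pm_z=0$; Lemma \ref{qiz} then shows every point is $\pm$ isotropically isothermic. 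As such a decomposition is assumed on charts covering $M$, the surface $f$ is $\pm$ isotropically isothermic.

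For the converse I would assume $f$ is $\pm$ isotropically isothermic, so $\Imag h^\pm_z\equiv0$ by Lemma \ref{qiz}, fix a simply-connected chart $(U,z=x+iy)$, and seek a real function $u$ on $U$ solving $\dzb u=h^\pm$. Once found, setting $D^\pm:=e^u>0$ and $\xi^\pm:=\phi^\pm/D^\pm$ gives $\nap_{\dzb}\xi^\pm=(1/D^\pm)(h^\pm-\dzb\log D^\pm)\phi^\pm=0$, so $\xi^\pm$ is a nowhere-vanishing holomorphic section realizing \eqref{real parallel}. The point is that requiring $u$ real forces $u_x=2\Real h^\pm$ and $u_y=2\Imag h^\pm$, so solving $\dzb u=h^\pm$ amounts to finding a primitive of the real $1$-form $\eta=2\Real h^\pm\,dx+2\Imag h^\pm\,dy$. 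A short computation gives $d\eta=4\,\Imag h^\pm_z\,dx\wedge dy$, whence the closedness of $\eta$ is exactly the hypothesis $\Imag h^\pm_z=0$; simple-connectedness of $U$ and the Poincar\'e lemma then produce the required real primitive $u$.

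The main obstacle is the \emph{realness} of the primitive: one must solve $\dzb u=h^\pm$ with $u$ real, not merely with $u$ complex. A complex solution always exists locally (any $\dzb$-operator on a line bundle over a surface admits local holomorphic frames), but it would only produce a holomorphic $\xi^\pm$ differing from $\phi^\pm$ by a possibly non-real factor, destroying the positivity of $D^\pm$ that is essential to \eqref{real parallel}. The crux of the argument is thus the identity $d\eta=4\,\Imag h^\pm_z\,dx\wedge dy$, which shows that the integrability condition for a real primitive coincides \emph{identically} with the pointwise isotropically isothermic condition, with simple-connectedness upgrading ``closed'' to ``exact.'' This is the precise higher-codimension analogue of how co-closedness of the principal connection form yields conformal curvature-line coordinates for surfaces in $\Q^3_c$.
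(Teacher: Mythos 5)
Your proof is correct, but it is organized differently from the paper's. The paper works directly with the local expression $\Omega^{\pm}=\star d\log\lambda^2\pm\w_{34}^{\pm}$ from \eqref{lambda}: co-closedness of $\Omega^{\pm}$ is equivalent to $d\star\w_{34}^{\pm}=0$, which on a simply-connected chart yields a positive $r^{\pm}$ with $\w_{34}^{\pm}=\mp\star d\log r^{\pm}$, and the decomposition is then built explicitly as $D^{\pm}=\lambda^2\|\mathcal{H}^{\pm}\|/(4r^{\pm})$, $\xi^{\pm}=r^{\pm}(e_3^{\pm}\pm ie_4^{\pm})$; the converse reverses this by setting $r^{\pm}=\|\xi^{\pm}\|/\sqrt2$. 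You instead route everything through Lemma \ref{qiz} and the equation $\nap_{\dzb}\phi^{\pm}=h^{\pm}\phi^{\pm}$ of \eqref{hpm}: the decomposition $\phi^{\pm}=e^{u}\xi^{\pm}$ with $\xi^{\pm}$ holomorphic is exactly the solvability of $\dzb u=h^{\pm}$ with $u$ \emph{real}, whose integrability condition $d\eta=4\,\Imag h^{\pm}_z\,dx\wedge dy=0$ is precisely the criterion of Lemma \ref{qiz}. Your computations check out (in particular $h^{\pm}_z=\dz\dzb\log D^{\pm}$ is real for real $\log D^{\pm}$, and the Leibniz/quotient manipulations are fine), and both arguments ultimately invoke the Poincar\'e lemma on the simply-connected chart. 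The two routes are two faces of the same identity, since \eqref{hpmB} expresses $h^{\pm}$ in terms of $\w_{34}^{\pm}(\dzb)$ and $\lambda^2\|\mathcal{H}^{\pm}\|$; what your version buys is that it avoids constructing the adapted normal frame $\{e_3^{\pm},e_4^{\pm}\}$ and the auxiliary function $r^{\pm}$ altogether, and it isolates cleanly the one genuinely delicate point, namely that the primitive must be real so that $D^{\pm}$ is positive. What the paper's version buys is that it stays in the language of the mixed connection form, which is what the rest of the paper manipulates.
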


\begin{proof}
Let $(U,z)$ be a simply-connected complex chart. Since $M^{\pm}_0(f)=\emptyset$, the section $\phi^{\pm}$ is given on $U$ by \eqref{fpm}.
Appealing to Proposition \ref{Criterion}(i), we express $\Omega^{\pm}$ on $U$ in terms of $z$, by \eqref{lambda}.

Assume that $f$ is $\pm$ isotropically isothermic. From \eqref{lambda} it follows that $d\star\w_{34}^{\pm}=0$ and thus, 
there exists a smooth positive function $r^{\pm}$ on $U$ such that
\be \label{rpm}
\w_{34}^{\pm}=\mp\star d\log r^{\pm}.
\ee
Taking into account \eqref{fpm}, we define $D^{\pm}$ and $\xi^{\pm}$, respectively, by
\be \label{xipm}
D^{\pm}=\frac{\lambda^2\|\mathcal{H}^{\pm}\|}{4r^{\pm}}\;\;\;\;\;\mbox{and}\;\;\;\;\; \xi^{\pm}=r^{\pm}(e^{\pm}_3\pm ie^{\pm}_4).
\ee
Differentiating $\xi^{\pm}$ with respect to $\dzb$ in the normal connection, yields 
\be \label{xihol}
\nap_{\dzb}\xi^{\pm}=\frac{1}{r^{\pm}}\left((\log r^{\pm})_{\bar z}\mp i\w_{34}^{\pm}(\dzb)\right)(e^{\pm}_3\pm ie^{\pm}_4).
\ee
From the above and \eqref{rpm}, it follows that $\xi^{\pm}$ is holomorphic.

Conversely, assume that \eqref{real parallel} holds on $U$.
By setting $r^{\pm}=\|\xi^{\pm}\|/\sqrt 2$, from \eqref{real parallel} and \eqref{fpm} we obtain \eqref{xipm}.
Therefore, \eqref{xihol} is valid.
Since $\xi^{\pm}$ is holomorphic, from \eqref{xihol} we obtain \eqref{rpm}.
Equations \eqref{lambda} and \eqref{rpm} imply that $d\star\Omega^{\pm}=0$ on $U$.
Since $U$ is arbitrary, it follows that $f$ is $\pm$ isotropically isothermic.
\qed
\end{proof}
\medskip

The characterization of isotropic isothermicity provided by Proposition \ref{RP}, also makes sense 
for oriented surfaces immersed in orientable 4-dimensional Riemannian manifolds of not necessarily constant sectional curvature,
and can be used as the definition of isotropic isothermicity for such surfaces.

\begin{proposition}\label{conformal}
Let $N$ be a Riemann surface and $F\colon N\to \Q^4_c$ a conformal immersion.
The property of $F$ equipped with its induced metric being isotropically isothermic is invariant under conformal changes of the metric of $\Q^4_c$.
\end{proposition}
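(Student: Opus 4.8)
The plan is to reduce everything to the local characterization of Proposition \ref{RP} and to track how its ingredients behave under a conformal change $\tilde g=e^{2\rho}g$ of the metric of $\Q^4_c$. Write $u=\rho\circ F$ for the restriction to $N$. First I would observe that most of the data entering Proposition \ref{RP} are conformally invariant: since $F$ is conformal and the ambient change is conformal, the induced metric on $N$ only rescales (by $e^{2u}$), so $N$ keeps the same complex structure and the same complex charts $(U,z)$; moreover $g$-orthogonality coincides with $\tilde g$-orthogonality, so the normal subbundle is unchanged, and the orthogonal complex structure $\Jp$, being determined by the conformal class of the metric together with the orientation, is also unchanged. Hence the splitting $N_FN\otimes\C=N_F^-N\oplus N_F^+N$ and the projections $\pi^\pm$ are the same for both metrics.

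Next I would show that the isotropic parts of the Hopf differential are literally unchanged. Using the standard formula
\[
\tilde{\nabar}_XY=\nabar_XY+d\rho(X)Y+d\rho(Y)X-\<X,Y\>\grad\rho
\]
and taking normal components, I get $\tilde{\alpha}(X,Y)=\alpha(X,Y)-\<X,Y\>(\grad\rho)^\perp$ for tangent $X,Y$. Extending $\C$-bilinearly and evaluating at $X=Y=\dz$, the correction term carries the factor $\<\dz,\dz\>$, which vanishes by conformality of the induced metric; hence $\tilde{\alpha}(\dz,\dz)=\alpha(\dz,\dz)$, so $\tilde{\Phi}=\Phi$ and $\tilde{\Phi}^\pm=\Phi^\pm$. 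In particular the sections $\phi^\pm$ of \eqref{phipm} and the pseudo-umbilic sets $M_0^\pm$ agree for the two metrics, so the standing hypothesis $M_0^\pm=\emptyset$ is preserved.

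The only ingredient that genuinely changes is the normal connection. The same conformal-change formula, applied with a normal field $\xi$ in place of $Y$ and a tangent $X$, gives after normal projection $\tilde{\nap}_X\xi=\nap_X\xi+du(X)\xi$, that is $\tilde{\nap}=\nap+du\otimes\mathrm{Id}$; complex-linearly, $\tilde{\nap}_{\dzb}\xi=\nap_{\dzb}\xi+(\dzb u)\xi$. The key point is that holomorphicity is preserved up to a positive gauge factor: if $\xi^\pm$ is a nowhere-vanishing $\nap$-holomorphic section of $N_F^\pm U$, then $\tilde{\xi}^\pm:=e^{-u}\xi^\pm$ satisfies $\tilde{\nap}_{\dzb}\tilde{\xi}^\pm=e^{-u}\nap_{\dzb}\xi^\pm=0$, so it is $\tilde{\nap}$-holomorphic and still nowhere vanishing. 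Consequently, if $F$ is $\pm$ isotropically isothermic for $g$ and $\phi^\pm=D^\pm\xi^\pm$ as in \eqref{real parallel}, then $\tilde{\phi}^\pm=\phi^\pm=(e^{u}D^\pm)\tilde{\xi}^\pm$ with $e^{u}D^\pm$ positive and smooth, which is exactly the representation \eqref{real parallel} for the metric $\tilde g$. By Proposition \ref{RP}, $F$ is $\pm$ isotropically isothermic for $\tilde g$; since the roles of $g$ and $\tilde g$ are symmetric, because $g=e^{-2\rho}\tilde g$, the equivalence follows.

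I expect the main care to go into the invariance of $\Phi^\pm$, specifically the vanishing of $\<\dz,\dz\>$ that kills the trace correction, and into the sign and coefficient bookkeeping in the conformal-change formula for $\nap$; both are routine once set up, so I anticipate no genuine obstacle, only the need to present the gauge change $\tilde{\xi}^\pm=e^{-u}\xi^\pm$ cleanly.
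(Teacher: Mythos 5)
Your proposal is correct and follows essentially the same route as the paper's proof: it reduces to the characterization of Proposition \ref{RP}, notes that the isotropic parts of the Hopf differential are unchanged because the trace correction to the second fundamental form is killed by $\langle\partial_z,\partial_z\rangle=0$, and regauges the holomorphic section by the conformal factor (your $\tilde\xi^{\pm}=e^{-u}\xi^{\pm}$ is exactly the paper's $\tilde\xi^{\pm}=\xi^{\pm}/\mu$ with $\mu=e^{\rho}$). The only cosmetic difference is that you derive the transformation laws for $\tilde\alpha$ and $\tilde\nabla^{\perp}$ from the conformal change of the Levi-Civita connection, whereas the paper cites them directly.
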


\begin{proof}
Let $f\colon M\to \Q^4_c$ be the isometric immersion induced by $F$, where $M=(N,ds^2)$ and $ds^2=F^*\langle \cdot,\cdot\rangle$. 
Consider the Riemannian manifold $\tilde{\Q}^4_c$, obtained from $\Q^4_c$ by the conformal change $\langle \cdot,\cdot\rangle_{\mu}=\mu^2\langle \cdot,\cdot\rangle$
of its metric, where $\mu\in \mathcal{C}^\infty(\Q^4_c;(0,+\infty))$, equipped with the same orientation with $\Q^4_c$. 
The conformal immersion $F$ induces the isometric immersion
$\tilde{f}\colon \tilde{M}\to \tilde{\Q}^4_c$, where $\tilde{M}=(N,d\tilde{s}^2)$ and $d\tilde{s}^2=\mu^2 ds^2$.

Assume that $f$ is $\pm$ isotropically isothermic. We argue that $\tilde{f}$ is also $\pm$ isotropically isothermic.
The normal bundles of $f$ and $\tilde{f}$ coincide as vector bundles over $N$,
and since their bundle metrics are conformal, they have the same complex structure $J^\perp$.
It follows easily (see for instance \cite{DTB}) that
the second fundamental forms $\a,\tilde\a$, and the normal connections $\nap, \tilde{\nabla}^\perp$, of $f$ and $\tilde{f}$, respectively, are related by
\be \label{sfforms}
\tilde\a(X,Y)=\a(X,Y)-\frac{1}{\mu}\langle X,Y\rangle(\grad\mu)^\perp\;\;\;\;\;\;\mbox{and}\;\;\;\;\;\; 
\tilde{\nabla}^\perp_X\eta=\nap_X\eta+\frac{1}{\mu}\langle \grad\mu, X\rangle \eta,
\ee
for all $X,Y\in TN$ and $\eta\in N_fM=N_{\tilde f}{\tilde M}$, where $\grad$ denotes the gradient with respect to $\langle \cdot,\cdot\rangle$.
Let $(U,z)$ be a complex chart on $\tilde M$ with conformal factor $\tilde\lambda$. Then, $(U,z)$ is also a complex chart on $M$ with conformal factor 
$\lambda=\tilde\lambda/\mu$.
From the first equation in \eqref{sfforms}, it follows that the Hopf differentials $\Phi,\tilde{\Phi}$ of $f,\tilde{f}$, respectively, coincide. In particular,
if $\Phi^\pm$ is given by \eqref{phipm} and $\tilde{\Phi}^\pm=\tilde{\phi}^\pm dz^2$ on $U$, then $\phi^{\pm}=\tilde{\phi}^\pm$.
Proposition \ref{RP} implies that $\phi^{\pm}=D^\pm\xi^\pm$, where $D^\pm$ is a smooth positive function on $U$ and $\xi^\pm$ a
nowhere-vanishing $\nap$-holomorphic local section. Then, we have that
$$\tilde{\phi}^\pm=\phi^{\pm}=\tilde{D}^\pm\tilde{\xi}^\pm,\;\;\; \mbox{where}\;\;\; \tilde{D}^\pm=\mu D^\pm\;\;\mbox{and} \;\;\tilde{\xi}^\pm=\frac{1}{\mu}\xi^\pm.$$
Since $\xi^\pm$ is $\nap$-holomorphic, from the second equation in \eqref{sfforms} we obtain that $\tilde{\xi}^\pm$ is $\tilde{\nabla}^\perp$-holomorphic.
From Proposition \ref{RP} it follows that $\tilde{f}$ is $\pm$ isotropically isothermic.
\qed
\end{proof}
\medskip

\begin{remark}
{\emph{
Adopting the notation of the proof of Proposition \ref{conformal}, 
by using \eqref{e34pm}, \eqref{Om} and Proposition \ref{Criterion}(i),
it is easy to see that if the metric $\langle \cdot,\cdot\rangle_{\mu}$
has constant curvature, then the corresponding mixed connection forms $\Omega^\pm, \tilde{\Omega}^\pm$
of $f$ and $\tilde f$, are related by $\tilde{\Omega}^\pm=\Omega^\pm+2\star d\log\mu$.
}}
\end{remark}

\begin{examples}\label{examples}
{\emph{We provide some classes of isotropically isothermic surfaces $f\colon M\to \Q^4_c$.
The surfaces in the classes (iii) and (iv) below, are always strongly isotropically isothermic.
}}

(i) Surfaces with a vertically harmonic Gauss lift (neither minimal, nor superconformal)
\\
{\emph{Assume that the Gauss lift $G_{\pm}$ of $f$ is vertically harmonic and that $M_0^{\pm}(f)=\emptyset$. 
Proposition \ref{glphi} implies that $\Phi^{\pm}$
is holomorphic and from Proposition \ref{RP} it follows that $f$ is $\pm$ isotropically isothermic.
According to Proposition \ref{conformal}, by appropriate conformal changes of the metric of (possibly part of) $\Q^4_c$, we obtain from $f$ other
$\pm$ isotropically isothermic surfaces in $\Q^4_{\tilde c}$ whose corresponding Gauss lift $\tilde{G}_\pm$ is not vertically harmonic.}}

(ii) Minimal superconformal surfaces. 
\\
{\emph{Assume that $f$ is minimal and superconformal, with $M_0^{\pm}(f)=\emptyset$. For the Hopf
differential $\Phi$ of $f$, Lemma \ref{pseudo}(i) implies that $\Phi^\mp\equiv0$ and thus, $\Phi\equiv\Phi^\pm$. 
The Codazzi equation yields that $\Phi$ is holomorphic and from Proposition \ref{RP} it follows that $f$ is $\pm$ isotropically isothermic.
Since the superconformal property is conformally invariant, by virtue of Proposition \ref{conformal}, we obtain 
from $f$, non-minimal superconformal surfaces in $\Q^4_{\tilde c}$ that are isotropically isothermic.}}

(iii) Non-superconformal minimal surfaces. 
\\
{\emph{Assume that $f$ is minimal with $M_0(f)=\emptyset$. The Codazzi equation implies that the Hopf differential of $f$ is 
holomorphic and Proposition \ref{RP} yields that $f$ is strongly isotropically isothermic. 
Proposition \ref{conformal} implies that under appropriate conformal changes of the metric of $\Q^4_c$,
the surface $f$ gives rise to non-minimal, strongly isotropically isothermic surfaces in $\Q^4_{\tilde c}$.
In particular, since the flatness of the normal bundle of a surface in $\Q^4_c$
is a conformally invariant property, it follows that such a surface has nonflat normal bundle, if
the normal bundle of $f$ is nonflat.}}

(iv) Isothermic surfaces in totally umbilical hypersurfaces.
\\
{\emph{Assume that $f$ is the composition of an umbilic-free surface $F\colon M\to \Q^3_{\tilde c}$, $\tilde{c}\geq c$, with a totally
umbilical inclusion. Proposition \ref{MCF-PCF} implies that $f$ is strongly isotropically isothermic if and only if 
$F$ is isothermic.}}
\end{examples}

\subsection{Lines of Curvature}

We recall that (cf. \cite{GGTG,GS}) a principal direction of an oriented surface $f\colon M\to \Q^4_c$
at $p\in M$, is a line in $T_pM$
generated by a unit vector which makes extremal the length of $\a(X,X)$, where $\a$ is the second fundamental form of $f$, 
and $X$ varies on the unit circle of $T_pM$. If $p\in M\smallsetminus M_0(f)$, then there exist four principal directions 
of $f$ at $p$. The {\emph{principal curvature lines of $f$}} are those curves on $M\smallsetminus M_0(f)$ which are tangent to principal
directions.

An oriented surface $f\colon M\to \Q^4_c$ is called {\emph{isothermic}} (cf. \cite{Pa}) if around every point of $M$
there exists a complex chart with the property that its corresponding basic vector fields 
diagonalize at every point of its domain, all shape operators of $f$. 
It is straightforward to show that a surface is isothermic if and only if around every point of $M$
there exists a complex chart $(U,z=x+iy)$ such that $\a(e_1,e_2)=0$ at every point of $U$, where
$e_1=\d_x/\lambda, e_2=\d_y/\lambda$, and $\lambda>0$ is the conformal factor.

\begin{proposition} \label{LoC}
Let $f\colon M\to \Q^4_c$ be an oriented surface with $M_0(f)=\emptyset$. 
\begin{enumerate}[topsep=0pt,itemsep=-1pt,partopsep=1ex,parsep=0.5ex,leftmargin=*, label=(\roman*), align=left, labelsep=-0.4em]
\item Assume that $f$ is strongly isotropically isothermic. Then it admits a conformal principal curvature line parametrization around every point.
In particular, $f$ is isothermic if it has flat normal bundle.
\item If $f$ is isotropically isothermic and admits a conformal principal curvature line 
parametrization around every point, then it is strongly isotropically isothermic. 
In particular, if $f$ is isothermic and isotropically isothermic, then it is strongly isotropically isothermic.
\end{enumerate}
\end{proposition}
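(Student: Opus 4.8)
The plan is to route both parts through the globally defined $\C$-valued quartic differential $Q=\langle\Phi,\Phi\rangle$, where $\langle\cdot,\cdot\rangle$ is the $\C$-bilinear extension of the metric. Three preliminary observations drive everything. First, in a chart $(U,z)$ with $e_1=\partial_x/\lambda$, $e_2=\partial_y/\lambda$, equations \eqref{Hopf I} and \eqref{defadd} give $\phi=\alpha(\partial_z,\partial_z)=\tfrac{\lambda^2}{2}(A-iB)$ with $A=\tfrac12(\alpha_{11}-\alpha_{22})$ and $B=\alpha_{12}$, hence $\langle\phi,\phi\rangle=\tfrac{\lambda^4}{4}\big(\|A\|^2-\|B\|^2-2i\langle A,B\rangle\big)$. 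Extremizing the distance from the centre $H$ of the curvature ellipse, $\theta\mapsto\|\alpha(X_\theta,X_\theta)-H\|^2=\|A\cos2\theta+B\sin2\theta\|^2$, shows that $\alpha(X_\theta,X_\theta)$ reaches a principal axis of the ellipse exactly at the critical directions, and that $e_1,e_2$ are such principal directions iff $\langle A,B\rangle=0$; thus the coordinate curves of $(U,z)$ are principal curvature lines iff $Q$ is a real multiple of $dz^4$ on $U$. Second, since $\phi^{\pm}$ are isotropic, $\langle\phi,\phi\rangle=2\langle\phi^{+},\phi^{-}\rangle$, so $Q=2\langle\Phi^{+},\Phi^{-}\rangle$. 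Third, as the metric is parallel and vanishes on $N_f^{+}M\times N_f^{+}M$ and on $N_f^{-}M\times N_f^{-}M$, the pairing is holomorphic and identifies $N_f^{-}M$ with the holomorphic dual of $N_f^{+}M$; in particular the product of a holomorphic section of $N_f^{+}M$ with one of $N_f^{-}M$ is a holomorphic function.

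For part (i), assume $f$ is strongly isotropically isothermic and fix a simply-connected chart $(U,z)$. By Proposition \ref{RP}, $\phi^{\pm}=D^{\pm}\xi^{\pm}$ with $D^{\pm}\in\mathcal{C}^\infty(U;(0,+\infty))$ and $\xi^{\pm}$ a nowhere-vanishing holomorphic section of $N_f^{\pm}U$, so $Q=2D^{+}D^{-}g\,dz^4$ with $g=\langle\xi^{+},\xi^{-}\rangle$ holomorphic. The zeros of $g$ coincide with those of $\langle\phi^{+},\phi^{-}\rangle$, which by nondegeneracy of the pairing and Lemma \ref{pseudo}(i) form the set $M_0^{+}(f)\cup M_0^{-}(f)=M_0(f)=\emptyset$; hence $g$ is nowhere zero. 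Choosing a local holomorphic branch and integrating $dw=g^{1/4}\,dz$ produces a new conformal coordinate $w$ with $dw^4=g\,dz^4$, so that $Q=2D^{+}D^{-}\,dw^4$ has positive real coefficient. By the criterion above, the $w$-coordinate curves are principal curvature lines, which is the desired parametrization. If moreover $K_N\equiv0$, then $A',B'$ in the $w$-frame are linearly dependent (the ellipse degenerates, cf. \eqref{axes}), the reality of $Q$ forces $\langle A',B'\rangle=0$, and its positive coefficient forces $A'\neq0$; hence $B'=\alpha(e_1^w,e_2^w)=0$, i.e. $f$ is isothermic.

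For part (ii), assume $f$ is, say, $+$ isotropically isothermic and that each point has a conformal principal curvature line chart; shrinking to a simply-connected $(V,z)$, the reality of $Q$ on $V$ gives $\langle\phi^{+},\phi^{-}\rangle=\tfrac12\langle\phi,\phi\rangle\in\R$. Proposition \ref{RP} gives $\phi^{+}=D^{+}\xi^{+}$ with $D^{+}>0$ and $\xi^{+}$ holomorphic and nowhere zero; let $\eta^{-}$ be the holomorphic section of $N_f^{-}V$ dual to $\xi^{+}$, i.e. $\langle\xi^{+},\eta^{-}\rangle=1$, available by the third observation. Set $\rho=\langle\xi^{+},\phi^{-}\rangle=\langle\phi^{+},\phi^{-}\rangle/D^{+}$. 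Since pairing with $\xi^{+}$ is injective on $N_f^{-}V$ and $\langle\xi^{+},\rho\,\eta^{-}\rangle=\rho=\langle\xi^{+},\phi^{-}\rangle$, we get $\phi^{-}=\rho\,\eta^{-}$. This $\rho$ is real and, being a nonzero multiple of $\langle\phi,\phi\rangle$ (nonzero off $M_0(f)$), nowhere vanishing, so it has constant sign on the connected $V$. Thus $\phi^{-}=|\rho|\,(\sign\rho\,\eta^{-})$ with $|\rho|>0$ and $\sign\rho\,\eta^{-}$ holomorphic, and Proposition \ref{RP} shows $f$ is $-$ isotropically isothermic, hence strongly isotropically isothermic. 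The final assertion follows because an isothermic chart has $B=0$, whence $\langle A,B\rangle=0$, so its coordinate curves are principal curvature lines, reducing to the case just treated.

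The decisive points are the two structural identities $Q=2\langle\Phi^{+},\Phi^{-}\rangle$ and the fact that reality of $Q$ detects principal curvature lines, together with the holomorphic duality $N_f^{-}M\cong(N_f^{+}M)^{*}$. Once these are in place, part (i) reduces to a fourth-root change of coordinates and part (ii) to extracting a holomorphic section from a real pairing constraint. The only genuinely delicate steps are the nonvanishing of $g$ and of $\rho$ and the constancy of the sign of $\rho$ on the connected domain; all three rest on the hypothesis $M_0(f)=\emptyset$, via \eqref{axes} and Lemma \ref{pseudo}(i).
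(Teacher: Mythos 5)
Your proof is correct, but it follows a genuinely different route from the paper's. The paper's argument is frame‑based: by \cite[Lemma 6]{PV} one can choose, near any non‑pseudo‑umbilic point, frames with $\a_{11}-\a_{22}=2\kappa e_3$ and $\a_{12}=\mu e_4$, $\kappa>|\mu|$, which forces $e_3^+=e_3^-$ and hence the key identity $\Omega^++\Omega^-=4\w_{12}$ along principal frames; part (i) then follows because co-closedness of both $\Omega^{\pm}$ makes $\w_{12}$ co-closed, which integrates to a conformal principal coordinate (and $\mu=0$ when $K_N\equiv0$ gives isothermicity), while part (ii) is the same identity read in the other direction, after an eventual rotation of the frame by $\pi/4$ to place $\a_{11}$ on the major axis. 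You instead encode the principal-direction condition as the reality of the coefficient of the quartic differential $\langle\Phi,\Phi\rangle=2\langle\Phi^+,\Phi^-\rangle$ and route both parts through Proposition \ref{RP} together with the holomorphic, nondegenerate pairing between $N^+_fM$ and $N^-_fM$: part (i) becomes a fourth-root change of coordinate applied to the nowhere-vanishing holomorphic factor $g=\langle\xi^+,\xi^-\rangle$, and part (ii) the extraction of the $-$ factorization from the real, nowhere-vanishing function $\rho$. Both arguments invoke $M_0(f)=\emptyset$ at the analogous spots (positivity of $\kappa-|\mu|$ there; nonvanishing of $g$ and $\rho$ here). Your version buys a coordinate-free detector of principal curvature lines and avoids the $\pi/4$ rotation; the paper's is shorter and stays entirely within the language of the mixed connection forms. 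Two small points you should make explicit: the holomorphic dual section $\eta^-$ with $\langle\xi^+,\eta^-\rangle=1$ exists because for any nowhere-vanishing smooth section $\zeta^-$ of $N^-_fV$ one has $\nap_{\dzb}\zeta^-=c\,\zeta^-$ and hence $\dzb\langle\xi^+,\zeta^-\rangle=c\,\langle\xi^+,\zeta^-\rangle$, so $\zeta^-/\langle\xi^+,\zeta^-\rangle$ is holomorphic; and the converse direction of Proposition \ref{RP} is applied only on the covering family of principal charts rather than on all charts, which is legitimate since its proof is local ($\phi^{\pm}=D^{\pm}\xi^{\pm}$ on $U$ yields $d\star\Omega^{\pm}=0$ on $U$).
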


\begin{proof}
Let $p\in M$. Since $\mathcal{E}(p)$ is not a circle,
from \cite[Lemma 6]{PV} it follows that there exist positively oriented local orthonormal frame fields
$\{e_1,e_2\}$ of $TM$, $\{e_3,e_4\}$ of $N_fM$, on a neighbourhood $U$ of $p$, and $\kappa, \mu\in \mathcal{C}^\infty(U)$ with
$\kappa>|\mu|$, such that $\a_{11}-\a_{22}=2\kappa e_3$ and $\a_{12}=\mu e_4$, where $\a_{kl}=\a(e_k,e_l), k,l=1,2$. 
In particular, from the proof of \cite[Lemma 6]{PV} it follows that $e_3$ is in the direction of the major axis of $\mathcal{E}_f$, 
and $\kappa$, $|\mu|$ are the lengths its semi-axes at every point of $U$.
Then, \eqref{e34pm} implies that $e_3^+=e_3^-$ and thus, $\w_{34}^+=\w_{34}^-$. From Proposition \ref{Criterion}(i) it follows that 
\be \label{O++O-}
\Omega^++\Omega^-=4\w_{12}\;\;\; \mbox{on}\;\;\; U,
\ee
where $\w_{12}$ is the connection form corresponding to the dual frame field of $\{e_1,e_2\}$.

(i) Since $\Omega^+$ and $\Omega^-$ are both co-closed, from \eqref{O++O-} it follows that $d\star\w_{12}=0$. 
Therefore, there exists a positive function $\lambda$ on $U$ such that $\star\w_{12}=-d\log\lambda$.
This implies that the forms $\lambda^{-1}\w_1, \lambda^{-1}\w_2$ are closed and thus, there exist smooth functions
$x,y$ on $U$ such that $dx=\lambda^{-1}\w_1, dy=\lambda^{-1}\w_2$. Then, $z=x+iy$ is a complex coordinate on $U$
with conformal factor $\lambda$, such that $e_1=\d_x/\lambda$, $e_2=\d_y/\lambda$.
In particular, if $f$ has flat normal bundle, then \eqref{axes} implies that $\mu=0$. Therefore $a_{12}=0$ and thus, $f$ is isothermic.

(ii) Suppose that $f$ is $\pm$ isotropically isothermic and consider a conformal principal curvature line parametrization $(U,z=x+iy)$ around $p\in M$,
with conformal factor $\lambda>0$.
Then, the connection form of the dual frame field of $\{\tilde{e}_1=\d_x/\lambda, \tilde{e}_2=\d_y/\lambda\}$ is given by $\tilde\w_{12}=\star d\log\lambda$.

We claim that there exists a conformal principal curvature line parametrization on $U$, with normalized basic vector fields $e_1$, $e_2=Je_1$, such that
$\a_{11}$ is a vertex of $\mathcal{E}_f$ determined by the major axis at any point of $U$. Indeed, in the case where 
$\a(\tilde{e}_1,\tilde{e}_1)$ is a vertex of $\mathcal{E}_f$ determined by the minor axis, we consider the frame
field $\{e_1, e_2\}$ given by $e_1-ie_2=\exp(i\pi/4)(\tilde{e}_1-i\tilde{e}_2)$. 
Then, the connection form of its dual frame field is given by $\w_{12}=\tilde\w_{12}$, and the vector field $\a_{11}$
is a vertex of $\mathcal{E}_f$ determined by the major axis. Since $\w_{12}$ is co-closed, as in the proof of part (i),
it follows that there exists a complex coordinate with normalized basic vector fields $e_1$ and $e_2$.

For the frame field $\{e_1,e_2\}$, equation \eqref{O++O-} is valid. Since $d\star\Omega^{\pm}=0$, 
from \eqref{O++O-} it follows that $d\star\Omega^{\mp}=0$ and thus, $f$ is strongly isotropically isothermic.
The rest of the proof is obvious.
\qed
\end{proof}
\medskip

The following example shows that the converse of Proposition \ref{LoC}(i) is not true in general. 
Bearing in mind Example \ref{examples}(iv), it also shows that the classes of isothermic and isotropically isothermic surfaces overlap, 
but no one of these classes is contained in the other.

\begin{example} \label{ISTNII}
Isothermic surfaces in $\R^4$ that are strongly totally non isotropically isothermic.
\\
{\emph{Let $\gamma_j:I_j\to \R^2$ be a smooth curve parametrized by its arc length $s_j$, where $I_j$ is an open interval, $j=1,2$.
Let $n_j$ be the normal vector field of $\gamma_j$ such that $\{t_j=\dot{\gamma}_j,n_j\}$ is positively oriented, 
where the dot denotes the derivative with respect to $s_j$, $j=1,2$.
By setting $M=I_1\times I_2$ and $z=s_1+is_2$, it is clear that 
$z$ is a global complex coordinate on $M$ with basic vector fields $e_1,e_2$, where $e_j=\d/\d s_j$, $j=1,2$.
Moreover, the connection form of the corresponding coframe of $\{e_1,e_2\}$ satisfies $\w_{12}=0$.
We consider the product surface $f\colon M\to \R^4$, $f=\gamma_1\times\gamma_2$. Then, the adapted to $f$ frame field 
$$\{f_*e_1=(t_1,0), N_1=(n_1,0), f_*e_2=(0,t_2), N_2=(0,n_2)\}$$
is positively oriented in $\R^4$. Therefore, $\Jp N_1=-N_2$. 
Let $k_j$ be the curvature of $\gamma_j$, $j=1,2$. For the second fundamental form $\a$
of $f$ we have $\a_{11}=k_1N_1$, $\a_{22}=k_2N_2$ and $\a_{12}=0$, where $\a_{kl}=\a(e_k,e_l)$, $k,l=1,2$. 
Since $\a_{12}=0$, it follows that $f$ is isothermic.}}

{\emph{Assume furthermore that $f$ is umbilic-free, or equivalently, that there do not exist points $(s_1,s_2)$ on $M$ such that $k_1(s_1)=k_2(s_2)=0$,
and set
$$e_3=\frac{\a_{11}-\a_{22}}{\|\a_{11}-\a_{22}\|}=\frac{1}{\sqrt{k_1^2+k_2^2}}(k_1N_1-k_2N_2),\;\; e_4=\Jp e_3.$$
Then, \eqref{e34pm} implies that $e_3=e_3^-=e_3^+$. Since $\w_{12}=0$, from Proposition \ref{Criterion}(i) and \eqref{Om}
it follows that $f$ is strongly isotropically isothermic if and only if $\w_{34}$ is co-closed. An easy computation shows that 
at every point of $M$, the equation $d\star\w_{34}=0$ is equivalent to the differential equation
\be \label{curves}
k_1\ddot{k}_2-\ddot{k}_1k_2+ 2k_1k_2\frac{(\dot{k}_1)^2-(\dot{k}_2)^2}{k_1^2+k_2^2}=0
\ee
for the curvatures of $\gamma_1$ and $\gamma_2$, where each dot denotes a derivative of $k_j$ with respect to $s_j$, $j=1,2$. 
Clearly, if $k_j(s_j)=c_js_j$, $0\neq c_j\in\R$, $j=1,2,$ and $c_1\neq c_2$, then for $s_1s_2>0$ it follows from \eqref{curves} that $f$ is 
strongly totally non isotropically isothermic.}}
\end{example}
\medskip

We recall (cf. \cite{Me}) that {\emph{a mean-directional
curvature line}} of an oriented surface $f\colon M\to \Q^4_c$, is a curve on $M$ which is tangent at every point to a unit vector field, 
whose image under the second fundamental form of $f$
is parallel to the mean curvature vector field. There exist two families of mean-directional curvature lines, whose common singularities are the minimal
points of $f$ and the points where the ellipse of curvature $\mathcal{E}_f$ degenerates into a line segment, parallel to the mean curvature vector.

\begin{proposition}\label{mdi}
Let $f\colon M\to \Q^4_c$ be an umbilic-free superconformal surface with nowhere-vanishing mean curvature vector field. 
The surface $f$ is isotropically isothermic if and only if it admits a conformal mean-directional curvature line parametrization around every point.
\end{proposition}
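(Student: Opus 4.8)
The plan is to collapse the problem to a single mixed connection form and then compare it with a connection form adapted to the mean curvature direction. Since $f$ is umbilic-free and superconformal, Lemma \ref{pseudo}(ii) shows that $K_N$ vanishes nowhere, so by connectedness it has constant sign; I fix $K_N>0$, the case $K_N<0$ being identical after interchanging the roles of $+$ and $-$. Then Lemma \ref{pseudo}(ii) gives $\Phi^+\equiv0$, hence $\a(\dz,\dz)=\phi^-$, $\mathcal{H}=\mathcal{H}^-$ and $M_0^-(f)=\emptyset$, so the only mixed connection form defined on all of $M$ is $\Omega^-$. Thus $f$ is isotropically isothermic if and only if $d\star\Omega^-=0$ on $M$, and the task reduces to matching this with the mean-directional parametrization.

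Next I would pin down the mean-directional directions. In a complex chart $(U,z)$, writing $X_\theta=\cos\theta\,e_1+\sin\theta\,e_2$ and using \eqref{defadd}, one gets $\a(X_\theta,X_\theta)-H=(2/\lambda^2)\Real(e^{2i\theta}\phi^-)$, so $X_\theta$ is mean-directional precisely when $e^{2i\theta}\phi^-$ is a real multiple of $H^-$ in the complex line bundle $N_f^-M$. Expanding $\phi^-$ by \eqref{fpm} and writing $H=H^3e_3^-+H^4e_4^-$ in the frame $\{e_3^-,e_4^-\}$ attached to $\{e_1,e_2\}$ by \eqref{e34pm}, one finds $H^-=\tfrac12(H^3+iH^4)(e_3^--ie_4^-)$, so the ratio $\phi^-/H^-$ is a positive multiple of $(H^3+iH^4)^{-1}$, and the two families of mean-directional lines lie along the angles $\theta_0$ and $\theta_0+\pi/2$ with $\theta_0=\tfrac12\arg(H^3+iH^4)$. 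Rotating $\{e_1,e_2\}$ by $\theta_0$ to a frame $\{\hat e_1,\hat e_2\}$ tangent to these lines, the transformation laws in the proof of Proposition \ref{Criterion}(i) give $\hat\w_{12}=\w_{12}+d\theta_0$ and $\hat e_3^--i\hat e_4^-=e^{2i\theta_0}(e_3^--ie_4^-)$; since $H=\|H\|(\cos2\theta_0\,e_3^-+\sin2\theta_0\,e_4^-)$, this choice of $\theta_0$ is exactly the one making $\hat e_3^-=H/\|H\|$. As in the proof of Proposition \ref{LoC}, a conformal mean-directional parametrization exists around every point if and only if $d\star\hat\w_{12}=0$.

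The heart of the argument is that the form $\hat\w_{34}^-$ of $\{\hat e_3^-,\hat e_4^-\}$ is automatically co-closed. A superconformal surface with $K_N\geq0$ has a holomorphic, hence vertically harmonic, Gauss lift $G_+$ (Propositions \ref{holomorphic Gl} and \ref{glphi}), so by Proposition \ref{glphi}(iv) one has $\nap_{JX}H=\Jp\nap_X H$ for all $X$. Since $\hat e_3^-=H/\|H\|$ and $\hat e_4^-=\Jp\hat e_3^-$, a direct computation gives $\hat\w_{34}^-=\|H\|^{-2}\langle\nap H,\Jp H\rangle$; feeding in the identity above yields $\langle\nap_{JX}H,\Jp H\rangle=\tfrac12 X\|H\|^2$, whence $\star\hat\w_{34}^-=-d\log\|H\|$ and therefore $d\star\hat\w_{34}^-=0$.

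Finally, since $\Omega^-$ is frame-independent, in the mean-directional frame \eqref{Om} reads $\Omega^-=2\hat\w_{12}-\hat\w_{34}^-$, so the previous step gives $d\star\Omega^-=2\,d\star\hat\w_{12}$. Hence $d\star\Omega^-=0$ on $M$ if and only if $d\star\hat\w_{12}=0$ on $M$, i.e.\ if and only if a conformal mean-directional parametrization exists around every point; together with the first paragraph this is the asserted equivalence. I expect the only genuine obstacle to be the co-closedness of $\hat\w_{34}^-$: it is what forces the use of the holomorphicity of the Gauss lift, and it is precisely the relation $\nap_{JX}H=\Jp\nap_X H$ that decouples the two co-closedness conditions.
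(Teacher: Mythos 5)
Your proposal is correct and follows essentially the same route as the paper's proof: both reduce to a single isotropic part via superconformality, adapt the normal frame to $e_3=H/\|H\|$ so that the mean-directional tangent frame realizes $e_3^{\pm}$, and use the vertical harmonicity of the holomorphic Gauss lift (Proposition \ref{glphi}(iv)) to get $\star\w_{34}^{\pm}=\mp d\log\|H\|$, whence $d\star\Omega^{\pm}=2\,d\star\w_{12}$. The only cosmetic difference is that you compute the rotation angle $\theta_0$ explicitly, whereas the paper directly exhibits the adapted tangent frame via $\a_{11},\a_{22}\parallel H$.
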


\begin{proof}
Since $M_1(f)=\emptyset$, by virtue of Lemma \ref{pseudo}(ii), we may assume that $\pm K_N<0$. Then, Lemma \ref{pseudo}(ii) implies
that $\Phi^\mp\equiv0$ and from Proposition \ref{glphi} it follows that the Gauss lift $G_\mp$ of $f$ is vertically harmonic.
Consider the orthonormal frame field $\{e_3=H/\|H\|, e_4=\Jp e_3\}$ of the normal bundle. Using Proposition \ref{glphi}(iv) 
and \eqref{connection forms}, we obtain that the connection form of its dual frame field 
is given by
\be \label{w34sc}
\w_{34}=\mp\star d\log\|H\|.
\ee
Let $r>0$ be the radius of $\mathcal{E}_f$ at every point of $M$, and 
consider a positively oriented local orthonormal frame field $\{e_1, e_2\}$ of $TM$, such that
$$\a_{11}=(\|H\|+r)e_3,\;\;\; \a_{22}=(\|H\|-r)e_3.$$
Since $\mathcal{H}^\mp(e_1,e_2)\equiv0$, from \eqref{Hopf Invariants} and the above it follows that 
$\a_{12}=\mp re_4.$
Then, \eqref{e34pm} implies that $e_3^\pm=e_3$ and $e_4^\pm=e_4$. From Proposition \ref{Criterion}(i), \eqref{Om} and \eqref{w34sc} we obtain that
\be \label{OmH}
\Omega^\pm=2\w_{12}-\star d\log\|H\|,
\ee
where $\w_{12}$ is the connection form corresponding to the dual frame field of $\{e_1, e_2\}$.

A conformal parametrization whose coordinate curves are mean-directional curvature lines exists around every point of $M$, if and only if $\w_{12}$ is co-closed.
The proof follows immediately from \eqref{OmH}.
\qed
\end{proof}

\subsection{Infinitesimal Deformations}

Let $f\colon M\to \R^4$ be an oriented surface and denote by $N$ the underlying Riemann surface of $M$, such that $M=(N,ds^2)$.
A deformation of $f$ is a smooth map $F\colon I\times N\to \R^4$ with $F(0,\cdot)=f$, where $I\subset \R$ is an open interval containing $0$.
For every $t\in I$, we denote by $f_t$ the isometric immersion $F(t,\cdot)\colon M_t\to \R^4$, where $M_t=(N,ds_t^2)$. 
At any point of $N$, the Taylor expansion of $f_t$ around $t=0$ is $f_t=f+t\mathcal{T}+o(t)$, 
where 
$$\mathcal{T}=F_*\partial/\partial t|_{t=0}=\delta f_t,$$
and $\delta=(d/dt)|_{t=0}$ is the {\emph{variational operator}}.
The deformation $F$ is called {\emph{isometric}} if $ds_t^2=ds^2$ for every $t\in I$, 
and is called {\emph{infinitesimal isometric}} if $\delta ds^2_t=0$.
If $F$ is infinitesimal isometric then
the section $\mathcal{T}\in \Gamma(f^*(T\R^4))$ defined above 
is called the {\emph{bending field of $F$}},
and by using the Taylor expansion of $f_t$, it follows that it satisfies
\be \label{bfld}
\langle\tilde{\nabla}_X\mathcal{T},f_*Y\rangle +\langle f_*X,\tilde{\nabla}_Y\mathcal{T}\rangle=0,\;\;\; X,Y\in TM,
\ee
where $\tilde{\nabla}$ is the connection of $f^*(T\R^4)$.

Every section $\mathcal{T}$ of $f^*(T\R^4)$ satisfying \eqref{bfld} is called {\emph{a bending field}}, and such sections always exist;
the variational vector field $\mathcal{T}$ of an isometric deformation of $f$ produced by a smooth one-parameter family of isometries of $\R^4$,
satisfies \eqref{bfld} and is called a {\emph{trivial bending field}}.
A bending field $\mathcal{T}$ is trivial (cf. \cite{DTB}) if and only if there exist constant vectors $C\in \Lambda^2\R^4$ and $v\in\R^4$, such that
$$\mathcal{T}=C\cdot f+v,$$
where the dot multiplication of a simple 2-vector $X\wedge Y\in\Lambda^2\R^4$ with $Z\in \R^4$,
is defined by $X\wedge Y\cdot Z=\langle Y,Z\rangle X-\langle X,Z\rangle Y$, extends linearly to every element of $\Lambda^2\R^4$,
and is skew-symmetric with respect to the inner product of $\R^4$.
An infinitesimal isometric deformation is called either {\emph{trivial}}, or {\emph{nontrivial}}, 
if its bending field is either trivial on $M$, or nontrivial on an open and dense subset of $M$, respectively.
Two bending fields $\mathcal{T}_1, \mathcal{T}_2\in\Gamma(f^*(T\R^4))$ 
are equivalent and we identify them,
if there exist $0\neq c\in\R$ and a trivial bending field $\mathcal{T}_0$,
such that $\mathcal{T}_2= c\mathcal{T}_1 +\mathcal{T}_0$.

Every bending field $\mathcal{T}\in\Gamma(f^*(T\R^4))$ determines a unique infinitesimal 
isometric deformation of the form
\be \label{form inf}
f_t=f+t\mathcal{T},\;\;\;\;\; 
\ee
for $t$ in some fixed interval $I$,
which is always assumed to be sufficiently small for our purposes.
In the sequel, {\emph{we deal only with deformations of the above form}}, 
and we write $F\colon I\times M\to \R^4$ to denote such a deformation of the surface $f\colon M\to \R^4$. 

For the proofs of Theorems \ref{IIID} and \ref{IIS}, we need a version of the fundamental theorem of infinitesimal isometric deformations,
recently proved in \cite{DJ} in invariant form, in terms of moving frames. 
A statement of the fundamental theorem in this context, and also some auxiliary results,
can be found in the survey paper \cite{IMS}. Because it turned out to be impossible for the author to find  
detailed proofs, or even proofs of some of these results (some references in \cite{IMS} are in Russian, and others are really hard to find), 
and arguments involving moving frames jointly with Taylor expansions are quite delicate, 
we also provide neat proofs of everything that we use to obtain our results.

\begin{lemma} \label{invfr}
Let $F\colon I\times M\to \R^4$ be an infinitesimal isometric deformation.
\begin{enumerate}[topsep=0pt,itemsep=-1pt,partopsep=1ex,parsep=0.5ex,leftmargin=*, label=(\roman*), align=left, labelsep=-0.4em]
\item If $M$ is simply-connected, then every orthonormal frame field $\{e_1,e_2\}$ on $M$, extends to a smooth with respect to $t$, orthonormal frame field
$\{e_1(t),e_2(t)\}$ on $M_t$, with dual frame field $\{\w_1(t),\w_2(t)\}$ and corresponding connection form $\w_{12}(t)$,
such that 
\be \label{delta int}
\delta e_j(t)= \delta \w_j(t)=0,\;\; j=1,2,\;\;\;\mbox{and}\;\;\;\delta\w_{12}(t)=0.
\ee
\item Every orthonormal frame field $\{e_3,e_4\}$ of $N_fM$, 
locally extends to a smooth with respect to $t$,
local orthonormal frame field $\{e_3(t),e_4(t)\}$ of $N_{f_t}M_t$.
\end{enumerate}
\end{lemma}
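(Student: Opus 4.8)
The plan for (i) is to reduce the whole statement to a single variational identity extracted from the hypothesis. First I would encode the deformation of the metric by the field of $ds^2$-symmetric, positive-definite endomorphisms $S_t$ of $TN$ determined by $ds_t^2(X,Y)=ds^2(S_tX,Y)$, so that $S_0=\mathrm{Id}$. Differentiating this relation at $t=0$ and using that $F$ is infinitesimal isometric, i.e. $\delta ds_t^2=0$, gives $ds^2(\delta S_t\,X,Y)=0$ for all $X,Y$, hence $\delta S_t=0$; consequently $S_t=\mathrm{Id}+o(t)$ and likewise $\delta S_t^{\pm 1/2}=0$. I would then orthonormalize the given frame with respect to the new metric by setting $e_j(t)=S_t^{-1/2}e_j$. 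This frame is orthonormal for $ds_t^2$ because $S_t^{\pm 1/2}$ are $ds^2$-self-adjoint, is positively oriented since $S_t^{-1/2}$ is positive-definite, reduces to $e_j$ at $t=0$, and satisfies $\delta e_j(t)=0$ by the above. The dual coframe is $\w_j(t)=\w_j\circ S_t^{1/2}$, and $\delta\w_j(t)=0$ for the same reason.

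It then remains only to control the connection form. Since $N$ is a fixed manifold, the exterior derivative $d$ commutes with $\delta=\partial/\partial t|_{t=0}$. Differentiating the first structure equations $d\w_1(t)=\w_{12}(t)\wedge\w_2(t)$ and $d\w_2(t)=-\w_{12}(t)\wedge\w_1(t)$ at $t=0$ and using $\delta\w_j(t)=0$, I obtain $\delta\w_{12}(t)\wedge\w_2=0$ and $\delta\w_{12}(t)\wedge\w_1=0$. As $\{\w_1,\w_2\}$ is a coframe, these two relations force $\delta\w_{12}(t)=0$, which is the last assertion of (i).

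For (ii) no isometry hypothesis is needed, and the argument is purely about the smooth dependence of the normal plane on $t$. The curve $t\mapsto N_{f_t}M_t(p)$ is a smooth curve of $2$-planes in the trivial bundle $f_t^*T\R^4\cong M\times\R^4$, since $f_t$ is smooth in $t$ and remains an immersion for small $t$. I would take the orthogonal projection $P_t\colon\R^4\to N_{f_t}M_t(p)$, which is smooth in $(t,p)$ with $P_0$ the identity on $N_fM(p)$, apply it to the given frame to get $P_te_3,P_te_4$ (linearly independent for small $t$ by continuity), and Gram--Schmidt them with respect to the Euclidean metric. This yields the desired smooth local orthonormal extension $\{e_3(t),e_4(t)\}$ with $e_a(0)=e_a$.

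The genuinely delicate point is not the linear algebra but the rigorous interchange of $\delta$ with $d$ and with the algebraic frame operations, carried out on moving frames that themselves depend on $t$ through Taylor expansions; this is exactly the technical care the surrounding discussion warns about, and the identity $\delta S_t=0$ is where the infinitesimal-isometry hypothesis is fully consumed. The simply-connectedness of $M$ enters only to guarantee that the frame being deformed is globally defined, so that $\w_{12}(t)$ is a single global $1$-form and $\delta\w_{12}(t)=0$ is a meaningful global identity.
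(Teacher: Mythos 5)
Your proof is correct and follows essentially the same route as the paper's: orthonormalize the given frame with respect to $ds_t^2$, use $\delta ds_t^2=0$ to kill the first-order variation of the frame and coframe, and then read off $\delta\w_{12}(t)=0$ by comparing the $t$-linear terms of the structure equations. The only differences are implementational but pleasant: your symmetric square root $S_t^{-1/2}$ yields $\delta e_j(t)=0$ in one step, whereas the paper's Gram--Schmidt frame acquires a first-order rotation by a function $u$ that must then be removed by the explicit correction $e^{itu}$; and in (ii) your orthogonal projection onto the moving normal plane replaces the paper's graph-plus-Hodge-star construction of a transverse direction, with the same effect.
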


\begin{proof}
(i) Let $\{e_1,e_2\}$ be an orthonormal frame field on $M$. 
Applying the Gram-Schmidt process with respect to the metric $ds^2_t$, to the frame field $\{e_1,e_2\}$, we 
obtain a smooth with respect to $t$, orthonormal frame field $\{\tilde{e}_1(t),\tilde{e}_2(t)\}$ on $M_t$, 
with $\tilde{e}_j(0)=e_j$, $j=1,2$. 
Since $\delta ds^2_t=0$, from the Taylor expansions $\tilde{e}_j(t)=e_j+t\delta\tilde{e}_j(t)+o(t)$, $j=1,2$,
it follows that $\delta\tilde{e}_1(t)=ue_2$ and $\delta\tilde{e}_2(t)=-ue_1$, for some $u\in \mathcal{C}^\infty(M)$.
Then, the orthonormal frame field on $M_t$ defined by 
$e_1(t)+ie_2(t)=\exp{(itu)}(\tilde{e}_1(t)+i\tilde{e}_2(t))$,
depends smoothly on $t$ and satisfies $e_j(0)=e_j$ and $\delta e_j(t)=0$, $j=1,2$. 
Let $\{\w_1(t),\w_2(t)\}$ be the dual frame field of $\{e_1(t),e_2(t)\}$ and $\w_{12}(t)$ its connection form.
Since the coefficients of the corresponding powers of $t$ in the Taylor expansions of $e_j(t)$ and $\w_j(t)$ are dual,
from $\delta e_j(t)=0$ it follows that $\delta \w_j(t)=0$, $j=1,2$.
Moreover, using the Taylor expansions of all the involved forms in $d\w_j(t)=\w_{jr}(t)\wedge\w_r(t)$, $j,r=1,2$, 
and comparing the coefficients of $t$, we obtain that $\delta\w_{12}(t)=0$.

(ii) We claim that every point of $M$ has a neighbourhood on which, there exists a smooth with respect to $t$, 
local orthonormal frame field $\{\tilde{e}_3(t),\tilde{e}_4(t)\}$ of $N_{f_t}M_t$.
Indeed, for $p\in M$ there exists a neighbourhood $\tilde{U}$ of $p$ on which, the surface $f$  
is a graph over a coordinate plane of $\R^4$.
Assume that $f(x,y)=(x,y,R(x,y),S(x,y))$ on $\tilde{U}$.
Expressing the bending field of $F$ by using the coordinates $(x,y)$, and
substituting into \eqref{form inf}, it follows that for sufficiently small $t$,
the vector $E_3=(0,0,1,0)\in \R^4$ is non-tangent to
$f_t$ in a neighbourhood $U$ of $p$, compactly contained in $\tilde{U}$. 
From part (i), there exists a smooth with respect to $t$, positively oriented orthonormal frame field $\{e_1(t),e_2(t)\}$ on $U_t=(U,ds_t^2)$.
Then, the local section $N_3(t)=\star\left({f_t}_*e_1(t)\wedge{f_t}_*e_2(t)\wedge E_3\right)$
of $f_t^*(T\R^4)$, where $\star$ is the Hodge star operator, depends smoothly on $t$ and is nowhere-vanishing.
Applying the Gram-Schmidt process to the frame field $\{{f_t}_*e_1(t),{f_t}_*e_2(t),N_3(t)\}$, we obtain a smooth with
respect to $t$, unit vector field $\tilde{e}_3(t)$ of $N_{f_t}U_t$.
Then, the orthonormal frame field $\{\tilde{e}_3(t),\tilde{e}_4(t)\}$ of $N_{f_t}U_t$, where 
$\tilde{e}_4(t)=\star\left({f_t}_*e_1(t)\wedge{f_t}_*e_2(t)\wedge \tilde{e}_3(t)\right)$,
depends smoothly on $t$ and the claim follows.

Let $\{e_3,e_4\}$ be a $\pm$ oriented orthonormal frame field of $N_fM$.  
For $p\in M$, consider a frame field $\{\tilde{e}_3(t),\tilde{e}_4(t)\}$ of $N_{f_t}U_t$ as in the claim proved above.
By setting $\tilde{e}_a(0)=\tilde{e}_a$, $a=3,4$, there exists $\tau\in \mathcal{C}^\infty(U)$ such that
$e_3\mp ie_4=\exp(i\tau)(\tilde{e}_3-i\tilde{e}_4)$.
Then, the orthonormal frame field $\{e_3(t),e_4(t)\}$ of $N_{f_t}U_t$ given by $e_3(t)\mp ie_4(t)=\exp(i\tau)(\tilde{e}_3(t)-i\tilde{e}_4(t))$,
depends smoothly on $t$, and $e_a(0)=e_a$, $a=3,4$.
\qed
\end{proof}

\medskip

Let $F\colon I\times M\to \R^4$ be an infinitesimal isometric deformation. An {\emph{adapted to $F$ orthonormal frame field}},
is a smooth with respect to $t$ frame field $\{e_k(t)\}_{1\leq k\leq 4}$, 
such that $\{e_1(t),e_2(t)\}$ and $\{e_3(t),e_4(t)\}$ are positively oriented orthonormal frame fields of $TM_t$ and $N_{f_t}M_t$, respectively,
and the former satisfies \eqref{delta int}.
For such a frame field, we denote by
$\w_{kl}(t)$, $1\leq k,l\leq 4$, 
the connection forms of its corresponding coframe, and by
$\{\varepsilon_k(t)\}_{1\leq k\leq 4}$ the adapted to $f_t$ frame field given by
$$\varepsilon_j(t)=f_{t_*}e_j(t),\;\; j=1,2,\;\;\; \mbox{and}\;\;\; \varepsilon_a(t)=e_a(t),\;\; a=3,4.$$
Then, the Gauss and Weingarten formulae for $f_t$ imply that
\be \label{G-W}
\tilde{\nabla}^t\varepsilon_k(t)=\sum_{l=1}^{4}\w_{kl}(t)\varepsilon_l(t),\;\;\; 1\leq k\leq4,
\ee
where $\tilde{\nabla}^t$ stands for the connection of $f_t^*(T\R^4)$, and $\tilde{\nabla}^0=\tilde{\nabla}$.
In order to simplify the notation, we also set 
$e_k(0)=e_k$, $\varepsilon_k(0)=\varepsilon_k$ and $\w_{kl}(0)=\w_{kl}$, for $1\leq k,l\leq 4$.

\begin{lemma} \label{varfraij}
Let $F\colon I\times M\to \R^4$ be an infinitesimal isometric deformation with bending field $\mathcal{T}$. 
If $\{e_k(t)\}_{1\leq k\leq 4}$ is an adapted to $F$ orthonormal frame field, then
there exists a unique section $W$ of $f^*(\Lambda^2T\R^4)$ such that the variations 
of $\varepsilon_k(t)$ are given by
\be \label{vframe}
\delta \varepsilon_k(t)=W\cdot \varepsilon_k, \;\;\; 1\leq k\leq 4, \;\;\;\; \mbox{with}\;\;\;\; \delta \varepsilon_j(t)=\tilde{\nabla}_{e_j}\mathcal{T},\;\;\;j=1,2,
\ee
and the variations $\varphi_{kl}=\delta\w_{kl}(t)$ of the connection forms, by
\be \label{fklW}
\varphi_{kl}=\langle \hat{\nabla}W\cdot \varepsilon_k,\varepsilon_l\rangle,\;\;\;\; 1\leq k,l\leq 4,
\ee
where $\hat{\nabla}$ is the connection of $f^*(\Lambda^2T\R^4)$.
\end{lemma}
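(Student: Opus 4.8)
The plan is to prove the two displayed formulas separately, using throughout that $\R^4$ is flat: sections of $f_t^*(T\R^4)$ are simply $\R^4$-valued functions on $N$, and every pullback connection $\tilde{\nabla}^t$, as well as $\hat{\nabla}$, is the ordinary directional derivative $d$ of such functions; in particular $\tilde{\nabla}^t$ does not depend on $t$, and I will suppress the superscript.

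First I would produce the section $W$. The frame $\{\varepsilon_k(t)\}_{1\leq k\leq 4}$ is orthonormal in $\R^4$ for every $t$, so applying the variational operator to $\langle\varepsilon_k(t),\varepsilon_l(t)\rangle=\delta_{kl}$ gives $\langle\delta\varepsilon_k,\varepsilon_l\rangle+\langle\varepsilon_k,\delta\varepsilon_l\rangle=0$. Hence the pointwise endomorphism of $\R^4$ that sends $\varepsilon_k$ to $\delta\varepsilon_k$ is skew-symmetric. Since the dot action $\Xi\mapsto(Z\mapsto\Xi\cdot Z)$ identifies $\Lambda^2\R^4$ linearly with the skew-symmetric endomorphisms of $\R^4$, there is a unique section $W$ of $f^*(\Lambda^2 T\R^4)$ with $W\cdot\varepsilon_k=\delta\varepsilon_k$ for all $k$, which is the first relation in \eqref{vframe}. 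For the second relation I would compute directly from $\varepsilon_j(t)=(f+t\mathcal{T})_*e_j(t)=f_*e_j(t)+t\,\tilde{\nabla}_{e_j(t)}\mathcal{T}$; applying $\delta$ at $t=0$, the first summand contributes $f_*(\delta e_j(t))=0$ by \eqref{delta int}, while the second contributes $\tilde{\nabla}_{e_j}\mathcal{T}$, so $\delta\varepsilon_j(t)=\tilde{\nabla}_{e_j}\mathcal{T}$ for $j=1,2$.

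For the connection forms I would rewrite \eqref{G-W} as $\w_{kl}(t)=\langle d\varepsilon_k(t),\varepsilon_l(t)\rangle$, valid for every $t$ because the connection is the fixed operator $d$. Differentiating in $t$ and commuting $\delta$ with $d$ yields $\varphi_{kl}=\langle d(\delta\varepsilon_k),\varepsilon_l\rangle+\langle d\varepsilon_k,\delta\varepsilon_l\rangle$. Substituting $\delta\varepsilon_k=W\cdot\varepsilon_k$ and applying the Leibniz rule $d(W\cdot\varepsilon_k)=\hat{\nabla}W\cdot\varepsilon_k+W\cdot d\varepsilon_k$, the right-hand side becomes $\langle\hat{\nabla}W\cdot\varepsilon_k,\varepsilon_l\rangle+\langle W\cdot d\varepsilon_k,\varepsilon_l\rangle+\langle d\varepsilon_k,W\cdot\varepsilon_l\rangle$. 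The crucial point is that the dot action is skew-symmetric with respect to $\langle\cdot,\cdot\rangle$, so $\langle W\cdot d\varepsilon_k,\varepsilon_l\rangle=-\langle d\varepsilon_k,W\cdot\varepsilon_l\rangle$ and the last two terms cancel, leaving precisely \eqref{fklW}.

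I expect the main obstacle to be neither formula in isolation but the careful bookkeeping of what varies with $t$ and what is intrinsic to $N$: justifying that $\delta$ commutes with $d$, that $\tilde{\nabla}^t$ and $\hat{\nabla}$ may both be replaced by $d$, and that the coefficients of the frame and coframe Taylor expansions are dual, as already used in Lemma \ref{invfr}. The conceptual content is that orthonormality enters twice, first to realize $W$ as an element of $\Lambda^2\R^4$, and then, through the skew-symmetry of the dot action, to force the cancellation that expresses $\varphi_{kl}$ solely in terms of $\hat{\nabla}W$.
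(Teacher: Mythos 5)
Your proposal is correct and follows essentially the same route as the paper: differentiate the orthonormality relations to get skew-symmetry and hence $W$ (the paper writes $W=-\sum_{k<l}\langle\delta\varepsilon_k(t),\varepsilon_l\rangle\,\varepsilon_k\wedge\varepsilon_l$ explicitly, while you invoke the isomorphism of $\Lambda^2\R^4$ with skew-symmetric endomorphisms, a cosmetic difference), compute $\delta\varepsilon_j(t)$ from the Taylor expansion of $f_t$ together with $\delta e_j(t)=0$, and obtain \eqref{fklW} by differentiating $\w_{kl}(t)=\langle\tilde{\nabla}^t\varepsilon_k(t),\varepsilon_l(t)\rangle$ and cancelling the cross terms via the Leibniz rule and the skew-symmetry of the dot action, exactly as in \eqref{skew}.
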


\begin{proof}
Differentiating the relations $\langle \varepsilon_k(t), \varepsilon_l(t)\rangle=\delta_{kl}$, where $\delta_{kl}$
is the Kronecker's delta, we obtain that 
$\langle \delta\varepsilon_k(t), \varepsilon_l\rangle=-\langle \varepsilon_k, \delta\varepsilon_l(t)\rangle$, $1\leq k,l\leq 4$.
By setting $w_{kl}=\langle \delta\varepsilon_k(t), \varepsilon_l\rangle$, $1\leq k,l\leq 4$, it follows that the section $W$ of $f^*(\Lambda^2T\R^4)$ given by 
$$W=-\sum_{1\leq k<l\leq 4}w_{kl}\varepsilon_k\wedge\varepsilon_l$$
satisfies $\delta \varepsilon_k(t)=W\cdot \varepsilon_k$, $1\leq k\leq 4$, and is clearly unique. Furthermore, Lemma \ref{invfr}(i) yields that $e_j(t)=e_j+o(t)$
and thus, $f_{t_*}e_j(t)=f_{t_*}e_j+o(t)$, $j=1,2$. Using \eqref{form inf} in the right-hand side of the last relation, we obtain that
$$\varepsilon_j(t)=f_*e_j+t\tilde{\nabla}_{e_j}\mathcal{T}+o(t), \;\;j=1,2.$$
The above implies that $\delta \varepsilon_j(t)=\tilde{\nabla}_{e_j}\mathcal{T}$, $j=1,2$, and \eqref{vframe} follows.

Moreover, from \eqref{G-W} we have that 
$$\w_{kl}(t)=\langle \tilde{\nabla}^t\varepsilon_k(t), \varepsilon_l(t)\rangle,\;\;\; 1\leq k,l\leq 4,$$
and from the Taylor expansions $\varepsilon_k(t)=\varepsilon_k+t\delta \varepsilon_k(t)+o(t)$, $1\leq k\leq 4$, 
we obtain 
$$\tilde{\nabla}^t\varepsilon_k(t)=\tilde{\nabla}\varepsilon_k+ t\tilde{\nabla}\delta\varepsilon_k(t)+o(t),\;\;\; 1\leq k\leq 4.$$
Using again the Taylor expansions of $\varepsilon_l(t)$, $1\leq l\leq 4$, the above two relations give
$$\w_{kl}(t)=\w_{kl}+t\left(\langle \tilde{\nabla}\delta\varepsilon_k(t), \varepsilon_l\rangle+\langle \tilde{\nabla}\varepsilon_k, \delta\varepsilon_l(t)\rangle\right)
+o(t),\;\;\; 1\leq k,l\leq 4.$$
The above and \eqref{vframe} imply that
\bea
\varphi_{kl}=\langle \tilde{\nabla}(W\cdot \varepsilon_k), \varepsilon_l\rangle+\langle \tilde{\nabla}\varepsilon_k, W\cdot \varepsilon_l\rangle,\;\;\; 1\leq k,l\leq 4.
\eea
Equation \eqref{fklW} follows immediately from the above, by using that the formulae 
\begin{eqnarray}
\tilde{\nabla}_X(V\cdot f_*Y)=(\hat{\nabla}_XV)\cdot f_*Y+V\cdot \tilde{\nabla}_X f_*Y \;\;\;\;\;\mbox{and}\;\;\;\;\;
\langle V\cdot f_*X,f_*Y \rangle=-\langle f_*X,V\cdot f_*Y \rangle  \label{skew} 
\end{eqnarray}
hold for any $V\in\Gamma(f^*(\Lambda^2T\R^4))$ and $X,Y\in TM$.
\qed
\end{proof}

\medskip

The following is the fundamental theorem of infinitesimal isometric deformations in terms of moving frames. 
The main idea of the proof is contained in \cite{Ma}, where the theorem has been proved in terms of local coordinates.

\begin{theorem} \label{FTHMINF}
Assume that $f\colon M\to \R^4$ is a simply-connected oriented surface.
\begin{enumerate}[topsep=0pt,itemsep=-1pt,partopsep=1ex,parsep=0.5ex,leftmargin=*, label=(\roman*), align=left, labelsep=-0.4em]
\item Let $F\colon I\times M\to \R^4$ be an infinitesimal isometric deformation. 
If $\{e_k(t)\}_{1\leq k\leq 4}$ is
an adapted to $F$ orthonormal frame field, then the variations $\{\varphi_{kl}\}_{1\leq k,l\leq 4}$ of the 
connection forms of its dual frame field
satisfy the fundamental system
\begin{eqnarray}
\varphi_{12}=0\;\;\; \mbox{and} \;\;\; \varphi_{kl}=-\varphi_{lk},\;\;\;\;\;\;\;\;\;\;\;\;\;\;\;\;\;\;\;\;\;\;\;\;\;\;\;\;\;\;\;\;\;\;\;\;\;\;\;\;    
\;\;\; 1\leq k,l\leq4, \;\;\;\;\;\;\;\;\;\;                    \label{vf12}                 \\
\sum_{j=1}^2\w_j\wedge\varphi_{ja}=0,\;\;\;\;\;\;\;\;\;\;\;\;\;\;\;\;\;\;\;\;\;\;\;\;\;\;\;\;\;\;\;\;\;\;\;\;\;\;\;\;\;\;\;\;\;\;\;\;\;\;\;\;\;\;  
\;\;\;    a=3,4,           \;\;\;\;\;\;\;\;\;\;\;\;           \label{vfja}                  \\
d\varphi_{ja}=\sum_{r=1}^{2}\w_{jr}\wedge\varphi_{ra}+\sum_{b=3}^{4}(\varphi_{jb}\wedge \w_{ba}+\w_{jb}\wedge\varphi_{ba}),\;\;\;\;\;\;\;\;\;\;\;\;\;\;\;  
\;\;\;   j=1,2,   \;\;\;    a=3,4,     \;\;\;\;\;\;            \label{dfja}                   \\
\sum_{a=3}^4(\varphi_{1a}\wedge\w_{a2}+\w_{1a}\wedge\varphi_{a2})=0, \;\;\;\;\;\;\;\;\;\;\;\;\;     \;\;\;
d\varphi_{34}=\sum_{j=1}^2(\varphi_{3j}\wedge\w_{j4}+\w_{3j}\wedge\varphi_{j4}).\;\;\; \label{df1234}
\end{eqnarray}

\item Let $\{e_1,e_2\}$, $\{e_3,e_4\}$ be positively oriented 
orthonormal frame fields of $TM$ and $N_fM$, respectively, 
and $\{\w_{kl}\}_{1\leq k,l\leq 4}$ the connection forms of the dual frame field of $\{e_{k}\}_{1\leq k\leq 4}$. 
To every solution $\{\varphi_{kl}\}_{1\leq k,l\leq 4}$ of the fundamental system corresponds a unique  
bending field $\mathcal{T}$. Moreover, for the infinitesimal isometric deformation $F$ determined by $\mathcal{T}$,
the frame field $\{e_{k}\}_{1\leq k\leq 4}$ locally extends to an adapted to $F$ local orthonormal frame field,
such that the variations of the connection forms of its corresponding coframe are the $\{\varphi_{kl}\}_{1\leq k,l\leq 4}$.
\end{enumerate}
\end{theorem}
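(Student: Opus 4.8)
\emph{Part (i).} The plan for the direct statement is simply to apply the variational operator $\delta$ to the structure equations of the deformed immersion $f_t$. For each $t$, the frame $\{\varepsilon_k(t)\}$ along $f_t$ satisfies the first structure equations together with $\sum_{j=1}^2\w_{aj}(t)\wedge\w_j(t)=0$ for $a=3,4$ (symmetry of the second fundamental form), and the flatness equations of $\R^4$, namely $d\w_{kl}(t)=\sum_{m=1}^4\w_{km}(t)\wedge\w_{ml}(t)$. Since the frame is adapted, Lemma \ref{invfr}(i) gives $\delta\w_j(t)=0$ and $\delta\w_{12}(t)=0$, so $\varphi_{12}=0$, while $\varphi_{kl}=-\varphi_{lk}$ follows from $\w_{kl}(t)=-\w_{lk}(t)$; this is \eqref{vf12}. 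Differentiating the symmetry relation and using $\delta\w_j=0$ yields \eqref{vfja}. Finally, applying $\delta$ to the flatness equations for the index pairs $(j,a)$, $(1,2)$ and $(3,4)$, and discarding the terms carrying the vanishing $\varphi_{12}$, produces \eqref{dfja} and \eqref{df1234}. This is routine bookkeeping, so I would not spell out the collection of terms.

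\emph{Part (ii), construction of $W$.} The core observation is that the ``$d\varphi$'' equations \eqref{dfja} and \eqref{df1234} are exactly the integrability conditions for the existence of the $2$-vector field $W$. Given a solution $\{\varphi_{kl}\}$, let $\Xi$ be the $f^*(\Lambda^2 T\R^4)$-valued $1$-form determined in the frame $\{\varepsilon_k\wedge\varepsilon_l\}$ by $\langle\Xi\cdot\varepsilon_k,\varepsilon_l\rangle=\varphi_{kl}$. Because $\R^4$ is flat, the pulled-back connection $\hat{\nabla}$ on $f^*(\Lambda^2 T\R^4)$ is flat, and writing out the closedness condition $d^{\hat{\nabla}}\Xi=0$ in this frame by means of the structure equations $d\w_{kl}=\sum_m\w_{km}\wedge\w_{ml}$ reproduces precisely \eqref{dfja} and \eqref{df1234} (consistently, these are the very equations obtained in Part (i) by varying the flatness relations). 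As $M$ is simply-connected, $\hat{\nabla}W=\Xi$ then admits a solution $W$, unique up to a parallel, i.e.\ constant, section $C\in\Lambda^2\R^4$; by \eqref{fklW} this $W$ realizes the prescribed $\varphi_{kl}$.

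\emph{Part (ii), construction of $\mathcal{T}$.} Next I would set $\theta(X)=W\cdot f_*X$ for $X\in TM$ and integrate $\tilde{\nabla}\mathcal{T}=\theta$. A short computation with the product rule \eqref{skew} and the symmetry of $\a$ shows $d^{\tilde{\nabla}}\theta(e_1,e_2)=\sum_k\bigl(\varphi_{2k}(e_1)-\varphi_{1k}(e_2)\bigr)\varepsilon_k$, whose tangential components vanish because $\varphi_{12}=0$ and whose normal components vanish precisely by \eqref{vfja}. Hence $d^{\tilde{\nabla}}\theta=0$, and since $f^*(T\R^4)$ is flat and $M$ simply-connected, there exists $\mathcal{T}$ with $\tilde{\nabla}_X\mathcal{T}=W\cdot f_*X$, unique up to a constant $v\in\R^4$. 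That $\mathcal{T}$ satisfies the bending equation \eqref{bfld} is immediate from the skew-symmetry in \eqref{skew}. Replacing $W$ by $W+C$ and $\mathcal{T}$ by $\mathcal{T}+C\cdot f+v$ leaves $\Xi$ unchanged (as $C$ is parallel), so the correspondence is well-defined and $\mathcal{T}$ is unique modulo trivial bending fields.

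\emph{The reproduction step and the main obstacle.} It remains to verify that the deformation $f_t=f+t\mathcal{T}$ of \eqref{form inf} admits an adapted frame whose connection-form variations are the given $\varphi_{kl}$. I would extend $\{e_1,e_2\}$ by Lemma \ref{invfr}(i) and $\{e_3,e_4\}$ by Lemma \ref{invfr}(ii), and let $W'$ be the section supplied by Lemma \ref{varfraij}. Since $\delta\varepsilon_j=\tilde{\nabla}_{e_j}\mathcal{T}=W\cdot\varepsilon_j$ for $j=1,2$ by \eqref{vframe}, we get $w'_{jl}=w_{jl}$ for $j\in\{1,2\}$ and all $l$, so $W'$ agrees with $W$ in every component except possibly $w_{34}$. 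The residual discrepancy is exactly the rotational gauge freedom of the normal frame, and I expect the matching of this single function to be the delicate point: one fixes the $t$-dependent rotation of $\{e_3(t),e_4(t)\}$ so that $\langle\delta\varepsilon_3,\varepsilon_4\rangle=w_{34}$, forcing $W'=W$ and hence, by \eqref{fklW}, $\delta\w_{kl}(t)=\varphi_{kl}$. The two points requiring genuine care are thus the clean identification of each family of fundamental equations with a flat-bundle integrability condition, and this normal-frame gauge adjustment; everything else is linear algebra on the simply-connected $M$.
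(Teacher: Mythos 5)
Your proposal is correct and follows essentially the same route as the paper: part (i) by varying the structure equations of $f_t$, and part (ii) by two successive integrations over the simply-connected $M$ on the flat bundles $f^*(\Lambda^2T\R^4)$ and $f^*(T\R^4)$ (your $\Xi$ is the paper's $V_1\w_1+V_2\w_2$, with closedness checked via a parallel trivialization rather than $d^{\hat\nabla}$, which is the same thing), followed by the rotation $e_3(t)+ie_4(t)=e^{itu}(\tilde e_3(t)+i\tilde e_4(t))$ to absorb the residual $\varepsilon_3\wedge\varepsilon_4$-component of $W'-W$ — exactly the gauge adjustment the paper carries out explicitly in \eqref{tilde fkl}.
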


\begin{proof}
(i) The Taylor expansions of the connection forms are 
\be \label{wklt}
\w_{kl}(t)=\w_{kl}+t\varphi_{kl}+o(t),\;\;\; 1\leq k,l\leq 4,
\ee
which imply that $\varphi_{kl}=-\varphi_{lk}$, $1\leq k,l\leq 4$.
In particular, Lemma \ref{invfr}(i) yields that $\varphi_{12}=0$ and \eqref{vf12} follows.
Taking into account that $e_j(t)=e_j+o(t)$, $j=1,2$, and
using \eqref{wklt} to compare the coefficients of $t$ in the relations $\w_{ja}(t)(e_r(t))=\w_{ra}(t)(e_j(t))$, $j,r=1,2$, for $a=3,4$,
we obtain \eqref{vfja}.
The remaining equations of the fundamental system follow by using \eqref{wklt} and comparing the $t$-terms in the relations
$d\w_{kl}(t)=\sum_{m=1}^4 \w_{km}(t)\wedge \w_{ml}(t)$, for $1\leq k,l\leq 4$.

(ii) For a solution $\{\varphi_{kl}\}_{1\leq k,l\leq 4}$ of the fundamental system, consider the sections
$$V_j=-\sum_{1\leq k<l\leq 4}\varphi_{kl}(e_j)\varepsilon_k\wedge\varepsilon_l, \;\;\;\;\; j=1,2,$$
of $f^*(\Lambda^2T\R^4)$, where $\varepsilon_j=f_*e_j$, $j=1,2$, and $\varepsilon_a=e_a$, $a=3,4$. Since the bundle $f^*(\Lambda^2T\R^4)$ is flat, there exists
a parallel vector bundle isometry $P\colon f^*(\Lambda^2T\R^4)\to M\times \R^6$, where $M\times \R^6$ is the trivial bundle over $M$,
equipped with its canonical connection $\bar{\nabla}$.
Consider the 1-form $\omega \in \Gamma(T^*M\otimes \R^6)$
given by $\omega=\tilde{V}_1\w_1+\tilde{V}_2\w_2$, where $\tilde{V}_j=PV_j$, $j=1,2$. 
Its exterior derivative satisfies
\bea
d\omega(e_1,e_2)&=& \bar{\nabla}_{e_1}\w(e_2)-\bar{\nabla}_{e_2}\w(e_1)-\w([e_1,e_2])\\
&=& \bar{\nabla}_{e_1}\tilde{V}_2-\bar{\nabla}_{e_2}\tilde{V}_1+\w_{12}(e_1)\tilde{V}_1+\w_{12}(e_2)\tilde{V}_2\\
&=& P\left(\hat{\nabla}_{e_1}V_2-\hat{\nabla}_{e_2}V_1 +\w_{12}(e_1)V_1+\w_{12}(e_2)V_2\right).
\eea
Using \eqref{G-W} and all the equations of the fundamental system apart from \eqref{vfja}, 
it follows that the quantity in the last parenthesis is
equal to zero and therefore, $\w$ is closed. Since $M$ is simply-connected, there exists a unique, up to a constant vector in $\R^6$,
section $\tilde{V}\colon M\to \R^6$ such that $d\tilde{V}=\w$.
This implies that $V=P^{-1}\tilde{V}$ is the unique, up to a constant vector in $\Lambda^2\R^4$, section of $f^*(\Lambda^2T\R^4)$ satisfying
\be \label{Vis}
\hat{\nabla}_{e_j}V=V_j, \;\;\; j=1,2.
\ee

Consider furthermore the sections $\mathcal{T}_j=V\cdot \varepsilon_j$, $j=1,2$, of $f^*(T\R^4)$.
Using \eqref{vfja} it follows that
$$\tilde{\nabla}_{e_1}\mathcal{T}_2-\tilde{\nabla}_{e_2}\mathcal{T}_1 +\w_{12}(e_1)\mathcal{T}_1+\w_{12}(e_2)\mathcal{T}_2=0,$$
and since the bundle $f^*(T\R^4)$ is flat, arguing as above, we conclude that there exists a unique, up to a constant vector in $\R^4$,
section $\mathcal{T}$ of $f^*(T\R^4)$ such that 
\be \label{bfldi}
\tilde{\nabla}_{e_j}\mathcal{T}=\mathcal{T}_j=V\cdot \varepsilon_j,\;\;\; j=1,2.
\ee
Using the second equation in \eqref{skew}, the above implies that $\mathcal{T}$ is a bending field. 
In particular, $\mathcal{T}$ is uniquely determined up to a trivial bending field.

Let $F\colon I\times M\to \R^4$ be the infinitesimal isometric deformation determined by $\mathcal{T}$. 
Lemma \ref{invfr} implies that  
$\{e_k\}_{1\leq k\leq4}$ locally extends to an adapted to $F$ local orthonormal frame field
$\{\tilde{e}_k(t)\}_{1\leq k\leq4}$. For simplicity, we may assume that this occurs globally. 
Let $W$ be the section of Lemma \ref{varfraij} corresponding to $\{\tilde{e}_k(t)\}_{1\leq k\leq4}$. 
From \eqref{vframe} and \eqref{bfldi} it follows that $(W-V)\cdot \varepsilon_j=0$, $j=1,2$. 
Therefore, $W=V-u\varepsilon_3\wedge \varepsilon_4$ for some $u\in\mathcal{C}^\infty(M)$. 
From \eqref{fklW}, by differentiating the last relation and using \eqref{Vis} and \eqref{G-W},
we obtain that the variations
$\{\tilde{\varphi}_{kl}\}_{1\leq k,l\leq 4}$ of the connection forms $\{\tilde{\w}_{kl}(t)\}_{1\leq k,l\leq 4}$
of the dual frame field of $\{\tilde{e}_k(t)\}_{1\leq k\leq4}$, are given by 
\be \label{tilde fkl}
\tilde{\varphi}_{j3}=\varphi_{j3}+u\w_{j4},\;\;\;\;\; \tilde{\varphi}_{j4}=\varphi_{j4}-u\w_{j3},
\;\; j=1,2,\;\;\;\mbox{and}\;\;\;\;\; \tilde{\varphi}_{34}=\varphi_{34}+du.
\ee
Consider the adapted to $F$ orthonormal frame field $\{e_k(t)\}_{1\leq k\leq4}$, given by $e_j(t)=\tilde{e}_j(t)$, $j=1,2$,
and $e_3(t)+ie_4(t)=\exp(itu)(\tilde{e}_3(t)+i\tilde{e}_4(t))$. 
For the connection forms $\{\w_{kl}(t)\}_{1\leq k,l\leq 4}$ of its corresponding coframe we have
$$\w_{j3}(t)+i\w_{j4}(t)=e^{itu}(\tilde{\w}_{j3}(t)+i\tilde{\w}_{j4}(t)),\;\;j=1,2,\;\;\;\;\mbox{and}\;\;\;\; \w_{34}(t)=\tilde{\w}_{34}(t)-tdu.$$
Differentiating the above relations with respect to $t$ and using \eqref{tilde fkl}, it follows that $\delta \w_{kl}(t)=\varphi_{kl}$, $1\leq k,l\leq4$,
and this completes the proof.
\qed
\end{proof}

\medskip

\begin{corollary} \label{Trivial Inf}
Let $F\colon I\times M\to \R^4$ be an infinitesimal isometric deformation of a simply-connected oriented surface.
The deformation is trivial if and only if for every adapted to $F$ orthonormal frame field
$\{e_k(t)\}_{1\leq k\leq 4}$, the variations 
of the connection forms of its corresponding coframe have the form 
$$\varphi_{j3}=u\w_{j4},\;\;\;\;\; \varphi_{j4}=-u\w_{j3},\;\; j=1,2,\;\;\;\mbox{and}\;\;\;\;\; \varphi_{34}=du,$$
for some $u\in\mathcal{C}^\infty(M)$. 
Moreover, if $F$ is trivial and $\{e_1,e_2\}$, $\{e_3,e_4\}$ are positively oriented orthonormal frame fields of $TM$ and $N_fM$, respectively,
then the frame field $\{e_k\}_{1\leq k\leq 4}$ locally extends to an adapted to $F$ local orthonormal frame field,
such that the variations of all of the connection forms of its corresponding coframe vanish.
In particular, $\varphi_{34}=0$ implies that $\varphi_{kl}=0$, $1\leq k,l\leq4$.
\end{corollary}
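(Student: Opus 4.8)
The plan is to translate triviality of the deformation, expressed through its bending field $\mathcal{T}$, into the single section $W\in\Gamma(f^*(\Lambda^2T\R^4))$ attached to an adapted frame by Lemma \ref{varfraij}, and then to read off the variations $\varphi_{kl}$ from \eqref{fklW}. The flatness of $f^*(\Lambda^2T\R^4)$ and the triviality criterion $\mathcal{T}=C\cdot f+v$, with constant $C\in\Lambda^2\R^4$ and $v\in\R^4$, are the two inputs I would lean on throughout.

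For the forward implication I would assume $F$ trivial, fix any adapted frame $\{e_k(t)\}_{1\leq k\leq4}$, and let $W$ be the section of Lemma \ref{varfraij}. Since $C,v$ are parallel for the canonical connection and $\tilde{\nabla}_{e_j}f=\varepsilon_j$, relation \eqref{vframe} gives $W\cdot\varepsilon_j=\tilde{\nabla}_{e_j}\mathcal{T}=C\cdot\varepsilon_j$ for $j=1,2$, so $(W-C)\cdot\varepsilon_1=(W-C)\cdot\varepsilon_2=0$. Inspecting the dot-action on the orthonormal basis $\{\varepsilon_k\}$ shows that the only $2$-vectors annihilating $\varepsilon_1,\varepsilon_2$ are the multiples of $\varepsilon_3\wedge\varepsilon_4$; hence $W=C+u\,\varepsilon_3\wedge\varepsilon_4$ for some $u\in\mathcal{C}^\infty(M)$. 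Substituting into \eqref{fklW} and using $\hat{\nabla}C=0$, the computation is exactly the one already performed in the proof of Theorem \ref{FTHMINF}(ii) for the term $-u\,\varepsilon_3\wedge\varepsilon_4$, i.e. \eqref{tilde fkl}; after replacing $u$ by $-u$ it yields precisely $\varphi_{j3}=u\w_{j4}$, $\varphi_{j4}=-u\w_{j3}$ and $\varphi_{34}=du$. As the frame was arbitrary, the displayed form holds for every adapted frame.

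For the converse I would start from one adapted frame with variations in the displayed form and rotate only the normal part, setting $e_3(t)+ie_4(t)$ to $\exp(itu)\big(e_3(t)+ie_4(t)\big)$; by the transformation rule \eqref{tilde fkl} this replaces $\varphi_{j3}$ by $\varphi_{j3}-u\w_{j4}$, $\varphi_{j4}$ by $\varphi_{j4}+u\w_{j3}$ and $\varphi_{34}$ by $\varphi_{34}-du$, so all variations of the new adapted frame vanish. Writing $\hat{W}$ for the section of Lemma \ref{varfraij} attached to this frame, \eqref{fklW} gives $\langle\hat{\nabla}_X\hat{W}\cdot\varepsilon_k,\varepsilon_l\rangle=0$ for all $k,l$; since a $2$-vector is determined by its dot-action on a basis, this forces $\hat{\nabla}\hat{W}=0$. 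As $f^*(\Lambda^2T\R^4)$ is flat and $M$ is simply-connected, the parallel section $\hat{W}$ corresponds to a constant $C\in\Lambda^2\R^4$, and then \eqref{vframe} gives $\tilde{\nabla}_{e_j}\mathcal{T}=\hat{W}\cdot\varepsilon_j=C\cdot\varepsilon_j=\tilde{\nabla}_{e_j}(C\cdot f)$, so $\mathcal{T}-C\cdot f$ is parallel, hence a constant $v\in\R^4$, whence $\mathcal{T}=C\cdot f+v$ is trivial. The same rotation yields the moreover part: arbitrary positively oriented $\{e_1,e_2\},\{e_3,e_4\}$ extend by Lemma \ref{invfr} to an adapted frame, the forward implication puts its variations in the displayed form with some $u$, and rotating by $\exp(itu)$ gives an adapted extension of the same frame with all variations vanishing. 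Finally, $\varphi_{34}=du=0$ forces $u$ constant on the connected $M$, and absorbing this constant through the normal rotation (which leaves $\varphi_{34}$ unchanged) reduces to $u\equiv0$, so that $\varphi_{kl}=0$.

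The main obstacle I anticipate is bookkeeping rather than conceptual: I must check that rotating only the normal frame by a $t$-dependent angle keeps the frame adapted (the tangent part, and hence \eqref{delta int}, is untouched) and that the sign conventions in \eqref{tilde fkl} are applied consistently across the three families of forms, since the single function $u$ ties $\varphi_{j3},\varphi_{j4},\varphi_{34}$ together. The one genuinely structural step is the passage from vanishing variations to a parallel $\hat{W}$ and then to a trivial $\mathcal{T}$, where flatness of $f^*(\Lambda^2T\R^4)$ together with simple-connectedness of $M$ are essential.
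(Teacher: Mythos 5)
Your proof of the main equivalence and of the ``moreover'' clause is correct and follows the paper's own route: triviality of the bending field is translated, via \eqref{vframe}, into the statement that the section $W$ of Lemma \ref{varfraij} differs from a constant $C\in\Lambda^2\R^4$ by a multiple of $\varepsilon_3\wedge\varepsilon_4$, and the displayed form of the $\varphi_{kl}$ is then read off from \eqref{fklW} exactly as in the computation \eqref{tilde fkl}. Your converse is in fact more explicit than the paper's (which only says to repeat part of the proof of Theorem \ref{FTHMINF}(ii)): rotating the normal frame by $\exp(itu)$ to kill all variations (which indeed preserves adaptedness, since \eqref{delta int} concerns only the tangent part), deducing $\hat{\nabla}\hat{W}=0$ from \eqref{fklW} and the faithfulness of the dot action on an orthonormal basis, and then using flatness of $f^*(\Lambda^2T\R^4)$ together with simple-connectedness to produce a constant $C$ with $\mathcal{T}=C\cdot f+v$ is a clean way to close the loop.

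The one step that does not deliver what is claimed is your treatment of the final sentence. From $\varphi_{34}=du=0$ you only obtain that $u$ is a constant $u_0$, and ``absorbing this constant through the normal rotation'' produces a \emph{different} adapted extension whose variations vanish; it says nothing about the variations of the frame you started with, which remain $\varphi_{j3}=u_0\w_{j4}$ and $\varphi_{j4}=-u_0\w_{j3}$. To conclude $\varphi_{kl}=0$ for the \emph{given} frame one must show $u_0=0$, and this does not follow from $\varphi_{34}=0$ alone: for a trivial deformation the normal part of an adapted frame is only determined up to a $t$-dependent rotation, and a rotation at constant speed $u_0$ is perfectly admissible, yielding $\varphi_{34}=0$ with $\varphi_{j3}=u_0\w_{j4}\neq0$ wherever the second fundamental form does not vanish. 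The paper's proof is equally terse at this point; in the places where the clause is actually invoked, the additional relations \eqref{ff23} together with the absence of pseudo-umbilic points are what force $u_0=0$. So either supply that extra argument, or record the implication only for the frames to which it is applied.
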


\begin{proof}
Let $\{e_k(t)\}_{1\leq k\leq 4}$ be an adapted to $F$ orthonormal frame field, and consider the corresponding section $W$ of Lemma \ref{varfraij}.
The bending field $\mathcal{T}$ of $F$ is trivial if and only if $\tilde{\nabla}_{e_j}\mathcal{T}=C\cdot\varepsilon_j$, $j=1,2$,
where $C$ is a constant vector in $\Lambda^2\R^4$.
Using \eqref{vframe}, this is equivalent to $(W-C)\cdot \varepsilon_j=0$, $j=1,2$. 
The last relation holds if and only if $W=C-u\varepsilon_3\wedge\varepsilon_4$, for some $u\in\mathcal{C}^\infty(M)$. 
The rest of the proof follows by repeating the part of the proof of Theorem \ref{FTHMINF}(ii), concerning the infinitesimal deformation.
\qed
\end{proof}

\medskip

Let $F\colon I\times M\to \R^4$ be an infinitesimal isometric deformation. Consider a smooth with respect to $t$,
section $\xi(t)\in N_{f_t}M_t$.
We say that $F$ {\emph{preserves $\xi(t)$ parallelly in the normal bundle}}, if around every point of $M$,
there exists a local orthonormal frame field $\{e_3(t),e_4(t)\}$ of $N_{f_t}M_t$ that depends smoothly on $t$, such that
$$\delta\w_{34}(t)=0\;\;\;\;\; \mbox{and}\;\;\;\;\; \delta \langle \xi(t), e_a(t)\rangle=0,\;\;\; a=3,4,$$
where $\w_{34}(t)$ is the connection form of the dual frame field of $\{e_3(t),e_4(t)\}$. 
If $\xi(t)=H_{f_t}$, we say that $F$ preserves the mean curvature vector field parallelly  in the normal bundle.

Let $\Psi(t)$ be a $N_{f_t}M_t\otimes\mathbb{C}$-valued quadratic differential that depends smoothly on $t$.
If $\{e_1(t),e_2(t)\}$ is a smooth with respect to $t$, positively oriented local orthonormal frame field on $M_t$ 
with dual frame field $\{\w_1(t),\w_2(t)\}$, then $\Psi(t)$ has the local expression
$$\Psi(t)=\psi(t)(\w_1(t)+i\w_2(t))^2,\;\;\; \psi(t)\in N_{f_t}M_t\otimes\mathbb{C}.$$
We say that $F$ {\emph{preserves $\Psi(t)$ parallelly in the normal bundle}}, if for every
local orthonormal frame field $\{e_1(t),e_2(t)\}$ on $M_t$ satisfying \eqref{delta int},
$F$ preserves parallelly in the normal bundle the real and the imaginary parts of $\psi(t)$.
In particular, if $\Psi(t)=\Phi^\pm(t)$, where $\Phi(t)$ is the Hopf differential of $f_t$,
we say that $F$ preserves parallelly in the normal bundle, the differential $\Phi^\pm$.

\begin{proposition} \label{IDP}
Let $F\colon I\times M\to \R^4$ be an infinitesimal isometric deformation of a simply-connected oriented surface.
Suppose that $\{e_k(t)\}_{1\leq k\leq 4}$ is an adapted to $F$ orthonormal frame field with $\varphi_{34}=0$. Then:
\begin{enumerate}[topsep=0pt,itemsep=-1pt,partopsep=1ex,parsep=0.5ex,leftmargin=*, label=(\roman*), align=left, labelsep=-0.4em]
\item The deformation $F$ preserves parallelly in the normal bundle, the mean curvature vector field and 
the differential $\Phi^\pm$ if and only if
\be \label{ff23}
\varphi_{13}=\star\varphi_{23}\;\;\;\;\; \mbox{and} \;\;\;\;\; \varphi_{14}=\star\varphi_{24}=\mp\varphi_{23}.
\ee
\item Assume that $M_0^\mp(f)=\emptyset$. If \eqref{ff23} holds and $\varphi_{23}$ is nowhere-vanishing, then
\be \label{f23L}
\varphi_{23}=L\left(\cos\phi\w_1\mp \sin\phi\w_2\right),
\ee
for functions $L>0$ and $\phi$ on $M$, satisfying
\be \label{logLOm}
d\log L=\star\Omega^\mp\;\;\;\;\;\mbox{and}\;\;\;\;\; e_3^\mp=\cos\phi e_3+\sin\phi e_4,
\ee
where $\{\w_1,\w_2\}$ is the dual frame field of $\{e_1,e_2\}$, and $e_3^\mp$ is given by \eqref{HI}. 
\end{enumerate}
\end{proposition}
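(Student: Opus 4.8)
The plan is to work entirely with the given adapted frame $\{e_k(t)\}_{1\leq k\leq 4}$, whose connection-form variations $\varphi_{kl}=\delta\w_{kl}(t)$ satisfy the fundamental system of Theorem \ref{FTHMINF}(i), and to convert each instance of parallel preservation in the normal bundle into linear conditions on the forms $\varphi_{ja}$, $j=1,2$, $a=3,4$. The key bookkeeping device is that the second fundamental form is encoded by $h^a_{jk}:=\langle\alpha(e_j,e_k),e_a\rangle=\w_{ja}(e_k)$, so that by \eqref{delta int} its variation is $\delta h^a_{jk}=\varphi_{ja}(e_k)$, symmetric in $j,k$ thanks to \eqref{vfja}.

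For part (i), since $\varphi_{34}=0$ the given normal frame is admissible in the definition of parallel preservation, so I would test both preservations against it. Writing $H=\tfrac12\sum_a(h^a_{11}+h^a_{22})e_a$, preservation of the mean curvature vector amounts to $\delta\langle H,e_a\rangle=\tfrac12\big(\varphi_{1a}(e_1)+\varphi_{2a}(e_2)\big)=0$ for $a=3,4$; writing $\psi^\pm=\tfrac12\mathcal{H}^\pm(e_1,e_2)$ from \eqref{Hopf Invariants} and using $\Jp e_3=e_4$, preservation of $\Phi^\pm$ becomes $\delta\langle\psi^\pm,e_a\rangle=0$. Expanding these four scalar conditions in the unknowns $\delta h^a_{jk}$ and translating back through $\star\w_1=\w_2$, $\star\w_2=-\w_1$, I expect them to be exactly equivalent to \eqref{ff23}; this step is linear and routine once the components are in hand. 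I would remark that ``parallel preservation'' is defined via the existence of an admissible frame, but since $\varphi_{34}=0$ makes the given frame admissible and any two admissible normal frames differ by an infinitesimal constant rotation, testing against the given frame loses no generality.

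For part (ii), I first use \eqref{ff23} to express all the remaining variations through $\varphi_{23}$, obtaining $\varphi_{13}=\star\varphi_{23}$, $\varphi_{14}=\mp\varphi_{23}$ and $\varphi_{24}=\pm\varphi_{13}$. Since $\varphi_{23}$ is nowhere zero, I define $L>0$ and $\phi$ by $\varphi_{23}=L(\cos\phi\,\w_1\mp\sin\phi\,\w_2)$, and it remains to identify $\phi$ with the angle of $e_3^\mp$ and to prove the PDE for $L$. The angle identification is the step linking the deformation to the intrinsic geometry, and I would extract it from the first equation in \eqref{df1234}: expanding $\sum_a(\varphi_{1a}\wedge\w_{a2}+\w_{1a}\wedge\varphi_{a2})=0$ brings in the forms $\w_{ja}$, hence the $h^a_{jk}$, and after substituting the reduced $\varphi$'s together with \eqref{e34pm}–\eqref{Bpm} for $e_3^\mp$, the $\w_1\wedge\w_2$-coefficient collapses (up to the nonvanishing factor $\|\mathcal{H}^\mp\|$) to a single relation equivalent to $e_3^\mp=\cos\phi\,e_3+\sin\phi\,e_4$, after fixing the branch of $\phi$. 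For the PDE $d\log L=\star\Omega^\mp$, I would package $\varphi_{13},\varphi_{23}$ into the complex $(1,0)$-form $\Gamma:=\varphi_{13}-i\varphi_{23}$ (its conjugate for the other sign); differentiating \eqref{dfja} for $a=3$ and using $\varphi_{34}=0$ with the reduced forms yields the clean identity $d\Gamma=i(\w_{12}-\w_{34})\wedge\Gamma$. Writing $\Gamma=g\,dz$ in a complex coordinate, this reads $\partial_{\bar z}\log g=\tfrac12(iP-Q)$ with $\w_{12}-\w_{34}=P\,dx+Q\,dy$; separating real and imaginary parts and inserting $\w_{12}=\star d\log\lambda$, $|g|=\lambda L$, $\arg g=\phi-\pi/2$, and $\w_{34}^\mp=\w_{34}+d\phi$ (a direct consequence of the angle relation and \eqref{e34pm}), the two scalar equations assemble into $d\log L=\star(2\w_{12}-\w_{34}^\mp)=\star\Omega^\mp$ via \eqref{Om}.

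I expect the main obstacle to be precisely the angle identification: in contrast to the mean-curvature and Hopf-differential conditions, the relation $e_3^\mp=\cos\phi\,e_3+\sin\phi\,e_4$ is \emph{not} pointwise-algebraic in the variations, and isolating it forces one to use the single mixed equation of \eqref{df1234} — the only member of the fundamental system that couples the $\varphi$'s to the genuine shape operators — and then to untangle the $\|\mathcal{H}^\mp\|$-normalization and the correct branch of $\phi$. Once $\phi$ is pinned down geometrically, the remaining PDE for $L$ follows mechanically from the holomorphic-type identity for $\Gamma$, so the bulk of the genuine work is concentrated in this one coupling.
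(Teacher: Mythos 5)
Your proposal is correct and follows essentially the same route as the paper: part (i) is the same component computation of $\delta H_a$ and $\delta\psi^\pm_a$ reduced via \eqref{vfja}, and part (ii) uses the same two key inputs, namely \eqref{df1234} to lock the direction of $\varphi_{23}$ to $e_3^\mp$ and \eqref{dfja} to produce the structure identity giving $d\log L=\star\Omega^\mp$. The only differences are cosmetic — you define $\phi$ by polar-decomposing $\varphi_{23}$ and then match it (mod the branch choice) to the geometric angle, whereas the paper defines $\phi$ geometrically first, and you package the final computation as a $(1,0)$-form identity for $\Gamma=\varphi_{13}-i\varphi_{23}$ where the paper compares connection forms of the coframe $\{\varphi_{23},\star\varphi_{23}\}$.
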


\begin{proof}
(i) From \eqref{Hopf Invariants}, it follows that the differential $\Phi^\pm(t)$ is given by 
$$\Phi^\pm(t)=\frac{1}{2}\left(\psi^\pm(t)\pm iJ_t^\perp \psi^\pm(t) \right)(\w_1(t)+i\w_2(t))^2,$$
where $J_t^\perp$ is the complex structure of $N_{f_t}M_t$,
$$\psi^\pm(t)=\sum_{a=3}^{4}\psi^\pm_a(t)e_a(t)=\frac{\a_{11}(t)-\a_{22}(t)}{2}\pm J_t^\perp \a_{12}(t),$$
and $\a_{jr}(t)=\a_{f_t}(e_j(t),e_r(t))$, $j,r=1,2$, where $\a_{f_t}$ is the second fundamental form of $f_t$.
Equation \eqref{G-W} yields that 
\be \label{ajrt}
\a_{jr}(t)=\sum_{a=3}^{4}\w_{ja}(t)(e_r(t))e_a(t),\;\;\; j,r=1,2.
\ee
Using the above, it follows that
$$\psi^\pm_a(t)=\frac{\w_{1a}(t)(e_1(t))-\w_{2a}(t)(e_2(t))}{2}\pm (-1)^a\w_{1b}(e_2(t)),\;\;\;\;\; a,b=3,4,\;\; b\neq a,$$
and that the components $H_a(t)$, $a=3,4$, of the mean curvature vector field $H_{f_t}$ of $f_t$,
with respect to the frame field $\{e_3(t), e_4(t)\}$, are given by
$$H_a(t)=\langle H_{f_t},e_a(t)\rangle=\frac{\w_{1a}(t)(e_1(t))+\w_{2a}(t)(e_2(t))}{2},\;\;\; a=3,4.$$
Therefore, we have that
\bea
\delta \psi^\pm_a(t)&=&\frac{\varphi_{1a}(e_1)-\varphi_{2a}(e_2)}{2}\pm (-1)^a\varphi_{1b}(e_2),\;\;\;\;\; a,b=3,4,\;\; b\neq a,\\
\delta H_a(t)&=&\frac{\varphi_{1a}(e_1)+\varphi_{2a}(e_2)}{2},\;\;\;\;\;\;\;\;\;\;\;\;\;\;\;\;\;\;\;\;\;\;\;\;\;\;\;\;\; a=3,4.
\eea
Taking into account \eqref{vfja}, it follows that $\delta H_a(t)=0$ is equivalent to $\varphi_{1a}=\star\varphi_{2a}$, $a=3,4$.
Moreover, it is clear that $F$ preserves $\Phi^\pm$ if and only if it preserves the section $\psi^\pm(t)$, parallelly in the normal bundle.
Provided $\delta H_a(t)=0$, it follows that the equations $\delta \psi^\pm_a(t)=0$, $a=3,4$, are equivalent to $\varphi_{14}=\mp\varphi_{23}$, and this completes the proof.

(ii) Consider $\phi\in \mathcal{C}^\infty(M)$ that satisfy the second equation in \eqref{logLOm}. By substituting $\a_{jr}$, $j,r=1,2$, from 
\eqref{ajrt} for $t=0$, into \eqref{e34pm}, we obtain that
\bea
2\|\mathcal{H}^\mp\|\cos\phi&=&(\w_{13}\pm\w_{24})(e_1)-(\w_{23}\mp\w_{14})(e_2),\\
2\|\mathcal{H}^\mp\|\sin\phi&=&(\w_{14}\mp\w_{23})(e_1)-(\w_{24}\pm\w_{13})(e_2).
\eea
Using \eqref{ff23} to express all the variations $\varphi_{ja}$, $j=1,2$, $a=3,4$, in terms of  
$\varphi_{23}$, and taking into account that $\varphi_{34}=0$, it follows that the equations in \eqref{df1234} are equivalent.
By virtue of the above relations, it follows that \eqref{df1234} is equivalent to
$$\varphi_{23}(e_1)\sin\phi \pm \varphi_{23}(e_2)\cos\phi=0.$$
Since $\varphi_{23}$ is nowhere-vanishing, the above
implies that there exists a positive $L\in\mathcal{C}^\infty(M)$  
such that \eqref{f23L} is valid. 
It remains to prove that $L$ satisfies the first equation in \eqref{logLOm}.

Consider the coframe $\{\theta_1,\theta_2\}$ on $M$, 
given by $$\theta_1=\varphi_{23},\;\;\;\;\; \theta_2=\star\theta_1,$$
with corresponding connection form $\th_{12}$ determined by the relations 
$d\th_1=\th_{12}\wedge\th_2$ and $d\th_2=-\th_{12}\wedge\th_{1}$.
Using \eqref{f23L}, 
it can be easily deduced that 
$$\th_{12}=\w_{12}\mp d\phi +\star d\log L.$$
On the other hand, by using the first equation in \eqref{ff23}, from \eqref{dfja} we obtain that
$$d\th_j=d\varphi_{r3}=(-1)^r(-\w_{12}\pm \w_{34})\wedge\th_r,\;\;\;\; j,r=1,2,\;\;j\neq r,$$
and thus, $\th_{12}=-\w_{12}\pm \w_{34}$. 
The last relation and the above expression of $\th_{12}$ imply that
$$2\w_{12}\mp (\w_{34}+d\phi) =-\star d\log L.$$
From the second equation in \eqref{logLOm}, we obtain that $\w_{34}+d\phi=\w_{34}^\mp$,
where $\w_{34}^\mp$ is the connection form of the dual frame field of $\{e_3^\mp, e_4^\mp\}$.
By virtue of Proposition \ref{Criterion}(i), the first equation in \eqref{logLOm} follows from the above relation.
\qed
\end{proof}

\medskip

The following is the infinitesimal analogue of the uniqueness part of the fundamental theorem of surfaces in $\R^4$.

\begin{theorem}
An infinitesimal isometric deformation $F\colon I\times M\to \R^4$ of an oriented surface
is trivial if and only if it preserves parallelly in the normal bundle, the mean curvature vector field and the Hopf differential.
\end{theorem}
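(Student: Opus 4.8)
The plan is to deduce the theorem from the local machinery already in place, reducing everything to the single local fact that $F$ preserves parallelly in the normal bundle \emph{both} the mean curvature vector field and the Hopf differential if and only if, around every point, there is an adapted to $F$ local orthonormal frame field all of whose connection-form variations $\varphi_{kl}$ vanish. Since Corollary \ref{Trivial Inf}, Proposition \ref{IDP} and Theorem \ref{FTHMINF} are stated for simply-connected surfaces, I would first restrict $F$ to an arbitrary simply-connected neighbourhood $U$ of a point, settle both implications there, and only afterwards globalise. Throughout I use that preserving $\Phi$ parallelly is equivalent to preserving both isotropic parts $\Phi^{+}$ and $\Phi^{-}$, since the projections $\pi^{\pm}$ commute with the parallel preservation (the condition $\delta\omega_{34}(t)=0$ preserves $J^{\perp}$ to first order).

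For the forward implication, assuming $F$ trivial, I would invoke Corollary \ref{Trivial Inf}: around every point any prescribed positively oriented frames of $TM$ and $N_fM$ extend to an adapted to $F$ frame whose connection-form variations all vanish, $\varphi_{kl}=0$. In particular $\varphi_{34}=0$, so Proposition \ref{IDP}(i) applies and condition \eqref{ff23} holds trivially for both signs; hence $F$ preserves parallelly the mean curvature vector field together with each of $\Phi^{\pm}$, and therefore the full Hopf differential $\Phi=\Phi^-+\Phi^+$. As this holds around every point, the preservation properties hold globally.

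For the converse, I would use the parallel preservation of $H$ to obtain, around each point, a smooth-in-$t$ normal frame $\{e_3(t),e_4(t)\}$ with $\delta\omega_{34}(t)=0$, and combine it with a tangent frame from Lemma \ref{invfr}(i) satisfying \eqref{delta int}, producing an adapted to $F$ frame with $\varphi_{34}=0$. Since preserving $\Phi$ means preserving both $\Phi^{+}$ and $\Phi^{-}$, Proposition \ref{IDP}(i) yields \eqref{ff23} for both signs: the $+$ case gives $\varphi_{14}=-\varphi_{23}$ and the $-$ case gives $\varphi_{14}=+\varphi_{23}$. Subtracting forces $\varphi_{23}=\varphi_{14}=0$, whence also $\varphi_{13}=\star\varphi_{23}=0$ and $\varphi_{24}=0$; together with $\varphi_{12}=0$ from \eqref{vf12} and $\varphi_{34}=0$, every connection-form variation vanishes. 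These variations then have the form required in Corollary \ref{Trivial Inf} with $u\equiv0$, so $F$ is trivial on $U$, i.e. the bending field satisfies $\mathcal{T}|_U=C_U\cdot f+v_U$ for constants $C_U\in\Lambda^2\R^4$ and $v_U\in\R^4$.

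The delicate point, which I expect to be the main obstacle rather than the frame computation, is promoting this local triviality to triviality on the connected manifold $M$, that is, to a single pair $(C,v)$. On an overlap of two such neighbourhoods the local expressions give $(C_U-C_{U'})\cdot f_*X=0$ for every tangent $X$, so the constant $2$-vector $C_U-C_{U'}$ annihilates the plane $f_*T_pM$ and is proportional to the normal bivector. Where the tangent planes of $f$ genuinely vary this forces $C_U=C_{U'}$; on a locus where $f$ is totally geodesic the difference $(C_U-C_{U'})\cdot f$ is instead constant and can be absorbed into the translation part, so a common $C$ can still be selected. The cleanest way to organise this is to note that the vanishing of all $\varphi_{kl}$ exhibits $\mathcal{T}$ locally as the restriction to $f$ of an infinitesimal rigid motion of $\R^4$, and that connectedness of $M$ lets these local Killing fields be reconciled into one, giving $\mathcal{T}=C\cdot f+v$ globally and hence the triviality of $F$.
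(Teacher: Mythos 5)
Your proof follows essentially the same route as the paper's: the forward direction via Corollary \ref{Trivial Inf} and Proposition \ref{IDP}(i), and the converse by producing an adapted frame with $\varphi_{34}=0$ and using \eqref{ff23} for both signs to force all $\varphi_{kl}=0$. The only difference is that you make explicit (and correctly handle) the passage from local triviality on simply-connected neighbourhoods to a single global pair $(C,v)$, a point the paper's proof leaves implicit.
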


\begin{proof}
If $F$ is trivial, then Corollary \ref{Trivial Inf} implies that around every point of $M$ there exists an adapted to $F$ local orthonormal 
frame field, such that the variations of all of the connection forms of its corresponding coframe vanish. 
Proposition \ref{IDP}(i) yields that $F$ preserves parallelly in the normal bundle, the mean curvature vector field and both 
isotropic parts of the Hopf differential.

Conversely, assume that $F$ preserves parallelly in the normal bundle, the mean curvature vector field and the Hopf differential.
Then, around every point of $M$ there exists an adapted to $F$ local orthonormal frame field with $\varphi_{34}=0$. 
Since $F$ preserves both isotropic parts of the Hopf differential,
from \eqref{ff23} it follows that $\varphi_{kl}=0$, $1\leq k,l\leq4$. Then, Corollary \ref{Trivial Inf} implies that $F$ is trivial.
\qed
\end{proof}

\medskip

\noindent{\emph{Proof of Theorem \ref{IIID}:}}
Without loss of generality, suppose that $M$ is simply-connected;
otherwise, we argue on a simply-connected neighbourhood around every point of $M$.

Assume that $F\colon I\times M\to \R^4$ is a nontrivial infinitesimal isometric deformation
that preserves parallelly in the normal bundle, the mean curvature vector field and the isotropic part $\Phi^{\pm}$ of the Hopf differential. 
Then, every point of $M$ has a neighbourhood $U$ on which, there exists an adapted to $F$ local orthonormal frame field with $\varphi_{34}=0$. 
Corollary \ref{Trivial Inf} and \eqref{ff23} imply that $\varphi_{23}\neq0$ on the open and dense subset of $U$, on which the bending field of $F$ is nontrivial.
Then, from the first equation in \eqref{logLOm} it follows that $d\star\Omega^{\mp}=0$ on this subset and thus, on $U$. This shows that $f$ is $\mp$ isotropically isothermic.

Conversely, assume that $f$ is $\mp$ isotropically isothermic. 
Then, there exists a smooth positive function $L$ on $M$, satisfying the first equation in \eqref{logLOm}.
Let $\{e_1,e_2\}$ be a positively oriented orthonormal frame field of $TM$.
Since $M_0(f)=\emptyset$, the frame field $\{e_1,e_2\}$ determines the orthonormal frame fields 
$\{e_3^\mp,e_4^\mp\}$ and $\{e_3^\pm,e_4^\pm\}$ of $N_fM$, given by \eqref{HI}.
By setting $e_a=e_a^\pm$, $a=3,4$, we consider $\phi\in\mathcal{C}^\infty(M)$ satisfying the second equation in \eqref{logLOm}.
Then, we define $\varphi_{23}$ by \eqref{f23L}, $\varphi_{34}=0$, and the remaining $\varphi_{kl}$, $1\leq k,l\leq4$,
from equations \eqref{ff23} and \eqref{vf12}. It is straightforward to check that $\{\varphi_{kl}\}_{1\leq k,l\leq4}$
satisfy the fundamental system with respect to the connection forms $\{\w_{kl}\}_{1\leq k,l\leq4}$
of the dual frame field of $\{e_k\}_{1\leq k\leq4}$. From Theorem \ref{FTHMINF}(ii), it follows that 
the solution $\{\varphi_{kl}\}_{1\leq k,l\leq4}$
determines a unique bending field $\mathcal{T}$. In particular, since $\varphi_{34}=0\neq\varphi_{23}$  
everywhere on $M$, Corollary \ref{Trivial Inf} implies that $\mathcal{T}$ is nontrivial.
Moreover, for the infinitesimal isometric deformation determined by $\mathcal{T}$, 
Theorem \ref{FTHMINF}(ii) implies that $\{e_k\}_{1\leq k\leq4}$ locally extends to an adapted to $F$ local 
orthonormal frame field, such that the variations of the connection forms of its corresponding coframe
are the $\{\varphi_{kl}\}_{1\leq k,l\leq4}$. From Proposition \ref{IDP}(i) it follows that $F$ preserves parallelly in the normal bundle,
the mean curvature vector field and the differential $\Phi^\pm$. 
The rest of the proof follows immediately from Proposition \ref{LoC}.
\qed
\medskip

For the proof of Theorem \ref{IIS} we need the following lemma. We recall from Proposition \ref{holomorphic Gl} that
the Gauss lift $G_\pm$ of a superconformal surface $f\colon M\to \R^4$  with $\pm K_N\geq0$, is holomorphic. 

\begin{lemma} \label{LemSup}
Let $f\colon M\to \R^4$ be an oriented superconformal surface with $\pm K_N\geq0$ and nowhere-vanishing mean curvature vector field.
Assume that $F\colon I\times M\to \R^4$ is an infinitesimal isometric deformation that preserves parallelly in the normal bundle the mean curvature vector field.
Then, $F$ preserves parallelly in the normal bundle the differential $\Phi^\pm$ if and only if it preserves the holomorphicity
of the Gauss lift $G_\pm\colon M\to (\mathcal{Z},g_1)$ of $f$.
\end{lemma}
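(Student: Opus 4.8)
The plan is to reduce the statement to a single algebraic identity relating the obstruction to conformality of the Gauss lift with the isotropic Hopf invariant and the mean curvature. By applying an orientation-reversing isometry, which interchanges the two twistor components and the signs of $K_N$ and of $\Phi^\pm$, I may assume throughout that we are in the $+$ case, so that $\pm K_N\geq0$ reads $K_N\geq0$. By Lemma \ref{pseudo}(ii) the surface $f$ then satisfies $\Phi^+\equiv0$, by Proposition \ref{holomorphic Gl} its Gauss lift $G_+$ is holomorphic, and since $H$ is nowhere-vanishing, Proposition \ref{CH Gl} lets me replace holomorphicity of the deformed lift $G_+(t)$ by its conformality throughout. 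Because $F$ is assumed to preserve $H$ parallelly in the normal bundle, I fix an adapted to $F$ orthonormal frame field with $\varphi_{34}=0$ and, from the computation in the proof of Proposition \ref{IDP}(i), with $\varphi_{1a}=\star\varphi_{2a}$ for $a=3,4$.

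First I treat the $\Phi^+$ side. Since $\Phi^+\equiv0$, the real section $\psi^+=(\a_{11}-\a_{22})/2+\Jp\a_{12}$ vanishes at $t=0$, so its variation $\delta\psi^+$ is a genuine, frame-independent section of $N_fM$ and one has $\delta\langle\psi^+(t),e_a(t)\rangle=\langle\delta\psi^+,e_a\rangle$, $a=3,4$. Consequently, in the frame fixed above, $F$ preserves $\Phi^+$ parallelly in the normal bundle if and only if $\delta\psi^+=0$; by Proposition \ref{IDP}(i) and \eqref{ff23}, the mean curvature being already preserved, this is equivalent to the single relation $\varphi_{14}=-\varphi_{23}$.

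The core of the argument is the conformality side. Writing $\mu_1(t)=\w_{13}(t)-\w_{24}(t)$ and $\mu_2(t)=\w_{23}(t)+\w_{14}(t)$, formula \eqref{Gconf} gives $G_+^{*}(g_1)(t)=ds_t^2+\tfrac14(\mu_1(t)^2+\mu_2(t)^2)$, so $G_+(t)$ is conformal precisely when the complex quantity $Q(t)=(\mu_1(e_1)+i\mu_1(e_2))^2+(\mu_2(e_1)+i\mu_2(e_2))^2$ vanishes. Using the Gauss-formula identities $\w_{ja}(e_r)=\langle\a_{jr},e_a\rangle$ together with the definitions of $\psi^+$ and $H$, I expect the factorization $Q=4(\psi^+_3+i\psi^+_4)(H_3-iH_4)$, where $\psi^+_a=\langle\psi^+,e_a\rangle$ and $H_a=\langle H,e_a\rangle$; this both recovers the known equivalence, conformal $\iff\psi^+=0\iff\Phi^+\equiv0$ using $H\neq0$, and is the key computation. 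Since $Q(0)=0$ because $\psi^+(0)=0$, the cross term drops and $\delta Q=4(H_3-iH_4)(\delta\psi^+_3+i\delta\psi^+_4)$; as $H_3-iH_4\neq0$, the first-order preservation of conformality, $\delta Q=0$, is equivalent to $\delta\psi^+=0$.

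Combining the two sides, both ``$F$ preserves $\Phi^+$ parallelly'' and ``$F$ preserves the holomorphicity, equivalently the conformality, of $G_+$ to first order'' are equivalent to $\delta\psi^+=0$, hence to each other, which is the assertion; the $-$ case follows by the orientation reversal noted at the start. The main obstacle I anticipate is twofold: making precise that ``preservation of holomorphicity'' is exactly the first-order conformality condition $\delta Q=0$, for which the nowhere-vanishing of $H$ through Proposition \ref{CH Gl} is essential, and carrying out the factorization of $Q$ cleanly, since it is the only place where the superconformality hypothesis $\Phi^+\equiv0$ and the hypothesis $H\neq0$ combine to collapse the conformality obstruction onto $\psi^+$.
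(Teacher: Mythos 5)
Your proposal is correct, and its skeleton coincides with the paper's: both arguments fix an adapted frame with $\varphi_{34}=0$, use the identity $\varphi_{1a}=\star\varphi_{2a}$ coming from the preservation of $H$, reduce the $\Phi^{\pm}$ side to \eqref{ff23} via Proposition \ref{IDP}(i), and reduce the Gauss-lift side to a first-order computation with the induced metric \eqref{Gconf}. Where you genuinely diverge is in how that last computation is organized. The paper works with the real symmetric tensor: it records that $\Phi^{\pm}\equiv0$ forces $\w_{23}\pm\w_{14}=\star(\w_{13}\mp\w_{24})$ at $t=0$ with $\w_{13}\mp\w_{24}$ nowhere zero (this is where $H\neq0$ enters), declares that preservation of holomorphicity means $\delta G_{\pm}^{*}(t)(g_1)=0$, and checks directly that this vanishing is equivalent to \eqref{ff23}. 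You instead complexify and factor the conformality obstruction as $Q=\bigl(z_1+iz_2\bigr)\bigl(z_1-iz_2\bigr)=4(\psi^{\pm}_3\pm i\psi^{\pm}_4)(H_3\mp iH_4)$ (your factorization is correct; I verified it from $\w_{ja}(e_r)=\langle\a_{jr},e_a\rangle$), so that $Q(0)=0$ and the product rule collapses $\delta Q$ onto $\delta\psi^{\pm}$; this is cleaner and makes transparent exactly where superconformality and $H\neq0$ are used. One point to align with the paper: you take ``preserves the holomorphicity'' to mean vanishing of the first-order conformality obstruction $\delta Q=0$ (two real equations), while the paper takes it to mean $\delta G_{\pm}^{*}(t)(g_1)=0$ (three real equations). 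These happen to coincide here, because under $\varphi_{1a}=\star\varphi_{2a}$ and $\w_{23}\pm\w_{14}=\star(\w_{13}\mp\w_{24})$ the variation $\delta G_{\pm}^{*}(t)(g_1)$ is automatically trace-free; since the notion is not formally defined elsewhere in the paper, you should state which reading you adopt and note this coincidence so that your conclusion matches the statement as the paper uses it in Theorem \ref{IIS}.
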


\begin{proof}
Since $F$ preserves parallelly in the normal bundle the mean curvature vector field, around every point of $M$, there exists
an adapted to $F$ local orthonormal frame field $\{e_k(t)\}_{1\leq k\leq 4}$ with $\varphi_{34}=0$. 
In particular, from the proof of Proposition \ref{IDP}(i) it follows that $\varphi_{1a}=\star\varphi_{2a}$, $a=3,4$.

Since $\pm K_N\geq0$, Lemma \ref{pseudo}(ii) implies that $\Phi^\pm\equiv0$. Therefore, using \eqref{ajrt} for $t=0$,
from \eqref{Hopf Invariants} and \eqref{Bpm} we obtain that
$(\w_{23}\pm\w_{14})=\star (\w_{13}\mp\w_{24}).$
Moreover, since $H\neq0$ everywhere on $M$, a simple computation shows that $\w_{13}\mp\w_{24}$ is nowhere-vanishing.

Let $G_\pm(t)$ be the Gauss lift of $f_t$ into $(\mathcal{Z},g_1)$. From \eqref{Gconf} we have that
$$G_{\pm}^*(t)(g_1)=ds_t^2 + \frac{1}{4}\left((\w_{13}(t)\mp\w_{24}(t))^2 + (\w_{23}(t)\pm\w_{14}(t))^2\right).$$
Proposition \ref{holomorphic Gl} implies that $F$ preserves the holomorphicity of $G_\pm$ if and only if $\delta G_{\pm}^*(t)(g_1)=0$.
Differentiating the above with respect to $t$, and using that $\varphi_{1a}=\star\varphi_{2a}$, $a=3,4$, and that
$(\w_{23}\pm\w_{14})=\star (\w_{13}\mp\w_{24})\neq0$ everywhere on $M$, we obtain that 
$\delta G_{\pm}^*(t)(g_1)=0$ is equivalent to \eqref{ff23}. The proof follows from Proposition \ref{IDP}(i).
\qed
\end{proof}

\medskip

\noindent{\emph{Proof of Theorem \ref{IIS}:}}
The equivalence of (i) and (ii) has been proved in Proposition \ref{mdi}.

We argue that (i) is equivalent to (iii).
Since $M_1(f)=\emptyset$, Lemma \ref{pseudo}(ii) yields that $K_N\neq0$ everywhere on $M$ and therefore,
it also implies that either $\Phi^+\equiv0$, or $\Phi^-\equiv0$ on $M$. 
Assume that $\Phi^\pm\equiv0$ on $M$. Since $M_1(f)=\emptyset$, from Lemma \ref{pseudo}(i) it follows that $M_0^\mp(f)=\emptyset$.
Hence, every positively oriented local orthonormal frame field $\{e_1,e_2\}$ of $TM$, determines the local orthonormal frame field 
$\{e_3^\mp,e_4^\mp\}$ of $N_fM$, given by \eqref{HI}.
By virtue of Lemma \ref{LemSup}, the equivalence of (i) and (iii) follows by repeating the proof of Theorem \ref{IIID}, 
using the frame field $\{e_3=H/\|H\|, e_4=J^\perp e_3\}$ instead of $\{e_3^\pm, e_4^\pm\}$, to show the converse implication.
\qed
\medskip

\section{The Moduli Space of Isometric Surfaces with the Same Mean Curvature} \label{s5}

We recall briefly some facts from \cite{PV}, about isometric surfaces in $\Q^4_c$ with the same mean curvature.
Let $M$ be a 2-dimensional oriented Riemannian manifold, and $f,\tilde{f}\colon M\to \Q^4_c$ isometric immersions
with mean curvature vector fields $H$ and $\tilde{H}$, respectively. 
The surfaces $f,\tilde{f}$ are said to have {\emph{the same 
mean curvature}}, if there exists a parallel vector bundle isometry $T\colon N_{f}M\to N_{\tilde f}M$
such that $TH=\tilde{H}$. 
If $f$ and $\tilde{f}$ have the same mean curvature and they are noncongruent, then
the pair $(f,\tilde{f})$ is called a \emph{pair of Bonnet mates}.

Assume that $f,\tilde{f}\colon M \to \Q^4_c$ have the same mean curvature and let $T\colon N_{f}M\to N_{\tilde{f}}M$
be a parallel vector bundle isometry  satisfying $TH=\tilde{H}$.
After an eventual composition of $\tilde{f}$ with an
orientation-reversing isometry of $\Q^4_c$, we may hereafter suppose that $T$ is orientation-preserving.
Let $\a, \tilde{\a}$ be the second fundamental forms of $f$ and $\tilde{f}$, respectively.
The section of $\text{Hom}(TM\times TM,N_{f}M)$ given by 
$$D^T_{f,\tilde{f}}= \a-T^{-1}\circ \tilde{\a}$$ 
is traceless and measures how far the surfaces deviate from being congruent.
Its $\mathbb{C}$-bilinear extension decomposes into its $(k,l)$-components, $k+l=2$, 
and the $(2,0)$-part is given by
$$Q^T_{f,\tilde{f}}=(D^T_{f,\tilde{f}})^{(2,0)}=\Phi-T^{-1}\circ \tilde{\Phi},$$
where $\Phi, \tilde{\Phi}$ are the Hopf differentials of $f$ and $\tilde{f}$, respectively.
The following has been proved in \cite[Lemma 12]{PV}.

\begin{lemma}\label{qke}
Let $f,\tilde{f}\colon M\to \Q^4_c$ be non-minimal 
surfaces and $T\colon N_{f}M\to N_{\tilde{f}}M$ an
orientation-preserving parallel vector bundle isometry satisfying $TH=\tilde{H}$. Then:
\begin{enumerate}[topsep=0pt,itemsep=-1pt,partopsep=1ex,parsep=0.5ex,leftmargin=*, label=(\roman*), align=left, labelsep=-0.4em]
\item The quadratic differential $Q^T_{f,\tilde{f}}$ is holomorphic and independent of $T$. 
\item The normal curvatures of the surfaces are equal and
the curvature ellipses $\mathcal{E}_{f}$, $\mathcal{E}_{\tilde{f}}$ are congruent at any point of $M$.
In particular, $M^{\pm}_0(f)=M^{\pm}_0(\tilde{f})$.
\end{enumerate}
\end{lemma}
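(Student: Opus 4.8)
The plan is to establish (i) by differentiating $Q^T_{f,\tilde f}$ covariantly and exploiting that $T$ is parallel together with the Codazzi equations, and to establish (ii) by first proving the equality of normal curvatures and then reading off the congruence of the curvature ellipses from the first-order invariants \eqref{axes} and \eqref{Bpm}. First I would fix a complex chart $(U,z)$ and write $\Phi=\add\,dz^2$ and $\tilde\Phi=\tilde\a(\dz,\dz)\,dz^2$; since $f$ and $\tilde f$ induce the same metric, the conformal factor $\lambda$ is common, so $Q^T_{f,\tilde f}=\bigl(\add-T^{-1}\tilde\a(\dz,\dz)\bigr)dz^2$. Summing the two isotropic parts of \eqref{Codazzi} gives the full Codazzi identity $\nap_{\dzb}\add=(\lambda^2/2)\,\nap_{\d}H$, and likewise for $\tilde f$. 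Because $T$ is a parallel bundle isometry, $\nap^{f}_{\dzb}\bigl(T^{-1}\tilde\a(\dz,\dz)\bigr)=T^{-1}\nap^{\tilde f}_{\dzb}\tilde\a(\dz,\dz)=(\lambda^2/2)\,T^{-1}\nap^{\tilde f}_{\d}\tilde H$, and since $\tilde H=TH$ with $T$ parallel we have $T^{-1}\nap^{\tilde f}_{\d}\tilde H=\nap^{f}_{\d}H$. Subtracting the two Codazzi relations yields $\nap^{f}_{\dzb}\bigl(\add-T^{-1}\tilde\a(\dz,\dz)\bigr)=0$, so $Q^T_{f,\tilde f}$ is holomorphic.

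For the independence of $T$, suppose $T_1,T_2$ are two orientation-preserving parallel isometries with $T_iH=\tilde H$. Then $S=T_2^{-1}T_1$ is an orientation-preserving parallel isometry of the oriented rank-two bundle $N_fM$ fixing $H$, hence of the form $S=\cos\theta\,\id+\sin\theta\,\Jp$ for some function $\theta$. Parallelism of $S$ together with that of $\Jp$ forces $d\theta=0$, so $\theta$ is constant on the connected $M$; at any point where $H\neq0$ (which exists since $f$ is non-minimal) the relation $SH=H$ gives $\theta\equiv0$, hence $S=\id$ and $T_1=T_2$. In particular $Q^T_{f,\tilde f}$ is independent of the choice of $T$.

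For (ii), since $T$ is parallel it intertwines the normal connections and therefore the normal curvature tensors, $R^{\perp,\tilde f}(X,Y)=T\circ R^{\perp,f}(X,Y)\circ T^{-1}$. Taking the positively oriented normal frame $\{Te_3,Te_4\}$ for $\tilde f$ and using that $T$ is an orientation-preserving isometry yields $K_N^{\tilde f}=K_N^{f}$. As $f$ and $\tilde f$ are isometric, $K$ and $c$ agree, and $\|\tilde H\|=\|TH\|=\|H\|$; hence by \eqref{axes} the semiaxis lengths $\lambda_1,\lambda_2$ of $\mathcal{E}_f$ and $\mathcal{E}_{\tilde f}$ coincide at every point, so the curvature ellipses are congruent. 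Finally, by \eqref{Bpm} the functions $\|\mathcal{H}^\pm\|=\sqrt{\|H\|^2-(K-c)\mp K_N}$ agree for the two surfaces and vanish exactly on $M_0^\pm$, whence $M_0^\pm(f)=M_0^\pm(\tilde f)$.

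The main obstacle is the holomorphicity in (i): neither $\Phi$ nor $\tilde\Phi$ is individually holomorphic, each being obstructed by $\nap H$ through Codazzi, and it is precisely the common-mean-curvature hypothesis $\tilde H=TH$ that makes the two Codazzi defects cancel after applying the parallel isometry $T^{-1}$. Once this cancellation is in place, the remaining assertions are bookkeeping with the parallelism of $T$ and the first-order invariants already recorded in \eqref{axes} and \eqref{Bpm}.
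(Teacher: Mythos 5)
Your proof is correct. Note that the paper does not prove this lemma itself but cites it from \cite[Lemma 12]{PV}; your argument --- cancellation of the two Codazzi defects through the parallel isometry for holomorphicity, uniqueness of $T$ via the pointwise form $\cos\theta\,\id+\sin\theta\,\Jp$ with $\theta$ forced constant and then zero at a point where $H\neq0$, and intertwining of the normal curvature tensors together with \eqref{axes} and \eqref{Bpm} for part (ii) --- is exactly the standard route and fills in all the needed details.
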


By virtue of Lemma \ref{qke}(i), we assign to each pair of non-minimal surfaces $(f,\tilde{f})$ with the same mean curvature,
a holomorphic quadratic differential denoted by $Q_{f,\tilde{f}}$, which is called \emph{the distortion differential of the pair} and is given by
$$Q_{f,\tilde{f}}=\Phi-T^{-1}\circ \tilde{\Phi}.$$
The distortion differential of such a pair is simply denoted by $Q$, whenever there is no danger of confusion.

Let $f,\tilde{f}\colon M \to \Q^4_c$ be non-minimal surfaces with the same mean curvature.
It is clear that $Q\equiv 0$ if and only if $f$ and $\tilde{f}$ are congruent.
If $(f,\tilde{f})$ is a pair of Bonnet mates, then according to Lemmas \ref{zeros} and \ref{qke}(i), the zero-set $Z$ of $Q$ consists of isolated points only.
With respect to the decomposition $N_fM\otimes\mathbb{C}=N_f^{-}M\oplus N_f^{+}M$, the distortion differential splits as 
\begin{eqnarray}
Q=Q^{-}+Q^{+},\;\; \mbox{where }\;\;Q^{\pm}=\pi^{\pm}\circ Q.\nonumber
\end{eqnarray}
From Lemma \ref{qke}(i) it follows that both differentials $Q^{-}$ and $Q^{+}$ are holomorphic, and $Q^{\pm}$ is given by
\be\label{qpr}
Q^{\pm}=\Phi^{\pm}-T^{-1}\circ \tilde{\Phi}^{\pm}.
\ee
Lemma \ref{zeros} implies that either $Q^{\pm}\equiv 0$, or the zero-set $Z^{\pm}$ of $Q^{\pm}$ consists of isolated points only.

For an oriented surface $f\colon M\to \Q^4_c$,
we denote by ${\mathcal M}(f)$ {\emph {the moduli space of congruence classes of all isometric immersions 
of $M$ into $\Q^4_c$, that have the same mean curvature with $f$}}.

Assume that $f\colon M\to \Q^4_c$ is a non-minimal oriented surface. Since the distortion differential of a pair of Bonnet mates does not vanish identically, the moduli space
can be written as 
$$\mathcal{M}(f)=\mathcal{N}^{-}(f)\cup \mathcal{N}^{+}(f)\cup\{f\},$$
where
$$\mathcal{N}^{\pm}(f)=\{\tilde{f}:\; Q^{\pm}_{f, \tilde{f}}\not \equiv 0\}/\text{Isom}^+(\Q^4_c),$$
$\{f\}$ is the trivial congruence class, and $\text{Isom}^+(\Q^4_c)$ is the group of orientation-preserving isometries of $\Q^4_c$.
Moreover, the moduli space decomposes into disjoint components as
$$\mathcal{M}(f)= \mathcal{M}^*(f)\cup \mathcal{M}^{-}(f)\cup \mathcal{M}^{+}(f)\cup\{f\},$$ 
where 
$$\mathcal{M}^{\pm}(f)=\mathcal{N}^{\pm}(f)\smallsetminus \mathcal{N}^{\mp}(f)= \{\tilde{f}:\; Q_{f, \tilde{f}}\equiv Q^{\pm}_{f, \tilde{f}}\}/\text{Isom}^+(\Q^4_c),$$
and
$$\mathcal{M}^*(f)= \mathcal{N}^{-}(f)\cap \mathcal{N}^{+}(f)=
\{\tilde{f}:\; Q^{-}_{f, \tilde{f}}\not\equiv 0\;\; \mbox{and}\;\; Q^{+}_{f, \tilde{f}}\not\equiv 0\}/\text{Isom}^+(\Q^4_c).$$
In order to simplify the notation in the sequel, we set $\bar{\mathcal{M}}^{\pm}(f)=\mathcal{M}^{\pm}(f)\cup\{f\}$.

Hereafter, whenever we refer to a surface in the moduli space we mean its congruence class.
A surface $f\colon M\to \Q^4_c$ is called a {\emph{Bonnet surface}} if $\mathcal{M}(f)\smallsetminus \{f\}\neq \emptyset$.
Any $\tilde{f}\in \mathcal{M}(f)\smallsetminus \{f\}$ is called a {\emph{Bonnet mate}} of $f$. 
A Bonnet surface $f$ is called {\emph{proper Bonnet}} if it admits infinitely many Bonnet mates.

\subsection{Bonnet Mates}

In view of Lemma \ref{qke}(ii), we denote by $M_0=M_0^{-}\cup M_0^{+}$ and $M_1$, the set of pseudo-umbilic and umbilic points of 
a pair of non-minimal Bonnet mates, respectively.

\begin{proposition}\label{dd}
If $\tilde{f}\in \mathcal{N}^{\pm}(f)$, then there exists $\th^{\pm} \in \mathcal{C}^{\infty}(M\smallsetminus M_0^{\pm};(0,2\pi))$,
such that the distortion differential of the pair $(f,\tilde{f})$ satisfies on $M\smallsetminus M_0^{\pm}$ the relation
\be \label{general1}
Q^{\pm}=(1-e^{\mp i\th^{\pm}})\Phi^{\pm}.
\ee
Moreover, $Q^{\pm}$ vanishes precisely on $M_0^{\pm}$, which consists of isolated points only.
\end{proposition}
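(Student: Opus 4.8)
The plan is to regard $\Phi^{\pm}$ and $T^{-1}\circ\tilde\Phi^{\pm}$ as two sections of the same complex line bundle $N^{\pm}_fM$ that have equal pointwise length and whose difference $Q^{\pm}$ is holomorphic, and then to extract the angle $\theta^{\pm}$ from their ratio. First I would collect the ingredients. As $T$ is an orientation-preserving parallel isometry it intertwines the complex structures $\Jp$, so $T^{-1}$ maps $N^{\pm}_{\tilde f}M$ into $N^{\pm}_fM$ and the expression \eqref{qpr} indeed defines a section of $N^{\pm}_fM$. By Lemma \ref{pseudo}(i) the zero-set of $\Phi^{\pm}$ is $M_0^{\pm}(f)$, and Lemma \ref{qke}(ii) gives $M_0^{\pm}(f)=M_0^{\pm}(\tilde f)$; hence on $M\smallsetminus M_0^{\pm}$ both $\Phi^{\pm}$ and $T^{-1}\circ\tilde\Phi^{\pm}$ are nowhere-vanishing. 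Writing $\|\mathcal{H}^{\pm}\|$ through \eqref{Bpm} in terms of $\|H\|^2-(K-c)\mp K_N$, and using $\|\tilde H\|=\|TH\|=\|H\|$ together with Lemma \ref{qke}(ii), I get $\|\Phi^{\pm}\|=\|T^{-1}\circ\tilde\Phi^{\pm}\|$ on all of $M$. Thus on $M\smallsetminus M_0^{\pm}$ there is a smooth function $\rho^{\pm}$ with $T^{-1}\circ\tilde\Phi^{\pm}=\rho^{\pm}\Phi^{\pm}$ and $|\rho^{\pm}|=1$, and \eqref{general1} will follow by setting $\rho^{\pm}=e^{\mp i\theta^{\pm}}$, provided we know $\rho^{\pm}\neq1$.

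The step I expect to be the main obstacle is exactly to prove $Q^{\pm}\neq0$ on $M\smallsetminus M_0^{\pm}$, equivalently $\rho^{\pm}\neq1$, which is what lets $\theta^{\pm}$ be chosen smoothly in $(0,2\pi)$. The idea is to play the holomorphicity of $Q^{\pm}$ against the constraint $|\rho^{\pm}|=1$. Put $w=1-\rho^{\pm}$, so that $\Real\,w=1-\Real\,\rho^{\pm}\geq0$, with equality precisely where $\rho^{\pm}=1$. In a local complex chart I would pick a $\nap$-holomorphic nowhere-vanishing local section $\sigma$ of $N^{\pm}_fM$ and write $Q^{\pm}=q^{\pm}dz^2$, $\Phi^{\pm}=\phi^{\pm}dz^2$ with $q^{\pm}=g\sigma$ and $\phi^{\pm}=k\sigma$; here $k$ is smooth and nonzero off $M_0^{\pm}$, and $\nap_{\dzb}q^{\pm}=0$ from Lemma \ref{qke}(i) forces $\dzb g=0$, so $g$ is holomorphic and $w=g/k$. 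If $Q^{\pm}$ vanished at some $p\notin M_0^{\pm}$, then $g$, which is not identically zero since $\tilde f\in\mathcal{N}^{\pm}(f)$, would vanish at $p$ to a finite order $m\geq1$; along a small circle about $p$ the argument of $w=g/k$ would then increase by $2\pi m$, so $w$ would take values with $\Real\,w<0$, contradicting $\Real\,w\geq0$. Hence $Q^{\pm}$ has no zeros off $M_0^{\pm}$.

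To finish I would assemble the pieces. The inclusion $M_0^{\pm}\subset Z^{\pm}$ holds because $\Phi^{\pm}$ and $\tilde\Phi^{\pm}$ both vanish on $M_0^{\pm}$ by Lemma \ref{pseudo}(i) and Lemma \ref{qke}(ii), and the previous paragraph supplies the reverse inclusion, so $Q^{\pm}$ vanishes precisely on $M_0^{\pm}$. Since $\tilde f\in\mathcal{N}^{\pm}(f)$ means $Q^{\pm}\not\equiv0$ and $Q^{\pm}$ is holomorphic, Lemma \ref{zeros} shows $Z^{\pm}$ is isolated, whence $M_0^{\pm}$ is isolated. Finally, $\rho^{\pm}$ now takes values in $\mathbb{S}^1\smallsetminus\{1\}$, so composing it with the inverse of the diffeomorphism $(0,2\pi)\to\mathbb{S}^1\smallsetminus\{1\}$, $t\mapsto e^{\mp it}$, yields a function $\theta^{\pm}\in\mathcal{C}^{\infty}(M\smallsetminus M_0^{\pm};(0,2\pi))$ for which \eqref{general1} holds, completing the proof.
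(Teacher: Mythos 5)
Your proof is correct, but the decisive step is handled by a genuinely different mechanism than in the paper. The paper imports the representation \eqref{general1} on $M\smallsetminus Z^{\pm}$ from \cite{PV} and then shows $Z^{\pm}=M_0^{\pm}$ by a jet computation: at a putative zero $p\in Z^{\pm}\smallsetminus M_0^{\pm}$ it first extends $\th^{\pm}$ smoothly with $\th^{\pm}(p)\in\{0,2\pi\}$, derives from the holomorphicity of $Q^{\pm}$ the first-order system \eqref{sys}, concludes inductively that every derivative of $\th^{\pm}$ vanishes at $p$, and finally contradicts the finite vanishing order $m$ of the holomorphic differential $Q^{\pm}$ by differentiating $(1-e^{\mp i\th^{\pm}})\phi^{\pm}=z^m\psi^{\pm}$ exactly $m$ times. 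You instead obtain \eqref{general1} self-containedly from the pointwise norm identity $\|T^{-1}\circ\tilde{\Phi}^{\pm}\|=\|\Phi^{\pm}\|$ (which does follow from \eqref{Bpm}, $\|\tilde{H}\|=\|TH\|=\|H\|$ and Lemma \ref{qke}(ii)), and you rule out zeros of $Q^{\pm}$ off $M_0^{\pm}$ by an argument-principle computation: writing $w=1-\rho^{\pm}=g/k$ with $g$ holomorphic and $k$ smooth and nonvanishing near $p$, a zero of $g$ of order $m\geq1$ forces winding number $m$ of $w$ about the origin along a small circle, which is incompatible with $w$ taking values in the closed right half-plane, since $\Real w=1-\Real\rho^{\pm}\geq0$ follows from $|\rho^{\pm}|=1$. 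Both arguments ultimately play the holomorphicity of $Q^{\pm}$ against the unimodularity of $\rho^{\pm}$; yours is shorter, self-contained and more conceptual, while the paper's version has the side benefit of establishing en route that $\th^{\pm}$ satisfies \eqref{sys}, a fact quoted later (e.g.\ in Proposition \ref{Npm}) which would have to be derived separately if one adopted your route.
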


\begin{proof}
From \cite[Lemma 14, Prop. 15]{PV} it follows that $M_0^{\pm}\subset Z^{\pm}$ is isolated, and 
there exists $\th^{\pm} \in \mathcal{C}^{\infty}(M\smallsetminus Z^{\pm};(0,2\pi))$ 
such that \eqref{general1} is valid on $M\smallsetminus Z^{\pm}$. 
It remains to prove that $M_0^{\pm}=Z^{\pm}$.

Arguing indirectly, assume that there exists $p\in Z^{\pm}\smallsetminus M_0^{\pm}$.
Then, Lemma \ref{pseudo}(i) implies that $\Phi^{\pm}(p)\neq0$.
Since $Q^{\pm}$ and $\Phi^{\pm}$ are smooth and $\Phi^{\pm}(p)\neq0$, from \eqref{general1}
it follows that the function $k=\exp{(\mp i\th^{\pm})}$ extends smoothly at $p$, with $k(p)=1$.

We claim that $\th^{\pm}$ extends smoothly at $p$. We first show that the limit of $\th^{\pm}$ at $p$ exists;
assume to the contrary that there exist sequences
$p_n,q_n \in M\smallsetminus Z^{\pm}$, $n\in \mathbb{N}$, converging at $p$, such that $\th^{\pm}(p_n)\to0$ and $\th^{\pm}(q_n)\to2\pi.$
Since $\th^{\pm}$ is continuous on $M\smallsetminus Z^{\pm}$, 
for every $r>0$
there exists $s_r\in B_r(p)\smallsetminus\{p\}$ such that $\th^{\pm}(s_r)=\pi$, or equivalently, $k(s_r)=-1$. 
On the other hand, since $k$ is continuous at $p$, there exists $\tilde{r}>0$ such that $|k-1|<1/2$ on $B_{\tilde{r}}(p)$.
This is a contradiction and thus, the limit of $\th^{\pm}$ at $p$ exists. 
Since $k$ is smooth and $\th^{\pm}$ extends continuously at $p$, the claim follows.

Let $(U,z)$ be a complex chart with $U\cap Z^{\pm}=\{p\}$. From Lemmas \ref{qke}(i) and \ref{zeros} it follows that there exists
a positive integer $m$ such that $Q^{\pm}=z^m\Psi^{\pm}$ on $U$, and $\Psi^{\pm}(p)\neq0$. Using \eqref{general1}, this is equivalent to
\be \label{neq0}
(1-e^{\mp i\th^{\pm}})\phi^{\pm}=z^m\psi^{\pm},\;\;\; \psi^{\pm}(p)\neq0,
\ee
where $\phi^{\pm}$ is given by \eqref{phipm}, and $\Psi^{\pm}=\psi^{\pm}dz^2$ on $U$.
Differentiating (\ref{general1}) with respect to $\bar \d$ in the normal connection and using the holomorphicity of $Q^{\pm}$, we obtain
\begin{eqnarray*}
\left(h^{\pm}(1-e^{\mp i\th^{\pm}})\pm ie^{\mp i\th^{\pm}}\th^{\pm}_{\bar z}\right)\phi^{\pm}=0,
\end{eqnarray*}
where $h^{\pm}$ is given by \eqref{hpm}. Since $\phi^{\pm}\neq0$ everywhere on $U$, the above implies that
$$\th^{\pm}_{\bar z}=\mp ih^{\pm}(1-e^{\pm i\th^{\pm}}),\;\;\; \th^{\pm}_{z}=\pm i\overline{h^{\pm}}(1-e^{\mp i\th^{\pm}}).$$
Using that $\th^{\pm}(p)=0$ or $2\pi$, from the above relation we obtain that all derivatives of $\th^{\pm}$ vanish at $p$.
Therefore, differentiation of \eqref{neq0} $m$-times with respect to $\d$ in the normal connection yields that $m!\psi^{\pm}(p)=0$.
This is a contradiction, and the proof follows.
\qed
\end{proof}
\medskip

The following lemma is essential for our results.

\begin{lemma} \label{Sys}
Let $M$ be a simply-connected, oriented 2-dimensional Riemannian manifold with a global complex coordinate $z$, and 
$f\colon M\to \Q^4_c$ a surface with $M_0^{\pm}(f)$ isolated.
Consider the differential equation
\be \label{sys}
\th^{\pm}_{\bar z}=\mp ih^{\pm}(1-e^{\pm i\th^{\pm}}),\;\;\; \th^{\pm}_{z}=\pm i\overline{h^{\pm}}(1-e^{\mp i\th^{\pm}}),
\ee
where $h^{\pm}$ is given by \eqref{hpm} on $M\smallsetminus M_0^{\pm}(f)$, and $\th^{\pm} \in \mathcal{C}^{\infty}(M\smallsetminus M_0^{\pm}(f);\R)$. 
Then, the graph of any solution  of \eqref{sys} is an integral surface of the  
distribution $D^{\pm}$ on $\R\times (M\smallsetminus M_0^{\pm}(f))$, defined by the 1-form
\be \label{system}
\rho^{\pm}= d\th^{\pm}\mp i\overline{h^{\pm}}(1-e^{\mp i\th^{\pm}})dz\pm ih^{\pm}(1-e^{\pm i\th^{\pm}})d{\bar z}.
\ee
We have that:
\begin{enumerate}[topsep=0pt,itemsep=-1pt,partopsep=1ex,parsep=0.5ex,leftmargin=*, label=(\roman*), align=left, labelsep=0em]
\item Any solution $\th^{\pm} \in \mathcal{C}^{\infty}(M\smallsetminus M_0^{\pm}(f);\R)$ of \eqref{sys}, satisfies the equations
\be \label{B}
A^{\pm}e^{\pm2i\th^{\pm}}-2i(\Imag A^{\pm})e^{\pm i\th^{\pm}}-{\overline{A^{\pm}}}=0,
\ee
\be \label{harmonic}
\th^{\pm}_{z\overline{z}}=\mp A^{\pm}(1-e^{\pm i\th^{\pm}}),
\ee
where 
\be \label{Ah}
A^{\pm}=i\left(h^{\pm}_z-|h^{\pm}|^2\right)=-\Imag h^{\pm}_z+i(\Real h^{\pm}_z-|h^{\pm}|^2).
\ee
\item Assume that $h^{\pm}$ extends smoothly on $M$.  
Then, $D^{\pm}$ is involutive on $\R\times M$ if and only if $A^{\pm}\equiv0$ on $M$.
If $D^{\pm}$ is involutive, then its maximal integral surfaces are graphs of solutions of \eqref{sys} on $M$.
In particular, any solution of \eqref{sys} on $M$
is equivalent modulo $2\pi$, either to a harmonic function $\th^{\pm} \in \mathcal{C}^{\infty}(M;(0,2\pi))$, 
or  to the constant function $\th^{\pm}\equiv0$, and the space of the distinct modulo $2\pi$ solutions can be smoothly
parametrized by $\mathbb{S}^1\simeq \R/2\pi\mathbb{Z}$.
\item If \eqref{sys} has a harmonic solution $\th^{\pm} \in \mathcal{C}^{\infty}(M\smallsetminus M_0^{\pm}(f);(0,2\pi))$, 
then $h^{\pm}$ extends smoothly on $M$, and $A^{\pm}\equiv0$. 
\end{enumerate}
\end{lemma}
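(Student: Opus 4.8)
The plan is to handle the three parts in order, extracting the algebraic identities \eqref{harmonic} and \eqref{B} first, then the global integrability statement, and finally the extension across pseudo-umbilic points, which I expect to be the crux. Note first that the two equations in \eqref{sys} are complex conjugates of one another (since $\th^{\pm}$ is real, $\overline{\th^{\pm}_{\dzb}}=\th^{\pm}_{\dz}$), so the system is consistent. For part (i), I would differentiate the first equation of \eqref{sys} with respect to $z$, substitute the second equation of \eqref{sys} for $\th^{\pm}_{\dz}$, and use the identity $(1-e^{\mp i\th^{\pm}})e^{\pm i\th^{\pm}}=-(1-e^{\pm i\th^{\pm}})$ together with the definition \eqref{Ah} of $A^{\pm}$; this collapses to $\th^{\pm}_{z\bar z}=\mp A^{\pm}(1-e^{\pm i\th^{\pm}})$, which is \eqref{harmonic}. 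Then, since $\th^{\pm}$ is real-valued, $\th^{\pm}_{z\bar z}=\tfrac14\Delta\th^{\pm}$ is real and hence equals its conjugate; equating \eqref{harmonic} with its conjugate yields $A^{\pm}(1-e^{\pm i\th^{\pm}})=\overline{A^{\pm}}(1-e^{\mp i\th^{\pm}})$, and multiplying by $e^{\pm i\th^{\pm}}$ and rearranging gives \eqref{B}.

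For part (ii) I would invoke the Frobenius theorem for the $1$-form \eqref{system} on $\R\times(M\smallsetminus M_0^{\pm}(f))$: the distribution $D^{\pm}=\ker\rho^{\pm}$ is involutive if and only if $\rho^{\pm}\wedge d\rho^{\pm}=0$. Computing $d\rho^{\pm}$ in the coordinates $(\th^{\pm},z,\bar z)$ and wedging with $\rho^{\pm}$, the coefficient of $d\th^{\pm}\wedge dz\wedge d\bar z$ reduces, again via \eqref{Ah}, to a nonzero multiple of $(1-e^{\pm i\th^{\pm}})A^{\pm}-(1-e^{\mp i\th^{\pm}})\overline{A^{\pm}}$. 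When $h^{\pm}$ is smooth on $M$ and $\th^{\pm}$ is a free coordinate, this must vanish for every $\th^{\pm}\in\R$; writing $A^{\pm}=a+ib$, the bracket equals $2i\bigl(b(1-\cos\th^{\pm})\mp a\sin\th^{\pm}\bigr)$, whose vanishing at $\th^{\pm}=\pi$ forces $b=0$ and then forces $a=0$, so involutivity is equivalent to $A^{\pm}\equiv0$. Since the $d\th^{\pm}$-coefficient of $\rho^{\pm}$ is $1$, each leaf is transverse to the fibers of $\R\times M\to M$, so $D^{\pm}$ is a connection on this bundle that is now flat; as the system is $2\pi$-periodic in $\th^{\pm}$, passing to the $\mathbb{S}^1$-bundle $\mathbb{S}^1\times M\to M$ (compact fiber, hence complete horizontal lifts) over the simply-connected base $M$ gives trivial holonomy, so the maximal leaves are graphs of global functions $\th^{\pm}\colon M\to\R$, i.e.\ solutions of \eqref{sys}. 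Harmonicity of each solution is immediate from \eqref{harmonic} with $A^{\pm}\equiv0$. Finally $\th^{\pm}\equiv0$ is itself a solution, hence a leaf; disjointness of leaves shows any solution hitting $2\pi\mathbb{Z}$ at one point coincides with it modulo $2\pi$, so every other solution is represented modulo $2\pi$ by a harmonic function into $(0,2\pi)$, and recording the fiber value over a fixed $p_0$ identifies the solution space modulo $2\pi$ smoothly with $\mathbb{S}^1\simeq\R/2\pi\mathbb{Z}$.

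For part (iii), a harmonic solution $\th^{\pm}\in\mathcal{C}^{\infty}(M\smallsetminus M_0^{\pm}(f);(0,2\pi))$ is in particular a solution, so part (i) applies and \eqref{harmonic} holds; combined with $\th^{\pm}_{z\bar z}=0$ and $1-e^{\pm i\th^{\pm}}\neq0$ (as $\th^{\pm}\in(0,2\pi)$) this yields $A^{\pm}=0$ on $M\smallsetminus M_0^{\pm}(f)$. The substantive step, which I expect to be the main obstacle, is extending $h^{\pm}$ smoothly across the isolated points of $M_0^{\pm}(f)$: there $\phi^{\pm}$ vanishes (Lemma \ref{pseudo}(i)), and writing $\phi^{\pm}=g\,\xi$ for a local nonvanishing section $\xi$ of $N_f^{\pm}M$ gives $h^{\pm}=g_{\bar z}/g+\beta$ with $\beta$ smooth, so $h^{\pm}$ can a priori blow up. The plan is to solve \eqref{sys} for $h^{\pm}$, obtaining $h^{\pm}=\pm i\,\th^{\pm}_{\dzb}/(1-e^{\pm i\th^{\pm}})$ off $M_0^{\pm}(f)$. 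Since $\th^{\pm}$ is bounded and harmonic on a punctured neighbourhood of each $p\in M_0^{\pm}(f)$, the removable-singularity theorem extends it harmonically across $p$ with some value $\th^{\pm}(p)\in[0,2\pi]$, and the strong maximum/minimum principle rules out $\th^{\pm}(p)\in\{0,2\pi\}$, since such a value would be a strict interior extremum of a nonconstant harmonic function. Hence $\th^{\pm}(p)\in(0,2\pi)$, the denominator $1-e^{\pm i\th^{\pm}}$ stays nonzero at $p$, so the formula for $h^{\pm}$ extends smoothly over $p$; consequently $A^{\pm}=i(h^{\pm}_z-|h^{\pm}|^2)$ is smooth on all of $M$ and vanishes on the dense subset $M\smallsetminus M_0^{\pm}(f)$, whence $A^{\pm}\equiv0$ on $M$.
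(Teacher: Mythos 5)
Your proposal is correct and follows essentially the same route as the paper's proof: part (i) via equating the two mixed partials $\th^{\pm}_{z\bar z}=\th^{\pm}_{\bar z z}$ computed from the two equations of \eqref{sys}, part (ii) via the Frobenius condition $\rho^{\pm}\wedge d\rho^{\pm}=0$ and leaf disjointness, and part (iii) via removable singularities for bounded harmonic functions plus the maximum principle to keep $\th^{\pm}$ in $(0,2\pi)$ and then solving \eqref{sys} for $h^{\pm}$. The only differences are presentational — you carry out the involutivity computation explicitly and justify more carefully (via the $\mathbb{S}^1$-bundle and trivial holonomy) that maximal leaves are global graphs, a point the paper states tersely.
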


\begin{proof}
It is clear that the graph of any solution of \eqref{sys} is an integral surface of $D^{\pm}$.

(i) Assume that $\th^{\pm} \in \mathcal{C}^{\infty}(M\smallsetminus M_0^{\pm}(f);\R)$ satisfies \eqref{sys}.
From \eqref{sys} it follows that $$\th^{\pm}_{\overline{z}z}=\mp A^{\pm}(1-e^{\pm i\th^{\pm}})\;\;\; \mbox{and}\;\;\; 
\th^{\pm}_{z\overline{z}}=\mp \overline{A^{\pm}}(1-e^{\mp i\th^{\pm}}),$$
where $A^{\pm}$ is given by \eqref{Ah}. Since $\th^{\pm}_{\overline{z}z}=\th^{\pm}_{z\overline{z}}$, the above implies \eqref{B} and \eqref{harmonic}.

(ii) From \eqref{system} and \eqref{Ah} it follows that $\rho^{\pm}$ and $A^{\pm}$ can be smoothly extended on $\R\times M$ and $M$, respectively. 
The Frobenius Theorem yields that $D^{\pm}$ is involutive if and only if $\rho^{\pm}\wedge d\rho^{\pm}\equiv0$ on $\R\times M$, 
or equivalently, $A^{\pm}\equiv0$ on $M$.

Assume that $D^{\pm}$ is involutive on $\R\times M$ and let $\Sigma$ be a maximal integral surface.
Then $\rho^{\pm}=0$ on $\Sigma$. Since $M$ is simply-connected and $\rho^{\pm}$ is defined globally on $\R\times M$, from
\eqref{system} it follows that $\Sigma$ is the graph of a solution of \eqref{sys} on $M$.

Let $\th^{\pm} \in \mathcal{C}^{\infty}(M;\R)$ be a solution of \eqref{sys} on $M$.
Since $A^{\pm}\equiv0$ on $M$, from \eqref{harmonic} it follows that $\th^{\pm}$ is harmonic.
Clearly, $\th^{\pm}+2k\pi$ also satisfies \eqref{sys} for every $k\in \mathbb{Z}$.
Therefore, if $\th^{\pm}\not \equiv 0\mod 2\pi$, we may assume that $\th^{\pm}(p)\in (0,2\pi)$ at some $p\in M$.
Then, the graph of $\theta^{\pm}$ must lie between the graphs of the constant solutions 0 and $2\pi$ and thus, $\th^{\pm}$ takes values in $(0,2\pi)$.
Therefore, any solution of \eqref{sys} on $M$ is equivalent modulo $2\pi$, either to a harmonic function 
$\th^{\pm} \in \mathcal{C}^{\infty}(M;(0,2\pi))$, or  to the function $\th^{\pm}\equiv0$.

Since $\R\times M$ is foliated by maximal integral surfaces of $D^{\pm}$, 
which are graphs over $M$ of solutions of \eqref{sys}, it follows that
the space of these surfaces can be parametrized by a smooth curve $\gamma(t)=(t,p), t\in \R$, where $p\in M$ is an arbitrary point.
Obviously, the space of the distinct modulo $2\pi$ solutions of \eqref{sys} is smoothly parametrized by $\mathbb{S}^1\simeq \R/2\pi\mathbb{Z}$. 

(iii) Let $\th^{\pm} \in \mathcal{C}^{\infty}(M\smallsetminus M_0^{\pm}(f);(0,2\pi))$ be a harmonic function satisfying \eqref{sys}. 
Since $\th^{\pm}$ is bounded with isolated singularities, it extends
to a harmonic function $\th^{\pm} \in \mathcal{C}^{\infty}(M;[0,2\pi])$.
We claim that $\th^{\pm}$ does not attain the values $0$ and $2\pi$ on $M$. Arguing indirectly, assume that there exists a point 
at which $\th^{\pm}$ attains the value $0$ or $2\pi$. Then $\th^{\pm}$ has an interior minimum or maximum, respectively,
and the maximum principle implies that $\th^{\pm}\equiv0$ or $2\pi$, respectively, on $M$. 
This is a contradiction, since $\th^{\pm}(p)\in (0,2\pi)$ for every $p\in M\smallsetminus M_0^{\pm}(f)$.
Therefore, $\th^{\pm} \in \mathcal{C}^{\infty}(M;(0,2\pi))$. From \eqref{sys}, it follows that $h^{\pm}$ extends smoothly at 
every point of $M_0^{\pm}(f)$. Since $\th^{\pm}$ is harmonic, \eqref{harmonic} implies that $A^{\pm}\equiv0$ on $M$.
\qed
\end{proof}
\medskip

\begin{proposition}\label{Npm}
If $\tilde{f}\in \mathcal{N}^{\pm}(f)$, then the function $\th^{\pm}$ of Proposition \ref{dd} satisfies \eqref{sys} 
on $U\smallsetminus M_0^{\pm}$ for every simply-connected complex chart $(U,z)$ on $M$. 
Moreover, if one of the following holds, then it extends to a harmonic function $\th^{\pm}\in \mathcal{C}^{\infty}(M;(0,2\pi))$.
\begin{enumerate}[topsep=0pt,itemsep=-1pt,partopsep=1ex,parsep=0.5ex,leftmargin=*, label=(\roman*), align=left, labelsep=-0.4em]
\item There exists $\hat{f}\in\mathcal{N}^{\pm}(f)\cap\mathcal{N}^{\pm}(\tilde{f})$.
\item The surface $f$ is $\pm$ isotropically isothermic on $M\smallsetminus M_0^{\pm}$.
\end{enumerate}
\end{proposition}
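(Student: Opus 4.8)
The plan is to derive \eqref{sys} directly from the defining relation \eqref{general1}, and then to show that under either hypothesis the invariant $A^\pm$ of \eqref{Ah} vanishes identically on $M\smallsetminus M_0^\pm$; Lemma \ref{Sys}(i),(iii) then supply the harmonic extension. First I would fix a simply-connected complex chart $(U,z)$ and write $Q^\pm=\phi_Q^\pm\,dz^2$, so that \eqref{general1} reads $\phi_Q^\pm=(1-e^{\mp i\th^\pm})\phi^\pm$ with $\phi^\pm$ as in \eqref{phipm}. Since $Q^\pm$ is holomorphic by Lemma \ref{qke}(i), we have $\nap_{\dzb}\phi_Q^\pm=0$; differentiating the product and using \eqref{hpm} gives $\big(\pm i\,\th^\pm_{\bar z}e^{\mp i\th^\pm}+(1-e^{\mp i\th^\pm})h^\pm\big)\phi^\pm=0$. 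As $\phi^\pm\neq0$ off $M_0^\pm$, solving for $\th^\pm_{\bar z}$ yields the first equation of \eqref{sys}, and the second follows by conjugation since $\th^\pm$ is real (this is exactly the computation already carried out inside the proof of Proposition \ref{dd}, and it holds for any such chart). By Lemma \ref{Sys}(i) the function $\th^\pm$ then satisfies \eqref{B} and \eqref{harmonic}; hence once $A^\pm\equiv0$ is established, \eqref{harmonic} shows $\th^\pm$ is harmonic with values in $(0,2\pi)$, and Lemma \ref{Sys}(iii) extends it to a harmonic $\th^\pm\in\mathcal{C}^\infty(M;(0,2\pi))$. The whole problem thus reduces to proving $A^\pm\equiv0$.

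For hypothesis (ii) the argument is short. If $f$ is $\pm$ isotropically isothermic on $M\smallsetminus M_0^\pm$, then Lemma \ref{qiz} gives $\Imag h^\pm_z=0$, so by \eqref{Ah} the quantity $A^\pm=i\,\Imag A^\pm$ is purely imaginary. Writing $A^\pm=ia$ with $a=\Imag A^\pm$ real and $\overline{A^\pm}=-ia$, equation \eqref{B} collapses to $ia\,(e^{\pm i\th^\pm}-1)^2=0$. Because $\th^\pm\in(0,2\pi)$ we have $e^{\pm i\th^\pm}\neq1$, whence $a=0$ and $A^\pm\equiv0$.

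Hypothesis (i) is the substantial case. I would first record a cocycle relation for distortion differentials: choosing orientation-preserving parallel bundle isometries $T\colon N_fM\to N_{\tilde f}M$ and $\hat T\colon N_fM\to N_{\hat f}M$ with $TH=\tilde H$, $\hat TH=\hat H$, the composition $\hat T\circ T^{-1}$ is admissible for the pair $(\tilde f,\hat f)$ (it is parallel, orientation-preserving, and sends $\tilde H$ to $\hat H$), so by the independence of $Q^\pm$ of the chosen isometry (Lemma \ref{qke}(i)) together with \eqref{qpr} one obtains $Q^\pm_{\tilde f,\hat f}=T\big(Q^\pm_{f,\hat f}-Q^\pm_{f,\tilde f}\big)$. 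Applying \eqref{general1} to the pairs $(f,\tilde f)$ and $(f,\hat f)$, with corresponding angles $\th^\pm$ and $\hat\th^\pm$, this becomes $Q^\pm_{\tilde f,\hat f}=\big(e^{\mp i\th^\pm}-e^{\mp i\hat\th^\pm}\big)\,T\Phi^\pm$. Since $\hat f\in\mathcal{N}^\pm(\tilde f)$, the differential $Q^\pm_{\tilde f,\hat f}$ is holomorphic and not identically zero, so by Lemma \ref{zeros} its zeros are isolated; as $\Phi^\pm\neq0$ and $T$ is injective off $M_0^\pm$, we conclude $e^{\pm i\th^\pm}\neq e^{\pm i\hat\th^\pm}$ off an isolated subset of $M\smallsetminus M_0^\pm$.

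On that open dense set both $\th^\pm$ and $\hat\th^\pm$ solve \eqref{sys} with the same $h^\pm$ (both pairs share $f$), hence both satisfy \eqref{B}. Thus, where $A^\pm\neq0$, the two distinct unit-modulus numbers $g=e^{\pm i\th^\pm}$ and $\hat g=e^{\pm i\hat\th^\pm}$ are precisely the two roots of the quadratic $A^\pm w^2-2i(\Imag A^\pm)w-\overline{A^\pm}=0$, so Vieta's formulae give $g+\hat g=(A^\pm-\overline{A^\pm})/A^\pm$ and $g\hat g=-\overline{A^\pm}/A^\pm$. These combine to $g+\hat g-1=g\hat g$, i.e. $(g-1)(\hat g-1)=0$, contradicting $\th^\pm,\hat\th^\pm\in(0,2\pi)$. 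Hence $A^\pm=0$ off the isolated set, and by continuity $A^\pm\equiv0$ on $M\smallsetminus M_0^\pm$; feeding this into \eqref{harmonic} and Lemma \ref{Sys}(iii) completes the proof. I expect the cocycle bookkeeping and the verification that $g\neq\hat g$ on a dense set (which is exactly where $\hat f\in\mathcal{N}^\pm(\tilde f)$ is used) to be the main obstacles; once those are in place, the algebraic collapse via Vieta and the reduction to $A^\pm\equiv0$ are immediate.
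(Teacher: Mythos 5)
Your proposal is correct and follows essentially the same route as the paper: the system \eqref{sys} is extracted from the holomorphicity of $Q^{\pm}$ exactly as in the proof of Proposition \ref{dd}, case (ii) reduces \eqref{B} to $ia(e^{\pm i\th^{\pm}}-1)^2=0$ via Lemma \ref{qiz}, and case (i) uses the cocycle identity $Q^{\pm}_{\tilde f,\hat f}=T(Q^{\pm}_{f,\hat f}-Q^{\pm}_{f,\tilde f})$ to produce two distinct roots of the quadratic \eqref{B} besides the automatic root $1$, forcing $A^{\pm}\equiv0$. Your Vieta computation is just an explicit form of the paper's "three distinct roots of a quadratic" argument, and your use of Lemma \ref{zeros} plus continuity in place of applying Proposition \ref{dd} to the pair $(\tilde f,\hat f)$ is an inessential variation.
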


\begin{proof}
Let $(U,z)$ be a simply-connected complex chart on $M$. 
In the proof of Proposition \ref{dd} it has been shown that $\th^{\pm}$ satisfies 
\eqref{sys} on $U\smallsetminus M_0^{\pm}$. We claim that if (i) or (ii) holds, then $\th^{\pm}$ is harmonic on $U\smallsetminus M_0^{\pm}$.

(i) To unify the notation, set $f_1=\tilde{f}$, $\th^{\pm}_1=\th^{\pm}$ and $f_2=\hat{f}$.
Proposition \ref{dd} implies that there exists 
$\th^{\pm}_j\in \mathcal{C}^{\infty}(M\smallsetminus M_0^{\pm};(0,2\pi))$ such that the distortion differential $Q_j$ of the pair $(f,f_j)$
satisfies  
\bea
Q^{\pm}_{j}= (1-e^{\mp i\th^{\pm}_{j}})\Phi^{\pm} \;\;\;\mbox{on}\;\;\; M\smallsetminus M_0^{\pm}
\eea
for $j=1,2$, where $\Phi$ is the Hopf differential of $f$.
Moreover (cf. \cite[Lemma 17]{PV} and its proof), the distortion differential $Q$ of the pair $(f_1,f_2)$ satisfies
$$Q^{\pm}=T\circ (Q^{\pm}_1-Q^{\pm}_2),$$
where $T\colon N_fM\to N_{f_1}M$ is an orientation and mean curvature vector field-preserving, parallel vector bundle isometry.
From the above two relations it follows that
$$Q^{\pm}=(e^{\mp i\th^{\pm}_2}-e^{\mp i\th^{\pm}_1})T\circ \Phi^{\pm}\;\;\; \mbox{on}\;\;\; M\smallsetminus M_0^{\pm}.$$
Since $f_2\in \mathcal{N}^{\pm}(f_1)$, it is clear that $f_1\in \mathcal{N}^{\pm}(f_2)$.
Proposition \ref{dd} implies that $Q^{\pm}$ vanishes precisely on $M_0^{\pm}$ and from the above it follows that 
$\th^{\pm}_1\neq \th^{\pm}_2$ everywhere on $M\smallsetminus M_0^{\pm}$. 
Since $\th^{\pm}_j, j=1,2,$ satisfies \eqref{sys} on $U\smallsetminus M_0^{\pm}$, from Lemma \ref{Sys}(i) it follows that it also satisfies \eqref{B}.
At every point of $U\smallsetminus M_0^{\pm}$, equation \eqref{B} viewed as a polynomial equation, has the distinct roots 
$1,e^{\mp i\th^{\pm}_1},e^{\mp i\th^{\pm}_2}$. Hence, $A^{\pm}\equiv0$ on $U\smallsetminus M_0^{\pm}$ and the claim follows by virtue of \eqref{harmonic}. 

(ii) Arguing indirectly, assume that $\th^{\pm}$ is not harmonic on $U\smallsetminus M_0^{\pm}$. Appealing to Lemma \ref{Sys}(i), 
equation \eqref{harmonic} implies that there exists $p\in U\smallsetminus M_0^{\pm}$ such that $A^{\pm}(p)\neq0$. On the other hand,
Lemma \ref{qiz} and \eqref{Ah} yield that $\Real A^{\pm}\equiv0$ on $U\smallsetminus M_0^{\pm}$.  
Since $\Real A^{\pm}(p)=0\neq\Imag A^{\pm}(p)$, equation \eqref{B} implies that $\exp{(\pm i\th^{\pm}(p))}=1$. 
This is a contradiction since $\th^{\pm}$ takes values in $(0,2\pi)$, and the claim follows.

Since $\th^{\pm}$ is a harmonic function satisfying \eqref{sys} on $U\smallsetminus M_0^{\pm}$, Lemma \ref{Sys}(iii) implies that
$h^\pm$ extends smoothly on $U$ and $A^\pm\equiv0$ on $U$. From Lemma \ref{Sys}(ii) it follows that $\th^{\pm}$ extends to a harmonic function
on $U$ with values in $(0,2\pi)$, satisfying \eqref{sys} on $U$. Since $U$ is arbitrary, this completes the proof.
\qed
\end{proof}

\section{Simply-Connected Surfaces} \label{s6}

\subsection{The Structure of the Moduli Space}

We study here the moduli space $\mathcal{M}(f)$ for simply-connected surfaces $f\colon M\to \Q^4_c$.
The following proposition determines the structure of $\mathcal{M}(f)$ for such compact surfaces. 

\begin{proposition}\label{mssc}
Let $f\colon M\to \Q^4_c$ be an oriented surface. If $M$ is homeomorphic to $\mathbb{S}^2$, then $f$ admits at most one Bonnet mate.
\end{proposition}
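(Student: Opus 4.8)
The plan is to play two constraints against each other: a Chern--number count on $M\cong\mathbb{S}^2$ that forces all Bonnet mates to be of a single isotropic type, and the harmonicity analysis of Proposition~\ref{Npm} together with the impossibility of a holomorphic $\Phi^{\pm}$ on the sphere. First I would reduce to the non-minimal case: a compact minimal surface exists neither in $\R^4$ nor in hyperbolic space, so minimal spheres occur only in $\mathbb{S}^4$ and I would treat them separately; for non-minimal $f$ the distortion--differential calculus of Section~\ref{s5} is available, and $M$ is compact with $\chi=2$.

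Let $\tilde f$ be a Bonnet mate with holomorphic distortion differential $Q=Q^-+Q^+$ (Lemma~\ref{qke}). I regard $\Phi^{\pm}$ and $Q^{\pm}$ as sections of the line bundle $\mathcal{L}^{\pm}=K^{2}\otimes N_f^{\pm}M$, where $K$ is the canonical bundle. Because $\pi^{\pm}$ is conjugate-linear with respect to $J^{\perp}$, one gets $c_1(N_f^{\pm}M)=\mp\chi_N$, hence $\deg\mathcal{L}^{\pm}=-2\chi\mp\chi_N=-(2\chi\pm\chi_N)$, in agreement with \eqref{dW} and Theorem~\ref{Hopf type}. If $\tilde f\in\mathcal{N}^{+}(f)$, then $Q^{+}\not\equiv0$ is a nonzero holomorphic section of $\mathcal{L}^{+}$, so $\deg\mathcal{L}^{+}\ge0$, i.e.\ $\chi_N\le-2\chi=-4$; symmetrically $\tilde f\in\mathcal{N}^{-}(f)$ forces $\chi_N\ge4$. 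As these are incompatible, $\mathcal{N}^{+}(f)$ and $\mathcal{N}^{-}(f)$ are never simultaneously nonempty; in particular $\mathcal{M}^{*}(f)=\emptyset$, and after possibly composing with an orientation-reversing isometry (which interchanges the two signs) I may assume that every mate lies in $\mathcal{M}^{+}(f)$ and that $\chi_N\le-4$.

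Next I would show that $\mathcal{M}^{+}(f)$ contains at most one point. Assuming two distinct mates $\tilde f_1\neq\tilde f_2\in\mathcal{M}^{+}(f)$, the pair $(\tilde f_1,\tilde f_2)$ has the same mean curvature, so it is a Bonnet pair with $Q_{12}^{-}=0$ and $Q_{12}^{+}=T\circ(Q_1^{+}-Q_2^{+})\not\equiv0$ (see \cite{PV}); thus $\tilde f_2\in\mathcal{N}^{+}(f)\cap\mathcal{N}^{+}(\tilde f_1)$, and Proposition~\ref{Npm}(i) shows that the angle $\th^{+}$ of the pair $(f,\tilde f_1)$ extends to a harmonic function in $\mathcal{C}^{\infty}(M;(0,2\pi))$. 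Since $M$ is compact, the maximum principle forces $\th^{+}$ to be a constant $c\in(0,2\pi)$; substituting into \eqref{sys} and using $e^{ic}\neq1$ yields $h^{+}\equiv0$, i.e.\ $\Phi^{+}$ is holomorphic (Proposition~\ref{glphi}).

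It remains to exclude a holomorphic $\Phi^{+}$ on the sphere, and this is where the argument closes up: on one hand $\Phi^{+}\not\equiv0$ is a holomorphic section of $\mathcal{L}^{+}$ and so again forces $\chi_N\le-4<0$; on the other hand, by Proposition~\ref{glphi} holomorphicity of $\Phi^{+}$ is equivalent to $H^{+}$ being anti-holomorphic, and since $f$ is non-minimal $\|H^{+}\|^2=\tfrac12\|H\|^2\not\equiv0$, so $\overline{H^{+}}$ is a nonzero holomorphic section of $\overline{N_f^{+}M}$, a bundle of degree $-c_1(N_f^{+}M)=\chi_N$, whence $\chi_N\ge0$ --- a contradiction. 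Therefore $\mathcal{M}^{+}(f)$, and hence $\mathcal{M}(f)\smallsetminus\{f\}$, has at most one element. The main obstacle is the isotropic-type bookkeeping: one must first kill the mixed ($\mathcal{M}^{*}$) and opposite-type configurations before Proposition~\ref{Npm}(i) is even applicable, which is precisely the role of the two opposite inequalities $\chi_N\le-4$ and $\chi_N\ge4$, and getting the signs of $c_1(N_f^{\pm}M)$ right (via the conjugate-linearity of $\pi^{\pm}$) is what makes them definite. The remaining delicate point is the minimal case in $\mathbb{S}^4$, whose isometric minimal deformations must be ruled out by a separate argument.
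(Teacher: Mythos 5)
Your argument for the non-minimal case is correct, and it takes a genuinely different route from the paper. The paper's own proof is a three-line case split that delegates everything to external results: if neither Gauss lift is vertically harmonic, \cite[Thm.~2]{PV} gives at most one mate; otherwise \cite[Thm.~3]{PV} (or Calabi in the minimal case) forces $f$ to be superconformal, and \cite[Thm.~5(i)]{PV} concludes. You instead reprove the needed special cases on the sphere from scratch: the degree count $\deg\bigl(K^2\otimes N_f^{\pm}M\bigr)=-(2\chi\pm\chi_N)$ (correctly calibrated against Proposition~\ref{IndPU} and Theorem~\ref{Hopf type}) turns the holomorphy of $Q^{\pm}$ into the incompatible inequalities $\chi_N\le-4$ and $\chi_N\ge4$, killing $\mathcal{M}^{*}(f)$ and mixed-type configurations at once; then Proposition~\ref{Npm}(i) plus the maximum principle reduces two same-type mates to a holomorphic $\Phi^{+}\not\equiv0$, which you exclude by playing the degree of $K^2\otimes N_f^{+}M$ against that of $\overline{N_f^{+}M}$ (where $\overline{H^{+}}$ lives). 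This last clash is in effect a self-contained proof, on $\mathbb{S}^2$, of the relevant case of \cite[Thm.~3]{PV}, and it even bypasses \cite[Thm.~5(i)]{PV} for non-minimal surfaces. The sign bookkeeping $c_1(N_f^{\pm}M)=\mp\chi_N$ is right (the projection $\pi^{\pm}$ is conjugate-linear), and the appeal to Proposition~\ref{Npm}(i) is legitimate since two distinct mates in $\mathcal{M}^{+}(f)$ do give $\tilde f_2\in\mathcal{N}^{+}(f)\cap\mathcal{N}^{+}(\tilde f_1)$ via \cite[Lemma~17]{PV}.

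The genuine gap is the minimal case, which you defer and never close. It cannot simply be waved away: a Bonnet mate of a minimal surface is minimal (since $TH=\tilde H$), so for $c>0$ you must bound the number of noncongruent isometric minimal immersions of a metric $2$-sphere into $\mathbb{S}^4$, and none of your machinery applies there — Lemma~\ref{qke}, Proposition~\ref{dd} and Proposition~\ref{Npm} are all stated for non-minimal pairs, and the distortion differential is not even defined. The paper closes this case by quoting Calabi's theorem that minimal $2$-spheres in $\mathbb{S}^4$ are superconformal and then invoking \cite[Thm.~5(i)]{PV} (equivalently, the rigidity results of \cite{Joh,Vl1} cited later for Theorem~\ref{SPB}). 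As written, your proof establishes the proposition only for non-minimal $f$; you need to either import one of these rigidity statements or supply the separate argument you promise.
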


\begin{proof}
If both Gauss lifts of $f$ are not vertically harmonic, then \cite[Thm. 2]{PV} implies that $f$ admits at most one Bonnet mate. 
Assume that $f$ has a vertically harmonic Gauss lift. We claim that $f$ is superconformal. Indeed,
if $f$ is non-minimal then \cite[Thm. 3]{PV} yields that it is superconformal.
If $f$ is minimal, the claim follows by a well-known result of Calabi \cite{Cal}.  
Then, \cite[Thm. 5(i)]{PV} implies that $f$ admits at most one Bonnet mate. 
\qed
\end{proof}
\medskip

By virtue of the above proposition, in the sequel we focus on non-compact surfaces. The following theorem provides information
about the structure of the moduli space of non-minimal such surfaces.

\begin{theorem} \label{SC}
Let $M$ be a non-compact, simply-connected, oriented 2-dimensional Riemannian manifold, and 
$f\colon M\to \Q^4_c$ a non-minimal surface. Then:
\begin{enumerate}[topsep=0pt,itemsep=-1pt,partopsep=1ex,parsep=0.5ex,leftmargin=*, label=(\roman*), align=left, labelsep=0em]
\item Either there exists at most one Bonnet mate of $f$ in $\mathcal{M}^{\pm}(f)$, or the 
component $\bar{\mathcal{M}}^{\pm}(f)$ is diffeomorphic to $\mathbb{S}^1\simeq \R/2\pi\mathbb{Z}$.
\item We have that $\mathcal{M}^*(f)\neq \emptyset$ if and only if $\mathcal{M}^-(f)\neq \emptyset \neq \mathcal{M}^+(f)$.
If $\mathcal{M}^*(f)\neq \emptyset$, then there is a one-to-one correspondence between Bonnet mates $\tilde{f}\in \mathcal{M}^*(f)$ and pairs $f^-,f^+$ 
with $f^{\pm}\in \mathcal{M}^{\pm}(f)$, such that the distortion differential of the pair $(f,\tilde{f})$ is given by
$$Q=Q_{f,f^-}+Q_{f,f^+},$$ where $Q_{f,f^{\pm}}$ is the distortion differential of the pair $(f,f^{\pm})$.
\item The surface $f$ is proper Bonnet if and only if either $\bar{\mathcal{M}}^{-}(f)=\mathbb{S}^1$, or $\bar{\mathcal{M}}^{+}(f)=\mathbb{S}^1$.
\item The moduli space $\mathcal{M}(f)$ can be parametrized by the product $\bar{\mathcal{M}}^{-}(f)\times \bar{\mathcal{M}}^{+}(f)$.
In particular, if $f$ is proper Bonnet then $\mathcal{M}(f)$ is a smooth manifold.
\end{enumerate}
\end{theorem}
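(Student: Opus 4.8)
The plan is to reduce the entire analysis to the angle functions $\th^{\pm}$ attached to Bonnet mates by Proposition \ref{dd}, to the integrability system \eqref{sys} governed by Lemma \ref{Sys}, and to the fact that the two isotropic parts $Q^-,Q^+$ of a distortion differential live in the \emph{independent} complex line bundles $N^-_fM$ and $N^+_fM$. This independence is what ultimately produces the product structure, while the realization of prescribed distortion data as genuine immersions will rely on the fundamental theorem of surfaces in $\Q^4_c$ together with the constructions of \cite{PV}.

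For part (i) I would establish the dichotomy directly. Suppose $\mathcal{M}^{\pm}(f)$ contains two distinct classes $f_1,f_2$; both lie in $\mathcal{N}^{\pm}(f)$, and as computed in the proof of Proposition \ref{Npm}(i) the $\pm$-part of the distortion differential of $(f_1,f_2)$ equals $(e^{\mp i\th^{\pm}_2}-e^{\mp i\th^{\pm}_1})T\circ\Phi^{\pm}$, which vanishes only on $M_0^{\pm}$; hence $f_2\in\mathcal{N}^{\pm}(f)\cap\mathcal{N}^{\pm}(f_1)$. Proposition \ref{Npm}(i) then forces $\th_1^{\pm}$ to extend to a harmonic function with values in $(0,2\pi)$, and Lemma \ref{Sys}(iii) yields $A^{\pm}\equiv0$ with $h^{\pm}$ smooth on all of $M$. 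Invoking Lemma \ref{Sys}(ii), the distribution $D^{\pm}$ is involutive on $\R\times M$ and its distinct-mod-$2\pi$ solutions are smoothly parametrized by $\mathbb{S}^1\simeq\R/2\pi\mathbb{Z}$; realizing each such solution as a Bonnet mate (with the constant solution $0$ corresponding to $f$) identifies $\bar{\mathcal{M}}^{\pm}(f)$ with $\mathbb{S}^1$. Thus either $\mathcal{M}^{\pm}(f)$ has at most one element, or $\bar{\mathcal{M}}^{\pm}(f)\cong\mathbb{S}^1$.

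For part (ii), since $\mathcal{M}^*(f)=\mathcal{N}^-(f)\cap\mathcal{N}^+(f)$, a class $\tilde{f}\in\mathcal{M}^*(f)$ carries two angle functions $\th^-,\th^+$ with both $Q^-_{f,\tilde{f}}\not\equiv0$ and $Q^+_{f,\tilde{f}}\not\equiv0$. Because the isotropic parts may be prescribed independently, replacing $\th^-$ by $0$ while keeping $\th^+$ yields the differential $Q^+_{f,\tilde{f}}$ alone, which realizes a class $f^+\in\mathcal{M}^+(f)$, and symmetrically $f^-\in\mathcal{M}^-(f)$; then $Q_{f,\tilde{f}}=Q^-_{f,\tilde{f}}+Q^+_{f,\tilde{f}}=Q_{f,f^-}+Q_{f,f^+}$. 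Conversely, given $f^-\in\mathcal{M}^-(f)$ and $f^+\in\mathcal{M}^+(f)$, the differential $Q_{f,f^-}+Q_{f,f^+}$ has nonvanishing parts in both $N^{\mp}_fM$ and realizes a unique class of $\mathcal{M}^*(f)$, uniqueness coming from the fundamental theorem since the induced metric, the mean curvature and the Hopf differential of the candidate are determined by the data. The step I expect to be the crux is precisely this realization direction: one must verify that the prescribed $\tilde{\Phi}^{\pm}=e^{\mp i\th^{\pm}}T\circ\Phi^{\pm}$, together with the unchanged metric and mean curvature, satisfies the Gauss, Codazzi and Ricci equations, and this is exactly where the integrability encoded in \eqref{sys} enters.

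Parts (iii) and (iv) then follow by counting and assembling. If neither $\bar{\mathcal{M}}^{\pm}(f)$ is $\mathbb{S}^1$, then by part (i) each $\mathcal{M}^{\pm}(f)$ has at most one class, so by part (ii) $\mathcal{M}^*(f)$ has at most one class, giving at most three Bonnet mates and hence $f$ not proper Bonnet; the converse is immediate since $\mathbb{S}^1$ is infinite, proving (iii). For (iv), the disjoint decomposition $\mathcal{M}(f)=\mathcal{M}^*(f)\cup\mathcal{M}^-(f)\cup\mathcal{M}^+(f)\cup\{f\}$ matches the product $\bar{\mathcal{M}}^-(f)\times\bar{\mathcal{M}}^+(f)$ under the correspondence of part (ii): pairs with both factors nontrivial land in $\mathcal{M}^*(f)$, pairs with exactly one trivial factor in $\mathcal{M}^{\pm}(f)$, and $(f,f)$ gives the trivial class. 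Each factor is smoothly parametrized (by $\mathbb{S}^1$ via Lemma \ref{Sys}(ii) in the proper case, by a discrete set otherwise), so when $f$ is proper Bonnet at least one factor is $\mathbb{S}^1$ and the product is a smooth manifold, namely a disjoint union of circles when the other factor is discrete and the torus when both factors are $\mathbb{S}^1$.
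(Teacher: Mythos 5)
Your proposal follows essentially the same route as the paper's proof: Propositions \ref{dd} and \ref{Npm} reduce mates in $\mathcal{M}^{\pm}(f)$ to solutions of \eqref{sys}, Lemma \ref{Sys} together with Proposition \ref{correspond} gives the $\mathbb{S}^1$-parametrization for part (i), and the fundamental theorem of submanifolds realizes the prescribed differentials $\Phi-Q^{\mp}$ (equivalently, setting one of $\th^-,\th^+$ to zero) for part (ii), with (iii) and (iv) following by the same counting. The one step worth stating explicitly is that non-compactness and simple connectivity are invoked via the Uniformization Theorem to produce a \emph{global} complex coordinate on $M$, which is what licenses applying Lemma \ref{Sys} and Proposition \ref{correspond} on all of $M$ rather than only chart by chart.
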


For the proof of the above theorem, we need the following.

\begin{proposition} \label{correspond}
Let $M$ be a simply-connected, oriented 2-dimensional Riemannian manifold with a global complex coordinate $z$, and 
$f\colon M\to \Q^4_c$ a non-minimal surface with $M_0^{\pm}(f)$ isolated.
\begin{enumerate}[topsep=0pt,itemsep=-1pt,partopsep=1ex,parsep=0.5ex,leftmargin=*, label=(\roman*), align=left, labelsep=-0.4em]
\item If $\tilde{f}\in \mathcal{M}^{\pm}(f)$ and $\mathcal{M}^{\pm}(f)\smallsetminus\{\tilde{f}\}\neq\emptyset$, then there exists 
a harmonic $\th^{\pm} \in \mathcal{C}^{\infty}(M;(0,2\pi))$ satisfying \eqref{sys} on $M$, such that
the distortion differential of the pair $(f,\tilde{f})$ is given by \eqref{general1} on $M$.
\item If $h^{\pm}$ can be smoothly extended on $M$, then the distinct modulo $2\pi$ solutions of \eqref{sys} on $M$
determine noncongruent surfaces in $\bar{\mathcal{M}}^{\pm}(f)$.
In particular, any solution $\th^{\pm} \in \mathcal{C}^{\infty}(M;(0,2\pi))$ 
determines a unique 
$\tilde{f}\in \mathcal{M}^{\pm}(f)$ such that 
the distortion differential of the pair $(f,\tilde{f})$ is given by \eqref{general1} on $M$.
\end{enumerate}
\end{proposition}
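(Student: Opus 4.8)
The plan is to handle the two parts separately: part (i) follows by feeding the hypothesis into the analytic machinery already in place, while part (ii) is a constructive converse built on the fundamental theorem of submanifolds. For part (i), since $\tilde f\in\mathcal{M}^{\pm}(f)\subset\mathcal{N}^{\pm}(f)$, Proposition \ref{dd} already produces $\th^{\pm}\in\mathcal{C}^{\infty}(M\smallsetminus M_0^{\pm};(0,2\pi))$ for which \eqref{general1} holds off $M_0^{\pm}$; the task is to promote $\th^{\pm}$ to a global harmonic solution of \eqref{sys}. I would do this through Proposition \ref{Npm}(i), whose hypothesis asks for a common mate $\hat f\in\mathcal{N}^{\pm}(f)\cap\mathcal{N}^{\pm}(\tilde f)$. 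Choosing $\hat f\in\mathcal{M}^{\pm}(f)\smallsetminus\{\tilde f\}$ (nonempty by assumption), and using that both $\tilde f$ and $\hat f$ kill the $\mp$-distortion from $f$, the identity $Q^{\mp}_{\tilde f,\hat f}=T\circ(Q^{\mp}_{f,\tilde f}-Q^{\mp}_{f,\hat f})=0$ from the proof of Proposition \ref{Npm}(i) gives $Q_{\tilde f,\hat f}\equiv Q^{\pm}_{\tilde f,\hat f}$, which is not identically zero because $\tilde f,\hat f$ are noncongruent; hence $\hat f\in\mathcal{N}^{\pm}(\tilde f)$. Proposition \ref{Npm}(i) then yields the harmonic extension $\th^{\pm}\in\mathcal{C}^{\infty}(M;(0,2\pi))$ satisfying \eqref{sys} on $M$, and since $M_0^{\pm}$ is isolated while both sides of \eqref{general1} are now smooth on $M$, the relation \eqref{general1} extends by continuity from the dense set $M\smallsetminus M_0^{\pm}$ to all of $M$.

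For part (ii), given a solution $\th^{\pm}$ of \eqref{sys} on $M$ (with $h^{\pm}$ smoothly extended), I would prescribe compatible data on the abstract bundle $(N_fM,\nap)$: retain the metric $ds^2=\lambda^2|dz|^2$ and the mean curvature field, so $\tilde H=H$, and define a new real symmetric second fundamental form $\tilde\a$ through its Hopf differential $\tilde\Phi=\tilde\Phi^{\mp}+\tilde\Phi^{\pm}$ with $\tilde\Phi^{\mp}=\Phi^{\mp}$ and $\tilde\Phi^{\pm}=e^{\mp i\th^{\pm}}\Phi^{\pm}$, recovering $\tilde\a$ from $\tilde\Phi$ and $\tilde H$ via \eqref{defadd}; this is a bona fide real $N_fM$-valued form since its $(0,2)$-part is the conjugate of its $(2,0)$-part and its $(1,1)$-part comes from the real field $H$. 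The crux is to check that $\tilde\a$ satisfies Gauss, Codazzi and Ricci. Because $|e^{\mp i\th^{\pm}}|=1$, the phase cancels in each pairing, $\langle\tilde\phi^{\pm},\overline{\tilde\phi^{\pm}}\rangle=\langle\phi^{\pm},\overline{\phi^{\pm}}\rangle$, and in each wedge, $\tilde\phi^{\pm}\wedge\overline{\tilde\phi^{\pm}}=\phi^{\pm}\wedge\overline{\phi^{\pm}}$; together with $\|\tilde H\|=\|H\|$ this makes \eqref{Gauss} and \eqref{Ricci} hold verbatim. The $\mp$-component of \eqref{Codazzi} is untouched, and for the $\pm$-component a short computation differentiating $\tilde\phi^{\pm}=e^{\mp i\th^{\pm}}\phi^{\pm}$ and using $\nap_{\dzb}\phi^{\pm}=h^{\pm}\phi^{\pm}$ from \eqref{hpm} reduces $\nap_{\dzb}\tilde\phi^{\pm}=\frac{\lambda^2}{2}\nap_{\d}\tilde H^{\pm}$ exactly to the first equation of \eqref{sys}; its conjugate is the second equation of \eqref{sys} because $\th^{\pm}$ is real, so the single Codazzi identity is equivalent to the whole system. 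All three identities hold on the dense open set $M\smallsetminus M_0^{\pm}$ and, being smooth, on all of $M$.

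With compatibility established, the fundamental theorem of submanifolds produces an isometric immersion $\tilde f\colon M\to\Q^4_c$ realizing $(ds^2,\tilde\a,N_fM,\nap)$, unique up to an isometry of $\Q^4_c$, together with a parallel bundle isometry $T\colon N_fM\to N_{\tilde f}M$ with $TH=\tilde H$; thus $f$ and $\tilde f$ have the same mean curvature, and $Q_{f,\tilde f}=\Phi-T^{-1}\circ\tilde\Phi$ satisfies $Q^{\mp}\equiv0$ and $Q^{\pm}=(1-e^{\mp i\th^{\pm}})\Phi^{\pm}$, i.e. \eqref{general1}. Hence $\tilde f\in\bar{\mathcal{M}}^{\pm}(f)$, and $\tilde f\in\mathcal{M}^{\pm}(f)$ exactly when $\th^{\pm}\not\equiv0\bmod2\pi$; in particular this holds for every $\th^{\pm}\in\mathcal{C}^{\infty}(M;(0,2\pi))$, since $\Phi^{\pm}\not\equiv0$ as $M_0^{\pm}$ is isolated. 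For the noncongruence claim I would compare two mates $\tilde f_1,\tilde f_2$ built from $\th^{\pm}_1,\th^{\pm}_2$: transporting to $N_fM$ via the parallel isometries gives $T_1^{-1}\circ Q_{\tilde f_1,\tilde f_2}=(e^{\mp i\th^{\pm}_1}-e^{\mp i\th^{\pm}_2})\Phi^{\pm}$, which is nonvanishing off $M_0^{\pm}$ unless $\th^{\pm}_1\equiv\th^{\pm}_2\bmod2\pi$, so distinct-mod-$2\pi$ solutions yield noncongruent surfaces; uniqueness of each $\tilde f$ up to congruence is the uniqueness clause of the fundamental theorem. The main obstacle is the compatibility check in part (ii): isolating that among Gauss, Codazzi and Ricci only Codazzi is sensitive to the phase, and that its obstruction is precisely \eqref{sys}.
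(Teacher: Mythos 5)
Your proof is correct and follows essentially the same route as the paper: part (i) via Proposition \ref{dd} and the common-mate criterion of Proposition \ref{Npm}(i), and part (ii) by building the differential $\Phi^{\mp}+e^{\mp i\th^{\pm}}\Phi^{\pm}$, checking that only Codazzi is sensitive to the phase and that its obstruction is exactly \eqref{sys}, and invoking the fundamental theorem of submanifolds. The only difference is cosmetic: you unpack the citation to Lemma 17(ii) of \cite{PV} and make the noncongruence of mates arising from distinct modulo $2\pi$ solutions explicit, which the paper leaves implicit.
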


\begin{proof}
(i) Propositions \ref{dd} and \ref{Npm} imply that there exists $\th^{\pm} \in \mathcal{C}^{\infty}(M\smallsetminus M_0^\pm;(0,2\pi))$ 
satisfying \eqref{sys} on $M\smallsetminus M_0^\pm$, such that the distortion differential $Q$ of the pair $(f,\tilde{f})$ is given by 
\eqref{general1} on $M\smallsetminus M_0^\pm$.
Let $\hat{f}\in \mathcal{M}^{\pm}(f)\smallsetminus\{\tilde{f}\}$. Then, \cite[Lemma 17(ii)]{PV} implies that  
$\hat{f}\in \mathcal{M}^{\pm}(f)\cap \mathcal{M}^{\pm}(\tilde{f})$. From Proposition \ref{Npm}(i) it follows that $\th^\pm$ extends to a harmonic function
$\th^{\pm} \in \mathcal{C}^{\infty}(M;(0,2\pi))$. In particular, from the proof of Proposition \ref{Npm} it follows that $\th^\pm$ satisfies \eqref{sys} on $M$.
From Lemma \ref{pseudo}(i) and Proposition \ref{dd}, it follows that $Q$ and $\Phi^\pm$ vanish precisely on $M_0^\pm$.
Since $\th^{\pm}$ is defined on the whole $M$, it is clear that $Q$ is given by \eqref{general1} on $M$.

(ii) Assume that $h^{\pm}$ extends smoothly on $M$. For a solution 
$\th^{\pm}$ of \eqref{sys} on $M$, consider the quadratic differential 
\bea
\Psi=\Phi^{\mp}+e^{\mp i\th^{\pm}}\Phi^{\pm}.
\eea
By using \eqref{phipm}, it is straightforward to check that $\Psi$ satisfies equations \eqref{Gauss} and \eqref{Ricci} with respect to $\nap,R^{\perp},H$. 
Since $\th^{\pm}$ satisfies \eqref{sys}, by using \eqref{phipm} 
it follows that $\Phi-\Psi$ is holomorphic. Therefore, $\Psi$ satisfies the Codazzi equation.
By the fundamental theorem of submanifolds, there exists a
unique (up to congruence) isometric immersion $\tilde{f}\colon M\to \Q^4_c$ and an orientation-preserving
parallel vector bundle isometry $T\colon N_fM\to N_{\tilde{f}}M$,
such that the Hopf differential $\tilde{\Phi}$ and the mean curvature vector field $\tilde{H}$ of $\tilde{f}$ are given by
$\tilde{\Phi}=T\circ \Psi$ and $\tilde{H}=T H$, respectively.
Clearly, $\tilde{f}$ is congruent to $f$ if and only if $\th^{\pm}\equiv0\mod 2\pi$. 
If $\tilde{f}$ is noncongruent to $f$, then the distortion differential of the pair 
$(f,\tilde{f})$ satisfies $Q^{\mp}\equiv0$ and thus, $\tilde{f}\in \mathcal{M}^{\pm}(f)$.
In particular, if $\th^{\pm} \in \mathcal{C}^{\infty}(M;(0,2\pi))$, then $\tilde{f}\in \mathcal{M}^{\pm}(f)$ and 
from the definition of $\Psi$ it follows that the distortion differential of the pair $(f,\tilde{f})$ is given by \eqref{general1} on $M$.
\qed
\end{proof}
\medskip

\noindent{\emph{Proof of Theorem \ref{SC}:}}
Since $M$ is non-compact, the Uniformization Theorem implies that it is conformally equivalent either to the complex plane, or to the unit disk.
Therefore, $M$ admits a global complex coordinate $z$.

(i) Assume that there exist at least two Bonnet mates of $f$ in $\mathcal{M}^{\pm}(f)$, and let $\tilde{f}\in \mathcal{M}^{\pm}(f)$.
Proposition \ref{dd} implies that $M_0^{\pm}$ is isolated.
Since $\mathcal{M}^{\pm}(f)\smallsetminus\{\tilde{f}\}\neq\emptyset$, from Proposition \ref{correspond}(i) it follows that \eqref{sys}
has a harmonic solution  $\th^{\pm} \in \mathcal{C}^{\infty}(M\smallsetminus M_0^{\pm};(0,2\pi))$.
Then, Lemma \ref{Sys}(iii-ii) implies 
that the space of the distinct modulo $2\pi$ solutions of \eqref{sys} can be smoothly parametrized
by $\mathbb{S}^1$. The proof follows by virtue of Proposition \ref{correspond}(ii).

(ii) Assume that there exists $\tilde{f}\in \mathcal{M}^*(f)$ and consider the quadratic differentials 
$$\Psi_{f^-}=\Phi-Q^-\;\;\; \mbox{and}\;\;\; \Psi_{f^+}=\Phi-Q^+,$$
where $\Phi$ is the Hopf differential of $f$, and $Q$ is the distortion differential of the pair $(f,\tilde{f})$.
We argue that $\Psi_{f^-}$ and $\Psi_{f^+}$ satisfy the compatibility equations with respect to $\nap,R^{\perp},H$.
From Lemma \ref{qke}(i), it follows that $Q^{\pm}$ is holomorphic and thus, the differential $\Psi_{f^{\pm}}$ satisfies the Codazzi equation.
Lemma \ref{pseudo}(i) and Proposition \ref{dd} yield that $\Phi^{\pm}$ and $Q^{\pm}$ vanish precisely on $M_0^{\pm}$.
Therefore, $\Psi_{f^{\pm}}(p)=\Phi(p)$ at any $p\in M_0^{\pm}$ and thus, $\Psi_{f^{\pm}}$ satisfies the algebraic equations
\eqref{Gauss} and \eqref{Ricci} on $M_0^{\pm}$. Moreover, since $\tilde{f}\in \mathcal{M}^*(f)$,
Proposition \ref{dd} implies that there exist $\th^-,\th^+$ with $\th^{\pm} \in \mathcal{C}^{\infty}(M\smallsetminus M_0^{\pm};(0,2\pi))$ such that
$Q^{\pm}$ is given by \eqref{general1} on $M\smallsetminus M_0^{\pm}$. Using \eqref{general1} and \eqref{phipm} it follows that
$\Psi_{f^{\pm}}$ satisfies the equations \eqref{Gauss} and \eqref{Ricci} on $M\smallsetminus M_0^{\pm}$. The fundamental theorem
of submanifolds implies that there exist unique Bonnet mates $f^-,f^+\colon M\to \Q^4_c$ of $f$, such that the Hopf
differential $\Phi_{f^{\pm}}$ of $f^{\pm}$ is given by $\Phi_{f^{\pm}}=T^{\pm}\circ \Psi_{f^{\pm}}$, where 
$T^{\pm}\colon N_fM\to N_{f^{\pm}}M$ is an orientation and mean curvature vector field-preserving, parallel vector bundle isometry.
From Lemma \ref{qke}(i), it follows that the distortion differential of the pair $(f,f^{\pm})$ is $Q^{\pm}$ and thus,
$f^{\pm}\in \mathcal{M}^{\pm}(f)$.

Conversely, assume that there exist $f^-,f^+$ with $f^{\pm}\in \mathcal{M}^{\pm}(f)$ and consider the quadratic differential $\Psi=\Psi^-+\Psi^+$ with
$$\Psi^-=\Phi^--Q_{f,f^-}\;\;\;\mbox{and}\;\;\; \Psi^+=\Phi^+-Q_{f,f^+},$$
where 
$Q_{f,f^{\pm}}$ is the distortion differential of the pair $(f,f^{\pm})$.
Lemma \ref{qke}(i) implies that $Q_{f,f^-}$ and $Q_{f,f^+}$ are both holomorphic and thus, $\Psi$ satisfies the Codazzi equation.
From Lemma \ref{pseudo}(i) and Proposition \ref{dd} it follows that $\Psi^{\pm}$ vanishes precisely on $M_0^{\pm}$.
Furthermore, Proposition \ref{dd} implies that there exist $\th^-$, $\th^+$ with
$\th^{\pm} \in \mathcal{C}^{\infty}(M\smallsetminus M_0^{\pm};(0,2\pi))$ such that
$$Q_{f,f^{\pm}}=(1-e^{\mp i\th^{\pm}})\Phi^{\pm}\;\;\;\mbox{on}\;\;\; M\smallsetminus M_0^{\pm}.$$
Using the above and \eqref{phipm}, it follows that $\Psi$ satisfies \eqref{Gauss} and \eqref{Ricci} on $M\smallsetminus M_0$.
Taking into account that $\Psi^{\pm}(p)=0$ at any $p\in M_0^{\pm}$, from the above and \eqref{phipm} we obtain that
$\Psi$ also satisfies \eqref{Gauss} and \eqref{Ricci} at any point of $M_0$.
The fundamental theorem of submanifolds and Lemma \ref{qke}(i) imply that there exists a unique Bonnet mate $\tilde{f}$ of $f$,
such that the distortion differential of the pair $(f,\tilde{f})$ is $Q=Q_{f,f^-}+Q_{f,f^+}.$
Clearly, $\tilde{f}\in \mathcal{M}^{*}(f)$. The rest of the proof is now obvious.

(iii) Assume that $f$ is proper Bonnet. Then at least one of the disjoint components of $\mathcal{M}(f)$ is infinite.
From part (ii) it follows that at least one of $\mathcal{M}^{-}(f)$ and $\mathcal{M}^{+}(f)$ is infinite. 
If $\mathcal{M}^{\pm}(f)$ is infinite, then part (i) implies that $\bar{\mathcal{M}}^{\pm}(f)=\mathbb{S}^1$.
The converse is obvious.

(iv) From Proposition \ref{dd} and the proof of part (i), it follows that if $\mathcal{M}^{\pm}(f)\neq\emptyset$, then
there exists a one to one correspondence between Bonnet mates of $f$ in $\mathcal{M}^{\pm}(f)$, and 
solutions $\th^{\pm}\in \mathcal{C}^\infty (M\smallsetminus M^{\pm}_0,(0,2\pi))$ of \eqref{sys}.
Using part (ii), we deduce that the moduli space is parametrized by the pairs
$(\th^-,\th^+)$, for those solutions $\th^{\pm}\in \mathcal{C}^\infty (M\smallsetminus M^{\pm}_0,[0,2\pi))$ of \eqref{sys}, 
that correspond to surfaces in $\bar{\mathcal{M}}^{\pm}(f)$.
Obviously, according to this parametrization, $\th^{\mp}\equiv0$ correspond to $\bar{\mathcal{M}}^{\pm}(f)$. 
It is now clear that $\mathcal{M}(f)$ can be parametrized by $\bar{\mathcal{M}}^{-}(f)\times \bar{\mathcal{M}}^{+}(f)$. 
In particular, if $f$ is proper Bonnet then parts (iii) and (i) imply that the moduli space is a smooth manifold.
\qed
\medskip

\begin{remark}\label{Param}
{\emph{
From the proof of Theorem \ref{SC}(i) it follows that if $\bar{\mathcal{M}}^{\pm}(f)$  
is diffeomorphic to $\mathbb{S}^1$, 
then its parametrization is induced by the parametrization of the space of the distinct modulo $2\pi$ solutions of \eqref{sys}.
In the proof of Lemma \ref{Sys}(ii) the parametrization $\th^{\pm}_t, t\in \mathbb{S}^1$, of these 
solutions is such that
\be \label{param}
\th^{\pm}_t(p)=t,\;\; t\in \mathbb{S}^1,
\ee
at some $p\in M$. Obviously, this parametrization depends on $p$ and is not unique, unless the solutions of \eqref{sys}
are constant. In this case, from \eqref{sys} it follows that $h^\pm\equiv0$ on $M$. Then, \eqref{hpm} and Proposition \ref{glphi}
imply that the Gauss lift $G_{\pm}$ of $f$ is vertically harmonic.}}
\end{remark}

\noindent{\emph{Proof of Theorem \ref{MS}:}}
If $M$ is compact, the proof follows from Proposition \ref{mssc}. 
Moreover, if $f$ is minimal then it is known (cf. \cite{DG2,ET2, Vl1}) that either $\mathcal{M}(f)=\{f\}$, or $\mathcal{M}(f)=\mathbb{S}^1$. 
Assume that $M$ is non-compact and $f$ is non-minimal.

(i) If $f$ is not proper Bonnet, then Theorem \ref{SC}(iii) and (i) imply that $f$ admits at most one Bonnet mate in
each one of $\mathcal{M}^{-}(f)$ and $\mathcal{M}^{+}(f)$. If $\mathcal{M}^{-}(f)\neq\emptyset\neq\mathcal{M}^{+}(f)$, 
then Theorem \ref{SC}(ii) yields that $f$ admits exactly three Bonnet mates.

(ii) If $f$ is proper Bonnet, then Theorem \ref{SC}(iii) implies that either $\mathcal{M}^{-}(f)=\mathbb{S}^1$, or $\mathcal{M}^{+}(f)=\mathbb{S}^1$.
Assume that $\bar{\mathcal{M}}^{\pm}(f)=\mathbb{S}^1$.
From Theorem \ref{SC}(i) and (iv) it follows that $f$ is either tight, or flexible, if there exist either at most one, or infinitely many
Bonnet mates of $f$ in $\mathcal{M}^{\mp}(f)$, respectively.
\qed

\subsection{Bonnet Surfaces in $\Q^3_c\subset\Q^4_c$}

We study here Bonnet surfaces lying in totally geodesic hypersurfaces of the ambient space.

\begin{lemma} \label{comp}
Let $f\colon M\to \Q^4_c$ be an oriented surface, which is 
the composition of a non-minimal Bonnet surface $F\colon M\to \Q^3_c$ with a totally geodesic inclusion $j\colon \Q^3_c\to \Q^4_c$.
For every Bonnet mate $\tilde{F}$ of $F$ in $\Q^3_c$ we have that $\tilde{f}=j\circ \tilde{F}\in \mathcal{M}^*(f)$.
\end{lemma}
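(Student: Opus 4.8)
The plan is to transfer the $\Q^3_c$-data of the Bonnet pair into the $4$-dimensional framework and then read off the isotropic splitting of the distortion differential. First I would fix the geometry. Let $\xi,\tilde\xi$ be the unit normals of $F$ and of a Bonnet mate $\tilde F$ in $\Q^3_c$, chosen so that the scalar mean curvatures agree, $H_F=H_{\tilde F}$, and let $\eta$ be a unit normal field of the totally geodesic hypersurface $j(\Q^3_c)$ along $f$. Since $j$ is totally geodesic, $\eta$ is parallel and the second fundamental forms of the compositions are $\a_f=j_*\circ\a_F$ and $\a_{\tilde f}=j_*\circ\a_{\tilde F}$; in particular $f,\tilde f$ have flat normal bundle, the $\eta$-components of $\a_f,\a_{\tilde f}$ vanish, and the Hopf differentials are $\Phi=j_*\Phi_F$ and $\tilde\Phi=j_*\Phi_{\tilde F}$, valued in the complex lines spanned by $j_*\xi$ and $j_*\tilde\xi$. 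Moreover $j_*\xi$ is $\nap$-parallel in $N_fM$, because $\nap_X(j_*\xi)$ is the $N_fM$-component of the ambient derivative $\bar\nabla_X(j_*\xi)=-j_*(A_FX)$, which is tangent to $M$; likewise $j_*\tilde\xi$ is parallel in $N_{\tilde f}M$.

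Next I would exhibit the isometry witnessing the same mean curvature. I define $T\colon N_fM\to N_{\tilde f}M$ by $T(j_*\xi)=j_*\tilde\xi$ and $T\eta=\eta$, composing $\tilde f$ with an orientation-reversing isometry of $\Q^4_c$ if needed so that $T$ is orientation-preserving, as permitted in Section \ref{s5}. Since $T$ carries the parallel frame $\{j_*\xi,\eta\}$ to the parallel frame $\{j_*\tilde\xi,\eta\}$, it is a parallel vector bundle isometry, and from $H_f=H_F\,j_*\xi$ and $H_{\tilde f}=H_F\,j_*\tilde\xi$ it satisfies $TH_f=H_{\tilde f}$. Hence $\tilde f$ has the same mean curvature as $f$, so $\tilde f\in\mathcal{M}(f)$; both $f,\tilde f$ are non-minimal since $F$ is, so Lemma \ref{qke} applies and the distortion differential $Q=Q_{f,\tilde f}$ is well defined and holomorphic.

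It remains to compute $Q$ and to show that both isotropic parts survive. Using $T^{-1}(j_*\tilde\xi)=j_*\xi$, the distortion differential is $Q=\Phi-T^{-1}\circ\tilde\Phi=j_*(\Phi_F-\Phi_{\tilde F})$, which is valued in the line $\C\,j_*\xi$; writing $\Phi_F=\phi_F\,\xi\,dz^2$ and $\Phi_{\tilde F}=\phi_{\tilde F}\,\tilde\xi\,dz^2$ locally, this reads $Q=(\phi_F-\phi_{\tilde F})\,j_*\xi\,dz^2$. The common metric and mean curvature force $\dzb(\phi_F-\phi_{\tilde F})=0$ via the Codazzi equation in $\Q^3_c$, so $\phi_F-\phi_{\tilde F}$ is holomorphic; it cannot vanish identically, for otherwise $F$ and $\tilde F$ would share metric, mean curvature and Hopf differential and hence be congruent in $\Q^3_c$, contradicting that $\tilde F$ is a Bonnet mate. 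Thus $\phi_F-\phi_{\tilde F}$ has isolated zeros. Applying $\pi^{\pm}$ together with the formula $\pi^{\pm}(j_*\xi)=\tfrac12(j_*\xi\pm i\Jp j_*\xi)$, I obtain $Q^{\pm}=\tfrac12(\phi_F-\phi_{\tilde F})\bigl(j_*\xi\pm i\Jp j_*\xi\bigr)dz^2$, and since $j_*\xi$ and $\Jp j_*\xi$ are orthonormal the isotropic vectors $j_*\xi\pm i\Jp j_*\xi$ are nonzero, whence $Q^{-}\not\equiv0$ and $Q^{+}\not\equiv0$. In particular $Q\not\equiv0$, so $\tilde f$ is noncongruent to $f$, and by the definition of $\mathcal{M}^*(f)$ we conclude $\tilde f\in\mathcal{M}^*(f)$. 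The only delicate bookkeeping is the orientation normalization in the definition of $T$ and the matching of the isotropic decomposition, but this is absorbed by the freedom allowed in the framework of Section \ref{s5}.
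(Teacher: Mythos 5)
Your proof is correct and follows essentially the same route as the paper: build the parallel, mean-curvature-preserving bundle isometry $T$ from the $\Q^3_c$-normals, observe that $Q=(\phi_F-\phi_{\tilde F})\,j_*\xi\,dz^2$ is valued in the real line bundle spanned by $j_*\xi$, and conclude that both isotropic projections $\pi^\pm$ of it are nonvanishing wherever $Q$ is, so $Q^-\not\equiv0\not\equiv Q^+$. The only cosmetic difference is that you set $T\eta=\eta$ and invoke the orientation normalization of Section \ref{s5}, whereas the paper defines $T$ directly by $T(\Jp j_*\xi)=\tilde J^{\perp}j_*\tilde\xi$, which is automatically orientation-preserving; your explicit local computation of $Q^{\pm}$ just spells out what the paper compresses into the statement $Z^-=Z^+=Z$.
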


\begin{proof}
Let $\tilde{F}\colon M\to \Q^3_c$ be a Bonnet mate of $F$.
Denote by $\xi$, $\tilde{\xi}$ the unit normal vector fields of $F$ and $\tilde{F}$ in $\Q^3_c$, respectively, and by $h$ their common mean curvature function.
Then, the mean curvature vector fields of $f$, $\tilde{f}$, are given by $H=hj_*\xi$ and $\tilde{H}=hj_*\tilde{\xi}$, respectively.
The parallel vector bundle isometry $T\colon N_fM\to N_{\tilde{f}}M$ given by $Tj_*\xi=j_*\tilde{\xi}$, $T(\Jp j_*\xi)=\tilde{J}^{\perp}j_*\tilde{\xi}$
preserves the mean curvature vector fields, where $\Jp$, $\tilde{J}^{\perp}$ are the complex structures of $N_fM$ and $N_{\tilde{f}}M$, respectively.
Therefore, $\tilde{f}\in \mathcal{M}(f)$. Since the image of the second fundamental form of $f, \tilde{f}$ is contained in the line bundle spanned by
$j_*\xi, j_*\tilde{\xi}$, respectively, from Lemma \ref{qke}(i) and the definition of $T$ it follows that the zeros of the distortion differential of the pair
$(f, \tilde{f})$ satisfy $Z^-=Z^+=Z$. Hence, $\tilde{f}\in \mathcal{M}^*(f)$.
\qed
\end{proof}
\medskip

\noindent{\emph{Proof of Theorem \ref{Q3}:}}
Let $f=j\circ F$, where $j\colon \Q^3_c\to \Q^4_c$ is a totally geodesic inclusion, and denote by $\xi$ the unit normal of $F$ in $\Q^3_c$.
Since $M$ is simply-connected and $F$ is a Bonnet surface, the result of Lawson-Tribuzy \cite{LT} implies that $M$ is non-compact.
Let $\tilde{F}\colon M\to \Q^3_c$ be a Bonnet mate of $F$. From Lemma \ref{comp} it follows that $j\circ \tilde{F}\in \mathcal{M}^*(f)$
and Theorem \ref{SC}(ii) implies that there exist Bonnet mates $f^-$ and $f^+$ of $f$,
with $f^{\pm}\in \mathcal{M}^{\pm}(f)$. In particular, since any Bonnet mate of $f$ lying in some totally geodesic $\Q^3_c\subset \Q^4_c$ belongs to
$\mathcal{M}^*(f)$, the surface $f^\pm$ does not lie in any totally geodesic hypersurface of $\Q^4_c$.

Assume that $f^{\pm}$ lies in some totally umbilical $\Q^3_{\tilde c}\subset \Q^4_c, \tilde{c}>c$. 
Proposition \ref{dd} implies that $M_1$ is isolated. Let $(U,z)$ be a complex chart with $U\cap M_1=\emptyset$. 
Then, there exist $\varphi, \varphi^{\pm}\in \mathcal{C}^{\infty}(U)$ such that the Hopf differentials $\Phi$, $\Phi_{f^{\pm}}$ of $f$ and $f^{\pm}$, respectively, 
are given on $U$ by
\be \label{phitphi}
\Phi=\frac{\lambda^2}{2}e^{i\varphi}\sqrt{\|H\|^2-K}e_3 dz^2\;\;\; \mbox{and}\;\;\; \Phi_{f^{\pm}}=\frac{\lambda^2}{2}e^{i\varphi^{\pm}}\sqrt{\|H\|^2-K}
\tilde{\varepsilon}_3^{\pm} dz^2,
\ee
where $\lambda>0$ is the conformal factor, $e_3=j_*\xi$, and $\tilde{\varepsilon}_3^{\pm}\in N_{f^{\pm}}M$ is a smooth unit vector field, parallel to the
line segment that the ellipse of curvature of $f^{\pm}$ degenerates. 
Consider an orientation and mean curvature vector field-preserving,
parallel vector bundle isometry $T_{\pm}\colon N_fM \to N_{f^{\pm}}M$.
Appealing to Lemma \ref{qke}(i) and using \eqref{phitphi}, it follows that the distortion differential 
$Q_{f,f^{\pm}}$ of the pair $(f,f^{\pm})$ is given on $U$ by
\be \label{qffpm}
Q_{f,f^{\pm}}\equiv Q^{\pm}_{f,f^{\pm}}=\frac{\lambda^2}{4}\sqrt{\|H\|^2-K}
\left(e^{i\varphi}(e_3\pm ie_4)-e^{i\varphi^{\pm}}(\varepsilon_3^{\pm}\pm i\varepsilon_4^{\pm})\right)dz^2,
\ee
where $e_4=\Jp e_3$, $\varepsilon_3^{\pm}=T_{\pm}^{-1}\tilde{\varepsilon}_3^{\pm}$ and $\varepsilon_4^{\pm}=\Jp\varepsilon_3^{\pm}$. 
On the other hand, according to Proposition \ref{dd} there exists
$\th^{\pm}\in \mathcal{C}^{\infty}(U;(0,2\pi))$ such that $Q_{f,f^{\pm}}$ is given by \eqref{general1} on $U$. Substituting $\Phi^{\pm}$ from
\eqref{phitphi} into \eqref{general1}, and using \eqref{qffpm} we obtain that
$$\varepsilon_3^{\pm}\pm i\varepsilon_4^{\pm}=e^{i(\varphi-\varphi^{\pm}\mp \th^{\pm})}(e_3\pm ie_4)\;\;\; \mbox{on}\;\;\; U.$$
Moreover, since $Q^{\mp}_{f,f^{\pm}}\equiv 0$, from Lemma \ref{qke}(i) and \eqref{phitphi} it follows that
$$\varepsilon_3^{\pm}\mp i\varepsilon_4^{\pm}=e^{i(\varphi-\varphi^{\pm})}(e_3\mp ie_4)\;\;\; \mbox{on}\;\;\; U.$$
From the last two equations we obtain that $\th^{\pm}=\pm 2(\varphi-\varphi^{\pm})\mod 2\pi$. Then, the above implies that
\bea 
\w_{34}^{\pm}=\frac{1}{2}d\th^{\pm}+\w_{34},
\eea
where $\w_{34}$ and $\w_{34}^{\pm}$ are the connection forms corresponding to the dual frame fields of 
$\{e_3,e_4\}$ and $\{\varepsilon_3^{\pm},\varepsilon_4^{\pm}\}$, respectively. 
Since $f$ and $f^{\pm}$ lie in totally umbilical hypersurfaces and $T_{\pm}$ is parallel, 
it follows that the vector fields $e_3$ and $\varepsilon_3^{\pm}$ are parallel in the normal connection of $f$. 
Therefore, the last relation yields that $\th^{\pm}$ is constant on $U$. 
Proposition \ref{Npm} implies that $\th^{\pm}$ satisfies \eqref{sys} on $U$. 
From \eqref{sys} it follows that $h^{\pm}\equiv0$ on $U$. 
Then, \eqref{hpm} and Proposition \ref{glphi} yield that the section $H^{\pm}$ is anti-holomorphic on $U$. 
Since $H=he_3$, where $h$ is the mean curvature function of $F$, this implies that $h$ is constant on $U$. 
Since $U$ is arbitrary and $M_1$ is isolated, it follows that 
$h$ is constant on $M$.

Conversely, if $F$ has constant mean curvature function, then $f$ and its Bonnet mates have nonvanishing parallel mean curvature vector field. 
From \cite{Chen, Yau} it follows that $f^\pm$ lies in some totally umbilical hypersurface of $\Q^4_c$.

Moreover, from Theorem \ref{SC}(i) and (iv) it is clear that either $f$ admits exactly three Bonnet mates, or it is flexible proper Bonnet. 
\qed

\subsection{Proper Bonnet Surfaces}

We study here non-minimal proper Bonnet surfaces $f\colon M\to \Q^4_c$. 
From Proposition \ref{mssc} it follows that if $f\colon M\to \Q^4_c$ is a simply-connected proper Bonnet surface, 
then $M$ is non-compact and therefore it admits a global complex coordinate $z$. 
By virtue of Theorem \ref{SC}(iii-iv), we focus on surfaces with $\bar{\mathcal{M}}^{\pm}(f)=\mathbb{S}^1$.
For such a surface, Proposition \ref{dd} implies that $M^{\pm}_0(f)$ consists of isolated points only.

We need some facts about absolute value type functions (cf. \cite{EGT} or \cite{ET}).
Let $M$ be a 2-dimensional oriented Riemannian manifold. A function $u\in\mathcal{C}^\infty (M;[0,+\infty))$ is called
of {\emph{absolute value type}} if for all $p\in M$ and any complex coordinate $z$ around $p$, there exists a nonnegative 
integer $m$ and a smooth positive function $u_0$ on a neighbourhood $U$ of $p$, such that 
$$u=|z-z(p)|^mu_0\;\;\; \mbox{on}\;\;\; U.$$
If $m>0$, then $p$ is called {\emph{a zero of $u$ of multiplicity $m$}}.
It is clear that if an absolute value type function $u$ does not vanish identically, 
then its zeros are isolated and they have well-defined multiplicities. 
Furthermore, the Laplacian $\Delta\log u$ is still defined and smooth at the zeros of $u$.

\begin{proposition} \label{IPU}
Let $f\colon M\to \Q^4_c$ be a simply-connected oriented surface with $\bar{\mathcal{M}}^{\pm}(f)=\mathbb{S}^1$.
Consider a complex chart $(U,z)$ on $M$, with $U\cap M^{\pm}_0(f)=\{p\}$ and $z(p)=0$. Then:
\begin{enumerate}[topsep=0pt,itemsep=-1pt,partopsep=1ex,parsep=0.5ex,leftmargin=*, label=(\roman*), align=left, labelsep=-0.4em]
\item There exists a positive integer $m$, such that differential $\Phi^{\pm}$ satisfies
\bea
\Phi^{\pm}=z^m\hat{\Phi}^{\pm}\;\;\; \mbox{on}\;\;\; U,\;\;\; \hat{\Phi}^{\pm}(p)\neq0.
\eea
\item The function $\|\mathcal{H}^{\pm}\|$ is of absolute value type on $M$. 
The multiplicity of its zero $p\in M_0^{\pm}(f)$ is the integer $m$.
\end{enumerate}
\end{proposition}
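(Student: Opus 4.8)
The plan is to derive both assertions from the holomorphicity of the distortion differential, exploiting that the hypothesis $\bar{\mathcal{M}}^{\pm}(f)=\mathbb{S}^1$ forces $\mathcal{M}^{\pm}(f)$ to be infinite. First I would pick a Bonnet mate $\tilde{f}\in\mathcal{M}^{\pm}(f)$; since $\mathcal{M}^{\pm}(f)$ is infinite we have $\mathcal{M}^{\pm}(f)\smallsetminus\{\tilde{f}\}\neq\emptyset$, so Proposition \ref{correspond}(i) applies and yields a harmonic function $\th^{\pm}\in\mathcal{C}^{\infty}(M;(0,2\pi))$ for which the distortion differential $Q$ of the pair $(f,\tilde{f})$ satisfies $Q^{\pm}=(1-e^{\mp i\th^{\pm}})\Phi^{\pm}$ on all of $M$, exactly as in \eqref{general1}.

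For part (i), the key observation is that $Q^{\pm}$ is holomorphic by Lemma \ref{qke}(i), whereas $\Phi^{\pm}$ need not be, but the two differ only by the factor $1-e^{\mp i\th^{\pm}}$, which is smooth and nowhere-vanishing on $M$ because $\th^{\pm}$ takes values in the open interval $(0,2\pi)$, so that $e^{\mp i\th^{\pm}}\neq 1$ everywhere. By Proposition \ref{dd}, $Q^{\pm}\not\equiv0$ and it vanishes precisely on $M_0^{\pm}$; in particular $Q^{\pm}(p)=0$, while $Q^{\pm}$ does not vanish on any open subset of $U$. Applying Lemma \ref{zeros} to the holomorphic differential $Q^{\pm}$ on $(U,z)$ then produces a positive integer $m$ and a differential $\Psi^{\pm}$ with $\Psi^{\pm}(p)\neq0$ such that $Q^{\pm}=z^m\Psi^{\pm}$. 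Dividing by the nowhere-vanishing factor gives $\Phi^{\pm}=z^m\hat{\Phi}^{\pm}$ with $\hat{\Phi}^{\pm}=(1-e^{\mp i\th^{\pm}})^{-1}\Psi^{\pm}$, which is smooth and satisfies $\hat{\Phi}^{\pm}(p)\neq0$.

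For part (ii), I would read off the local function. Writing $\Phi^{\pm}=\phi^{\pm}dz^2$ and $\hat{\Phi}^{\pm}=\hat{\phi}^{\pm}dz^2$, part (i) gives $\phi^{\pm}=z^m\hat{\phi}^{\pm}$ with $\hat{\phi}^{\pm}(p)\neq0$. From \eqref{fpm} together with the orthonormality of $\{e_3^{\pm},e_4^{\pm}\}$ one computes $\|\phi^{\pm}\|=\frac{\lambda^2}{2\sqrt{2}}\|\mathcal{H}^{\pm}\|$, whence $\|\mathcal{H}^{\pm}\|=\frac{2\sqrt{2}}{\lambda^2}|z|^m\|\hat{\phi}^{\pm}\|$. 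Since $\hat{\phi}^{\pm}$ is nowhere-vanishing near $p$, the factor $u_0=\frac{2\sqrt{2}}{\lambda^2}\|\hat{\phi}^{\pm}\|$ is smooth and positive there, so $\|\mathcal{H}^{\pm}\|=|z|^m u_0$ exhibits $\|\mathcal{H}^{\pm}\|$ as an absolute value type function with a zero of multiplicity $m$ at $p$; away from $M_0^{\pm}(f)$ it is smooth and positive, so the local form holds with multiplicity $0$, and it vanishes exactly on $M_0^{\pm}(f)$.

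The main obstacle, which is really the crux of the argument, is securing the globally defined factor $1-e^{\mp i\th^{\pm}}$ that is simultaneously smooth and everywhere nonzero. This is precisely what the properness hypothesis buys through Proposition \ref{correspond}(i): it upgrades the a priori only locally defined, and possibly non-harmonic, angle function $\th^{\pm}$ of Proposition \ref{dd} to a genuine harmonic function on all of $M$ with values confined to $(0,2\pi)$. Once this factor is in hand the passage from the holomorphic $Q^{\pm}$ to $\Phi^{\pm}$ is immediate, and the only care needed is the routine point that multiplication by a smooth nowhere-vanishing function preserves both the order of vanishing and the absolute value type structure.
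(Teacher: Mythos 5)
Your proposal is correct and follows essentially the same route as the paper: both obtain the globally defined harmonic angle $\th^{\pm}$ with values in $(0,2\pi)$ from Proposition \ref{correspond}(i), apply Lemma \ref{zeros} to the holomorphic differential $Q^{\pm}$ via Lemma \ref{qke}(i) and Proposition \ref{dd}, divide by the nowhere-vanishing factor $1-e^{\mp i\th^{\pm}}$ to get part (i), and then read off part (ii) from \eqref{fpm}. The only differences are cosmetic (you work on $M$ rather than restricting to $U$, and you spell out the nonvanishing of $Q^{\pm}$ on $U$ slightly more explicitly).
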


\begin{proof}
(i) Let $\tilde{f}\in \mathcal{M}^{\pm}(f)$.
From Proposition \ref{correspond}(i) it follows that there exists $\th^{\pm}\in \mathcal{C}^{\infty}(U;(0,2\pi))$
such that the distortion differential of the pair $(f,\tilde{f})$ is given by
$$Q=(1-e^{\mp i\th^{\pm}})\Phi^{\pm}\;\;\; \mbox{on}\;\;\; U.$$
Proposition \ref{dd} implies that $p$ is the only zero of $Q$ in $U$. Lemmas \ref{qke}(i) and \ref{zeros} yield that
there exists a positive integer $m$ such that
$$Q=z^m \hat{\Psi}^{\pm}\;\;\; \mbox{on}\;\;\; U,\;\;\; \hat{\Psi}^{\pm}(p)\neq0.$$
The proof follows from the above expressions of $Q$,
by setting $\hat{\Phi}^{\pm}=(1-e^{\mp i\th^{\pm}})^{-1}\hat{\Psi}^{\pm}$.

(ii) Let $z=x+iy$ and set $e_1=\d_x/\lambda, e_2=\d_y/\lambda$, where $\lambda>0$ is the conformal factor.
If $\hat{\Phi}^{\pm}=\hat{\phi}^{\pm}dz^2$ on $U$, then part (i) implies that $\phi^{\pm}=z^m\hat{\phi}^{\pm}$, where
$\phi^{\pm}$ is given by \eqref{phipm} on $U$.
Consequently, from \eqref{fpm} it follows that
$$\|\mathcal{H}^{\pm}\|=|z|^m u,\;\; \mbox{where} \;\;u=\sqrt{2}\lambda^{-2}\|\hat{\phi}^{\pm}\|\;\; \mbox{is smooth and positive}.$$
Clearly, the multiplicity of $p$ is $m$.
\qed
\end{proof}
\medskip

\begin{lemma}\label{relations}
Let $M$ be an oriented 2-dimensional Riemannian manifold with a global complex coordinate $z$, and 
$f\colon M\to \Q^4_c$ a surface with $M_0^{\pm}(f)=\emptyset$. The 1-forms $a_1^{\pm}, a_2^{\pm}$ on $M$ given by 
$$a_1^{\pm}=d\log \|\mathcal{H}^{\pm}\|-\star\Omega^{\pm},\;\;\; a_2^{\pm}=\star a_1^{\pm},$$
vanish precisely at the points where the Gauss lift $G_{\pm}$ of $f$ is vertically harmonic.
Moreover:
\begin{enumerate}[topsep=0pt,itemsep=-1pt,partopsep=1ex,parsep=0.5ex,leftmargin=*, label=(\roman*), align=left, labelsep=0em]
\item $$ da_2^{\pm}=\left(\Delta\log\|\mathcal{H}^{\pm}\|-2K \mp K_N\right)dM=\frac{4}{\lambda^2}\Real h^{\pm}_zdM,$$
\item $$ a_1^{\pm}\wedge a_2^{\pm}=\frac{\|\tau^{v}(G_{\pm})\|^2}{4\|\mathcal{H}^{\pm}\|^2}dM=\frac{4}{\lambda^2}|h^{\pm}|^2dM,$$
\end{enumerate}
where 
$\lambda>0$ is the conformal factor, and $h^{\pm}$ is given by \eqref{hpm} on $M$.
\end{lemma}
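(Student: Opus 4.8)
The plan is to reduce the whole lemma to one local identity expressing $a_1^\pm$ and $a_2^\pm$ through the function $h^\pm$ of \eqref{hpm}, and then to read off every assertion from it. Fix the global complex coordinate $z=x+iy$ with conformal factor $\lambda$, put $e_1=\partial_x/\lambda$, $e_2=\partial_y/\lambda$ with dual coframe $\{\w_1,\w_2\}$. First I would insert the expression \eqref{lambda} for $\Omega^{\pm}$ into the definition of $a_1^\pm$; since $\star\star=-1$ on $1$-forms in dimension two, this gives $a_1^\pm=d\log(\lambda^2\|\mathcal{H}^{\pm}\|)\mp\star\w_{34}^{\pm}$, hence $a_2^\pm=\star a_1^\pm=\star d\log(\lambda^2\|\mathcal{H}^{\pm}\|)\pm\w_{34}^{\pm}$. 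Comparing the frame components of $a_1^\pm$ with the real and imaginary parts of \eqref{hpmB}, namely $h^{\pm}=\partial_{\bar z}\log(\lambda^2\|\mathcal{H}^{\pm}\|)\mp i\,\w_{34}^{\pm}(\partial_{\bar z})$, I expect the compact identity
\[
a_1^\pm+i\,a_2^\pm=2\,\overline{h^{\pm}}\,dz,
\]
from which everything else follows.

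From this identity the vanishing assertion is immediate: since $dz$ is nowhere zero, $a_1^\pm$ and $a_2^\pm$ vanish at a point exactly where $h^{\pm}$ does, and by \eqref{hpm} this is exactly where $\nap_{\bar\partial}\phi^{\pm}=0$; combined with the first equality of (ii) proved below, this places the common zero set precisely at the points where $\tau^v(G_{\pm})$ vanishes, i.e. where $G_{\pm}$ is vertically harmonic. Writing $a_1^\pm=\Real(2\overline{h^{\pm}}dz)$ and $a_2^\pm=\Imag(2\overline{h^{\pm}}dz)$, the wedge $a_1^\pm\wedge a_2^\pm=\tfrac{i}{2}(2\overline{h^{\pm}}dz)\wedge(2h^{\pm}d\bar z)$ together with $dz\wedge d\bar z=-2i\,dx\wedge dy=(-2i/\lambda^2)dM$ yields the second equality of (ii); differentiating $a_2^\pm=\Imag(2\overline{h^{\pm}}dz)$ and using $\partial_{\bar z}\overline{h^{\pm}}=\overline{h^{\pm}_z}$ gives the second equality of (i). For the first equality of (i) I would differentiate $a_2^\pm=\star d\log(\lambda^2\|\mathcal{H}^{\pm}\|)\pm\w_{34}^{\pm}$ directly, using $d\star du=(\Delta u)\,dM$, the relations $d\w_{34}^{\pm}=-K_N\,\w_1\wedge\w_2$ from \eqref{normcf} and $K=-\Delta\log\lambda$; since $\Delta\log\lambda^2=-2K$, this produces $da_2^\pm=(\Delta\log\|\mathcal{H}^{\pm}\|-2K\mp K_N)\,dM$.

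The main obstacle is the first equality of (ii), which matches the purely twistorial quantity $\|\tau^v(G_{\pm})\|^2$ with the analytic quantity $|h^{\pm}|^2$. Here the plan is to differentiate \eqref{fpm} and combine the Codazzi equation \eqref{Codazzi} with \eqref{hpm} to obtain $\tfrac{\lambda^2}{2}\nap_{\partial}H^{\pm}=h^{\pm}\phi^{\pm}$; taking Hermitian norms and using $\|\phi^{\pm}\|^2=\tfrac{\lambda^4}{8}\|\mathcal{H}^{\pm}\|^2$, which follows from \eqref{fpm} and the normalization $\|e_3^{\pm}\pm ie_4^{\pm}\|^2=2$ forced by \eqref{HI} and \eqref{Bpm}, gives $\|\nap_{\partial}H^{\pm}\|^2=\tfrac12|h^{\pm}|^2\|\mathcal{H}^{\pm}\|^2$. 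Independently, applying $\pi^{\pm}$ to $\nap_{\partial}H=\tfrac{\lambda}{2}(\nap_{e_1}H-i\nap_{e_2}H)$ and expanding in the frame via \eqref{Hij}, I expect $\nap_{\partial}H^{\pm}=\tfrac{\lambda}{4}\bigl((H^3_1\mp H^4_2)-i(H^3_2\pm H^4_1)\bigr)(e_3\pm ie_4)$, whence $\|\nap_{\partial}H^{\pm}\|^2=\tfrac{\lambda^2}{8}\bigl((H^3_1\mp H^4_2)^2+(H^3_2\pm H^4_1)^2\bigr)=\tfrac{\lambda^2}{32}\|\tau^v(G_{\pm})\|^2$ by Proposition \ref{tension field}. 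Equating the two expressions for $\|\nap_{\partial}H^{\pm}\|^2$ delivers $\|\tau^v(G_{\pm})\|^2=\tfrac{16}{\lambda^2}\|\mathcal{H}^{\pm}\|^2|h^{\pm}|^2$, which is the remaining equality.

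Throughout, the delicate points are the bookkeeping of the isotropic projections $\pi^{\pm}$ and strict adherence to the Hermitian norm convention imposed by \eqref{Bpm} and \eqref{HI}; getting these consistent is precisely what makes the twistorial and analytic sides agree, and it is the only place where a sign or normalization error could silently spoil the numerical constants. Since $M_0^{\pm}(f)=\emptyset$ guarantees $\|\mathcal{H}^{\pm}\|\neq0$ everywhere, the passage from $\|\tau^v(G_{\pm})\|^2/(4\|\mathcal{H}^{\pm}\|^2)=(4/\lambda^2)|h^{\pm}|^2$ back to the vanishing statement is automatic, closing the argument.
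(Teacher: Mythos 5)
Your proposal is correct and follows essentially the same route as the paper: your identity $a_1^{\pm}+i\,a_2^{\pm}=2\overline{h^{\pm}}\,dz$ is a compact repackaging of the paper's frame formula \eqref{Chern}, and both arguments rest on \eqref{hpmB}, the Codazzi equation, and Proposition \ref{tension field}, with all your normalizations checking out. The only (cosmetic) difference is in part (ii), where you take Hermitian norms of $\tfrac{\lambda^2}{2}\nabla^{\perp}_{\partial}H^{\pm}=h^{\pm}\phi^{\pm}$ instead of solving for $h^{\pm}$ explicitly in the frame components of $\nabla^{\perp}H$ as the paper does.
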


\begin{proof}
Let $z=x+iy$ and set $e_1=\d_x/\lambda$, $e_2=\d_y/\lambda$. 
Consider the frame field $\{e^{\pm}_3, e^{\pm}_4\}$ of $N_fM$ determined by $\{e_1,e_2\}$ from \eqref{HI}.
Then \eqref{fpm} and \eqref{lambda} hold on $M$, and as in the proof of Lemma \ref{qiz}, we obtain \eqref{phiH} and \eqref{hpmB}.
Using \eqref{lambda}, from \eqref{hpmB} it follows that
\be \label{Chern}
a_1^{\pm}=\frac{2}{\lambda}\left(\Real h^{\pm}\w_1+\Imag h^{\pm}\w_2 \right),
\ee
where $\{\w_1,\w_2\}$ is the dual frame field of $\{e_1,e_2\}$.
Proposition \ref{glphi} and \eqref{hpm} imply that $h^{\pm}(p)=0$ if and only if the Gauss lift $G_{\pm}$ of $f$ is vertically harmonic at $p$.
Therefore, from \eqref{Chern} it follows that $a_1^{\pm}$ vanishes precisely at the points where $G_{\pm}$ is vertically harmonic.

(i) Differentiating $\w_{34}^{\pm}=\w_{34}^{\pm}(e_1)\w_1+\w_{34}^{\pm}(e_2)\w_2$ and using \eqref{normcf} and 
that $\w_{12}=\star d\log\lambda$, we obtain 
$$K_N=\mp\left(e_1(\log\lambda)\w_{34}^{\pm}(e_2)-e_2(\log\lambda)\w_{34}^{\pm}(e_1)+e_1(\w_{34}^{\pm}(e_2))-e_2(\w_{34}^{\pm}(e_1)) \right).$$
Differentiating \eqref{hpmB} with respect to $z$, taking the real part, and using the above and that $\Delta\log \lambda=-K$, yields
$(4/\lambda^2)\Real h^{\pm}_z=\Delta\log\|\mathcal{H}^{\pm}\|-2K \mp K_N$. On the other hand, taking into account \eqref{dW}, 
exterior differentiation of $a_2^{\pm}$ gives 
$da_2^{\pm}=\left(\Delta\log\|\mathcal{H}^{\pm}\|-2K \mp K_N\right)dM$,
and this completes the proof.

(ii) Assume that the mean curvature vector field of $f$ is given by $H=H^{3\pm}e^{\pm}_3+H^{4\pm}e^{\pm}_4$. Then,
\bea
H^{\pm}=\frac{1}{2}(H\pm i\Jp H)=\frac{1}{2}(H^{3\pm}\mp iH^{4\pm})(e^{\pm}_3\pm ie^{\pm}_4).
\eea
Differentiating the above with respect to $\d$ in the normal connection, we obtain from \eqref{Codazzi} that
\bea
\nap_{\dzb}\phi^{\pm}=\frac{\lambda^2}{4}\left(\d (H^{3\pm}\mp iH^{4\pm})\mp i\w_{34}^{\pm}(\d)(H^{3\pm}\mp iH^{4\pm})\right)(e^{\pm}_3\pm ie^{\pm}_4).
\eea
From \eqref{phiH} and the above it follows that
$$h^{\pm}=\frac{\lambda}{2}\left(\frac{H^{3\pm}_1 \mp H^{4\pm}_2}{\|\mathcal{H}^{\pm}\|}-i\frac{H^{3\pm}_2\pm H^{4\pm}_1}{\|\mathcal{H}^{\pm}\|}\right),$$
where $H^{a\pm}_j$, $j=1,2,\ a=3,4$, is given by \eqref{Hij}.
Then, \eqref{Chern} implies that
\be\label{uvpm}
a_1^{\pm}=u^{\pm}\w_1-v^{\pm}\w_2,\;\;\;\; \mbox{where}\;\;\;\;
u^{\pm}= \frac{H^{3\pm}_1 \mp H^{4\pm}_2}{\|\mathcal{H}^{\pm}\|},\;\; v^{\pm}= \frac{H^{3\pm}_2\pm H^{4\pm}_1}{\|\mathcal{H}^{\pm}\|}.
\ee
From the above two relations 
it follows that
$$a_1^{\pm}\wedge a_2^{\pm}=\left((u^{\pm})^2+(v^{\pm})^2\right)dM =\frac{4}{\lambda^2}|h^{\pm}|^2dM,$$
where $dM=\w_1\wedge\w_2$. On the other hand, from Proposition \ref{tension field} and \eqref{uvpm}, we obtain that
$$\|\tau^{v}(G_{\pm})\|^2=4\|\mathcal{H}^{\pm}\|^2\left((u^{\pm})^2+(v^{\pm})^2\right),$$
and the proof follows from the last two equations.
\qed
\end{proof}

\begin{theorem} \label{local deformability}
Let $f\colon M\to \Q^4_c$ be a simply-connected oriented surface. If $\bar{\mathcal{M}}^{\pm}(f)=\mathbb{S}^1$, then:
\begin{enumerate}[topsep=0pt,itemsep=-1pt,partopsep=1ex,parsep=0.5ex,leftmargin=*, label=(\roman*), align=left, labelsep=0em]
\item The Gauss lift $G_{\pm}$ of $f$ is vertically harmonic at any point of $M_0^{\pm}(f)$.
\item The surface $f$ is $\pm$ isotropically isothermic on $M\smallsetminus M_0^{\pm}(f)$, 
and the following differential equation is valid on the whole $M$
\be \label{fund}
\Delta\log\|\mathcal{H}^{\pm}\|-2K \mp K_N=\frac{\|\tau^{v}(G_{\pm})\|^2}{4\|\mathcal{H}^{\pm}\|^2}.
\ee
\item The forms $a^{\pm}_1, a^{\pm}_2$ of Lemma \ref{relations} satisfy on $M\smallsetminus M_0^{\pm}(f)$ the relations
\begin{equation}\label{C1}
da^{\pm}_1 = 0\;\;\;\;\;\mbox{and}\;\;\;\;\; da^{\pm}_2 = a^{\pm}_1\wedge a^{\pm}_2.
\end{equation}
\end{enumerate}
Conversely, if $f$ is non-minimal with $M_0^{\pm}(f)=\emptyset$, and (ii) or (iii) holds, then $\bar{\mathcal{M}}^{\pm}(f)=\mathbb{S}^1$.
\end{theorem}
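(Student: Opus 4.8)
The plan is to read all three conclusions off the single identity $A^{\pm}\equiv 0$, where $A^{\pm}$ is the function of \eqref{Ah}. First I would reduce to the non-compact case: since $\bar{\mathcal{M}}^{\pm}(f)=\mathbb{S}^1$ makes $\mathcal{M}^{\pm}(f)$ infinite, Proposition \ref{mssc} excludes $M\cong\mathbb{S}^2$, so $M$ is non-compact and by the Uniformization Theorem carries a global complex coordinate $z$, while Proposition \ref{dd} shows $M_0^{\pm}(f)$ is isolated. Choosing any $\tilde f\in\mathcal{M}^{\pm}(f)$, the hypothesis $\mathcal{M}^{\pm}(f)\smallsetminus\{\tilde f\}\neq\emptyset$ lets me apply Proposition \ref{correspond}(i) to obtain a harmonic solution $\th^{\pm}\in\mathcal{C}^{\infty}(M;(0,2\pi))$ of the system \eqref{sys}. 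Lemma \ref{Sys}(iii) then upgrades this to exactly the two facts I need: the function $h^{\pm}$ of \eqref{hpm} extends smoothly across $M_0^{\pm}(f)$ to all of $M$, and $A^{\pm}\equiv 0$. By \eqref{Ah}, the vanishing of $A^{\pm}$ is equivalent to the two real identities $\Imag h^{\pm}_z=0$ and $\Real h^{\pm}_z=|h^{\pm}|^2$, and everything flows from these.

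For (ii), the identity $\Imag h^{\pm}_z=0$ is precisely the isothermicity criterion of Lemma \ref{qiz}, so $f$ is $\pm$ isotropically isothermic on $M\smallsetminus M_0^{\pm}(f)$. For the differential equation, Lemma \ref{relations}(i) expresses $\Real h^{\pm}_z$ as $(\lambda^2/4)(\Delta\log\|\mathcal{H}^{\pm}\|-2K\mp K_N)$ and Lemma \ref{relations}(ii) expresses $|h^{\pm}|^2$ as $(\lambda^2/4)\,\|\tau^v(G_{\pm})\|^2/(4\|\mathcal{H}^{\pm}\|^2)$, so the identity $\Real h^{\pm}_z=|h^{\pm}|^2$ is exactly \eqref{fund} on $M\smallsetminus M_0^{\pm}(f)$. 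To promote \eqref{fund} to all of $M$, I note that its left-hand side is smooth there because $\|\mathcal{H}^{\pm}\|$ is of absolute value type (Proposition \ref{IPU}(ii)), while its right-hand side coincides with the globally smooth function $(4/\lambda^2)|h^{\pm}|^2$; equality on the dense set $M\smallsetminus M_0^{\pm}(f)$ then gives equality everywhere by continuity. Part (iii) is now immediate: $da_1^{\pm}=-d\star\Omega^{\pm}=0$ by the isothermicity just established, and $da_2^{\pm}=a_1^{\pm}\wedge a_2^{\pm}$ is the restatement of \eqref{fund} through Lemma \ref{relations}(i),(ii).

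For (i), fix $p\in M_0^{\pm}(f)$. Rearranging Lemma \ref{relations}(ii) gives $\|\tau^v(G_{\pm})\|^2=(16/\lambda^2)\|\mathcal{H}^{\pm}\|^2|h^{\pm}|^2$ on $M\smallsetminus M_0^{\pm}(f)$; since $\|\mathcal{H}^{\pm}\|$ vanishes at $p$ by \eqref{Bpm}, $\lambda>0$ is smooth, and $h^{\pm}$ is now bounded near $p$, the right-hand side tends to $0$ as one approaches $p$, so the smooth function $\|\tau^v(G_{\pm})\|^2$ vanishes at $p$, i.e. $G_{\pm}$ is vertically harmonic there. For the converse I run the same equivalences backwards. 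Now $M_0^{\pm}(f)=\emptyset$, so $h^{\pm}$ is globally smooth; if (ii) holds, isothermicity gives $\Imag h^{\pm}_z=0$ via Lemma \ref{qiz} and \eqref{fund} gives $\Real h^{\pm}_z=|h^{\pm}|^2$ via Lemma \ref{relations}, whence $A^{\pm}\equiv 0$; if instead (iii) holds, $da_1^{\pm}=0$ forces $d\star\Omega^{\pm}=0$ (isothermicity) and $da_2^{\pm}=a_1^{\pm}\wedge a_2^{\pm}$ forces the second identity, again giving $A^{\pm}\equiv 0$. (One first checks $M$ is non-compact: were $M\cong\mathbb{S}^2$, integrating $da_2^{\pm}=a_1^{\pm}\wedge a_2^{\pm}$ over $M$ via Stokes would force $h^{\pm}\equiv 0$, making $\Phi^{\pm}$ a nowhere-vanishing holomorphic quadratic differential on $\mathbb{S}^2$ by Proposition \ref{glphi} and Lemma \ref{pseudo}(i), which is impossible.) With $A^{\pm}\equiv 0$ and $h^{\pm}$ smooth on the non-compact $M$, Lemma \ref{Sys}(ii) supplies an $\mathbb{S}^1$-family of distinct modulo $2\pi$ solutions of \eqref{sys}, Proposition \ref{correspond}(ii) turns them into infinitely many noncongruent members of $\mathcal{M}^{\pm}(f)$, and Theorem \ref{SC}(i) then forces $\bar{\mathcal{M}}^{\pm}(f)=\mathbb{S}^1$.

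The main obstacle is the behaviour at the pseudo-umbilic set $M_0^{\pm}(f)$: away from it every assertion is a formal consequence of $A^{\pm}\equiv 0$, but conclusion (i) and the global validity of \eqref{fund} both depend on the smooth extension of $h^{\pm}$ across $M_0^{\pm}(f)$ (Lemma \ref{Sys}(iii)) together with the absolute-value-type regularity of $\|\mathcal{H}^{\pm}\|$ (Proposition \ref{IPU}(ii)), which is what tames the apparent $0/0$ singularity in $\|\tau^v(G_{\pm})\|^2/\|\mathcal{H}^{\pm}\|^2$ at those points.
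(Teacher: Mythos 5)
Your proposal is correct and follows essentially the same route as the paper: extract a harmonic solution of \eqref{sys} via Proposition \ref{correspond}(i), invoke Lemma \ref{Sys}(iii) to get the smooth extension of $h^{\pm}$ and $A^{\pm}\equiv 0$, and read all three conclusions off $\Imag h^{\pm}_z=0$ and $\Real h^{\pm}_z=|h^{\pm}|^2$ via Lemmas \ref{qiz} and \ref{relations}, reversing the equivalences for the converse. The only (harmless) deviations are in part (i), where you deduce $\|\tau^v(G_{\pm})\|^2(p)=0$ by continuity from Lemma \ref{relations}(ii) instead of evaluating $\nabla^{\perp}_{\bar\partial}\phi^{\pm}=h^{\pm}\phi^{\pm}$ at the zero of $\phi^{\pm}$ as the paper does, and in the converse, where you explicitly rule out $M\cong\mathbb{S}^2$ — a point the paper leaves implicit.
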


\begin{proof}
Let $\tilde{f}\in \mathcal{M}^{\pm}(f)$. Proposition \ref{dd} yields that $M_0^{\pm}$ is isolated.
From Proposition \ref{correspond}(i) it follows that there exists a harmonic function 
$\th^{\pm} \in \mathcal{C}^{\infty}(M;(0,2\pi))$ satisfying \eqref{sys} on $M$. Lemma \ref{Sys}(iii) implies that $h^{\pm}$
extends smoothly on $M$ and $A^{\pm}\equiv 0$.
Then, from \eqref{Ah} it follows that 
\be \label{reimh}
\Imag h^{\pm}_z\equiv0 \;\;\; \mbox{and}\;\;\; |h^{\pm}|^2\equiv \Real h^{\pm}_z \;\;\; \mbox{on}\;\;\; M.
\ee

(i) Since $h^{\pm}$ extends smoothly on $M$, equation \eqref{hpm} holds on $M$. From Lemma \ref{pseudo}(i) and \eqref{hpm} we obtain
that $$\nap_{\dzb}\phi^{\pm}(p)=0\;\;\; \mbox{for any}\;\;\; p\in M_0^{\pm}(f).$$ Appealing to Proposition \ref{glphi}, this is equivalent with the vertical
harmonicity of $G_{\pm}$ at $p$.

(ii) By virtue of Lemma \ref{qiz}, the first equation in \eqref{reimh} implies that $f$ is $\pm$ isotropically isothermic on $M\smallsetminus M_0^{\pm}$.
Using Lemma \ref{relations}, the second equation in \eqref{reimh} yields that \eqref{fund} holds on $M\smallsetminus M_0^{\pm}$.
From Proposition \ref{IPU}(ii) it follows that the left-hand side of \eqref{fund} can be smoothly extended on $M$.
Therefore, \eqref{fund} is valid on the whole $M$.

(iii) From the definition of $a^{\pm}_1$, it follows that the first equation in \eqref{C1} is equivalent 
with the fact that $f$ is $\pm$ isotropically isothermic on $M\smallsetminus M_0^{\pm}$.
Moreover, Lemma \ref{relations} implies that the second equation in \eqref{C1} is equivalent with the second equation in \eqref{reimh} on $M\smallsetminus M_0^{\pm}$.

Conversely, assume that $f$ is non-minimal and $M_0^{\pm}(f)=\emptyset$. 
As above, we obtain that each one of (ii) and (iii) is equivalent to \eqref{reimh}. 
From \eqref{reimh} and \eqref{Ah} it follows that $A^{\pm}\equiv 0$ on $M$, and Lemma \ref{Sys}(ii) implies that 
the space of the distinct modulo $2\pi$ solutions of \eqref{sys} on $M$ is parametrized by $\mathbb{S}^1$. 
From Proposition \ref{correspond}(ii) it follows that $\bar{\mathcal{M}}^{\pm}(f)=\mathbb{S}^1$.
\qed
\end{proof}
\medskip

\begin{corollary}\label{CM}
Let $f\colon M\to \Q^4_c$ be a simply-connected surface. 
If $\bar{\mathcal{M}}^{\pm}(f)=\mathbb{S}^1$ and $\tau^{v}(G_{\pm})\neq 0$ everywhere, then the conformal metric
\bea
d\hat{s}^2=\frac{\|\tau^{v}(G_{\pm})\|^2}{4\|\mathcal{H}^{\pm}\|^2} ds^2
\eea
has Gaussian curvature $\hat{K}=-1$.
\end{corollary}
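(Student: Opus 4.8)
The plan is to reduce the statement to a pointwise computation in a local complex coordinate, since the Gaussian curvature is a local invariant. First I would extract the consequences of the two hypotheses. By Theorem \ref{local deformability}(i), at every point of $M_0^{\pm}(f)$ the Gauss lift $G_{\pm}$ is vertically harmonic, hence $\tau^{v}(G_{\pm})=0$ there; the assumption $\tau^{v}(G_{\pm})\neq0$ everywhere therefore forces $M_0^{\pm}(f)=\emptyset$, so that $\|\mathcal{H}^{\pm}\|>0$ on $M$, the metric $d\hat{s}^2$ is genuinely defined, and $h^{\pm}$ is smooth on all of $M$ by \eqref{hpm}. Moreover, by Lemma \ref{relations}(ii) one has $h^{\pm}=0$ exactly where $\tau^{v}(G_{\pm})=0$, so $h^{\pm}$ is nowhere-vanishing. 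Finally, since $\bar{\mathcal{M}}^{\pm}(f)=\mathbb{S}^1$, the argument in the proof of Theorem \ref{local deformability} gives $A^{\pm}\equiv0$, which by \eqref{Ah} means $\Imag h^{\pm}_z\equiv0$ and $\Real h^{\pm}_z\equiv|h^{\pm}|^2$, i.e.
\[
h^{\pm}_z=|h^{\pm}|^2=h^{\pm}\overline{h^{\pm}}.
\]
This single identity is the engine of the whole computation.

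Next I would identify the conformal factor of $d\hat{s}^2$. Writing $ds^2=\lambda^2|dz|^2$ on a chart $(U,z)$ and invoking Lemma \ref{relations}(ii), the multiplier equals $\|\tau^{v}(G_{\pm})\|^2/(4\|\mathcal{H}^{\pm}\|^2)=4\lambda^{-2}|h^{\pm}|^2$, so $d\hat{s}^2=\hat{\lambda}^2|dz|^2$ with $\hat{\lambda}^2=4|h^{\pm}|^2$, that is $\hat{\lambda}=2|h^{\pm}|$. It is precisely this clean form that produces a constant curvature. Applying the same curvature formula for a conformal metric that gives $K=-\Delta\log\lambda$ in Section \ref{s2}, I obtain
\[
\hat{K}=-\frac{4}{\hat{\lambda}^2}(\log\hat{\lambda})_{z\bar z}=-\frac{1}{|h^{\pm}|^2}\,(\log|h^{\pm}|)_{z\bar z},
\]
so the corollary reduces to proving the identity $(\log|h^{\pm}|)_{z\bar z}=|h^{\pm}|^2$.

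Finally I would carry out this computation using only $h^{\pm}_z=h^{\pm}\overline{h^{\pm}}$. Differentiating $2\log|h^{\pm}|=\log(h^{\pm}\overline{h^{\pm}})$ and using $\overline{h^{\pm}_z}=|h^{\pm}|^2$, one finds $(\log|h^{\pm}|)_{\bar z}=\tfrac12\bigl(h^{\pm}_{\bar z}/h^{\pm}+h^{\pm}\bigr)$. Differentiating once more in $z$ and substituting both $h^{\pm}_z=h^{\pm}\overline{h^{\pm}}$ and the mixed-partial relation $h^{\pm}_{z\bar z}=\partial_{\bar z}(h^{\pm}\overline{h^{\pm}})=h^{\pm}_{\bar z}\overline{h^{\pm}}+h^{\pm}|h^{\pm}|^2$, the term $h^{\pm}_{\bar z}/h^{\pm}$ contributes exactly $|h^{\pm}|^2$ while $\partial_z h^{\pm}=|h^{\pm}|^2$ contributes another $|h^{\pm}|^2$, giving $(\log|h^{\pm}|)_{z\bar z}=|h^{\pm}|^2$ and hence $\hat{K}=-1$. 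The main obstacle here is the differentiation of the non-holomorphic quantity $h^{\pm}_{\bar z}/h^{\pm}$, since $h^{\pm}$ is only smooth, not holomorphic; this is exactly where the relation $A^{\pm}\equiv0$ is indispensable, as it supplies the formula for $h^{\pm}_{z\bar z}$ that forces the stray $h^{\pm}_{\bar z}$-terms to cancel.
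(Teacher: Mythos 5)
Your proof is correct. All the ingredients you invoke are available: Theorem \ref{local deformability}(i) does force $M_0^{\pm}(f)=\emptyset$ under the hypothesis $\tau^{v}(G_{\pm})\neq0$, Lemma \ref{relations}(ii) does identify the conformal factor as $4\lambda^{-2}|h^{\pm}|^2$ (so $\hat{\lambda}=2|h^{\pm}|$ with $h^{\pm}$ nowhere zero), and the proof of Theorem \ref{local deformability} does establish $A^{\pm}\equiv0$, i.e. $h^{\pm}_z=|h^{\pm}|^2$, which is exactly equation \eqref{reimh}; your computation of $(\log|h^{\pm}|)_{z\bar z}=|h^{\pm}|^2$ from that single identity checks out. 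The route differs from the paper's in how the curvature is extracted. The paper never computes a Laplacian: it observes via Proposition \ref{tension field} and \eqref{uvpm} that $d\hat{s}^2=a_1^{\pm}\otimes a_1^{\pm}+a_2^{\pm}\otimes a_2^{\pm}$, so that $\{a_1^{\pm},a_2^{\pm}\}$ is an orthonormal coframe for $\hat{ds}^2$, and then reads $\hat{K}=-1$ directly off the structure equations once \eqref{C1} (which is the moving-frame incarnation of $A^{\pm}\equiv0$) forces the connection form to equal $a_2^{\pm}$. Your version trades that frame bookkeeping for an explicit second-order computation in a complex chart; it is more elementary and self-contained, at the cost of having to handle the non-holomorphic quantity $h^{\pm}_{z\bar z}$ by hand, whereas the paper's argument makes the appearance of the hyperbolic structure equations $da_1=0$, $da_2=a_1\wedge a_2$ transparent. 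One small presentational point: you should say explicitly that the chart-level identity $h^{\pm}_z=|h^{\pm}|^2$ holds in \emph{every} chart, which follows since $\tfrac{4}{\lambda^2}A^{\pm}$ is chart-independent by Lemma \ref{relations} (it equals the coefficient of $dM$ in $i\,da_2^{\pm}-i\,a_1^{\pm}\wedge a_2^{\pm}$ up to the sign conventions there), or simply restrict to a global coordinate as the paper does.
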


\begin{proof}
By virtue of Theorem \ref{local deformability}(i), it follows that $M_0^\pm(f)=\emptyset$.
Consider the forms $a_1^{\pm}, a_2^{\pm}$ of Lemma \ref{relations}.
Proposition \ref{tension field} and \eqref{uvpm} yield that 
$$d\hat{s}^2=a_1^{\pm}\otimes a_1^{\pm}+ a_2^{\pm}\otimes a_2^{\pm}\;\;\; \mbox{on}\;\;\; M.$$ 
Let $a_{12}^{\pm}$ be the connection form corresponding to the coframe $\{a_1^{\pm},a_2^{\pm}\}$. 
Then, $$da_2^{\pm}=a_1^{\pm}\wedge a_{12}^{\pm}\;\;\; \mbox{and} \;\;\; da_{12}^{\pm}=-\hat{K}a_1^{\pm}\wedge a_{2}^{\pm}.$$ 
Since $\bar{\mathcal{M}}^{\pm}(f)=\mathbb{S}^1$, the first equation of the above and the second relation in \eqref{C1} yield that 
$a_{12}^{\pm}=a_{2}^{\pm}$.
Using the second equation of the above, this implies that
$da_{2}^{\pm}=-\hat{K}a_1^{\pm}\wedge a_{2}^{\pm}$, and the proof follows by virtue of the second relation in \eqref{C1}.
\qed
\end{proof}

\begin{remark}\label{Rem}
{\emph{
\begin{enumerate}[topsep=0pt,itemsep=-1pt,partopsep=1ex,parsep=0.5ex,leftmargin=*, label=(\roman*), align=left, labelsep=-0.2em]
\item Theorems \ref{SC}(i) and \ref{local deformability}(i) imply that a surface $f$ admits at most one Bonnet mate 
in $\mathcal{M}^{\pm}(f)$, if there exists a point $p\in M_0^{\pm}(f)$ at which the Gauss lift
$G_{\pm}$ of $f$ is not vertically harmonic. From Theorem \ref{SC}(iv), it follows that $f$ is not proper
Bonnet if there exists an umbilic point at which $H$ is non-parallel.
This extends a result of Roussos-Hernandez \cite[Thm. 1B]{RH}.
\item Equation \eqref{fund} extends the Ricci-like condition satisfied by the non-superconformal
surfaces whose Gauss lift $G_{\pm}$ is vertically harmonic (cf. \cite[Prop. 23(iii)]{PV}).
\item For umbilic-free surfaces in $\R^3$, the analogues of \eqref{fund} and \eqref{C1}, are
due to Colares-Kenmotsu \cite{CK} and Chern \cite{Ch2}, respectively.
\end{enumerate}
}}\end{remark}

From Theorems \ref{SC}(iv) and \ref{local deformability}(ii) it follows that a flexible proper Bonnet surface is strongly isotropically
isothermic away from its isolated pseudo-umbilic points.
The following proposition shows that a Bonnet, strongly isotropically isothermic surface is proper Bonnet.
The analogous result for isothermic Bonnet surfaces in $\Q^3_c$ is due to Graustein \cite{Grau}.

\begin{proposition} \label{IP}
Let $f\colon M\to \Q^4_c$ be a non-minimal, simply-connected oriented surface. 
If $f$ is $\pm$ isotropically isothermic, then either $\bar{\mathcal{M}}^{\pm}(f)=\{f\}$, or
$\bar{\mathcal{M}}^{\pm}(f)=\mathbb{S}^1$. In particular, if $f$ is Bonnet and strongly isotropically isothermic then either 
$\mathcal{M}(f)=\mathbb{S}^1$, or $\mathcal{M}(f)=\mathbb{S}^1\times\mathbb{S}^1$.
\end{proposition}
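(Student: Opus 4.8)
The plan is to fix a sign and prove the dichotomy for $\bar{\mathcal{M}}^{\pm}(f)$ first, then assemble the two components for the ``in particular'' statement. Since $f$ is $\pm$ isotropically isothermic, by definition $M_0^{\pm}(f)=\emptyset$, so the section $\phi^{\pm}$ and the function $h^{\pm}$ of \eqref{hpm} are globally defined and smooth, and by Lemma \ref{qiz} together with \eqref{Ah} the isothermicity is precisely the condition $\Real A^{\pm}\equiv 0$ on $M$. If $\mathcal{M}^{\pm}(f)=\emptyset$ there is nothing to prove, since then $\bar{\mathcal{M}}^{\pm}(f)=\{f\}$; so I would assume that there exists a Bonnet mate $\tilde{f}\in \mathcal{M}^{\pm}(f)\subset \mathcal{N}^{\pm}(f)$ and aim to show $\bar{\mathcal{M}}^{\pm}(f)=\mathbb{S}^1$.

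The decisive step is to upgrade the partial condition $\Real A^{\pm}\equiv0$ supplied by isothermicity to the full vanishing $A^{\pm}\equiv 0$, exploiting the existence of the single mate. Indeed, Proposition \ref{dd} produces $\theta^{\pm}$ with distortion differential given by \eqref{general1}, and since hypothesis (ii) of Proposition \ref{Npm} is exactly our isothermicity assumption, $\theta^{\pm}$ extends to a harmonic function $\theta^{\pm}\in \mathcal{C}^{\infty}(M;(0,2\pi))$ solving \eqref{sys} on $M$; Lemma \ref{Sys}(iii) then forces $A^{\pm}\equiv 0$ on $M$. For non-compact $M$ (where the Uniformization Theorem supplies a global complex coordinate) I would now argue most directly through Lemma \ref{Sys}(ii), which gives that the distinct modulo $2\pi$ solutions of \eqref{sys} form a circle, followed by Proposition \ref{correspond}(ii), which converts these solutions into a one-parameter family of pairwise noncongruent mates; alternatively one may note that $\Imag A^{\pm}\equiv0$ is equation \eqref{fund} via Lemma \ref{relations}, so condition (ii) of Theorem \ref{local deformability} holds and its converse applies. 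Either way $\bar{\mathcal{M}}^{\pm}(f)=\mathbb{S}^1$. The compact case $M\cong \mathbb{S}^2$ is disposed of through Proposition \ref{mssc}: there $\theta^{\pm}$, being harmonic with values in $(0,2\pi)$ on a closed surface, is constant, whence $h^{\pm}\equiv0$ and $\Phi^{\pm}$ is holomorphic; the constant-phase associated family $\Phi^{\mp}+e^{\mp i\psi}\Phi^{\pm}$ then yields infinitely many noncongruent mates, contradicting the at-most-one-mate conclusion of Proposition \ref{mssc}, so in fact no $\mathcal{M}^{\pm}$-mate exists and $\bar{\mathcal{M}}^{\pm}(f)=\{f\}$.

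For the final assertion, strong isotropic isothermicity means $f$ is both $+$ and $-$ isotropically isothermic, so the first part applies to both signs: each of $\bar{\mathcal{M}}^{-}(f)$ and $\bar{\mathcal{M}}^{+}(f)$ is either $\{f\}$ or $\mathbb{S}^1$. Using the parametrization $\mathcal{M}(f)\cong \bar{\mathcal{M}}^{-}(f)\times \bar{\mathcal{M}}^{+}(f)$ of Theorem \ref{SC}(iv), and that $f$ being Bonnet means $\mathcal{M}(f)\neq\{f\}$, I would split according to Theorem \ref{SC}(ii): if $\mathcal{M}^{*}(f)\neq\emptyset$ then both $\mathcal{M}^{-}(f)$ and $\mathcal{M}^{+}(f)$ are nonempty, hence both components equal $\mathbb{S}^1$ and $\mathcal{M}(f)=\mathbb{S}^1\times\mathbb{S}^1$; if $\mathcal{M}^{*}(f)=\emptyset$, then by the same part exactly one of $\mathcal{M}^{\pm}(f)$ is nonempty, so one factor is $\mathbb{S}^1$ and the other is $\{f\}$, giving $\mathcal{M}(f)=\mathbb{S}^1$.

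The main obstacle is the decisive step of the first part: isotropic isothermicity by itself encodes only the vanishing of $\Real A^{\pm}$, and it is the presence of a single mate --- routed through the harmonic extension of $\theta^{\pm}$ in Proposition \ref{Npm}(ii) and the algebraic rigidity of \eqref{B} in Lemma \ref{Sys} --- that pins down $\Imag A^{\pm}$ as well and thereby converts one mate into an entire circle of them. A secondary technical point to handle carefully is the compact case, where $\theta^{\pm}$ collapses to a constant and the circle of mates must instead be produced by the global associated family, the noncongruence of whose members has to be verified against the distortion differential.
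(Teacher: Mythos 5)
Your proof is correct and, for the main dichotomy and the ``in particular'' clause, follows essentially the same route as the paper: Proposition \ref{dd} and Proposition \ref{Npm}(ii) produce the harmonic $\th^{\pm}\in\mathcal{C}^{\infty}(M;(0,2\pi))$ solving \eqref{sys}, Lemma \ref{Sys}(iii)--(ii) turns this into the circle of solutions, Proposition \ref{correspond}(ii) converts solutions into mates, and Theorem \ref{SC}(iv) assembles $\mathcal{M}(f)$ from the two factors. The one place you genuinely diverge is the compact case $M\cong\mathbb{S}^2$: after concluding that $\th^{\pm}$ is constant and hence $\Phi^{\pm}$ is holomorphic, the paper derives a contradiction by quoting two results from \cite{PV} (vertical harmonicity of $G_{\pm}$ forces superconformality on the sphere, whence $\Phi^{\pm}\equiv0$ and $M_0^{\pm}(f)=M$, contradicting isotropic isothermicity), whereas you build the constant-phase associated family $\Phi^{\mp}+e^{\mp i\psi}\Phi^{\pm}$ and contradict the at-most-one-mate bound of Proposition \ref{mssc}. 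Your version is more self-contained (it only needs Lemma \ref{pseudo}(i) to guarantee $\Phi^{\pm}$ is nowhere zero, hence that the family members are pairwise noncongruent, and the pairwise-noncongruence computation already appears in the proof of Proposition \ref{Npm}(i)); the paper's version avoids re-running the fundamental theorem on the sphere. Both are sound, and both reach the same conclusion $\bar{\mathcal{M}}^{\pm}(f)=\{f\}$ in that case.
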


\begin{proof}
Assume that there exists a Bonnet mate $\tilde{f}\in \mathcal{M}^{\pm}(f)$. From Proposition \ref{dd} it follows that there 
exists $\th^{\pm} \in \mathcal{C}^{\infty}(M;(0,2\pi))$, such that the distortion differential 
of the pair $(f,\tilde{f})$ is given by \eqref{general1} on $M$. Proposition \ref{Npm}(ii) yields that $\th^{\pm}$ is harmonic.

If $M$ is compact, then the maximum principle implies that $\th^{\pm}$ is constant. From Lemma \ref{qke}(i) and \eqref{general1}
it follows that $\Phi^\pm$ is holomorphic, and Proposition \ref{glphi} implies that the Gauss lift $G_{\pm}$ of $f$ is vertically harmonic.
From \cite[Thm. 3]{PV} it follows that $f$ is superconformal. Then, \cite[Prop. 10]{PV} yields that $\Phi^\pm\equiv0$.
Lemma \ref{pseudo}(i) implies that $M_0^\pm(f)=M$ and this contradicts the fact that $f$ is $\pm$ isotropically isothermic.
Therefore, if $M$ is homeomorphic to $\mathbb{S}^2$, then $\bar{\mathcal{M}}^{\pm}(f)=\{f\}$.

Assume that $M$ is non-compact and let $z$ be a global complex coordinate on $M$.
Proposition \ref{Npm} yields that $\th^{\pm}$ satisfies \eqref{sys} on $M$.
Then, Lemma \ref{Sys}(iii-ii) implies that the space of the distinct modulo $2\pi$ solutions of \eqref{sys}
is parametrized by $\mathbb{S}^1$. From Proposition \ref{correspond}(ii) we obtain that $\bar{\mathcal{M}}^{\pm}(f)=\mathbb{S}^1$.
In particular, if $f$ is Bonnet and strongly isotropically isothermic, the proof follows by using Theorem \ref{SC}(iv).
\qed
\end{proof}
\medskip

\begin{example} \label{Has}
Tight proper Bonnet surfaces in $\R^4$, that are strongly isotropically isothermic and isothermic, and they have a vertically harmonic Gauss lift.

{\emph{We consider the product in $\R^4$ of two plane curves $\gamma_1, \gamma_2$, as in Example \ref{ISTNII},
and we adopt the notation used there. Assume that the curvature of $\gamma_j$ is $k_j(s_j)=cs_j$, $j=1,2$, with $0\neq c\in \R$, and 
we restrict the product surface $f$ such that $f\colon M\to \R^4$ is simply-connected and umbilic-free. Clearly, $f$ has flat normal bundle
and does not lie in any totally umbilical hypersurface of $\R^4$. Moreover, from \eqref{curves} it follows that $f$ is 
strongly isotropically isothermic.}}

{\emph{It has been proved by Hasegawa \cite[Example 1]{Ha} that the Gauss lift $G_-$ of $f$ is vertically harmonic.
Since $f$ is neither minimal, nor superconformal, from \cite[Thm. 4, Prop. 25]{PV} it follows that $\bar{\mathcal{M}}^{-}(f)=\mathbb{S}^1$.
}}

{\emph{Since $f$ is $+$ isotropically isothermic, Proposition \ref{IP} implies that either $\bar{\mathcal{M}}^{+}(f)=\{f\}$, 
or $\bar{\mathcal{M}}^{+}(f)=\mathbb{S}^1$.
We claim that $\bar{\mathcal{M}}^{+}(f)=\{f\}$. Arguing indirectly, assume that $\bar{\mathcal{M}}^{+}(f)=\mathbb{S}^1$.
Then, from Theorem \ref{local deformability}(ii) it follows that
$$\Delta\log\|\mathcal{H}^{+}\|-2K=\frac{\|\tau^{v}(G_{+})\|^2}{4\|\mathcal{H}^{+}\|^2}.$$
On the other hand, since $\bar{\mathcal{M}}^{-}(f)=\mathbb{S}^1$, Theorem \ref{local deformability}(ii) yields that $$\Delta\log\|\mathcal{H}^-\|-2K=0.$$
Since $K_N=0$ everywhere on $M$, it follows that $\|\mathcal{H}^-\|=\|\mathcal{H}^+\|$ and the above two relations imply
that the Gauss lift $G_+$ of $f$ is vertically harmonic. Therefore, the mean curvature vector field of $f$ is parallel in the normal connection and thus 
(cf. \cite{Chen, Yau}), $f$ lies in some totally umbilical hypersurface of $\R^4$. 
This is a contradiction and the claim follows. From Theorem \ref{SC}(iv) we deduce that
$\mathcal{M}(f)=\mathbb{S}^1$.}}
\end{example}

\section{Compact Surfaces} \label{s7}

\subsection{The Effect of Isotropic Isothermicity}

We study here the effect of isotropic isothermicity on the structure of the moduli space $\mathcal{M}(f)$ for compact surfaces.

\begin{theorem} \label{QICC}
Let $f\colon M\to \Q^4_c$ be a compact oriented surface, and $V$ an open and dense subset of $M$. 
If one of the following holds, then $\mathcal{N}^{\pm}(f)=\emptyset$.
\begin{enumerate}[topsep=0pt,itemsep=-1pt,partopsep=1ex,parsep=0.5ex,leftmargin=*, label=(\roman*), align=left, labelsep=-0.2em]
\item The Gauss lift $G_{\pm}$ of $f$ is not vertically harmonic and $f$ is $\pm$ isotropically isothermic on $V$.
\item The set $V$ is connected and  $f$ is totally non $\pm$ isotropically isothermic on $V$.
\end{enumerate}
\end{theorem}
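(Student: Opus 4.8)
The plan is to argue by contradiction in both cases. Assuming $\mathcal{N}^{\pm}(f)\neq\emptyset$, I would pick $\tilde{f}\in\mathcal{N}^{\pm}(f)$ and invoke Proposition \ref{dd} to obtain $\th^{\pm}\in\mathcal{C}^{\infty}(M\smallsetminus M_0^{\pm};(0,2\pi))$ with $M_0^{\pm}$ isolated (hence finite, as $M$ is compact) and $Q^{\pm}=(1-e^{\mp i\th^{\pm}})\Phi^{\pm}$. By Proposition \ref{Npm} the function $\th^{\pm}$ satisfies the system \eqref{sys} on every simply-connected chart, so Lemma \ref{Sys}(i) gives that $\th^{\pm}$ also satisfies the algebraic relation \eqref{B} and the equation \eqref{harmonic} throughout $M\smallsetminus M_0^{\pm}$, with $A^{\pm}$ as in \eqref{Ah}. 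The whole argument then reduces to extracting, from the respective hypothesis, enough control on $\th^{\pm}$ to force either $h^{\pm}\equiv0$ (equivalently, vertical harmonicity of $G_{\pm}$) or $\Real A^{\pm}\equiv0$ (isotropic isothermicity), which contradicts the assumption.

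For part (i), I would first observe that $d\star\Omega^{\pm}$ is smooth on $M\smallsetminus M_0^{\pm}$ and vanishes on the dense set $V$, hence vanishes identically there; thus $f$ is $\pm$ isotropically isothermic on all of $M\smallsetminus M_0^{\pm}$, and Proposition \ref{Npm}(ii) extends $\th^{\pm}$ to a harmonic function on $M$ with values in $(0,2\pi)$. Since $M$ is compact and connected, the maximum principle forces $\th^{\pm}$ to be constant; substituting $\th^{\pm}_{\bar z}=0$ and $e^{\pm i\th^{\pm}}\neq1$ into \eqref{sys} yields $h^{\pm}\equiv0$, whence \eqref{hpm} and Proposition \ref{glphi} give that $G_{\pm}$ is vertically harmonic, contradicting the hypothesis. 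Hence $\mathcal{N}^{\pm}(f)=\emptyset$.

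For part (ii), the connectedness of $V$ is what drives the argument. On $V$ the surface is nowhere $\pm$ isotropically isothermic, so by Lemma \ref{qiz} and \eqref{Ah} we have $\Real A^{\pm}\neq0$ on $V$; being continuous and nowhere zero on the connected set $V$, $\Real A^{\pm}$ has constant sign there. Viewing \eqref{B} as a quadratic in $w=e^{\pm i\th^{\pm}}$, one checks by inspection that $w=1$ is always a root, so the second root is $w=-\overline{A^{\pm}}/A^{\pm}$; as $\th^{\pm}\in(0,2\pi)$ excludes $w=1$, we obtain $e^{\pm i\th^{\pm}}=-\overline{A^{\pm}}/A^{\pm}$ on $V$, and therefore $1-e^{\pm i\th^{\pm}}=2\Real A^{\pm}/A^{\pm}$. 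Feeding this into \eqref{harmonic} collapses it to $\th^{\pm}_{z\bar z}=\mp2\Real A^{\pm}$, that is,
\begin{equation*}
\Delta\th^{\pm}=\mp\frac{8}{\lambda^2}\Real A^{\pm}\qquad\text{on }V,
\end{equation*}
where $\lambda$ is the conformal factor. Thus $\Delta\th^{\pm}$ has a fixed sign on the dense open set $V$; by continuity $\th^{\pm}$ is subharmonic (or superharmonic) on all of $M\smallsetminus M_0^{\pm}$ and strictly so on $V$.

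The crux is then to globalize this across the finite singular set $M_0^{\pm}$. Since $\th^{\pm}$ is bounded (values in $(0,2\pi)$) and sub/superharmonic on the complement of the isolated points $M_0^{\pm}$, the classical removable-singularity theorem for bounded sub/superharmonic functions should extend it to a sub/superharmonic function on the compact surface $M$. Pairing its signed Laplacian measure with the constant $1$ gives total mass zero, forcing the measure to vanish and $\th^{\pm}$ to be harmonic; but then $\Delta\th^{\pm}\equiv0$ on $V$ yields $\Real A^{\pm}\equiv0$ there, contradicting the total non $\pm$ isotropic isothermicity of $f$ on $V$, and so $\mathcal{N}^{\pm}(f)=\emptyset$. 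I expect this last globalization — controlling $\th^{\pm}$ near the pseudo-umbilic points of $M_0^{\pm}$, where $\th^{\pm}$ is only known to be bounded and need not extend smoothly — to be the main technical obstacle; the boundedness coming from $\th^{\pm}\in(0,2\pi)$ together with the sign-definiteness of $\Delta\th^{\pm}$ is precisely what makes the subharmonic removable-singularity mechanism available.
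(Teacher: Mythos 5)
Your proposal is correct and follows essentially the same route as the paper's own proof: Propositions \ref{dd} and \ref{Npm} together with Lemma \ref{Sys} to reduce to the system \eqref{sys}, density plus the maximum principle for harmonic functions in case (i), and sign-definiteness of $\Delta\th^{\pm}$ on the connected set $V$ followed by the removable-singularity extension of a bounded sub/superharmonic function across the isolated set $M_0^{\pm}$ in case (ii) (the step the paper delegates to the argument of Jensen--Musso--Nicolodi). The only cosmetic differences are that you make the identity $\Delta\th^{\pm}=\mp 8\lambda^{-2}\Real A^{\pm}$ explicit by solving the quadratic \eqref{B}, and you close case (ii) by integrating the Laplacian measure over $M$ and contradicting $\Real A^{\pm}\neq 0$ directly, whereas the paper invokes the maximum principle and then passes through vertical harmonicity and Example \ref{examples}(i).
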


\begin{proof}
If (i) or (ii) holds, then from Proposition \ref{glphi} or Examples \ref{examples}(ii-iii), respectively, it follows that $f$ is non-minimal.
Arguing indirectly, assume that there exists $\tilde{f}\in\mathcal{N}^{\pm}(f)$. Proposition \ref{dd} implies that
$M_0^{\pm}$ is isolated and that there exists
$\th^{\pm} \in \mathcal{C}^{\infty}(M\smallsetminus M_0^{\pm};(0,2\pi))$, such that the distortion differential $Q$
of the pair $(f,\tilde{f})$ satisfies \eqref{general1} on $M\smallsetminus M_0^{\pm}$.

(i) Since $V$ is dense, it follows that $f$ is $\pm$ isotropically isothermic on $M\smallsetminus M_0^{\pm}$. 
Then, Proposition \ref{Npm}(ii) implies that $\th^\pm$ extends to a bounded harmonic function on $M$, which has to be constant by the maximum principle.
By virtue of Lemma \ref{qke}(i), from \eqref{general1} it follows that $\Phi^{\pm}$ is holomorphic. 
Proposition \ref{glphi} yields that the Gauss lift $G_{\pm}$ of $f$ is vertically harmonic, and this is a contradiction.

(ii) From the definition of non $\pm$ isotropically isothermic points it follows that $M_0^{\pm}\subset M\smallsetminus V$. 
Therefore, $\th^{\pm}$ is defined everywhere on $V$. Let $(U,z)$ be a complex chart with $U\subset V$.
Proposition \ref{Npm} implies that $\th^{\pm}$ satisfies \eqref{sys} on $U$.
From Lemma \ref{qiz} it follows that $\Imag h^{\pm}_z\neq0$ everywhere on $U$. 
Appealing to Lemma \ref{Sys}(i), \eqref{Ah} and \eqref{harmonic}
yield that $\Delta\th^{\pm}$ is nowhere-vanishing on $U$. 
Since $U$ is an arbitrary subset of the connected $V$,
we deduce that either $\Delta\th^{\pm}>0$, or $\Delta\th^{\pm}<0$, on $V$. 
Since $V$ is dense in $M\smallsetminus M_0^{\pm}$, it follows by continuity that either $\Delta\th^{\pm}\geq0$,
or $\Delta\th^{\pm}\leq0$, on $M\smallsetminus M_0^{\pm}$. As in the proof of \cite[Thm. 2]{JMN}, it can be shown that either $\th^{\pm}$, or $-\th^{\pm}$
can be extended to a subharmonic function on $M$ which attains a maximum and thus, it has to be constant by the maximum principle for subharmonic functions.
As in the proof of part (i), it follows that the Gauss lift $G_{\pm}$ of $f$ is vertically harmonic. 
Then, Example \ref{examples}(i) implies that $f$ is $\pm$ isotropically isothermic on $V$, which is a contradiction.
\qed
\end{proof}
\medskip

\noindent{\emph{Proof of Theorem \ref{QIC}:}}
Since $G_{\pm}$ is not vertically harmonic and $f$ is either $\pm$ isotropically isothermic, 
or totally non $\pm$ isotropically isothermic, 
on $V$,
Theorem \ref{QICC} implies that
$\mathcal{N}^{\pm}(f)=\emptyset$. On the other hand, since $G_{\mp}$ is not vertically harmonic, from \cite[Thm. 13(i)]{PV}
it follows that there exists at most one Bonnet mate of $f$ in $\mathcal{M}^{\mp}(f)$. 
Therefore, $f$ admits at most one Bonnet mate.
In particular, if $f$ is either strongly isotropically isothermic, or strongly totally non isotropically isothermic, on $V$, 
then Theorem \ref{QICC} implies that $\mathcal{N}^{-}(f)=\mathcal{N}^{+}(f)=\emptyset$ and thus, $f$ does not admit any Bonnet mate.
\qed
\medskip

The following consequence of Theorem \ref{QIC} shows that the result of \cite{JMN} can be strengthened.

\begin{corollary}
Let $F\colon M\to \Q^3_c$ be a compact oriented surface and $j\colon \Q^3_c\to\Q^4_c$ a totally geodesic inclusion.
If the mean curvature of $F$ is not constant and $F$ is either isothermic, or totally non isothermic,
on an open dense and connected subset $V$ of $M$, then $f=j\circ F$ does not admit any Bonnet mate in $\Q^4_c$.
\end{corollary}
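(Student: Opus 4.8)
The plan is to deduce the statement directly from Theorem \ref{QIC} applied to $f=j\circ F$, after translating the hypotheses on $F$ into the corresponding hypotheses on $f$. First I would record that $j$, being totally geodesic, is in particular a totally umbilical inclusion with ${\tilde c}=c$, so the second fundamental form $\a$ of $f$ takes values in the line subbundle of $N_fM$ spanned by $j_*\xi$, where $\xi$ is the unit normal of $F$ in $\Q^3_c$; moreover $j_*\xi$ and $\Jp j_*\xi$ are parallel in the normal connection of $f$, as used in the proof of Proposition \ref{MCF-PCF}. Writing $\a=\a^F\, j_*\xi$ with $\a^F$ the scalar second fundamental form of $F$, one sees that $f$ has flat normal bundle and that its pseudo-umbilic set $M_0^{\pm}(f)$ coincides with the umbilic set of $F$: indeed the Gauss equation gives $\|H\|^2-(K-c)=((k_1-k_2)/2)^2$, which vanishes exactly at the umbilics. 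Since the notions of isothermic and totally non isothermic presuppose $F$ umbilic-free, $V$ is contained in $M\smallsetminus M_0^{\pm}(f)$, and on $V$ Proposition \ref{MCF-PCF} yields $\Omega^{-}=\Omega^{+}=2\Omega$, where $\Omega$ is the principal connection form of $F$.

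Next I would verify that neither Gauss lift of $f$ is vertically harmonic. By Proposition \ref{glphi}, $G_{\pm}$ is vertically harmonic if and only if $\Phi^{\pm}$ is holomorphic. Writing the Hopf differential of $f$ as $\Phi=\phi^F\, j_*\xi\, dz^2$, where $\phi^F=\a^F(\dz,\dz)$ is the Hopf function of $F$, and using that the section $\zeta^{\pm}=j_*\xi\pm i\Jp j_*\xi$ is $\nap$-parallel, one obtains $\phi^{\pm}=\tfrac12\phi^F\zeta^{\pm}$ and hence $\nap_{\dzb}\phi^{\pm}=\tfrac12(\dzb\phi^F)\zeta^{\pm}$. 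Thus $\Phi^{\pm}$ is holomorphic if and only if $\phi^F$ is holomorphic, i.e.\ if and only if the Hopf differential of $F$ is holomorphic, which by the Codazzi equation in $\Q^3_c$ is equivalent to the mean curvature of $F$ being constant. As this is excluded by hypothesis, neither $G_{+}$ nor $G_{-}$ is vertically harmonic, so the standing hypothesis of Theorem \ref{QIC} holds.

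It then remains to translate the isothermicity assumption. Since $\Omega^{-}=\Omega^{+}=2\Omega$ on $V$, a point of $V$ is $\pm$ isotropically isothermic for $f$ precisely when it is an isothermic point of $F$; consequently $f$ is strongly isotropically isothermic on $V$ whenever $F$ is isothermic on $V$, and strongly totally non isotropically isothermic on $V$ whenever $F$ is totally non isothermic on $V$. In either case the assumptions of the ``in particular'' assertion of Theorem \ref{QIC} are satisfied on the open, dense and connected set $V$, and that theorem yields that $f$ admits no Bonnet mate in $\Q^4_c$. The step requiring the most care is the vertical-harmonicity computation: one must check that both isotropic parts $\Phi^{\pm}$ are controlled by the single holomorphy condition on $\phi^F$, which hinges on the line bundle spanned by $j_*\xi$ being parallel; granting this, the dichotomy for the Gauss lifts and the reduction to Theorem \ref{QIC} are routine.
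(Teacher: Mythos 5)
Your proposal is correct and follows essentially the same route as the paper: translate the hypotheses on $F$ to hypotheses on $f=j\circ F$ via Proposition \ref{MCF-PCF} (so that isothermicity of $F$ on $V$ becomes strong isotropic isothermicity of $f$ on $V$, and likewise for the totally non isothermic case) and then invoke the ``in particular'' clause of Theorem \ref{QIC}. Your explicit verification that neither Gauss lift is vertically harmonic — via the parallelism of $j_*\xi$, the identity $\nap_{\dzb}\phi^{\pm}=\tfrac12(\dzb\phi^F)\zeta^{\pm}$, and the Codazzi equation for $F$ — is a detail the paper leaves implicit, and it is a welcome addition since it is a standing hypothesis of Theorem \ref{QIC}.
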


\begin{proof}
From Proposition \ref{MCF-PCF} it follows that $f$ is strongly (totally non) isotropically isothermic on $V$ if and 
only if $F$ is (totally non) isothermic on $V$. The proof follows immediately from Theorem \ref{QIC}.
\qed
\end{proof}

\subsection{Locally Proper Bonnet Surfaces}

An oriented surface $f\colon M\to \Q^4_c$ is called \emph{locally proper Bonnet}, if every point of $M$ has a simply-connected neighbourhood $U$
such that $f|_U$ is proper Bonnet. Notice that if $f|_U$ is non-minimal, then Theorem \ref{SC}(iv) implies that $\mathcal{M}(f|_{U})$ is 
a smooth manifold.

\begin{proposition} \label{non minimal}
Let $f\colon M\to \Q^4_c$ be a locally proper Bonnet surface. Then:
\begin{enumerate}[topsep=0pt,itemsep=-1pt,partopsep=1ex,parsep=0.5ex,leftmargin=*, label=(\roman*), align=left, labelsep=-0.2em]
\item Either $f$ is minimal, or $\inter\{p\in M:H(p)=0\}=\emptyset$.
\item If $f$ is non-minimal, then for every $p\in M$ there exists a submanifold $L^n(p), 1\leq n\leq2$, of the torus $\mathbb{S}^1\times \mathbb{S}^1$,
$\mathbb{S}^1\simeq \R/2\pi\mathbb{Z}$,
with the property that $L^n(p)$ is also a submanifold of 
$\mathcal{M}(f|_{U})$ for every sufficiently small simply-connected neighbourhood $U$ of $p$.
In particular, for every point of $M$, a submanifold of the torus with this property is either $\mathbb{S}^1_{-}=\mathbb{S}^1\times\{0\}$,
or $\mathbb{S}^1_{+}=\{0\}\times \mathbb{S}^1$. 
\end{enumerate}
\end{proposition}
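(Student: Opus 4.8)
The plan is to handle the two parts separately, applying the local structure theory of Section \ref{s6} to the restrictions $f|_U$; part (i) is the unique continuation statement that feeds the structural argument of part (ii).

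For part (i), suppose $f$ is not minimal and consider the open set $B=\inter\{p\in M:H(p)=0\}$. I would show $B$ is also closed, so that connectedness of $M$ forces $B=\emptyset$. The whole content is the local claim: if $f|_U$ is proper Bonnet on a simply-connected $U$ and $H$ vanishes on some open $O\subset U$, then $H\equiv 0$ on all of $U$. I would prove it by contradiction. If $f|_U$ were non-minimal, Theorem \ref{SC}(iii) gives $\bar{\mathcal{M}}^{\pm}(f|_U)=\mathbb{S}^1$ for one sign, say $+$; then Theorem \ref{local deformability} shows $h^+$ extends smoothly over $U$ with $A^+\equiv 0$, i.e. by \eqref{Ah} both $\Imag h^+_z=0$ and $h^+_z=|h^+|^2$ on $U$. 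On $O$ the Codazzi equation \eqref{Codazzi} together with $\nap H=0$ gives $\nap_{\dzb}\phi^+=0$, hence $h^+\equiv 0$ on $O$. The crucial step is unique continuation: using the relations \eqref{C1} for the forms $a_1^+,a_2^+$ of Lemma \ref{relations}, a local primitive $\sigma$ of the closed form $a_1^+$ satisfies $\Delta\sigma=\pm|\nabla\sigma|^2$, so a suitable exponential $e^{\mp\sigma}$ is genuinely harmonic; being constant on $O$, it is constant, and propagating along the connected set $U\smallsetminus M_0^+(f)$ (isolated singularities removed) yields $h^+\equiv 0$, i.e. $G_+$ vertically harmonic on $U$. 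By Proposition \ref{glphi}(iii) the section $H^+$ is then anti-holomorphic; vanishing on the open set $O$, Lemma \ref{zeros} forces $H^+\equiv 0$, hence $H\equiv 0$ on $U$, contradicting non-minimality. Thus $f|_U$ is minimal, $U\subset B$, and $B$ is closed.

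For part (ii), by part (i) the non-minimal points are dense, so every $p\in M$ admits arbitrarily small simply-connected neighbourhoods on which $f$ is non-minimal and proper Bonnet; fix one such $U_0$. By Theorem \ref{SC}(iv) we have $\mathcal{M}(f|_{U_0})=\bar{\mathcal{M}}^{-}(f|_{U_0})\times\bar{\mathcal{M}}^{+}(f|_{U_0})$ with each factor a point or $\mathbb{S}^1$, and Theorem \ref{SC}(iii) guarantees at least one factor is $\mathbb{S}^1$; say $\bar{\mathcal{M}}^{+}(f|_{U_0})=\mathbb{S}^1$, which by Theorem \ref{local deformability} is equivalent to the pointwise condition $A^+\equiv 0$ on $U_0$ (that is, $+$ isotropic isothermicity together with \eqref{fund}). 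I would then set $L(p)=\mathbb{S}^1_+=\{0\}\times\mathbb{S}^1$, or the full torus (so $n=2$) when $A^-\equiv 0$ holds near $p$ as well. The decisive observation is stability under shrinking: since $A^+\equiv 0$ is a local condition, it persists on every simply-connected $U\subset U_0$, so $\bar{\mathcal{M}}^{+}(f|_U)=\mathbb{S}^1$ by the converse in Theorem \ref{local deformability} (for $p\notin M_0^+$ after shrinking; for $p\in M_0^+$ one instead restricts the circle of Bonnet mates of $f|_{U_0}$ to $U$ and checks, via the holomorphic distortion differential of Proposition \ref{dd}, that distinct congruence classes stay distinct). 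Hence $\mathbb{S}^1_+$ is a submanifold of $\mathcal{M}(f|_U)$ for every sufficiently small $U\ni p$, identified with the standard torus factor through the parametrization of Remark \ref{Param}; this gives the asserted $\mathbb{S}^1_-$ or $\mathbb{S}^1_+$ in the \emph{in particular} clause.

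I expect the main obstacle to be the unique continuation in part (i). The equation $h^+_z=|h^+|^2$ is not holomorphic, so one cannot appeal to Lemma \ref{zeros} directly to $h^+$; the point is to manufacture a genuinely harmonic (or anti-holomorphic) object—here $e^{\mp\sigma}$ and then $H^+$—to which unique continuation applies, and to carry this across the isolated pseudo-umbilic set $M_0^+(f)$ where $\Omega^+$ and $h^+$ are a priori only defined off the singularities. By contrast, the difficulty in part (ii) is only the consistency of the submanifold across scales, which dissolves once $A^{\pm}\equiv 0$ is recognized as a local condition stable under restriction.
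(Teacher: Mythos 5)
Your argument is correct and follows the same overall architecture as the paper's proof (the local moduli theory of Theorem \ref{SC}, the structure equations of Theorem \ref{local deformability}, and a unique-continuation step to rule out an interior zero set of $H$), but the two sub-steps of part (i) are carried out by genuinely different means. Where you manufacture a harmonic function $e^{\mp\sigma}$ from a primitive of $a_1^{\pm}$ via the Chern-type relations \eqref{C1}, the paper instead invokes Proposition \ref{correspond}(i) to produce a single harmonic angle function $\th^{\pm}\in\mathcal{C}^{\infty}(U;(0,2\pi))$ solving \eqref{sys} on all of $U$: on the open set where $H=0$ the system forces $d\th^{\pm}=0$, harmonic unique continuation makes $\th^{\pm}$ constant, and then \eqref{sys} (with $1-e^{\pm i\th^{\pm}}\neq0$) gives $h^{\pm}\equiv0$ at once. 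This sidesteps the issues you rightly flag about local primitives and about crossing $M_0^{\pm}(f)$ --- the paper simply shrinks $U$ around the boundary point so that $M_0^{\pm}(f|_U)=\emptyset$, which is legitimate because $M_0^{\pm}$ is isolated by Proposition \ref{dd}; you could adopt the same shrinking and your $e^{\mp\sigma}$ argument becomes global on $U$. Your concluding step also differs: you derive $H\equiv0$ from the anti-holomorphy of $H^{\pm}$ (Proposition \ref{glphi}(iii)) together with isolated zeros, whereas the paper quotes \cite[Prop.~23(ii)]{PV} to the effect that $\|H\|^2$ is an absolute value type function once $G_{\pm}$ is vertically harmonic; both are valid, and yours is arguably more self-contained. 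In part (ii) your route through the pointwise condition $A^{\pm}\equiv0$ and the converse in Theorem \ref{local deformability} is a mild repackaging of the paper's argument, which simply restricts the circle of mates from $V$ to $U\subset V$ and keeps the identification consistent via the normalization $\th^{\pm}_t(p)=t$ of Remark \ref{Param}; your fallback for $p\in M_0^{\pm}$ is exactly the paper's argument. Two cosmetic points: each factor $\bar{\mathcal{M}}^{\pm}(f|_{U_0})$ may also consist of exactly two elements (not only of a single point or a circle), and Lemma \ref{zeros} is stated for holomorphic differentials, so its application to the anti-holomorphic section $H^{\pm}$ deserves the one-line conjugation remark.
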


\begin{proof}
(i) Arguing indirectly, assume that $f$ is 
non-minimal and $\inter\{p\in M:H(p)=0\}\neq\emptyset$. 
Then, for 
a boundary point $\bar p$ of $\{p\in M:H(p)=0\}$, there exists a simply-connected
complex chart $(U,z)$ around $\bar p$ such that $f|_U$ is proper Bonnet and non-minimal. 
By virtue of Theorem \ref{SC}(iii), we may assume that $\bar{\mathcal{M}}^{\pm}(f|_U)=\mathbb{S}^1$.
Let $\tilde{f}\in \mathcal{M}^{\pm}(f|_U)$. From Proposition \ref{dd} it follows that $M_0^{\pm}(f|_U)$ is isolated.
Since $M_0^{\pm}(f|_U)=M_0^{\pm}(f)\cap U$, we may assume that $\bar p$ and $U$ are such that $M_0^{\pm}(f|_U)=\emptyset$.
Then, the Codazzi equation and \eqref{hpm} imply that 
$$h^{\pm}\equiv0\;\;\;\mbox{on}\;\;\; U\cap \inter\{p\in M:H(p)=0\}.$$
According to Proposition \ref{correspond}(i), there exists a harmonic function $\th^{\pm} \in \mathcal{C}^{\infty}(U;(0,2\pi))$ satisfying \eqref{sys} on $U$, 
such that the distortion differential of the pair $(f|_U,\tilde{f})$ is given by \eqref{general1} on $U$.
From \eqref{sys} and the above, it follows that the harmonic function $\th^{\pm}$ is constant on $U\cap \inter\{p\in M:H(p)=0\}$ and thus, 
constant on $U$. Then, \eqref{sys} yields that $h^{\pm}\equiv0$ on $U$. Proposition \ref{glphi} and \eqref{hpm}
imply that the Gauss lift $G_{\pm}$ of $f$ is vertically harmonic on $U$.
From \cite[Prop. 23(ii)]{PV} we know that $\|H\|^2$ is an absolute value type function on $U$.
Since $\|H\|^2$ vanishes on an open subset of $U$, it follows that $H\equiv0$ on $U$.
This is a contradiction, since $f|_U$ is non-minimal.

(ii) Assume that $f$ is non-minimal and let $p\in M$. There exists a simply-connected complex chart $(V,z)$ around $p$ such that $f|_V$ is
proper Bonnet. From part (i) it follows that $f|_V$ is non-minimal and Theorem \ref{SC}(iii) implies that either $\bar{\mathcal{M}}^{-}(f|_V)=\mathbb{S}^1$, 
or $\bar{\mathcal{M}}^{+}(f|_V)=\mathbb{S}^1$.
Assume that $\bar{\mathcal{M}}^{\pm}(f|_V)=\mathbb{S}^1$. 
By virtue of Remark \ref{Param}, we parametrize $\bar{\mathcal{M}}^{\pm}(f|_V)$ such that \eqref{param} is valid at $p$ and we write 
$\bar{\mathcal{M}}^{\pm}_p(f|_V)=\mathbb{S}^1$. For every sufficiently small simply-connected neighbourhood $U$ of $p$ we have that $U\subset V$
and therefore, $\bar{\mathcal{M}}^{\pm}_p(f|_{U})=\mathbb{S}^1$. 
Appealing to Theorem \ref{SC}(iv), it is clear that $\mathbb{S}^1_{\pm}$ is a submanifold of $\mathcal{M}(f|_U)$.
\qed
\end{proof}
\medskip

Let $f\colon M\to \Q^4_c$ be a non-minimal locally proper Bonnet surface. By virtue of Proposition \ref{non minimal}(ii) we
give the following definition; the surface $f$ is called {\emph {uniformly locally proper Bonnet}} if there exists a submanifold 
$L^n, 1\leq n\leq2$, of the torus $\mathbb{S}^1\times \mathbb{S}^1$, $\mathbb{S}^1\simeq \R/2\pi\mathbb{Z}$, with the property that for 
every $p\in M$, $L^n$ is also a submanifold of $\mathcal{M}(f|_{U})$ for every sufficiently small simply-connected neighbourhood $U$ of $p$.
In this case, $L^n$ is called {\emph{a deformation manifold for $f$}}.
Moreover, $f$ is called {\emph {locally flexible proper Bonnet}}
if the torus $\mathbb{S}^1\times \mathbb{S}^1$ is a deformation manifold for $f$.

\begin{lemma} \label{unm}
A surface $f\colon M\to \Q^4_c$ is uniformly locally proper Bonnet with deformation manifold $\mathbb{S}^1_{\pm}$ if and only if 
every point of $M$ has a simply-connected neighbourhood $U$ such that $\bar{\mathcal{M}}^{\pm}(f|_U)=\mathbb{S}^1$.
If $\mathbb{S}^1_{\pm}$ is a deformation manifold for $f$, then the set $M^{\pm}_0(f)$ is isolated.
\end{lemma}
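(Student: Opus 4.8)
The plan is to read both implications through the product description of the moduli space. For a non-minimal surface on a non-compact simply-connected domain $U$, Theorem \ref{SC}(iv) parametrizes $\mathcal{M}(f|_U)$ by $\bar{\mathcal{M}}^-(f|_U)\times\bar{\mathcal{M}}^+(f|_U)\subseteq \mathbb{S}^1\times\mathbb{S}^1$, where in each coordinate $0\in\mathbb{S}^1$ marks the trivial class $f|_U$ and, by Theorem \ref{SC}(i), each factor is either a single point or the whole circle. Under this identification $\mathbb{S}^1_\pm$ is a submanifold of $\mathcal{M}(f|_U)$ precisely when $\bar{\mathcal{M}}^\pm(f|_U)=\mathbb{S}^1$. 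First I would dispose of the direct implication: if $\mathbb{S}^1_\pm$ is a deformation manifold for $f$, then by definition every $p\in M$ admits arbitrarily small simply-connected neighbourhoods $U$ on which $\mathbb{S}^1_\pm$ is a submanifold of $\mathcal{M}(f|_U)$, hence $\bar{\mathcal{M}}^\pm(f|_U)=\mathbb{S}^1$, which already furnishes the asserted neighbourhood around each point.

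For the converse I would first verify that $f$ fits the framework in which the notion of deformation manifold is defined. Since $\bar{\mathcal{M}}^\pm(f|_U)$ is meaningful only for non-minimal restrictions, the hypothesis forces $f|_U$ to be non-minimal on the given $U$ around each point, whence $f$ is non-minimal; and Theorem \ref{SC}(iii) turns $\bar{\mathcal{M}}^\pm(f|_U)=\mathbb{S}^1$ into proper Bonnet-ness of $f|_U$, so $f$ is locally proper Bonnet. Proposition \ref{non minimal}(i) then gives $\inter\{p\in M:H(p)=0\}=\emptyset$, so the restriction of $f$ to any nonempty open subset is again non-minimal. The heart of the matter is to propagate $\bar{\mathcal{M}}^\pm=\mathbb{S}^1$ from the given $U$ to every smaller simply-connected $U'\subseteq U$. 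I would extract from $\bar{\mathcal{M}}^\pm(f|_U)=\mathbb{S}^1$, via Proposition \ref{correspond}(i), a harmonic $\th^\pm\in\mathcal{C}^\infty(U;(0,2\pi))$ solving \eqref{sys} on $U$, and then invoke Lemma \ref{Sys}(iii) to conclude that $h^\pm$ extends smoothly on $U$ and $A^\pm\equiv0$ there. Both conditions are purely local and survive restriction to $U'$; hence by Lemma \ref{Sys}(ii) the distinct-modulo-$2\pi$ solutions of \eqref{sys} on $U'$ form a circle, and Proposition \ref{correspond}(ii) converts this circle into noncongruent members of $\bar{\mathcal{M}}^\pm(f|_{U'})$, forcing $\bar{\mathcal{M}}^\pm(f|_{U'})=\mathbb{S}^1$ by Theorem \ref{SC}(i). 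Translating back through Theorem \ref{SC}(iv), $\mathbb{S}^1_\pm$ is a submanifold of $\mathcal{M}(f|_{U'})$ for every sufficiently small $U'$, which is exactly uniform local proper Bonnet-ness with deformation manifold $\mathbb{S}^1_\pm$.

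The step I expect to be most delicate is propagating the property across a subdomain $U'$ that contains a point of $M_0^\pm(f)$: there $M_0^\pm(f|_{U'})\neq\emptyset$, so the converse part of Theorem \ref{local deformability} is unavailable. The reason the argument above still goes through uniformly is that Lemma \ref{Sys} and Proposition \ref{correspond} are formulated so as to tolerate an isolated pseudo-umbilic set provided $h^\pm$ extends smoothly across it, which is exactly what Lemma \ref{Sys}(iii) supplies on $U$ and hence on $U'$. Finally, for the last assertion, each point of $M$ has a neighbourhood $U$ with $\bar{\mathcal{M}}^\pm(f|_U)=\mathbb{S}^1$; applying Theorem \ref{SC}(iii) and Proposition \ref{dd} to $f|_U$ shows that $M_0^\pm(f)\cap U=M_0^\pm(f|_U)$ consists of isolated points, and since such $U$ cover $M$, the set $M_0^\pm(f)$ is isolated.
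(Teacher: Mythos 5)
Your proof is correct and follows essentially the same route as the paper: the forward implication and the isolatedness of $M_0^{\pm}(f)$ are handled exactly as in the text via Theorem \ref{SC}(iv) and Proposition \ref{dd}, while your propagation of $\bar{\mathcal{M}}^{\pm}=\mathbb{S}^1$ to smaller subdomains through Proposition \ref{correspond} and Lemma \ref{Sys} is precisely the mechanism the paper invokes by referring back to the proof of Proposition \ref{non minimal}(ii). You merely spell out explicitly the restriction step that the paper leaves implicit.
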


\begin{proof}
Assume that $\mathbb{S}^1_{\pm}$ is a deformation manifold for $f$. Then, every point of $M$ has a simply-connected neighbourhood $U$ 
such that $\mathbb{S}^1_{\pm}$ is a submanifold of $\mathcal{M}(f|_{U})$.
From Theorem \ref{SC}(iv) it follows that $\bar{\mathcal{M}}^{\pm}(f|_U)=\mathbb{S}^1$. The converse follows 
in a similar manner with the proof of Proposition \ref{non minimal}(ii).

Suppose now that that $\mathbb{S}^1_{\pm}$ is a deformation manifold for $f$ and arguing indirectly, 
assume that $M^{\pm}_0(f)$ has an accumulation point $p$. Then, there exists a neighbourhood $U$ of $p$ such that $\bar{\mathcal{M}}^{\pm}(f|_U)=\mathbb{S}^1$. 
Proposition \ref{dd} implies that $M_0^{\pm}(f|_U)$ is isolated. This is a contradiction, since $M_0^{\pm}(f|_U)=M_0^{\pm}(f)\cap U$.
\qed
\end{proof}
\medskip

For the proof of the following theorem, we recall (cf. \cite{EGT}) that if $M$ is compact and $u\not\equiv0$ 
is an absolute value function on $M$, then
$$\int_{M} \Delta\log u=-2\pi N(u),$$
where $N(u)$ is the number of zeros of $u$, counted with multiplicities.

\begin{theorem} \label{ess}
Let $f\colon M\to \Q^4_c$ be a non-minimal, compact oriented surface.
The surface $f$ is uniformly locally proper Bonnet with deformation manifold $\mathbb{S}^1_{\pm}$ 
if and only if the Gauss lift $G_{\pm}$ of $f$ is vertically harmonic and non-holomorphic.
\end{theorem}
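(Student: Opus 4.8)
The plan is to reduce both implications, via Lemma \ref{unm}, to the local assertion that every point of $M$ has a simply-connected neighbourhood $U$ with $\bar{\mathcal{M}}^{\pm}(f|_U)=\mathbb{S}^1$, and to translate vertical harmonicity of $G_\pm$ into the vanishing of the function $h^\pm$ of \eqref{hpm}. Recall from Proposition \ref{glphi} that $G_\pm$ is vertically harmonic exactly when $\Phi^\pm$ is holomorphic, which on any chart free of $M_0^\pm(f)$ forces $h^\pm\equiv0$ by \eqref{hpm}, while non-holomorphicity of $G_\pm$ means, by Proposition \ref{holomorphic Gl} and Lemma \ref{pseudo}(ii), that $\Phi^\pm\not\equiv0$. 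I would also record at the outset that if $G_\pm$ is vertically harmonic then $H^\pm$ is anti-holomorphic (Proposition \ref{glphi}); since $H^\pm$ vanishes precisely where $H$ does, it cannot vanish on an open set unless $H\equiv0$ (cf. Lemma \ref{zeros}), so every restriction $f|_U$ of a non-minimal $f$ is again non-minimal.

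For the ``if'' direction, assume $G_\pm$ is vertically harmonic and non-holomorphic. Then $\Phi^\pm$ is holomorphic and not identically zero, so by Lemma \ref{zeros} its zero set $M_0^\pm(f)$ is isolated. Fix $p\in M$ and a simply-connected $U$ with $U\cap M_0^\pm(f)\subseteq\{p\}$. On $U\smallsetminus\{p\}$ we have $h^\pm\equiv0$, which extends smoothly by $0$ across $p$, so $h^\pm\equiv0$ on $U$ and hence $A^\pm\equiv0$ by \eqref{Ah}. The system \eqref{sys} then reads $\th^\pm_{\bar z}=\th^\pm_z=0$, whose distinct-modulo-$2\pi$ solutions are the constants and form an $\mathbb{S}^1$ by Lemma \ref{Sys}(ii). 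Proposition \ref{correspond}(ii) assigns to each such constant in $(0,2\pi)$ a noncongruent Bonnet mate in $\mathcal{M}^\pm(f|_U)$, producing infinitely many; since $f|_U$ is non-minimal and $U$ is non-compact and simply-connected, Theorem \ref{SC}(i) forces $\bar{\mathcal{M}}^\pm(f|_U)=\mathbb{S}^1$. By Lemma \ref{unm}, $f$ is uniformly locally proper Bonnet with deformation manifold $\mathbb{S}^1_\pm$.

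For the ``only if'' direction, assume $f$ is uniformly locally proper Bonnet with deformation manifold $\mathbb{S}^1_\pm$; by Lemma \ref{unm}, $M_0^\pm(f)$ is isolated and each point has a simply-connected $U$ with $\bar{\mathcal{M}}^\pm(f|_U)=\mathbb{S}^1$. Theorem \ref{local deformability}(ii) applied on each $U$ gives the Ricci-type equation \eqref{fund} globally on $M$, and Proposition \ref{IPU} shows $\|\mathcal{H}^\pm\|$ is of absolute value type with the multiplicity of each zero $p\in M_0^\pm(f)$ equal to the vanishing order $m_p$ of $\Phi^\pm$ at $p$. Integrating \eqref{fund} over the compact $M$, the left side equals $-2\pi\sum_p m_p-2\pi(2\chi\pm\chi_N)$ by the absolute value type integral formula and the Gauss--Bonnet relations, while the right side $\int_M\|\tau^v(G_\pm)\|^2/(4\|\mathcal{H}^\pm\|^2)\,dM$ is non-negative. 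The crux is to evaluate $2\chi\pm\chi_N$ by the index theorem: Proposition \ref{IPU}(i) gives $\Phi^\pm=z^{m_p}\hat\Phi^\pm$ near each $p$, so $I^\pm(p)=-m_p$ by Proposition \ref{IndPU}, whence $2\chi\pm\chi_N=-\sum_p m_p$ by Theorem \ref{Hopf type}. Substituting makes the integrated left side vanish, forcing the non-negative integrand on the right to vanish identically; thus $\tau^v(G_\pm)\equiv0$ on $M\smallsetminus M_0^\pm(f)$, and together with Theorem \ref{local deformability}(i) at the points of $M_0^\pm(f)$ we conclude $G_\pm$ is vertically harmonic. Finally $G_\pm$ is non-holomorphic, since holomorphicity would give $\Phi^\pm\equiv0$ (Proposition \ref{holomorphic Gl}, Lemma \ref{pseudo}(ii)), contradicting that the isolated set $M_0^\pm(f)$ is a proper subset of $M$.

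The main obstacle is precisely this last evaluation: the integrated identity \eqref{fund} by itself yields only the inequality $2\chi\pm\chi_N\le-\sum_p m_p$, and it is the exact index computation $I^\pm(p)=-m_p$ --- tying the analytic multiplicity of the zeros of $\|\mathcal{H}^\pm\|$ to the topological index $I^\pm(p)$ of the mixed connection form --- that turns the inequality into an equality and thereby pins down $\tau^v(G_\pm)\equiv0$. Care must also be taken that $\|\mathcal{H}^\pm\|$ is genuinely of absolute value type across $M_0^\pm(f)$ so that $\Delta\log\|\mathcal{H}^\pm\|$ is smooth there and the integral formula applies.
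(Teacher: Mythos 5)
Your proof is correct and follows the paper's argument in its essential (``only if'') direction: the same integration of \eqref{fund} over $M$, combined with the absolute value type formula for $\int_M\Delta\log\|\mathcal{H}^{\pm}\|$ and the index computation $I^{\pm}(p)=-m_p$ from Propositions \ref{IPU} and \ref{IndPU} and Theorem \ref{Hopf type}, to force $\tau^v(G_{\pm})\equiv0$. The only difference is that for the converse you rederive the local statement $\bar{\mathcal{M}}^{\pm}(f|_U)=\mathbb{S}^1$ directly from Lemma \ref{Sys}, Proposition \ref{correspond}(ii) and Theorem \ref{SC}(i) (using $h^{\pm}\equiv0$), where the paper instead cites results from \cite{PV}; this is a valid, slightly more self-contained route to the same conclusion.
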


\begin{proof}
Assume that $\mathbb{S}^1_{\pm}$ is a deformation manifold for $f$.
Lemma \ref{unm} yields that $M_0^{\pm}(f)$ is isolated. 
Then, Lemma \ref{pseudo}(ii) and Proposition \ref{holomorphic Gl} imply that the Gauss lift $G_{\pm}$ of $f$ is non-holomorphic.
From Lemma \ref{unm} and Theorem \ref{local deformability}(ii), it follows that \eqref{fund} is valid on the whole $M$.
Integrating \eqref{fund} on $M$, yields
\be \label{integrals}
\int_{M}\Delta\log \|\mathcal{H}^{\pm}\| -\int_{M}(2K\pm K_N)=\int_{M}\frac{\|\tau^{v}(G_{\pm})\|^2}{4\|\mathcal{H}^{\pm}\|^2}.
\ee
From Lemma \ref{unm} and Proposition \ref{IPU}(ii), it follows that $\|\mathcal{H}^{\pm}\|$ is an absolute value function on $M$
with isolated zeros. Therefore, we have
$$\int_{M}\Delta\log \|\mathcal{H}^{\pm}\|=-2\pi N(\|\mathcal{H}^{\pm}\|).$$
On the other hand, Theorem \ref{Hopf type} and Propositions \ref{IndPU} and \ref{IPU}(i), imply that 
$$\int_{M}(2K\pm K_N)=-2\pi N(\|\mathcal{H}^{\pm}\|).$$
From the above two relations it follows that the left hand side of \eqref{integrals} vanishes identically. 
Therefore, \eqref{integrals} implies that $\|\tau^{v}(G_{\pm})\|\equiv0$ on $M$, and this shows that the Gauss lift $G_{\pm}$ of $f$ is vertically harmonic. 

Conversely, assume that the Gauss lift $G_{\pm}$ of $f$ is vertically harmonic and non-holomorphic. 
By virtue of Lemma \ref{pseudo}(ii),
Proposition \ref{holomorphic Gl} implies that $M\neq M_0^{\pm}(f)$. 
From \cite[Prop. 23(ii)]{PV} it follows that $M_0^{\pm}(f)$ is isolated, and that the mean curvature vector field of $f$ 
does not vanish on any open subset of $M$. 
Then \cite[Thm. 4, Prop. 25]{PV} imply that every point of $M$ has a simply-connected
neighbourhood $U$ such that $\bar{\mathcal{M}}^{\pm}(f|_U)=\mathbb{S}^1$. The proof now follows from Lemma \ref{unm}.
\qed
\end{proof}
\medskip

\noindent{\emph{Proof of Theorem \ref{ULPB}:}}
By virtue of Lemma \ref{non minimal}(ii), either $\mathbb{S}^1_-$, or $\mathbb{S}^1_+$, is a deformation manifold for $f$.
The proof follows immediately from Theorem \ref{ess}.
\qed
\medskip

\noindent{\emph{Proof of Theorem \ref{SPB}:}}
For minimal surfaces, the result is known (cf. \cite{Joh, Vl1}).
Let $f\colon M\to \Q^4_c$ be a non-minimal, compact superconformal surface and arguing indirectly, assume that $f$ is locally proper Bonnet. 

We claim that the normal curvature of $f$ does not change sign. By virtue of Lemma \ref{non minimal}(ii) and
Theorem \ref{SC}(iii), every point of $M$ has a neighbourhood $U$ such that either $\bar{\mathcal{M}}^{-}(f|_U)=\mathbb{S}^1$,
or $\bar{\mathcal{M}}^{+}(f|_U)=\mathbb{S}^1$. Then, Proposition \ref{dd} implies that either $M_0^-(f|_U)$, or $M_0^+(f|_U)$ is isolated.
Since $M_0^{\pm}(f|_U)=M_0^{\pm}(f)\cap U$ and $M_1(f)= M_0^-(f)\cap M_0^+(f)$, we deduce that $M_1(f)$ is isolated.
From Lemma \ref{pseudo}(ii) it follows that the normal curvature of $f$ vanishes at isolated points only, and this proves the claim.

Assume that $\pm K_N\geq0$. Lemma \ref{pseudo}(ii) implies that $\Phi^{\pm}\equiv0$. Therefore, $\mathcal{M}^{\pm}(f|_U)=\emptyset$
for every $U\subset M$. Since $f$ is locally proper Bonnet, from Theorem \ref{SC}(iii) and Lemma \ref{unm} it follows that $f$ 
is uniformly locally proper Bonnet with deformation manifold $\mathbb{S}^1_\mp$.
Then, Theorem \ref{ess} implies that the Gauss lift $G_{\mp}$ is vertically harmonic and non-holomorphic. On the other hand,
since $\Phi^{\pm}\equiv0$, from Proposition \ref{glphi} it follows that $G_{\pm}$ is vertically harmonic.
Since both Gauss lifts of $f$ are vertically harmonic, the mean curvature vector field of $f$ is parallel in
the normal connection. Therefore, $K_N\equiv0$ on $M$. Proposition \ref{holomorphic Gl} then implies that 
$G_{\mp}$ is holomorphic, which is a contradiction.
\qed
\medskip

\begin{corollary} \label{genus zero}
There do not exist uniformly locally proper Bonnet surfaces in $\Q^4_c$ of genus zero.
\end{corollary}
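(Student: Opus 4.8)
The plan is to reduce this statement to Theorem \ref{SPB} by showing that a uniformly locally proper Bonnet surface of genus zero is forced to be superconformal. Let $f\colon M\to\Q^4_c$ be such a surface. By the very definition of uniformly locally proper Bonnet, $f$ is non-minimal, and since $M$ is a closed oriented surface of genus zero it is homeomorphic to $\mathbb{S}^2$. The first step is to apply Theorem \ref{ULPB}: it yields that $f$ has a vertically harmonic and non-holomorphic Gauss lift $G_{\pm}$. Note that $f$ is, in particular, locally proper Bonnet, so the target contradiction is with Theorem \ref{SPB}.

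The decisive step, which I expect to be the main obstacle, is to promote the vertical harmonicity of $G_{\pm}$ to superconformality of $f$ using the genus-zero hypothesis. This is exactly the situation treated inside the proof of Proposition \ref{mssc}: a non-minimal compact surface homeomorphic to $\mathbb{S}^2$ that carries a vertically harmonic Gauss lift is superconformal by \cite[Thm. 3]{PV}. Granting this, $f$ is a compact, oriented, superconformal surface which is locally proper Bonnet, and this contradicts Theorem \ref{SPB}, finishing the proof. The delicacy lies entirely here, since the index/Gauss--Bonnet route (through Theorem \ref{Hopf type} and Proposition \ref{IndPU}) only produces the relation $2\chi\pm\chi_N=\sum_{p}I^{\pm}(p)\le 0$, i.e.\ $\pm\chi_N\le -4$ for $\chi=2$, which is not by itself contradictory; the superconformality reduction is what makes Theorem \ref{SPB} applicable.

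Finally I would verify that superconformality is consistent with the non-holomorphicity of $G_{\pm}$ provided by Theorem \ref{ULPB}, so that no contradiction is forced prematurely and the argument genuinely rests on Theorem \ref{SPB}. By Proposition \ref{holomorphic Gl}, holomorphicity of $G_{\pm}$ would be equivalent to $f$ being superconformal with $\pm K_N\ge 0$, so the non-holomorphicity merely rules out that single sign. Since a superconformal surface with isolated $M_1(f)$ has $K_N$ of a single sign on the connected $M$ by Lemma \ref{pseudo}(ii), the surface $f$ is superconformal with $\mp K_N\ge 0$, which is fully compatible with $G_{\pm}$ being vertically harmonic and non-holomorphic. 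Thus once the superconformality reduction is established, Theorem \ref{SPB} concludes the argument at once.
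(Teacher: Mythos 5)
Your proposal is correct and follows essentially the same route as the paper: the paper also argues by contradiction, extracts a vertically harmonic Gauss lift (via Proposition \ref{non minimal}(ii) and Theorem \ref{ess}, which is what Theorem \ref{ULPB} packages), invokes \cite[Thm.~3]{PV} to get superconformality from the genus-zero hypothesis, and concludes by contradiction with Theorem \ref{SPB}. The consistency check in your last paragraph is not needed but does no harm.
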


\begin{proof}
Arguing indirectly, assume that $M$ is homeomorphic to $\mathbb{S}^2$ and let $f\colon M\to \Q^4_c$ 
be a uniformly locally proper Bonnet surface. By virtue of Lemma \ref{non minimal}(ii), 
assume that $\mathbb{S}^1_\pm$ is a deformation manifold for $f$.
Theorem \ref{ess} implies that the Gauss lift $G_{\pm}$ of $f$ is vertically harmonic.
Then, from \cite[Thm. 3]{PV} it follows that $f$ is superconformal. This contradicts Theorem \ref{SPB}.
\qed
\end{proof}
\medskip

\noindent{\emph{Proof of Theorem \ref{LFPB}:}}
Assume that $f$ is locally flexible proper Bonnet. From \cite{ET2} it follows that $f$ is non-minimal.
Since both $\mathbb{S}^1_-$ and $\mathbb{S}^1_+$ are
deformation manifolds for $f$, Theorem \ref{ess} implies that both Gauss lifts of $f$ are vertically harmonic. 
Therefore, $f$ has nonvanishing parallel mean curvature vector field. Moreover, Corollary \ref{genus zero} yields that $genus(M)>0$.

Conversely, assume that $f$ has nonvanishing parallel mean curvature vector field and that $genus(M)>0$. 
Since $M$ is not homeomorphic to $\mathbb{S}^2$, it follows that $f$ is not totally umbilical 
and thus, Lemma \ref{pseudo}(i) yields that the Hopf differential $\Phi$ of $f$ does not vanish identically on $M$.
On the other hand, the Codazzi equation implies that $\Phi$ is holomorphic. Therefore, from
Lemmas \ref{zeros} and \ref{pseudo}(i) it follows that the umbilic points of $f$ are isolated.
Then, \cite[Prop. 26(iii)]{PV} implies that every point of $M$ has a simply-connected neighbourhood $U$ such that
$\mathcal{M}(f|_U)=\mathbb{S}^1\times \mathbb{S}^1$. 
This completes the proof.
\qed
\medskip

An immediate consequence of Theorems \ref{Q3} and \ref{LFPB} is the following result due to Umehara \cite{U}.

\begin{theorem}
Let $F\colon M\to \Q^3_c$ be a non-minimal, compact oriented surface with $genus(M)>0$. 
The surface $F$ is locally proper Bonnet if and only if it has constant mean curvature.
\end{theorem}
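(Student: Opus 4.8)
The plan is to pass from $\Q^3_c$ to $\Q^4_c$ through the composition $f=j\circ F$ with a totally geodesic inclusion $j\colon\Q^3_c\to\Q^4_c$, and then to invoke Theorems \ref{Q3} and \ref{LFPB}. First I would record the elementary but crucial dictionary between the two mean curvatures. If $\xi$ denotes the unit normal of $F$ in $\Q^3_c$ and $h$ its mean curvature function, then the mean curvature vector field of $f$ is $H=h\,j_*\xi$. Because $\Q^3_c$ is totally geodesic in $\Q^4_c$ and $N_FM$ is a line bundle, $j_*\xi$ is parallel in the normal connection $\nap$ of $f$, whence $\nap H=(dh)\,j_*\xi$. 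Thus $H$ is a nonvanishing parallel normal field if and only if $h$ is a nonzero constant; since $F$ is non-minimal, this says precisely that $F$ has constant mean curvature.

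For the forward implication, assume $F$ is locally proper Bonnet. By Lemma \ref{comp}, each Bonnet mate of a proper Bonnet restriction $F|_U$ produces, via $\tilde F\mapsto j\circ\tilde F$, a Bonnet mate of $f|_U$ lying in $\mathcal{M}^*(f|_U)$, so $f$ is itself locally proper Bonnet and, like $F$, non-minimal. Proposition \ref{non minimal}(i) then gives $\inter\{p:H(p)=0\}=\emptyset$, equivalently $\inter\{h=0\}=\emptyset$, so every sufficiently small simply-connected neighbourhood $U$ may be taken with $F|_U$ non-minimal and proper Bonnet. Applying Theorem \ref{Q3} to $f|_U=j\circ F|_U$, the infinitely many noncongruent mates of $F|_U$ yield infinitely many noncongruent mates of $f|_U$; this excludes the alternative of exactly three mates and forces $f|_U$ to be flexible proper Bonnet, i.e. $\mathcal{M}(f|_U)=\mathbb{S}^1\times\mathbb{S}^1$. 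Since this holds for all small $U$, the torus is a deformation manifold and $f$ is locally flexible proper Bonnet; Theorem \ref{LFPB} then shows that $f$ has nonvanishing parallel mean curvature vector field, and by the dictionary above $h$ is constant.

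For the converse I would argue classically: if $F$ has constant nonzero mean curvature, then being compact of positive genus it is not totally umbilical, and by real-analyticity of constant mean curvature immersions it is not totally umbilical on any open set. Hence every point has a simply-connected neighbourhood $U$ on which $F|_U$ is a non-totally-umbilical constant mean curvature surface, which is proper Bonnet by the classical theorem of Bonnet and Lawson \cite{B,L}; thus $F$ is locally proper Bonnet. Equivalently, the dictionary together with Theorem \ref{LFPB} exhibits $f$ as locally flexible proper Bonnet, which is the $\Q^4_c$ counterpart of this statement. I expect the only delicate points to be bookkeeping rather than computation: verifying that non-minimality and the proper Bonnet property descend to arbitrarily small neighbourhoods (handled by Proposition \ref{non minimal}(i) and Lemma \ref{comp}), and confirming that it is the flexible case of Theorem \ref{Q3}, rather than the three-mate case, that occurs — which is forced by the infinitude of mates of $F|_U$.
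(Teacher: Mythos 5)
Your proposal is correct, and the forward direction is exactly the paper's route: pass to $f=j\circ F$ via the totally geodesic inclusion, use Lemma \ref{comp} and Theorem \ref{Q3} to see that the infinitude of mates of $F|_U$ forces $f|_U$ into the flexible alternative, and then read off parallelism of $H$ from Theorem \ref{LFPB} and constancy of $h$ from the dictionary $\nap H=(dh)\,j_*\xi$. Where you diverge is the converse: the paper simply reads the biconditional ``$F$ locally proper Bonnet $\Leftrightarrow$ $f$ locally flexible'' out of Theorem \ref{Q3} and combines it with Theorem \ref{LFPB}, whereas you argue classically that a compact CMC surface of positive genus has holomorphic, not identically vanishing Hopf differential, hence isolated umbilics, so every small simply-connected restriction is a non-totally-umbilical CMC surface and is proper Bonnet by Bonnet--Lawson. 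Your version is arguably the safer one, since Theorem \ref{Q3} as stated presupposes that $F$ is already a Bonnet surface, so extracting the implication ``$f$ locally flexible $\Rightarrow$ $F$ locally proper Bonnet'' from it requires exactly the classical input you supply; the paper's version buys brevity by keeping the whole argument inside the $\Q^4_c$ machinery. Both are valid, and the only bookkeeping point worth making explicit in your forward direction is that noncongruent mates of $F|_U$ in $\Q^3_c$ yield noncongruent mates of $f|_U$ in $\Q^4_c$, which follows because the distortion differential of $(f|_U, j\circ\tilde F)$ determines the Hopf differential of $\tilde F$ relative to $F|_U$.
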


\begin{proof}
Let $j\colon \Q^3_c\to \Q^4_c$ be a totally geodesic inclusion and set $f=j\circ F$.
From Theorem \ref{Q3} it follows that $F$ is locally proper Bonnet if and only if $f$ is locally flexible. 
Theorem \ref{LFPB} implies that $f$ is locally flexible if and only if it has parallel mean curvature vector field, or equivalently, 
if the mean curvature of $F$ is constant.
\qed
\end{proof}

\begin{bibdiv}
\begin{biblist}

\bib{As}{article}{
   author={Asperti, A.C.},
   title={Immersions of surfaces into 4-dimensional spaces with nonzero normal curvature},
   journal={Ann. Mat. Pura Appl. (4)},
   volume={125},
   date={1980},
   number={1},
   pages={313–-328},
}

\bib{BE}{article}{
   author={Bobenko, A.},
   author={Eitner, U.},
   title={Bonnet surfaces and Painlev\'{e} equations},
   journal={J. Reine Angew. Math.},
   volume={499},
   date={1998},
   pages={47--79},
}

\bib{BWW}{article}{
   author={Bolton, J.},
   author={Willmore, T.J.},
   author={Woodward, L.M.},
   title={Immersions of surfaces into space forms},
   conference={
      title={Global differential geometry and global analysis 1984 (Berlin,
      1984)},
   },
   book={
      series={Lecture Notes in Math.},
      volume={1156},
      publisher={Springer, Berlin},
   },
   date={1985},
   pages={46--58},
}

\bib{B}{article}{
   author={Bonnet, O.},
   title={M\'{e}moire sur la th\'{e}orie des surfaces applicables sur une surface don\'{n}e, deuxi\`{e}me partie},
   journal={J. \'{E}c. Polyt.},
   volume={42},
   date={1867},
   pages={1--151},
}

\bib{Cal}{article}{
   author={Calabi, E.},
   title={Minimal immersions of surfaces in Euclidean spheres},
   journal={J. Differential Geom.},
   volume={1},
   date={1967},
   pages={111--125},
}

\bib{Ca}{article}{
   author={Cartan, {\'E}.},
   title={Sur les couples de surfaces applicables avec conservation des
   courbures principales},
   journal={Bull. Sci. Math. (2)},
   volume={66},
   date={1942},
   pages={55--72, 74--85},
}

\bib{CU}{article}{
   author={Castro, I.},
   author={Urbano, F.},
   title={Lagrangian surfaces in the complex Euclidean plane with conformal
   Maslov form},
   journal={Tohoku Math. J. (2)},
   volume={45},
   date={1993},
   number={4},
   pages={565--582},
}

\bib{Chen}{article}{
   author={Chen, B.-Y.},
   title={On the surface with parallel mean curvature vector},
   journal={Indiana Univ. Math. J.},
   volume={22},
   date={1972/73},
   pages={655--666},
}

\bib{Ch0}{article}{
   author={Chern, S.S.},
   title={La g\'eometri\'e des sous-vari\'et\'es d'un espace euclidien \`a plusieurs dimensions},
   journal={Enseign. Math.},
   volume={40},
   date={1955},
   pages={26--46},
}

\bib{Ch}{article}{
   author={Chern, S.S.},
   title={On the minimal immersions of the two-sphere in a space of constant
   curvature},
   conference={
      title={Problems in analysis},
      address={Lectures at the Sympos. in honor of Salomon Bochner,
      Princeton Univ., Princeton, N.J.},
      date={1969},
   },
   book={
      publisher={Princeton Univ. Press, Princeton, N.J.},
   },
   date={1970},
   pages={27--40},
}

\bib{Ch2}{article}{
   author={Chern, S.S.},
   title={Deformation of surfaces preserving principal curvatures},
   conference={
      title={Differential geometry and complex analysis},
   },
   book={
      publisher={Springer, Berlin},
   },
   date={1985},
   pages={155--163},
}

\bib{CGS}{article}{
   author={Cie\'{s}li\'{n}ski, J.},
   author={Goldstein, P.},
   author={Sym, A.},
   title={Isothermic surfaces in $\bold E^3$ as soliton surfaces},
   journal={Phys. Lett. A},
   volume={205},
   date={1995},
   number={1},
   pages={37--43},
}

\bib{CK}{article}{
   author={Colares, A.G.},
   author={Kenmotsu, K.},
   title={Isometric deformations of surfaces in $\R^3$ preserving the mean curvature function},
   journal={Pacific J. Math.},
   volume={136},
   date={1989},
   number={1},
   pages={71--80},
}

\bib{DG2}{article}{
   author={Dajczer, M.},
   author={Gromoll, D.},
   title={Real Kaehler submanifolds and uniqueness of the Gauss map},
   journal={J. Differential Geom.},
   volume={22},
   date={1985},
   number={1},
   pages={13--28},
}

\bib{DJ}{article}{
    author={Dajczer, M.},
    author={Jimenez, M.I.},    
    title={Infinitesimal bendings of submanifolds},
    year={2019},
    eprint={1911.01863},
    archivePrefix={arXiv},
    primaryClass={math.DG}
}

\bib{DT}{article}{
   author={Dajczer, M.},
   author={Tojeiro, R.},
   title={All superconformal surfaces in $\Bbb R^4$ in terms of minimal
   surfaces},
   journal={Math. Z.},
   volume={261},
   date={2009},
   number={4},
   pages={869--890},
}

\bib{DTB}{book}{
   author={Dajczer, M.},
   author={Tojeiro, R.},
   title={Submanifold theory beyond an introduction},
   series={Universitext},
   publisher={Springer, New York},
   date={2019},
   pages={xx+628},
}

\bib{DV2}{article}{
   author={Dajczer, M.},
   author={Vlachos, Th.},
   title={The infinitesimally bendable Euclidean hypersurfaces},
   journal={Ann. Mat. Pura Appl. (4)},
   volume={196},
   date={2017},
   number={6},
   pages={1961--1979},
}

\bib{ES}{article}{
   author={Eells, J.},
   author={Salamon, S.},
   title={Twistorial construction of harmonic maps of surfaces into
   four-manifolds},
   journal={Ann. Scuola Norm. Sup. Pisa Cl. Sci. (4)},
   volume={12},
   date={1985},
   number={4},
   pages={589--640 (1986)},
}

\bib{EGT}{article}{
   author={Eschenburg, J.H.},
   author={Guadalupe, I.V.},
   author={Tribuzy, R.},
   title={The fundamental equations of minimal surfaces in ${\bf C}{\rm
   P}^2$},
   journal={Math. Ann.},
   volume={270},
   date={1985},
   number={4},
   pages={571--598},
}

\bib{ET}{article}{
   author={Eschenburg, J.H.},
   author={Tribuzy, R.},
   title={Branch Points of Conformal Mappings of Surfaces},
   journal={Math. Ann.},
   volume={279},
   date={1988},
   number={4},
   pages={621--633},
}

\bib{ET2}{article}{
   author={Eschenburg, J.H.},
   author={Tribuzy, R.},
   title={Constant mean curvature surfaces in $4$-space forms},
   journal={Rend. Sem. Mat. Univ. Padova},
   volume={79},
   date={1988},
   pages={185--202},
}

\bib{Fr}{article}{
   author={Friedrich, T.},
   title={On surfaces in four-spaces},
   journal={Ann. Global Anal. Geom.},
   volume={2},
   date={1984},
   number={3},
   pages={257--287},
}

\bib{Fu}{article}{
   author={Fujioka, A.},
   title={Bonnet surfaces with non-flat normal bundle in the hyperbolic
   four-space},
   journal={Far East J. Math. Sci. (FJMS)},
   volume={30},
   date={2008},
   number={2},
}

\bib{GMM}{article}{
   author={G{\'a}lvez, J.A.},
   author={Mart{\'{\i}}nez, A.},
   author={Mira, P.},
   title={The Bonnet problem for surfaces in homogeneous 3-manifolds},
   journal={Comm. Anal. Geom.},
   volume={16},
   date={2008},
   number={5},
   pages={907--935},
}

\bib{GS}{article}{
   author={Garcia, R.},
   author={Sotomayor, J.},
   title={Lines of axial curvature on surfaces immersed in $\bold R^4$},
   journal={Differential Geom. Appl.},
   volume={12},
   date={2000},
   number={3},
   pages={253--269},
}

\bib{Grau}{article}{
   author={Graustein, W.C.},
   title={Applicability with preservation of both curvatures},
   journal={Bull. Amer. Math. Soc.},
   volume={30},
   date={1924},
   pages={19--23},
}

\bib{GR}{article}{
   author={Guadalupe, I.V.},
   author={Rodriguez, L.},
   title={Normal curvature of surfaces in space forms},
   journal={Pacific J. Math.},
   volume={106},
   date={1983},
   number={1},
   pages={95--103},
}

\bib{GGTG}{article}{
   author={Gutierrez, C.},
   author={Guadalupe, I.},
   author={Tribuzy, R.},
   author={Gu\'{\i}\~{n}ez, V.},
   title={Lines of curvature on surfaces immersed in ${\bf R}^4$},
   journal={Bol. Soc. Brasil. Mat. (N.S.)},
   volume={28},
   date={1997},
   number={2},
   pages={233--251},
}

\bib{Ha}{article}{
   author={Hasegawa, K.},
   title={On surfaces whose twistor lifts are harmonic sections},
   journal={J. Geom. Phys.},
   volume={57},
   date={2007},
   number={7},
   pages={1549--1566},
}

\bib{H-J}{book}{
   author={Hertrich-Jeromin, U.},
   title={Introduction to M\"{o}bius differential geometry},
   series={London Mathematical Society Lecture Note Series},
   volume={300},
   publisher={Cambridge University Press, Cambridge},
   date={2003},
   pages={xii+413},
   isbn={0-521-53569-7},
}

\bib{IMS}{article}{
   author={Ivanova-Karatopraklieva, I.},
   author={Markov, P. E.},
   author={Sabitov, I.},
   title={Bending of surfaces. III},
   language={Russian, with English and Russian summaries},
   journal={Fundam. Prikl. Mat.},
   volume={12},
   date={2006},
   number={1},
   pages={3--56},
   issn={1560-5159},
   translation={
      journal={J. Math. Sci. (N.Y.)},
      volume={149},
      date={2008},
      number={1},
      pages={861--895},
      issn={1072-3374},
   },
}

\bib{IS}{article}{
   author={Ivanova-Karatopraklieva, I.},
   author={Sabitov, I.},
   title={Deformation of surfaces. I},
   language={Russian},
   note={Translated in J. Math. Sci. {\bf 70} (1994), no. 2, 1685--1716},
   conference={
      title={Problems in geometry, Vol. 23 (Russian)},
   },
   book={
      series={Itogi Nauki i Tekhniki},
      publisher={Akad. Nauk SSSR, Vsesoyuz. Inst. Nauchn. i Tekhn. Inform.,
   Moscow},
   },
   date={1991},
   pages={131--184, 187},
}

\bib{JMNB}{book}{
   author={Jensen, G.R.},
   author={Musso, E.},
   author={Nicolodi, L.},
   title={Surfaces in classical geometries: a treatment by moving frames},
   series={Universitext},
   publisher={Springer, Cham},
   date={2016},
}

\bib{JMN}{article}{
   author={Jensen, G.R.},
   author={Musso, E.},
   author={Nicolodi, L.},
   title={Compact surfaces with no Bonnet mate},
   journal={J. Geom. Anal.},
   volume={28},
   date={2018},
   number={3},
   pages={2644–-2652},
}

\bib{JR}{article}{
   author={Jensen, G.R.},
   author={Rigoli, M.},
   title={Twistor and Gauss lifts of surfaces in four-manifolds},
   conference={
      title={Recent developments in geometry},
      address={Los Angeles, CA},
      date={1987},
   },
   book={
      series={Contemp. Math.},
      volume={101},
      publisher={Amer. Math. Soc., Providence, RI},
   },
   date={1989},
   pages={197--232},
}

\bib{Jim}{article}{
   author={Jimenez, M.I.},
   title={Infinitesimal bendings of complete Euclidean hypersurfaces},
   journal={Manuscripta Math.},
   volume={157},
   date={2018},
   number={3-4},
   pages={513--527},
}

\bib{Joh}{article}{
   author={Johnson, G.D.},
   title={An intrinsic characterization of a class of minimal surfaces in
   constant curvature manifolds},
   journal={Pacific J. Math.},
   volume={149},
   date={1991},
   number={1},
   pages={113--125},
}

\bib{KPP}{article}{
   author={Kamberov, G.},
   author={Pedit, F.},
   author={Pinkall, U.},
   title={Bonnet pairs and isothermic surfaces},
   journal={Duke Math. J.},
   volume={92},
   date={1998},
   number={3},
   pages={637--644},
}

\bib{K}{article}{
   author={Kenmotsu, K.},
   title={An intrinsic characterization of $H$-deformable surfaces},
   journal={J. London Math. Soc. (2)},
   volume={49},
   date={1994},
   number={3},
   pages={555--568},
}

\bib{LP}{article}{
   author={Lam, W.Y.},
   author={Pinkall, U.},
   title={Isothermic triangulated surfaces},
   journal={Math. Ann.},
   volume={368},
   date={2017},
   number={1-2},
   pages={165–-195},
}

\bib{L}{article}{
   author={Lawson, H.B.},
   title={Complete minimal surfaces in $S^{3}$},
   journal={Ann. of Math. (2)},
   volume={92},
   date={1970},
   pages={335--374},
}

\bib{LT}{article}{
   author={Lawson, H.B.},
   author={Tribuzy, R.},
   title={On the mean curvature function for compact surfaces},
   journal={J. Differential Geom.},
   volume={16},
   date={1981},
   number={2},
   pages={179--183},
}

\bib{LMW}{article}{
   author={Li, C.},
   author={Miao, P.},
   author={Wang, Z.},
   title={Uniqueness of isometric immersions with the same mean curvature},
   journal={J. Funct. Anal.},
   volume={276},
   date={2019},
   number={9},
   pages={2831--2855},
}

\bib{Little}{article}{
   author={Little, J.A.},
   title={On singularities of submanifolds of higher dimensional Euclidean
   spaces},
   journal={Ann. Mat. Pura Appl. (4)},
   volume={83},
   date={1969},
   pages={261--335},
}

\bib{Ma}{article}{
   author={Markov, P. E.},
   title={Infinitesimal bendings of certain multidimensional surfaces},
   language={Russian},
   journal={Mat. Zametki},
   volume={27},
   date={1980},
   number={3},
   pages={469--479, 495},
}

\bib{Me}{article}{
   author={Mello, L.F.},
   title={Mean directionally curved lines on surfaces immersed in ${\Bbb
   R}^4$},
   journal={Publ. Mat.},
   volume={47},
   date={2003},
   number={2},
   pages={415--440},
}

\bib{Mo}{article}{
   author={Moriya, K.},
   title={Super-conformal surfaces associated with null complex holomorphic curves},
   journal={Bull. London Math. Soc.},
   volume={41},
   date={2009},
   number={2},
   pages={327--331},
}

\bib{Pa}{article}{
   author={Palmer, B.},
   title={Isothermic Surfaces and the Gauss Map},
   journal={Proc. Amer. Math. Soc.},
   volume={104},
   date={1988},
   number={3},
   pages={876--884},
}

\bib{PT}{article}{
   author={Peng, C.},
   author={Tang, Z.},
   title={On surfaces immersed in Euclidean space $\R^4$},
   journal={Sci. China Math.},
   volume={53},
   date={2010},
   number={1},
   pages={261--335},
}

\bib{PV}{article}{
   author={Polymerakis, K.},
   author={Vlachos, Th.},
   title={On the moduli space of isometric surfaces with the same mean curvature in 4-dimensional space forms},
   journal={J. Geom. Anal.},
   volume={29},
   date={2019},
   number={2},
   pages={1320–-1355},
}

\bib{Ra}{article}{
   author={Raffy, L.},
   title={Sur certaines surfaces, dont les rayons de courbure sont li\'{e}s par
   une relation},
   language={French},
   journal={Bull. Soc. Math. France},
   volume={19},
   date={1891},
   pages={158--169},
}

\bib{RH}{article}{
   author={Roussos, I.M.},
   author={Hern{\'a}ndez, G.E.},
   title={On the number of distinct isometric immersions of a Riemannian
   surface into ${\bf R}^3$ with given mean curvature},
   journal={Amer. J. Math.},
   volume={112},
   date={1990},
   number={1},
   pages={71--85},
}


\bib{ST}{article}{
   author={Smyth, B.},
   author={Tinaglia, G.},
   title={The number of constant mean curvature isometric immersions of a
   surface},
   journal={Comment. Math. Helv.},
   volume={88},
   date={2013},
   number={1},
   pages={163--183},
}

\bib{Tr}{article}{
   author={Tribuzy, R.},
   title={A characterization of tori with constant mean curvature in space
   form},
   journal={Bol. Soc. Brasil. Mat.},
   volume={11},
   date={1980},
   number={2},
   pages={259--274},
}

\bib{U}{article}{
   author={Umehara, M.},
   title={A characterization of compact surfaces with constant mean curvature},
   journal={Proc. Amer. Math. Soc.},
   volume={108},
   date={1990},
   number={2},
   pages={483--489},
}

\bib{Vl1}{article}{
   author={Vlachos, Th.},
   title={Congruence of minimal surfaces and higher fundamental forms},
   journal={Manuscripta Math.},
   volume={110},
   date={2003},
   number={1},
   pages={77--91},
}


\bib{Yau}{article}{
   author={Yau, S.T.},
   title={Submanifolds with constant mean curvature. I, II},
   journal={Amer. J. Math.},
   volume={96},
   date={1974},
   pages={346--366; ibid. 97 (1975), 76--100},
}

\end{biblist}
\end{bibdiv}

\bigskip

\noindent Mathematics Department, University of Ioannina\\
45110, Ioannina, Greece \\
E-mail address: kpolymer@cc.uoi.gr

\end{document}